\newcommand\N{\mathbb{N}}
\newcommand\R{\mathbb{R}}
\newcommand\C{\mathbb{C}}
\newcommand\Z{\mathbb{Z}}
\newcommand\T{\mathbb{T}}
\newcommand\1{\mathds{1}}
\newcommand\W{\mathscr{W}}
\newcommand\Wa{\mathcal{W}}
\newcommand\eps{\varepsilon}
\newcommand\X{\mathscr{X}}
\newcommand\F{\mathcal{F}}
\newcommand\A{\mathcal{A}}
\newcommand{\proj}{\operatorname{Proj}}
\newcommand{\E}{\mathcal{E}}
\renewcommand{\P}{\mathbb{P}}
\newcommand\e{\operatorname{\mathbb{E}}}
\newcommand\p{\operatorname{\mathbb{P}}}
\newcommand\var{\operatorname{Var}}
\renewcommand\leq{\leqslant}
\renewcommand\geq{\geqslant}
\renewcommand\le{\leqslant}
\renewcommand\ge{\geqslant}
\renewcommand\Re{\operatorname{Re}}
\renewcommand\Im{\operatorname{Im}}
\newcommand\ft[1]{\widehat #1}
\newcommand\dotprod[2]{\langle #1 , #2 \rangle}
\newcommand\mes{\operatorname{mes}}
\theoremstyle{plain}
\newtheorem{theorem-main}{Theorem}
\newtheorem{corollary-main}{Corollary}
\newtheorem{theorem}{Theorem}[section]
\newtheorem{proposition}[theorem]{Proposition}
\newtheorem{lemma}[theorem]{Lemma}
\newtheorem{corollary}[theorem]{Corollary}
\newtheorem*{theorem-a}{Theorem A}
\newtheorem*{theorem-b}{Theorem B}
\theoremstyle{definition}
\newtheorem{definition}{Definition}
\newtheorem*{definition*}{Definition}
\newtheorem*{remark*}{Remark}
\newenvironment{enumerate-math}
{\begin{enumerate}
\addtolength{\itemsep}{5pt}
}
{\end{enumerate}}
\newcommand{\D}{\mathcal{D}}
\newcommand{\rank}{\operatorname{rank}}
\newcommand{\AC}{A}
\newcommand{\AP}{\mathcal{AP}}
\newcommand{\open}{\mathcal{O}}
\begin{document}

\title{Separating signal from noise}

\author{Nir Lev}
\thanks{Research of N.L. is partially supported by the Israel Science Foundation grant No.
225/13.}
\address{Bar-Ilan University, Ramat-Gan 52900, Israel.}
\email{levnir@math.biu.ac.il}

\author{Ron Peled}
\thanks{Research of R.P. is partially supported by an ISF grant and an IRG
grant.}
\address{School of Mathematical sciences, Tel-Aviv University, Tel-Aviv 69978, Israel.}
\email{peledron@post.tau.ac.il}
\urladdr{http://www.math.tau.ac.il/~peledron}

\author{Yuval Peres}
\address{Microsoft Research, One Microsoft Way, Redmond, WA 98052, USA.}
\email{peres@microsoft.com}

\begin{abstract}
Suppose that a sequence of numbers $x_n$ (a `signal') is transmitted
through a noisy channel. The receiver observes a noisy version of
the signal with additive random fluctuations, $x_n + \xi_n$, where
$\xi_n$ is a sequence of independent standard Gaussian random
variables. Suppose further that the signal is known to come from
some fixed space $\X$ of possible signals. Is it possible to fully
recover the transmitted signal from its noisy version? Is it
possible to at least detect that a non-zero signal was transmitted?

In this paper we consider the case in which signals are infinite
sequences and the recovery or detection are required to hold with
probability one. We provide conditions on the space $\X$ for
checking whether detection or recovery are possible. We also analyze
in detail several examples including spaces of Fourier transforms of
measures, spaces with fixed amplitudes and the space of almost
periodic functions. Many of our examples exhibit critical phenomena,
in which a sharp transition is made from a regime in which recovery
is possible to a regime in which even detection is impossible.
\end{abstract}

\maketitle

\renewcommand{\thefootnote}{\fnsymbol{footnote}}
\footnotetext{\emph{2010 Mathematics Subject Classification:} 60G35,
62M20, 93E11, 94A12, 94A13.}
\renewcommand{\thefootnote}{\arabic{footnote}}

% ========================================================================

\section{Introduction}
\label{section:introduction}

\subsection{} Let $x = \{x_n\}$ be a sequence of numbers, which we
consider as a ``signal''. Suppose that the signal is transmitted
through a noisy channel, and the receiver observes the signal with
additive random fluctuations, namely the sequence $\{x_n + \xi_n\}$
where the $\xi_n$ are independent standard Gaussian random
variables. When is it possible for the receiver to recover the
original signal from its noisy version?

This type of question has been considered by many authors in
different versions, see
Section~\ref{section:definitions_related_works}. In this work we
shall consider a setting which seems to have received little
attention. We consider infinite signals, and ask for \emph{perfect
recovery} of the transmitted signal by the receiver. Clearly, for
perfect recovery to be possible, the receiver must have some prior
information about the transmitted signal. This is imposed by
requiring that the signal $x$ belongs to a given class of sequences
$\X$, which is known to the receiver.

In addition, there are several possible interpretations to the
notion of ``recovery'' in the random setting. In this paper we focus
on \emph{almost sure recovery}, that is, we require that the
receiver may recover every signal from $\X$ with probability one.
The following definition formalizes these ideas.

\begin{definition}
We say that the space \emph{$\X\subset \R^\N$ admits recovery} if
there exists a Borel measurable mapping $T:\R^\N\to\R^\N$, such that
for each $x \in \X$ we have $T(\{x_n + \xi_n\}) = \{x_n\}$ almost
surely.
\end{definition}

One may also consider a variant of this problem, in which the
receiver is asked merely to detect the existence of a signal in the
observed sequence. In other words, one is required only to
distinguish noisy versions of signals from ``pure noise''. Again,
our setting differs from most of the existing literature in that we
take our signals to be infinite sequences, and ask for almost sure
detection.

\begin{definition}
We say that the space \emph{$\X\subset\R^\N$ admits detection} if
there exists a Borel measurable mapping $T:\R^\N\to\{0,1\}$ such
that:
\begin{enumerate-math}
\item For every $x\in \X$, we have $T(\{x_n+\xi_n\})=1$ almost surely.
\item $T(\{\xi_n\})=0$ almost surely.
\end{enumerate-math}
\end{definition}

Similar definitions apply for any countable index set and for
complex-valued signals. We discuss these definitions in more detail
in Section~\ref{section:definitions_related_works} below.

It is natural to expect that for $\X$ to admit recovery, the signals
in $\X$ should be, in a sense, separated from each other. Indeed, a
basic result is that for $\X$ to admit recovery, we must have
\begin{equation}\label{eq:recover_nec_cond}
  \sum_n |x_n - y_n|^2 = \infty\;\;\text{for every distinct
  $x,y\in\X$}.
\end{equation}
This may be deduced, for example, from Kakutani's theorem on
singularity of product measures \cite{kakutani}. Similarly, a
necessary condition for $\X$ to admit detection is
\begin{equation}\label{eq:detection_nec_cond}
  \sum_n |x_n|^2 = \infty\;\;\text{for every $x\in\X$}.
\end{equation}
These conditions turn out to also be sufficient if $\X$ is a
countable space, see Section~\ref{section:preliminaries}. However,
as we will see below, these conditions are not sufficient in
general.

In this paper we give both necessary and sufficient conditions for a
general space $\X$ to admit recovery or detection. These conditions
are then used to study several examples. Most of these examples
exhibit critical phenomena, in which a sharp transition is made from
a regime in which recovery (or detection) is possible to a regime in
which it is not.

\subsection{} A simple example of an uncountable space which admits
recovery is the space of all periodic signals. Our first substantial
example shows that recovery may be possible from much larger spaces.
Consider a signal $\{x_n\}$, $n\in\Z$, which is the Fourier
transform of a measure $\mu$ on the circle $\T = \R/\Z$. The support
of $\mu$ represents the spectrum of frequencies contained in the
signal. Periodic signals thus correspond to atomic measures
supported by the roots of unity.

We study spaces of signals obtained by restricting the support of
the measure, and show that a sharp transition occurs when the
Hausdorff dimension of the support crosses $1/2$. We define the
dimension of a measure $\mu$ as
\begin{equation}\label{eq:dimension_of_mu}
\dim(\mu):=\min\{\dim(E)\colon E\text{ Borel}, |\mu|(\T\setminus
E)=0\},
\end{equation}
where $\dim(E)$ is the Hausdorff dimension of $E$.

\begin{theorem}\label{thm:Fourier_transforms_of_measures}
  Let $0\le \alpha\le 1$ and let $\X$ be the space of Fourier transforms of all finite, complex measures
  whose dimension is at most $\alpha$.
  \begin{enumerate-math}
    \item If $\alpha<1/2$ then $\X$ admits recovery.
    \item If $\alpha>1/2$ then $\X\setminus\{0\}$ does not admit detection.% (and, in particular, does not admit recovery).
  \end{enumerate-math}
\end{theorem}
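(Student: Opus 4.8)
The plan is to prove the two parts by rather different means, though in both the threshold $1/2$ enters through the elementary fact that $\sum_{n}|n|^{-s}$ converges exactly when $s>1$. For part~(i) I will exhibit an explicit Borel recovery map. For part~(ii) I will construct a probability measure $\rho$ on $\X\setminus\{0\}$ for which the mixture $\mathbb Q:=\int\p_{x^{\mu}+\xi}\,d\rho(\mu)$ is absolutely continuous with respect to the noise law $\p_{\xi}$; this rules out detection, since a detector $T$ would force $\mathbb Q(T=1)=1$ while $\p_{\xi}(T=1)=0$. Throughout, $x^{\mu}:=\{\widehat\mu(n)\}_{n\in\Z}$, and $\widehat{\mu-\nu}=\widehat\mu-\widehat\nu$ has dimension $\le\alpha$, so in part~(i) it is enough to reconstruct an arbitrary $\mu$ with $\dim\mu\le\alpha$ from its noisy transform.

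\emph{Part~(i).} From $y_{n}=\widehat\mu(n)+\xi_{n}$ the receiver forms $\Phi_{N}(t):=\sum_{1\le|n|\le N}\tfrac{y_{n}}{2\pi i n}e^{2\pi i n t}$. As $N\to\infty$ the noise part converges uniformly a.s.\ to a Gaussian process $W$ on $\T$ with $(\tfrac12-\eta)$-H\"older paths obeying the law of the iterated logarithm, while the signal part converges (Dirichlet--Jordan) to a primitive $F$ of $\mu$; thus $\Phi:=F+W$ is recovered, and it remains to extract $F$ (equivalently $\mu$) from it. The key dichotomy: at $t\notin\supp\mu$ one has $|\Phi(B(t,r))|=|W(B(t,r))|=O(\sqrt{r\log(1/r)})$ by the law of the iterated logarithm, whereas at $|\mu|$-a.e.\ $t$ the lower local dimension of $\mu$ is at most $\alpha<\tfrac12$, so $|\mu(B(t,r))|\ge r^{\alpha+\delta}\gg\sqrt{r\log(1/r)}$ along a sequence $r\to0$, for any fixed $\delta<\tfrac12-\alpha$. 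Hence the random set $\widetilde E:=\{t:\limsup_{r\to0}|\Phi(B(t,r))|/\sqrt{r\log(1/r)}=\infty\}$ is measurable in $y$, contained in $\supp\mu$ (so $\dim\widetilde E\le\alpha$), and carries the full mass of $\mu$. Finally, fixing $\beta\in(\alpha,\tfrac12)$ and choosing, measurably in $y$, a cover $\{J_{k}\}$ of $\widetilde E$ with $\sum_{k}|J_{k}|^{\beta}<\eps$, set $\mu^{(\eps)}(A):=\sum_{k:\,J_{k}\subseteq A,\ |\Phi(J_{k})|\ge C\sqrt{|J_{k}|\log(1/|J_{k}|)}}\Phi(J_{k})$, with $\Phi(J)$ the increment of $\Phi$ over $J$ and $C$ large. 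A Borel--Cantelli bound (Gaussian tail $|J_{k}|^{C^{2}/2}\le|J_{k}|^{\beta}$) discards the intervals on which $W$ alone crosses the threshold; intervals carrying appreciable $\mu$-mass are kept with $\Phi(J_{k})=\mu(J_{k})(1+o(1))$; and the omitted mass is $\le\sum_{k}C\sqrt{|J_{k}|\log(1/|J_{k}|)}\le C'\sum_{k}|J_{k}|^{\beta}<C'\eps$, the first bound using that $|J_{k}|^{1/2-\beta}\sqrt{\log(1/|J_{k}|)}$ is bounded precisely because $\beta<\tfrac12$. Letting $\eps\to0$ along a deterministic sequence recovers $\mu$ a.s., for every $\mu\in\X$. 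I expect the main obstacle here to be the measurable, uniform-in-$\mu$ control of these error terms when $\mu$ fails to be lower-regular; this is why one covers $\widetilde E$ by sets witnessing small $\beta$-dimensional Hausdorff content rather than by a uniform mesh, together with a Lebesgue-differentiation argument to pass from $|\mu|$ to $\mu$.

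\emph{Part~(ii).} Fix $\gamma\in(\tfrac12,1)$ with $\gamma\le\alpha$, and let $\rho$ be the law of $\eps\,\mu$, where $\eps>0$ is small and $\mu$ is a random fractal measure of dimension $\gamma$ of Salem type, so that a.s.\ $\dim\mu=\gamma\le\alpha$, $\mu\neq0$, and $|\widehat\mu(n)|=O(|n|^{-\gamma/2+\eta})$ for every $\eta>0$. The likelihood ratios of $\mathbb Q$ relative to $\p_{\xi}$ form a nonnegative martingale $L_{N}$ with $\e_{\p_{\xi}}[L_{N}]=1$, and
\[
\e_{\p_{\xi}}[L_{N}^{2}]=\int\!\!\int\exp\!\Big(2\eps^{2}\,\Re\!\!\sum_{1\le|n|\le N}\widehat\mu(n)\overline{\widehat\nu(n)}\Big)\,d\rho(\mu)\,d\rho(\nu)
\]
by the Gaussian shift formula and Fubini. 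The point is that for two independent samples $\mu,\nu$ the sum in the exponent has mean zero and variance $O\big(\eps^{4}\sum_{n\neq0}|n|^{-2\gamma+4\eta}\big)$, which is finite for small $\eta$ \emph{exactly because $\gamma>\tfrac12$}; combined with a sub-Gaussian/sub-exponential tail bound for that sum (exploiting the independence built into the random construction at each frequency scale), this bounds $\e_{\p_{\xi}}[L_{N}^{2}]$ uniformly in $N$. Then $L_{N}\to L_{\infty}$ in $L^{2}(\p_{\xi})$ with $\e_{\p_{\xi}}[L_{\infty}]=1$, so $\mathbb Q=L_{\infty}\,\p_{\xi}\ll\p_{\xi}$ and no detector can exist. (The self-term $\sum_{1\le|n|\le N}|\widehat\mu(n)|^{2}$ diverges on the diagonal $\mu=\nu$, but that diagonal is $\rho\times\rho$-null and, as the second-moment bound records, near-diagonal pairs are super-exponentially rare.) I expect the main obstacle here to be reconciling two opposing demands: $\mu$ must be a genuine finite measure of dimension $\le\alpha$, which forces the \emph{phases} of $\widehat\mu(n)$ to be correlated, yet the exponential moment above must be controlled, which wants these phases to decorrelate; the resolution is to choose the random construction so that its independent ingredients act essentially independently on disjoint frequency blocks, and the slack needed for this is precisely what $\gamma>\tfrac12$ (via $\sum|n|^{-2\gamma}<\infty$) provides.
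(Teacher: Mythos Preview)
Part~(i) contains a false step. From $\tilde E\subseteq\supp\mu$ you infer $\dim\tilde E\le\alpha$, but $\dim(\supp\mu)$ can exceed $\dim\mu$, and in fact $\dim\tilde E\le\alpha$ fails. Take $\alpha=0$: for any $\beta\in(0,\tfrac12)$ build a Cantor set $C$ of dimension $\beta$ (two pieces per stage, contraction ratio $q>4$) and let $\mu$ be the purely atomic measure placing mass $n\,q^{-n/2}$ at the centre of each level-$n$ construction interval; since $2/\sqrt q<1$ the total mass is finite. Every $t\in C$ lies within $q^{-n}$ of a level-$n$ centre, so $\mu(B(t,q^{-n}))\ge n\,q^{-n/2}$ and $C\subseteq\tilde E$, giving $\dim\tilde E\ge\beta>0=\dim\mu$. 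Letting $\beta\uparrow\tfrac12$ shows no exponent in $(\alpha,\tfrac12)$ works uniformly, and at the borderline the extra factor $\sqrt{\log(1/|J_k|)}$ in your omitted-mass bound is no longer absorbed. The paper avoids locating the carrier altogether: it \emph{maximizes} $\Re\,T_E(y)$ over \emph{all} finite unions $E$ of dyadic intervals with $\sum|I_j|^\alpha\le1$ and ranks in $[N_1,N_2]$, after proving by a union bound---this is exactly where $\alpha<\tfrac12$ enters---that $\sup_E|\xi(E)|\to0$ over the whole family as $N_1\to\infty$.

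In Part~(ii) the framework is correct (it is Theorem~\ref{thm:general_non_detection_intro}), but the decisive estimate $\e\exp\bigl(\Re\sum_n\widehat\mu(n)\overline{\widehat\nu(n)}\bigr)<\infty$ is asserted rather than proved: finite variance of the inner sum is far from sufficient, and the Fourier coefficients of a Salem measure are genuinely correlated across $n$, so the promised ``sub-Gaussian tail bound exploiting independence at each frequency scale'' is a real gap. The paper makes this computation exact by first passing to Walsh coefficients (detection being equivalent there) and taking a random $q$-adic Cantor measure with $q^{1/2}<p<q^{\alpha}$; then $\sum_{n<q^k}x_n\bar y_n=(q/p^2)^kZ_k$ where $Z_k$ is the $k$-th generation size of a supercritical Galton--Watson process, and a theorem of Harris on the moment generating function of the martingale limit $Z_k/\e Z_k$ delivers the required exponential moment.
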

This theorem is proved and discussed further in
Section~\ref{sec:hausdorff}, see
Theorem~\ref{thm:reconstruction-small-dimension}. We also prove
there a related result, that in the space of all measures supported
on a given, fixed set, recovery is possible if and only if this set
has Lebesgue measure zero.

\subsection{} In Section~\ref{sec:unknown_phase} we study a different
type of signal space. Here, the situation is that the amplitudes of
the transmitted signal are known to the receiver beforehand, and
only the signs, or phases in the complex case, remain unknown. Thus,
the amplitudes are given by a sequence $\{\sigma_n\}$, and the space
consists of signals $\{x_n\}$ satisfying $|x_n| = \sigma_n$. Let us
present two examples of this type, which exhibit rather different
behavior.

The first space, which we term the ``Rademacher space'', consists of
real signals of the form $x_n = \pm \sigma_n$ with all possible
choices of signs allowed. Certainly, one cannot expect this space to
admit recovery since each sign affects only one coordinate (and
hence the necessary condition \eqref{eq:recover_nec_cond} is
violated). However, the detection problem still makes sense.
Condition~\eqref{eq:detection_nec_cond} implies that $\sum
\sigma_n^2 = \infty$ is a necessary condition for detection, but it
turns out that this condition is not sufficient. The following
result is true (see Theorem~\ref{thm:detection_phases}(i)).
\begin{theorem}\label{thm:Rademacher_detection}
  The Rademacher space admits detection if and only if $\sum
  \sigma_n^4 = \infty$.
\end{theorem}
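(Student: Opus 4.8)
The plan is to prove the two directions separately, both resting on the same heuristic: when the sign of $x_n$ is unknown, the coordinate $y_n=x_n+\xi_n$ is effectively a sample from the symmetric mixture $\tfrac12 N(\sigma_n,1)+\tfrac12 N(-\sigma_n,1)$, whose (squared Hellinger) distance to $N(0,1)$ is of order $\sigma_n^4$ — the $\sigma_n^2$-order discrepancy between $N(\pm\sigma_n,1)$ and $N(0,1)$, which is what a sign-aware receiver would exploit, is annihilated by the symmetrization. So $\sum_n\sigma_n^4$ is the quantity controlling a sign-blind detector, and Kakutani-type reasoning makes $\sum_n\sigma_n^4=\infty$ the expected threshold.

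\emph{Sufficiency of $\sum_n\sigma_n^4=\infty$.} I would base $T$ on the sign-invariant statistic
\[
  W_N:=\sum_{n=1}^{N}\frac{\sigma_n^2}{1+\sigma_n^2}\,\big(y_n^2-1\big),
  \qquad
  s_N:=\sum_{n=1}^{N}\frac{\sigma_n^4}{1+\sigma_n^2}\,,
\]
declaring $T(\{y_n\})=1$ iff $\limsup_N W_N/s_N\ge\tfrac12$ (clearly Borel). Three points need checking. (a) $s_N\to\infty$: since $\tfrac{\sigma_n^4}{1+\sigma_n^2}\ge\tfrac12\sigma_n^4$ when $\sigma_n\le1$ and $\ge\tfrac12$ when $\sigma_n>1$, divergence of $\sum_n\sigma_n^4$ forces $s_N\to\infty$. (b) A one-line Gaussian moment computation gives, for $y=\pm\sigma_n+\xi_n$, that $\e[y^2]=1+\sigma_n^2$ and $\var(y^2)=4\sigma_n^2+2$ \emph{independently of the sign}; hence $\e[W_N]=s_N$ for every signal in the Rademacher space while $\e[W_N]=0$ under pure noise. (c) Because the weights lie in $[0,1)$ and $\var(y^2)\le 6(1+\sigma_n^2)$, the $n$-th centered summand of $W_N$ has variance at most $6(s_n-s_{n-1})$, which is summable against $s_n^{-2}$ by telescoping; Kolmogorov's strong law for weighted sums of independent variables then gives $W_N/s_N\to1$ a.s.\ under every signal and $W_N/s_N\to0$ a.s.\ under pure noise, which is exactly detection.

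\emph{Necessity of $\sum_n\sigma_n^4=\infty$.} Assume $\sum_n\sigma_n^4<\infty$ and suppose, for contradiction, that $T$ witnesses detection, with $A:=T^{-1}(\{1\})$. Randomize the unknown signs: let $\{\eps_n\}$ be i.i.d.\ uniform on $\{-1,+1\}$, independent of $\{\xi_n\}$, and let $\bar\mu$ be the law on $\R^\N$ of $\{\eps_n\sigma_n+\xi_n\}$, a product measure, while $\mu_0$ is the law of $\{\xi_n\}$. The density of the first $N$ coordinates of $\bar\mu$ relative to $\mu_0$ is $M_N=\prod_{n=1}^{N}e^{-\sigma_n^2/2}\cosh(\sigma_n y_n)$, a nonnegative $\mu_0$-martingale of mean $1$, and a direct computation yields $\e_{\mu_0}[M_N^2]=\prod_{n=1}^{N}\cosh(\sigma_n^2)\le\exp\!\big(\tfrac12\sum_n\sigma_n^4\big)<\infty$. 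Thus $M_N$ is bounded in $L^2(\mu_0)$, hence uniformly integrable, hence $\bar\mu\ll\mu_0$ (one half of Kakutani's theorem \cite{kakutani}; alternatively invoke the dichotomy and compute Hellinger affinities). But detection forces $\mu_0(A)=0$ and $\mu_x(A)=1$ for \emph{every} signal $x$ in the space; averaging the latter over the random signs (Fubini) gives $\bar\mu(A)=1$, contradicting $\bar\mu\ll\mu_0$.

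The hard part is the sufficiency direction. Distinguishing any one fixed signal from pure noise is easy and needs only $\sum_n\sigma_n^2=\infty$; the real task is to produce a \emph{single} Borel map that defeats \emph{all} sign patterns simultaneously and whose random fluctuations stay of smaller order than its drift even when the amplitudes $\sigma_n$ are unbounded. This is precisely why $W_N$ uses the bounded, self-normalizing weights $\sigma_n^2/(1+\sigma_n^2)$: they make the Kolmogorov variance criterion hold with one and the same estimate in every case, whereas the naive weights $\sigma_n^2$ (or $1$) break down when $\sup_n\sigma_n=\infty$.
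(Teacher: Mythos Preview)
Your proof is correct and follows the paper's approach in both directions. For necessity you randomize the signs uniformly and bound the likelihood-ratio martingale in $L^2$ via $\prod_n\cosh(\sigma_n^2)\le\exp(\tfrac12\sum_n\sigma_n^4)$; this is exactly the paper's argument (the computation leading to \eqref{eq:Rademacher_non_detect_cond}--\eqref{eq:sigma_moment_calc}, i.e.\ an application of Theorem~\ref{thm:general_detection}). For sufficiency both proofs test via a weighted sum of $y_n^2$: the paper (Proposition~\ref{sense_prop}) takes weights $\sigma_n^2$ after first reducing to $\sigma_n\le 1$ and then uses Chebyshev along a subsequence together with Borel--Cantelli, whereas your self-normalizing weights $\sigma_n^2/(1+\sigma_n^2)$ make the telescoping variance estimate $\sum_n(s_n-s_{n-1})/s_n^2<\infty$ hold without truncation and let Kolmogorov's strong law apply directly. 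This is a minor but clean variant of the same idea.
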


We also consider different examples, in which the set of possible
phases for the signals is restricted in some way. A representative
space is the, so termed, ``trigonometric space''. Here we take
$\{\sigma_n\}$, $n\in\Z$, which we allow also to take complex
values. The signals are indexed by a parameter $t\in\T$ and to each
$t$ corresponds the signal $x_n(t) = e^{2\pi i n t} \sigma_n$. In
this space the phases are highly correlated and both the detection
and recovery problems make sense. One motivation for considering
such a space is by noting that if the $\{\sigma_n\}$ are the Fourier
coefficients of some object on the circle (say, a function or a
measure), then the trigonometric space consists of the Fourier
coefficients for all rotations of this object. Thus, the receiver
seeks to recover the unknown rotation from the noisy signal.
\begin{theorem}\label{thm:trig_space_recovery_intro}
  Suppose the absolute values $\{|\sigma_n|\}$ are non-increasing. There exist absolute constants $0<a<b<\infty$ such that, if
  \begin{equation}\label{eq:cond_non_detect_trig}
    \limsup_{k\to\infty} \frac1{\log k} \sum_{|n|<k} |\sigma_n|^2
  \end{equation}
  is larger than $b$ then recovery is possible in the trigonometric
  space, while if this limit is smaller than $a$
  then detection is impossible.
\end{theorem}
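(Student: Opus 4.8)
The plan is to treat recovery and detection separately, both relying on a single deterministic estimate for the weighted kernel
\[
 D_k(\theta):=\sum_{|n|<k}|\sigma_n|^2\cos(2\pi n\theta),\qquad D_k(0)=A_k:=\sum_{|n|<k}|\sigma_n|^2 .
\]
Since $\{|\sigma_n|\}$ is non-increasing in $|n|$, summation by parts together with the elementary bound $\bigl|\sum_{m=1}^{N}\cos(2\pi m\theta)\bigr|\le 1/(2\|\theta\|)$ (with $\|\theta\|$ the distance from $\theta$ to $\Z$) gives
\[
 D_k(\theta)\;\le\;A_{M(\theta)},\qquad M(\theta):=\min\bigl(k,\ \lceil 1/(2\|\theta\|)\rceil+1\bigr);
\]
that is, $D_k$ attains its maximum at $\theta=0$ and, away from $0$, is dominated by the partial sum $A_m$ at the scale $m\asymp 1/\|\theta\|$. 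This is the only point at which the hypothesis on $\{|\sigma_n|\}$ is used; the same inequality, applied after factoring out the period of a truncated signal, also bounds $D_k$ at any point lying at distance $\ge\delta$ from its maximisers by a constant $c_\delta$ depending only on $\delta$.

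For recovery, write the transmitted signal as $x_n(t)=e^{2\pi int}\sigma_n$ and the observation as $y_n=x_n(t)+\xi_n$, and for each $k$ form the correlation statistic $G_k(s):=\Re\sum_{|n|<k}\overline{x_n(s)}\,y_n$. Since $\sum_{|n|<k}|x_n(s)|^2=A_k$ is independent of $s$, the maximum-likelihood rule is to let $\hat t_k$ be a Borel maximiser of $G_k$ over $\T$, and we have the decomposition $G_k(s)=D_k(t-s)+W_k(s)$, where $W_k(s):=\Re\sum_{|n|<k}\overline{\sigma_n}\,\xi_n\,e^{-2\pi ins}$ is a stationary Gaussian trigonometric polynomial of degree $<k$ with pointwise variance $\tfrac12A_k$. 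Fix $\delta>0$. Since the $\limsup$ in \eqref{eq:cond_non_detect_trig} exceeds $b$, there are infinitely many $k$ with $A_k>b\log k$, and we may choose such a sequence $k_1<k_2<\cdots$ with $k_j\ge 2^{j}$. On the event that $\hat t_{k_j}$ lies at distance $\ge\delta$ from every maximiser of $D_{k_j}(t-\cdot)$ one has $G_{k_j}(\hat t_{k_j})\le c_\delta+\|W_{k_j}\|_\infty$ and $G_{k_j}(\hat t_{k_j})\ge G_{k_j}(t)=A_{k_j}+W_{k_j}(t)$, so $\|W_{k_j}\|_\infty-W_{k_j}(t)\ge A_{k_j}-c_\delta\ge\tfrac12A_{k_j}$ for $j$ large; hence $\|W_{k_j}\|_\infty\ge\tfrac14A_{k_j}$ or $-W_{k_j}(t)\ge\tfrac14A_{k_j}$. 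Bernstein's inequality for trigonometric polynomials, applied to an equally spaced net of $\asymp k_j$ points, bounds $\p\bigl(\|W_{k_j}\|_\infty>\tfrac14A_{k_j}\bigr)\le Ck_j e^{-A_{k_j}/C}\le Ck_j^{\,1-b/C}$ for an absolute constant $C$, while $W_{k_j}(t)$ is a single Gaussian of variance $\tfrac12A_{k_j}$ and satisfies an even better bound; both are summable in $j$ once $b$ is a large enough absolute constant. Borel--Cantelli then gives, almost surely, that for every $\delta>0$ the point $\hat t_{k_j}$ lies within $\delta$ of some maximiser of $D_{k_j}(t-\cdot)$ for all large $j$. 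For each fixed $n$ these maximisers eventually all assign the value $x_n(t)$ to the $n$-th coordinate (they differ from $t$ by elements of $\tfrac1{d_j}\Z$, where $d_j:=\gcd\{|n|:|n|<k_j,\ \sigma_n\neq0\}$, and $d_j\mid n$ once $k_j>|n|$); since $x_n(\cdot)$ is continuous, letting $\delta\to0$ yields $x_n(\hat t_{k_j})\to x_n(t)$ almost surely for every $n$. The Borel map $T$ returning $\bigl(\lim_j x_n(\hat t_{k_j})\bigr)_n$ when these limits all exist, and $0$ otherwise, therefore recovers every signal of the trigonometric space.

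For the impossibility of detection, consider the mixture $\nu:=\int_\T\mu_t\,dt$, where $\mu_t$ is the law of $\{x_n(t)+\xi_n\}$ and $\mu_0$ that of $\{\xi_n\}$. It suffices to show $\nu\ll\mu_0$, since a detector $T$ would then satisfy $\mu_0(T=1)=0$, hence $\nu(T=1)=0$, contradicting $\nu(T=1)=\int_\T\mu_t(T=1)\,dt=1$. Let $Z_k$ be the density with respect to $\mu_0$ of $\nu$ restricted to the $\sigma$-algebra $\F_k$ generated by $(y_n)_{|n|<k}$; then $(Z_k)$ is a non-negative $\mu_0$-martingale, and a Gaussian second-moment computation gives
\[
 \e_{\mu_0}\bigl[Z_k^2\bigr]=\int_\T\!\!\int_\T\exp\bigl(2D_k(t-s)\bigr)\,dt\,ds=\int_\T\exp\bigl(2D_k(\theta)\bigr)\,d\theta .
\]
If the $\limsup$ in \eqref{eq:cond_non_detect_trig} is below $a$ (one may take $a=\tfrac12$), then $A_m\le a'\log m+O(1)$ for some $a'<\tfrac12$ and all $m$, so by the kernel estimate $\exp\bigl(2D_k(\theta)\bigr)\le C\,M(\theta)^{2a'}\le C'\theta^{-2a'}$ on $(0,\tfrac12]$, and the right-hand side above is at most $C'\int_0^{1/2}\theta^{-2a'}\,d\theta<\infty$, uniformly in $k$. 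Thus $(Z_k)$ is bounded in $L^2(\mu_0)$, hence uniformly integrable, hence $\nu\ll\mu_0$, and detection is impossible.

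The main obstacle is in the recovery half: one must upgrade the heuristic $\|W_k\|_\infty\asymp\sqrt{A_k\log k}$ to a genuine tail inequality with the correct exponent and a polynomial-in-$k$ prefactor — via Bernstein's inequality and a union bound over a net of size $\asymp k$, or via chaining / the Borell--TIS inequality — and then verify that all the constants combine so that the decisive threshold $b$ is a finite absolute constant. No attempt is made here to narrow the (large) gap between $a=\tfrac12$ and the value of $b$ that this argument yields.
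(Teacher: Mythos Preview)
Your proof is correct and largely parallels the paper's. For recovery, the two arguments are essentially identical: both compare the likelihood at $t$ to that at any $s$ with $d(t,s)\ge\delta$, lower-bound the signal gap $A_k-D_k(t-s)$ by a block/kernel estimate exploiting the monotonicity of $|\sigma_n|$, and upper-bound the noise term $\|W_k\|_\infty$ by a Salem--Zygmund-type inequality (your ``Bernstein plus net'' is precisely how Lemma~\ref{lemma:multivariate-salem-zygmund} is proved). For non-detection, both reduce to the second-moment criterion of Theorem~\ref{thm:general_detection}, but the executions diverge: you bound $\int_{\T}\exp\bigl(D_k(\theta)\bigr)\,d\theta$ directly via the Dirichlet-type pointwise estimate $D_k(\theta)\le CA_{1/\|\theta\|}$, which is quick and elementary but specific to the exponential system; the paper instead bounds the tail of the random variable $W_k=\sum|\sigma_n|^2\varphi_n(\omega)\overline{\varphi_n(\omega')}$ by the Montgomery--Smith $\ell^1/\ell^2$ splitting (Proposition~\ref{prop:general_non_detect_phi}), using only orthogonality and $|\varphi_n|\le1$, and thereby treats the Walsh and trigonometric cases in one stroke. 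Two minor constant slips: in the paper's complex-noise normalization the second moment is $\int_{\T}\exp(D_k)$ rather than $\int_{\T}\exp(2D_k)$, so your method actually gives $a=1$ rather than $a=\tfrac12$; and the pointwise variance of $W_k(s)$ is $A_k$, not $\tfrac12A_k$. Neither affects the argument.
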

This is proved in
Sections~\ref{sec:Walsh_and_trigonometric_non_detection} and
\ref{sec:Walsh_and_trigonometric_recovery}. The non-increasing
condition should be understood in the sense that $|\sigma_{n+1}|\le
|\sigma_{n}|$ for $n\ge 0$, and $|\sigma_{n-1}|\le |\sigma_n|$ for
$n\le 0$. It is interesting to note that, again, the necessary
condition $\sum \sigma_n^2 = \infty$ is insufficient. However, the
sufficient condition is ``closer'' to this necessary condition than
in the case of the Rademacher space, which is an indication of the
fact that the trigonometric space is much more restricted.

Section~\ref{sec:unknown_phase} contains also other examples in the
same spirit.

\subsection{} The results presented so far consist of analysis of concrete
spaces. We are also interested in establishing useful conditions for
detection and recovery from general spaces of signals. Such
conditions are presented in Section~\ref{section:preliminaries}.

The simplest result relating to detection from general spaces relies
on the notion of volume growth of the space. Assume that the space
$\X$ is such that each coordinate $x_n$ $(n=1,2,3,\ldots)$ may take
only finitely many values. Define
\begin{equation*}
  \proj_k(\X):=\{(x_1,\ldots, x_k)\,:\, x\in\X\},
\end{equation*}
which is then a finite set.
\begin{theorem}\label{thm:general_detection_intro}
Let $\X$ be a Borel subset of $\R^\N$ or $\C^\N$, such that $\sum
|x_n|^2 = \infty$ for all signals in $\X$. If
\begin{equation}\label{eq:discrete_entropy_cond_intro}
  \liminf_{k\to\infty} \frac{\log|\proj_k(\X)|}{\inf_{x\in\X}\sum_{n=1}^k |x_n|^2} <
  \frac{1}{8}
\end{equation}
then $\X$ admits detection.
\end{theorem}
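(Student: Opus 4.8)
The plan is to build the detecting map $T$ as the eventual value of a sequence of simple likelihood-ratio tests performed on longer and longer prefixes of the observation, and to control them by the Borel--Cantelli lemma. Write $s_k(x) := \sum_{n=1}^{k} |x_n|^2$ and $S_k := \inf_{x\in\X} s_k(x)$; note that $S_k$ and $|\proj_k(\X)|$ are both non-decreasing in $k$. By hypothesis~\eqref{eq:discrete_entropy_cond_intro} there are a constant $c_0<1/8$ and an increasing sequence $k_1<k_2<\cdots$ along which $\log|\proj_{k_j}(\X)| < c_0 S_{k_j}$, and I will work along this sequence. (I treat the real case; the complex case is entirely analogous, via the coordinatewise identification of $\C^\N$ with $\R^\N$.)

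For each $j$ and each admissible prefix $a=(a_1,\dots,a_{k_j})\in\proj_{k_j}(\X)$, put $\Lambda_j^a(y):=\sum_{n=1}^{k_j}\bigl(a_n y_n - \tfrac12 a_n^2\bigr)$; this is the log-likelihood ratio, on the first $k_j$ coordinates, of ``$y$ is a noisy signal with prefix $a$'' against ``$y$ is pure noise'' (equivalently $\Lambda_j^a(y)>0$ iff the observed prefix is closer to $a$ than to the origin). Let $T_j(y)=1$ if $\Lambda_j^a(y)>0$ for some $a\in\proj_{k_j}(\X)$, and $T_j(y)=0$ otherwise; as $\proj_{k_j}(\X)$ is a fixed finite set and each $\Lambda_j^a$ is a polynomial in finitely many coordinates, $T_j$ is Borel. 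Two Gaussian tail estimates do the work. Under pure noise, for each $a$ with $s_a:=\sum_{n\le k_j}a_n^2\ge S_{k_j}$ we have $\Lambda_j^a(\xi)\sim N(-s_a/2,\,s_a)$, so $\p(\Lambda_j^a(\xi)>0)\le e^{-s_a/8}\le e^{-S_{k_j}/8}$; a union bound over $\proj_{k_j}(\X)$ yields $\p(T_j(\xi)=1)\le|\proj_{k_j}(\X)|\,e^{-S_{k_j}/8}\le e^{(c_0-1/8)S_{k_j}}$. Under a signal $x\in\X$ with prefix $a$ and $s:=s_{k_j}(x)\ge S_{k_j}$ we have $\Lambda_j^a(x+\xi)\sim N(s/2,\,s)$, hence $\p(T_j(x+\xi)=0)\le\p(\Lambda_j^a(x+\xi)\le0)\le e^{-s/8}\le e^{-S_{k_j}/8}$, and this bound is uniform in $x$. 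The constant $\tfrac18$ is precisely what makes the noise bound beat the ``entropy'' factor $|\proj_{k_j}(\X)|\le e^{c_0 S_{k_j}}$.

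Finally I assemble $T$, after disposing of one degenerate case. If $S_{k_j}\not\to\infty$ then, $(S_k)$ being non-decreasing, it is bounded, whence $|\proj_k(\X)|$ is bounded in $k$; since distinct sequences differ at a finite coordinate, this forces $\X$ to be finite, and a finite space with $\sum_n|x_n|^2=\infty$ for each of its elements admits detection by the sufficiency of \eqref{eq:detection_nec_cond} for countable spaces (Section~\ref{section:preliminaries}). Otherwise $S_{k_j}\to\infty$, and passing to a further subsequence I may assume $\sum_j e^{-\delta S_{k_j}}<\infty$ for $\delta:=1/8-c_0>0$; then $\sum_j\p(T_j(\xi)=1)<\infty$ and, for every $x\in\X$, $\sum_j\p(T_j(x+\xi)=0)<\infty$. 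By Borel--Cantelli, a.s.\ $T_j(\xi)=0$ for all large $j$, while for each fixed $x$, a.s.\ $T_j(x+\xi)=1$ for all large $j$. Declaring $T(y)=1$ iff $T_j(y)=1$ for all sufficiently large $j$ --- the indicator of the Borel set $\bigcup_N\bigcap_{j\ge N}\{T_j=1\}$ --- gives $T(\xi)=0$ a.s.\ and $T(x+\xi)=1$ a.s.\ for every $x\in\X$, i.e.\ detection. The one genuinely delicate point is getting the signal-error bound uniformly over all $x\in\X$ at once: this is exactly what the finiteness of $\proj_{k_j}(\X)$ buys us, since the test depends on $x$ only through its prefix and through $s_{k_j}(x)\ge S_{k_j}$, so a single threshold and a single union bound serve the whole space.
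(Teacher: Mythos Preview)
Your proof is correct and shares its core idea with the paper's: a union bound over the finite set $\proj_k(\X)$ combined with the Gaussian tail estimate $\p(G\ge \tfrac12\sigma)\le e^{-\sigma^2/8}$, which is exactly what makes the constant $\tfrac18$ appear.

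The route differs in how the two sides are packaged. The paper deduces this result as a corollary of a general detection test (Theorem~\ref{thm:detect_likelihood2}): one shows that, almost surely, $\inf_{y\in\X}\sum_{n\le k}[y_n^2-2\xi_n y_n]>0$ for infinitely many $k$, which handles the pure-noise side, while the signal side is handled once and for all inside Theorem~\ref{thm:detect_likelihood2} via the embedding $(\sum_{n\le k} y_n\xi_n)\stackrel{d}{=}B(\sum_{n\le k}y_n^2)$ and the law of large numbers for Brownian motion. In particular the paper only needs $\liminf_k \p(\Omega_k)=0$ (Fatou), not summability, and treats the finite-$\X$ case and the infinite case together via the observation \eqref{eq:limsup_sigma_k_log_proj}. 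Your argument is more self-contained: you control \emph{both} the noise error and the signal error by explicit Gaussian tail bounds along a subsequence, then invoke Borel--Cantelli. The price is that you must pass to a summable subsequence and dispose of the degenerate case $S_{k_j}\not\to\infty$ separately; the gain is that you avoid the general framework and the Brownian embedding entirely.
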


See Corollary~\ref{cor:vol_growth_detect}(i). This theorem is sharp
in the sense that the constant on the right-hand side of
\eqref{eq:discrete_entropy_cond_intro} may not be replaced by an
arbitrarily large constant. A similar theorem is proved for the
recovery problem. We also obtain results which may be used for
spaces in which the coordinates $x_n$ take infinitely many values.
The results yield, for example, the recovery criterion for the
trigonometric space presented above.

We also obtain a condition necessary for detection. The condition
says, roughly, that a space $\X$ does not admit detection if there
exists a probability measure on $\X$ under which two independent
samples are nearly orthogonal. In this sense, a space which admits
detection cannot be ``large in many different directions''.

\begin{theorem}\label{thm:general_non_detection_intro}
Let $\X$ be a Borel subset of $\R^\N$. Suppose that there is a
probability measure $P$ on $\X$ such that
\begin{equation*}
  \liminf_{k \to\infty} \; \e \, \exp \bigg\{\sum_{n=1}^k x_n y_n \bigg\} <
  \infty,
\end{equation*}
where $\{x_n\}$ and $\{y_n\}$ are sampled independently from $P$.
Then $\X$ does not admit detection.
\end{theorem}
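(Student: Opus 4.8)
The plan is to convert the (non-)detection question into one about mutual singularity of two measures on $\R^\N$ and then settle it by a second-moment computation with likelihood ratios. Write $\mu_x$ for the law on $\R^\N$ of the noisy signal $\{x_n+\xi_n\}$, so that $\mu_x$ is the product of the one-dimensional Gaussians $\mathcal N(x_n,1)$, and write $\mu_0=\mu_{\mathbf 0}$ for the law of pure noise $\{\xi_n\}$. Form the mixture $\mu_P:=\int_\X\mu_x\,dP(x)$, which is a Borel probability measure on $\R^\N$. First I would note that if $\X$ admitted detection, witnessed by a Borel map $T:\R^\N\to\{0,1\}$, then $\mu_x(T^{-1}(1))=1$ for every $x\in\X$, hence $\mu_P(T^{-1}(1))=1$, whereas $\mu_0(T^{-1}(1))=0$; thus $\mu_P$ and $\mu_0$ would be mutually singular. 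So it suffices to show that, under the hypothesis, $\mu_P$ is \emph{not} singular with respect to $\mu_0$---and I will in fact prove $\mu_P\ll\mu_0$.

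For this, let $\F_k$ be the $\sigma$-algebra on $\R^\N$ generated by the first $k$ coordinates and let $L_k=d(\mu_P|_{\F_k})/d(\mu_0|_{\F_k})$ be the likelihood ratio on the first $k$ coordinates; a short computation with Gaussian densities and Fubini gives
\begin{equation*}
  L_k(z)=\int_\X\exp\Big\{\sum_{n=1}^k x_n z_n-\tfrac12\sum_{n=1}^k x_n^2\Big\}\,dP(x).
\end{equation*}
The sequence $(L_k)$ is a nonnegative $\mu_0$-martingale with $\e_{\mu_0}L_k=1$, so by the martingale convergence theorem $L_k\to L_\infty$ $\mu_0$-almost surely for some $L_\infty\ge0$; moreover, by the standard relation between this martingale limit and the Lebesgue decomposition (the same machinery underlying Kakutani's dichotomy), $L_\infty$ is a version of the density of the absolutely continuous part of $\mu_P$ with respect to $\mu_0$, so that $\mu_P\ll\mu_0$ as soon as $\e_{\mu_0}L_\infty=1$. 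Next I would compute the second moment: since the coordinates $z_n$ are independent standard Gaussians under $\mu_0$ and $\e[e^{a z_n}]=e^{a^2/2}$, one gets, for $x,y$ sampled independently from $P$,
\begin{equation*}
  \e_{\mu_0}\big[L_k^2\big]=\int_\X\int_\X\exp\Big\{\sum_{n=1}^k x_n y_n\Big\}\,dP(x)\,dP(y)=\e\,\exp\Big\{\sum_{n=1}^k x_n y_n\Big\},
\end{equation*}
which is precisely the quantity controlled in the hypothesis.

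The remaining point---and the only delicate one---is that the hypothesis bounds merely the \emph{limit inferior} of $\e_{\mu_0}[L_k^2]$, so the martingale $(L_k)$ need not be bounded in $L^2(\mu_0)$. I would get around this by choosing a subsequence $k_j\to\infty$ along which $\e_{\mu_0}[L_{k_j}^2]$ stays bounded; then $\{L_{k_j}\}$ is bounded in $L^2(\mu_0)$, hence uniformly integrable, and as it still converges $\mu_0$-almost surely to $L_\infty$ it converges in $L^1(\mu_0)$, whence $\e_{\mu_0}L_\infty=\lim_j\e_{\mu_0}L_{k_j}=1$. Thus the singular part of $\mu_P$ with respect to $\mu_0$ has mass $1-\e_{\mu_0}L_\infty=0$, i.e.\ $\mu_P\ll\mu_0$; in particular $\mu_P$ is not singular with respect to $\mu_0$, contradicting the consequence of detection established at the outset. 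Hence $\X$ does not admit detection. The only ingredients beyond routine Gaussian calculus are the identification of $L_\infty$ with the density of the absolutely continuous part of $\mu_P$ and this subsequence/uniform-integrability step.
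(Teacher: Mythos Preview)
Your proof is correct and follows essentially the same approach as the paper: reduce detection to mutual singularity of the mixture $\mu_P$ and the noise law $\mu_0$, compute the likelihood-ratio martingale $L_k$, and control its second moment by the expectation in the hypothesis. The only difference is in the final step---the paper uses the Paley--Zygmund inequality to show $L_k\not\to 0$ $\mu_0$-a.s.\ (hence non-singularity), whereas you pass to a subsequence bounded in $L^2$ and use uniform integrability to get $\e_{\mu_0}L_\infty=1$ (hence the stronger conclusion $\mu_P\ll\mu_0$); the paper in fact remarks that $\e_{\mu_0}L_k^2$ is nondecreasing in $k$ (as $L_k^2$ is a submartingale), so the $\liminf$ equals the $\lim$ and your subsequence step is unnecessary.
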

This theorem (see Theorem~\ref{thm:general_detection}) is the main
tool which we use to prove non-detection results. In particular, it
is used to obtain the non-detection results presented earlier in the
introduction. The restriction that $\X$ is a subset of $\R^\N$ can
be made without loss of generality. See
Section~\ref{sec:general_conditions} for more details and for a
related theorem giving a condition both necessary and sufficient for
detection.

\subsection{} If a space $\X$ does not admit recovery or detection, it is natural to ask whether this situation could be improved
by sufficiently reducing the noise level. Thus one may ask whether
recovery becomes possible when the noise added to each coordinate
has a sufficiently small variance $\sigma^2$. By scaling, this is
equivalent to asking whether recovery is possible from the space
$\frac{1}{\sigma}\X:=\{(\frac{1}{\sigma} x_n)\,:\, x\in\X\}$ with
the standard noise level.

In Sections~\ref{sec:unknown_phase} and ~\ref{sec:tree_trail} we
present examples of spaces $\X$ with a ``critical signal-to-noise
ratio'' for detection and recovery. By this we mean that there
exists some critical $\sigma_c$ such that $\X$ admits recovery when
$\sigma<\sigma_c$ and does not even admit detection when
$\sigma>\sigma_c$. In the example of Section~\ref{sec:tree_trail},
recovery is also possible at $\sigma_c$ itself.

\subsection{}

Our concept of recovery is related to the concept of disjointness of
two dynamical systems as introduced by Furstenberg
\cite{furstenberg:disjointness}. Furstenberg showed that in any zero
entropy stochastic process there exists a subset of full measure
which admits recovery. Here is a special case of this result.
Suppose we are given a continuous function $f:\T^d\to\R$ and
$\alpha\in\T^d$, and consider signals of the form $(f(x +
n\alpha))$, $n\ge 0$, indexed by points $x\in\T^d$. Furstenberg's
result implies that recovery is possible from the space of such
signals when $x$ is restricted to a subset of full measure of
$\T^d$. In this case we can strengthen the conclusion and show that
recovery is possible from this space also when $x\in\T^d$ is
unrestricted. In fact, our results imply that recovery is possible
from the much larger space of signals when neither $\alpha$ nor $f$
are specified. More generally, we show that recovery is possible
from the space of all almost periodic functions, which includes the
above examples as a special case.
%polynomial phase functions, which includes the above examples as a special case.

Let us state our result precisely. We denote by $\AP$ the uniform
closure (i.e., closure in the $\ell^\infty$ norm) of the linear
combinations of functions on $\Z$ of the form $e^{2 \pi i n\theta}$,
$\theta\in\T$. This is a translation invariant algebra of functions.
The following result is proved in Section~\ref{sec:poly_phase}.
\begin{theorem}
\label{thm:reconstruction-polynomial-intro} The space $\AP$ admits
recovery.
\end{theorem}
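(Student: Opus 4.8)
The plan is to reduce the recovery problem for $\AP$ to the volume-growth criterion for recovery mentioned in Section~\ref{section:preliminaries} (the recovery analogue of Theorem~\ref{thm:general_detection_intro}). The key structural feature of $\AP$ is that, although it is an infinite-dimensional space, its elements are ``almost finite-dimensional'' in the following sense: any $f\in\AP$ is a uniform limit of trigonometric polynomials, so for every $\eps>0$ there is a finite set of frequencies $\theta_1,\dots,\theta_m$ and a trigonometric polynomial $g=\sum_{j=1}^m c_j e^{2\pi i n\theta_j}$ with $\|f-g\|_{\ell^\infty}<\eps$. The first step is therefore to quantify this: I would fix a sequence $\eps_k\downarrow 0$ and show that, on the block of coordinates $n=1,\dots,k$, the set $\proj_k(\AP)$ can be covered by relatively few $\ell^\infty$-balls of radius $\eps_k$. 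The number of balls needed is controlled by (a) the number of distinct $m$-tuples of frequencies $\theta_j$ that can appear for polynomials of a given degree, discretized to the relevant scale, and (b) the number of coefficient vectors $(c_j)$, again discretized. Because a trigonometric polynomial of bounded sup-norm has bounded coefficients and its restriction to $k$ coordinates is determined up to $\eps_k$ by the frequencies and coefficients known to precision roughly $1/k$, one obtains $\log(\text{covering number at scale }\eps_k) = o(k)$ after choosing $\eps_k$ to go to zero slowly enough (e.g.\ polynomially in $1/k$), while simultaneously $\eps_k$ does not decay so fast that the covering blows up.

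The second step is to combine this sub-exponential covering bound with the separation condition \eqref{eq:recover_nec_cond}. For two distinct $f,h\in\AP$, uniqueness theorems for almost periodic functions (a nonzero almost periodic function cannot vanish in $\ell^2$-average over $\{1,\dots,k\}$ as $k\to\infty$, since its mean $M(|f-h|^2)>0$ by Parseval for the Bohr compactification) give that $\sum_{n=1}^k |f_n-h_n|^2$ grows linearly in $k$. Thus the separation between distinct signals, measured on the first $k$ coordinates, is of order $k$, which dominates the $o(k)$ metric entropy. This is exactly the regime in which the general recovery criterion of Section~\ref{section:preliminaries} applies: the noise on the first $k$ coordinates displaces the observed point by roughly $\sqrt{k}$, the candidate signals sit in about $e^{o(k)}$ clusters of diameter $\eps_k$, and the pairwise separations are of order $\sqrt{k}$; a union bound over the clusters then shows that the nearest-cluster decoder (refined by a consistency check across infinitely many blocks) identifies the correct signal almost surely.

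A few technical points need care. Measurability of the recovery map $T$ must be addressed: I would build $T$ as a pointwise limit of block-wise estimators, each of which is Borel because it minimizes a continuous functional over a countable dense subfamily of trigonometric polynomials with rational frequencies and coefficients, and the full space $\AP$ is handled by density together with the separation estimate. One must also verify that the covering argument is uniform enough that the exceptional probabilities are summable in $k$, so Borel--Cantelli yields almost sure recovery for each fixed $f$. The main obstacle is the first step: getting the metric entropy bound $\log N(\eps_k) = o(k)$ while retaining enough resolution to separate signals. The difficulty is that frequencies $\theta$ lie in the continuum $\T$, so naively there are infinitely many signals even at bounded degree; the resolution is that on only $k$ coordinates, two frequencies closer than $c/k$ produce $\ell^\infty$-indistinguishable signals, so the effective number of frequency choices is polynomial in $k$, and one must track the interplay between the approximation error $\eps_k$, the degree needed to achieve it (which may depend on $f$ but is finite for each $f$), and the $1/k$ scale of coordinate resolution. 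Making this quantitative and compatible with the hypotheses of the general recovery theorem is the crux of the argument.
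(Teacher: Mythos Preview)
Your approach has a genuine and fatal gap: the volume-growth / metric-entropy criterion of Section~\ref{section:preliminaries} cannot be applied to $\AP$, because $\proj_k(\AP)=\C^k$ for every $k$. Indeed, for any $k$ distinct frequencies $\theta_1,\dots,\theta_k\in\T$ the matrix $(e^{2\pi i m\theta_j})_{1\le m,j\le k}$ is a nonsingular Vandermonde-type matrix, so already the trigonometric polynomials with $k$ terms project onto all of $\C^k$. Consequently the covering number of $\proj_k(\AP)$ at any positive scale is infinite, and $\log N(\eps_k)=o(k)$ is unattainable. Your argument implicitly fixes a bound on the number of frequencies (``a given degree''), but a generic $f\in\AP$ is only a \emph{uniform limit} of trigonometric polynomials and need not be one itself; the degree required to approximate $f$ to accuracy $\eps$ is finite for each $f$ but is not uniformly bounded over $\AP$, nor over any countable Borel stratification that would let you invoke Proposition~\ref{prop:union_of_spaces}.

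The same obstruction makes condition~\eqref{eq:recover_likelihood_cond2} of Theorem~\ref{thm:recover_likelihood2} fail outright for $\AP$: given $x\in\AP$, $U\in\open_x$, and any realization of the noise, interpolation produces $y\in\AP$ with $y_n=x_n+\xi_n$ for $n=1,\dots,k$ (and typically $y\notin U$), whence $\sum_{n\le k}\big[|y_n-x_n|^2-2\Re(\overline{\xi_n}(y_n-x_n))\big]=-\sum_{n\le k}|\xi_n|^2<0$ for every $k$. The paper therefore abandons entropy methods for this space and instead recovers the auto-correlation $A_k(f)=M\{f(n)\overline{f(n-k)}\}$ from the noisy data (Lemma~\ref{lem:auto_correlation_with_noise}), uses it to locate a sequence $L_j$ of $\eps$-almost periods of $f$ (Lemma~\ref{lem:f_recovery_lemma}), and then averages the noisy signal along these shifts so that the noise cancels by the law of large numbers. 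The argument exploits the algebraic structure of $\AP$ (translation invariance and syndetic sets of almost periods) rather than any smallness of the space.
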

Section~\ref{sec:poly_phase} also includes a discussion of the
larger space of polynomial phase functions. This space is defined as
the uniform closure of the linear combinations of functions on $\Z$
of the form $e^{2 \pi i p(n)}$, where $p(x)$ is a real polynomial.
It is a translation-invariant algebra of functions, which contains
the almost periodic functions. We give an indication of why recovery
may be possible from this larger space as well.

\subsection{} A natural continuous analogue of our setup is the
following. Consider a complex Brownian motion $B(t)$ on the circle
$\T$, i.e., a continuous random function defined via the Fourier
series
\begin{equation*}
  \xi_0 + \sum_{n\neq 0} \frac{\xi_n}{n} e^{2\pi i n t}
\end{equation*}
for an independent sequence of standard complex Gaussian random
variables $\{\xi_n\}$. Suppose $\X$ is a space of functions on $\T$.
Given a function of the form $f(t)+B(t)$ for some $f\in\X$, is it
possible to recover the function $f$? Is it possible to detect that
$f$ is non-zero? In other words, given a Brownian motion with a
drift taken from a prescribed class, is it possible to identify the
precise drift, or at least to detect that it is there? For certain
spaces $\X$, such questions may be embedded in our standard setup.
Indeed, if $f(t)$ has Fourier series
\begin{equation*}
  x_0 + \sum_{n\neq 0} \frac{x_n}{n} e^{2\pi i n t}
\end{equation*}
then recovering $f$ from $f + B$ is equivalent to recovering
$\{x_n\}$ from $\{x_n + \xi_n\}$.

\subsection{} In Section~\ref{sec:alternative_models} we briefly discuss some variants of our
setup, including alternative noise distributions, a notion of
detection and recovery which is uniform in the transmitted signal
and a notion of partial recovery. The final
Section~\ref{sec:remarks_open_questions} presents several open
problems and remarks.

% ========================================================================

\section{Definitions and related works}
\label{section:definitions_related_works}

\subsection{} In this section we describe our setup in more detail. We are given
a countable index set $I$, typically $\N$ or $\Z$. A \emph{signal
space} $\X$ is then a subset of $\R^I$, in the real case, or a
subset of $\C^I$, in the complex case. We let $(\xi_n)$, $n\in I$,
be a sequence of independent standard Gaussian random variables.
This means in the real case that $\xi_n\sim N(0,1)$, and means in
the complex case that the real and imaginary parts of $\xi_n$ are
distributed as $N(0,1)$ independently. Thus, our setup is unaffected
if we treat complex signals as real signals by putting the real and
imaginary parts in separate coordinates. For brevity, we continue
the description of the setup in the real case.

We say that the space $\X$ \emph{admits recovery} if there exists a
Borel measurable function $T:\R^I\to\R^I$ with the property that for
each $x\in \X$, we have $T(\{x_n + \xi_n\}) = \{x_n\}$ almost
surely. Here, the probability is taken over the noise sequence. We
say that the space $\X$ \emph{admits detection} if there exists a
Borel measurable function $T:\R^I\to\{0,1\}$ satisfying that for
every $x\in \X$ we have $T(\{x_n + \xi_n\}) = 1$ almost surely, and
$T(\{\xi_n\}) = 0$ almost surely.

\subsection{} As mentioned in the introduction, the idea of
separating signals from noise has been considered previously by many
authors (see \cite{ingster-suslina} and references within).
Typically, however, it has been considered when the signals have
finite length and the detection or recovery probabilities are
strictly less than one. In the context of infinite signals, our work
is especially related to the work of Ingster, see the book
\cite{ingster-suslina}, who considered a similar setup to ours, but
focused on the particular case that the signal space $\X$ is defined
via norm inequalities. Another particular case of our setup has
appeared in the work of Arias-Castro, Cand\'es, Helgason and
Zeitouni \cite{ACCHZ} who considered the problem of detecting a
trail in a graph, see also in Section~\ref{sec:tree_trail}. A
theorem related to our Theorem~\ref{thm:Rademacher_detection} was
shown by Kadota and Shepp \cite{kadota-shepp}. In the context of
signals with finite length, a problem related to our
Theorem~\ref{thm:Fourier_transforms_of_measures} was considered by
Donoho and Jin \cite{donoho-Jin}. Lastly, the effect of introducing
feedback in the transmission channel was considered by Polyanskiy,
Poor and Verd\'u \cite{polyanskiy-poor-verdu}.

%
%% ========================================================================

\section{Non-detection via signal randomization}\label{sec:general_conditions}
\subsection{}Suppose that a signal space $\X$ admits detection. By
definition, this means that there exists a $\{0,1\}$-valued Borel
measurable mapping $T$ satisfying that $T(\{\xi_n\})=0$ almost
surely, and, for every $x\in\X$, $T(\{x_n + \xi_n\})=1$ almost
surely. Now suppose that we are given a probability measure $P$ on
$\X$. It follows from Fubini's theorem that if $x$ is sampled from
$P$, independently of the noise sequence $\xi$, then it is still
true that $T(\{x_n + \xi_n\})=1$ almost surely, where now the
probability is taken over the product space of $x$ and $\xi$.
Equivalently, the distributions of $\xi$ and $x+\xi$ are mutually
singular. It follows that one possibility for showing that $\X$ does
\emph{not} admit detection is to find a probability measure $P$ on
$\X$ such that $\xi$ and $x+\xi$ are not mutually singular. Our next
two theorems exploit this fact to present conditions for detection.
Theorem~\ref{thm:general_detection} provides a relatively simple
sufficient condition for non-detection.
Theorem~\ref{thm:detection_singularity_criterion} shows that in
certain situations, detection may be characterized by this approach,
however, with a more complicated condition.

\begin{theorem}\label{thm:general_detection}
Let $\X$ be a Borel subset of $\R^\N$. Suppose that there is a
probability measure $P$ on $\X$ such that
\begin{equation}\label{eq:non_detection_cond}
  \liminf_{k \to\infty} \; \e \, \exp \bigg\{\sum_{n=1}^k x_n y_n \bigg\} <
  \infty,
\end{equation}
where $\{x_n\}$ and $\{y_n\}$ are sampled independently from $P$.
Then $\X$ does not admit detection.
\end{theorem}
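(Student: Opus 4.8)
The plan is to follow the strategy outlined just before the theorem statement: by the Fubini argument given there, it suffices to produce, from the measure $P$, a situation in which the law of $\{\xi_n\}$ and the law of $\{x_n + \xi_n\}$ (with $x$ sampled from $P$ independently of $\xi$) are \emph{not} mutually singular. The natural device is a likelihood ratio / relative density computation on finite coordinates. Fix $k$ and let $\mu_k$ denote the law of the first $k$ coordinates of $\{x_n + \xi_n\}$ and $\nu_k$ the law of the first $k$ coordinates of $\{\xi_n\}$, i.e.\ the standard Gaussian on $\R^k$. Writing $\varphi_k$ for the standard $k$-dimensional Gaussian density, the density of $\mu_k$ with respect to $\nu_k$ is
\begin{equation*}
  L_k(z) := \frac{d\mu_k}{d\nu_k}(z) = \e_{x\sim P}\exp\Big\{\sum_{n=1}^k x_n z_n - \tfrac12 \sum_{n=1}^k x_n^2\Big\}.
\end{equation*}
This is a nonnegative martingale (in $k$) under $\nu_k$ with mean one, since each $\mu_k$ is a probability measure; hence $L_k \to L_\infty$ $\nu$-almost surely for some limit $L_\infty \ge 0$.

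The heart of the argument is to show $L_\infty > 0$ with positive probability, for then $\nu$ is not singular with respect to the weak limit of the $\mu_k$, and the Fubini argument finishes the proof. The standard way to get this from a martingale is to establish \emph{uniform integrability}, e.g.\ by bounding $\e_\nu[L_k^2]$ uniformly in $k$ (or more crudely, by showing $\sup_k \e_\nu[L_k \log L_k] < \infty$). Compute:
\begin{equation*}
  \e_{\nu}\big[L_k^2\big] = \e_{\nu}\Big[\,\e_{x}\e_{y}\exp\Big\{\sum_{n=1}^k (x_n+y_n) z_n - \tfrac12\sum x_n^2 - \tfrac12\sum y_n^2\Big\}\Big],
\end{equation*}
where $x,y$ are independent draws from $P$; performing the Gaussian integral over $z$ (using $\e_\nu \exp\{\sum a_n z_n\} = \exp\{\tfrac12\sum a_n^2\}$ with $a_n = x_n + y_n$) yields, after the terms $\tfrac12\sum x_n^2$ and $\tfrac12\sum y_n^2$ cancel against the cross-free part of $\tfrac12\sum(x_n+y_n)^2$,
\begin{equation*}
  \e_{\nu}\big[L_k^2\big] = \e_{x,y}\exp\Big\{\sum_{n=1}^k x_n y_n\Big\}.
\end{equation*}
By hypothesis \eqref{eq:non_detection_cond}, this is bounded below $\infty$ along a subsequence $k_j\to\infty$. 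Since $L_k$ is an $L^2$-bounded martingale along that subsequence (and $\e_\nu[L_k^2]$ is itself nondecreasing in $k$, being a martingale-square expectation, so the bound holds for \emph{all} $k$, not just the subsequence), the martingale convergence theorem in $L^2$ gives $L_k \to L_\infty$ in $L^2(\nu)$ with $\e_\nu[L_\infty] = 1$, so in particular $\nu(L_\infty > 0) > 0$.

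To conclude: on the event $\{L_\infty > 0\}$, which has positive $\nu$-measure, the Radon--Nikodym derivatives $L_k$ do not degenerate, and a routine argument (e.g.\ via Kakutani's dichotomy, or directly: the measure with $\nu$-density $L_\infty$ is absolutely continuous with respect to $\nu$ and is the restriction of the weak limit of the $\mu_k$ — more precisely $d\mu/d\nu \ge L_\infty$ where $\mu = \mathrm{law}(x+\xi)$, by Fatou applied to the densities) shows that $\mu$ and $\nu$ are not mutually singular. Therefore no Borel $T:\R^\N\to\{0,1\}$ can satisfy $T(\{x_n+\xi_n\})=1$ a.s.\ for $P$-a.e.\ $x$ while $T(\{\xi_n\})=0$ a.s., which by the Fubini reduction rules out detection for $\X$. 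The main obstacle, and the only place the hypothesis is used, is the passage from the second-moment bound to non-singularity; everything else is the standard machinery of Gaussian likelihood ratios and $L^2$-bounded martingales. One should be slightly careful that $\e_\nu[L_k^2]$ is genuinely monotone in $k$ — this follows because $L_{k}$ is obtained from $L_{k+1}$ by conditional expectation, so $\e_\nu[L_k^2] \le \e_\nu[L_{k+1}^2]$ by Jensen — which is what lets us upgrade the $\liminf$ hypothesis to a uniform bound.
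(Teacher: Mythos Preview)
Your proof is correct and follows essentially the same approach as the paper: both compute the finite-dimensional Radon--Nikodym derivative $L_k = d\mu_k/d\nu_k$, identify $\e_\nu[L_k^2]$ with the expression in \eqref{eq:non_detection_cond} via the Gaussian moment-generating function, and use this second-moment control to show the martingale $(L_k)$ does not converge to zero $\nu$-almost surely, whence non-singularity. The only cosmetic difference is that the paper applies the Paley--Zygmund inequality on the subsequence where the bound holds (and notes the monotonicity of $\e_\nu[L_k^2]$ only as a closing remark), whereas you first invoke monotonicity to upgrade the $\liminf$ to a uniform $L^2$ bound and then use $L^2$ martingale convergence; these are interchangeable routes to the same conclusion.
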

We make several remarks concerning this theorem. First, by choosing
$P$ to be concentrated on a single element, this shows that
$\X\cap\ell^2=\emptyset$ is a necessary condition for detection, as
stated in the introduction.

Second, it follows from the proof of the theorem that the sequence
of expectations in \eqref{eq:non_detection_cond} is non-decreasing
with $k$ and hence the $\liminf$ is in fact a $\lim$.

Lastly, as the proof shows, condition \eqref{eq:non_detection_cond}
is only a sufficient condition for showing that the measures of
$\xi$ and $x+\xi$ are non-singular, when $x$ is sampled from $P$
independently of $\xi$. The precise condition for non-singularity is
that a certain martingale converges to a non-zero limit with
positive probability, whereas condition
\eqref{eq:non_detection_cond} is equivalent to the same martingale
being bounded in $L^2$. More on the gap between these two conditions
can be found in
Section~\ref{sec:gap_between_ell_2_and_uniform_integrability}. The
advantage of condition \eqref{eq:non_detection_cond}, however, is
that it is simple to check in many applications.

Clearly, the choice of index set for the coordinates of $\X$ makes
no difference to the possibility of detection from $\X$. Thus one
may replace $\R^\N$ in the above theorem by $\R^\Z$, or replace the
sum from $1$ to $k$ by sums over arbitrary sets increasing to the
whole index set. Similarly, for complex-valued signals condition
\eqref{eq:non_detection_cond} generalizes to
\begin{equation*}
  \liminf_{k \to\infty} \; \e \, \bigg|\exp \bigg\{\sum_{n=1}^k x_n \overline{y_n} \bigg\}\bigg| <
  \infty
\end{equation*}
by identifying the complex-valued signal space with a real-valued
signal space as explained in
Section~\ref{section:definitions_related_works}.

The next theorem gives a necessary and sufficient condition for
detection, in terms of the possible probability measures on $\X$.
The proof was explained to us by Boris Tsirelson \cite{tsirelson},
following our question to him.
\begin{theorem}\label{thm:detection_singularity_criterion}
  Let $\X$ be a compact subset of $\R^\N$. Then $\X$ admits
  detection if and only if for every probability measure $P$ on
  $\X$, if $x$ is sampled from $P$ independently of $\xi$, then the
  distributions of $\xi$ and $x+\xi$ are mutually singular.
\end{theorem}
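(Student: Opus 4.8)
The plan is to prove Theorem~\ref{thm:detection_singularity_criterion} by showing the two implications separately. One direction is essentially the content of the opening paragraph of this section: if $\X$ admits detection via a Borel map $T$, then for any probability measure $P$ on $\X$ Fubini's theorem gives $T(\{x_n+\xi_n\})=1$ almost surely when $x\sim P$ is independent of $\xi$, while $T(\{\xi_n\})=0$ almost surely, and hence the distribution of $\xi$ and the distribution of $x+\xi$ are mutually singular (they are separated by the Borel set $T^{-1}(1)$). So the real work is the converse: assuming that for \emph{every} probability measure $P$ on the compact set $\X$ the laws of $\xi$ and $x+\xi$ are mutually singular, we must construct a single Borel test $T$ that works simultaneously for all $x\in\X$.

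The key tool is a minimax/compactness argument. For $x\in\X$, let $\P_x$ denote the law of $\{x_n+\xi_n\}$ on $\R^\N$ and let $\P_0$ denote the law of pure noise $\{\xi_n\}$. For a measurable set $A\subset\R^\N$, consider the ``risk'' $R(A,x):=\P_0(A)+\P_x(\R^\N\setminus A)$ (the sum of the two error probabilities of the test $\1_A$). A space admits detection precisely when there is a Borel $A$ with $\P_0(A)=0$ and $\P_x(A)=1$ for all $x$, i.e. with $\sup_{x\in\X}R(A,x)=0$; more usefully, by a standard argument (a good test for a slightly enlarged problem can be sharpened; or: consider tests on the first $k$ coordinates and a tail-triviality / zero-one law argument) $\X$ admits detection if and only if $\inf_A \sup_{x\in\X} R(A,x)=0$. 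The plan is then to invoke a minimax theorem to exchange the $\inf_A$ and $\sup_x$: if $\inf_A\sup_{x\in\X}R(A,x)=:c>0$, then there is a probability measure $P$ on $\X$ (the least favorable prior) with $\inf_A \int_\X R(A,x)\,dP(x)\geq c>0$. But $\int_\X R(A,x)\,dP(x)=\P_0(A)+\int\P_x(\R^\N\setminus A)\,dP(x)=\P_0(A)+\P_P(\R^\N\setminus A)$, where $\P_P$ is the law of $x+\xi$ with $x\sim P$; and $\inf_A[\P_0(A)+\P_P(A^c)]=1-\|\P_0\wedge\P_P\|_{TV}'$ — it equals $0$ exactly when $\P_0$ and $\P_P$ are mutually singular. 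Hence a strictly positive value of this infimum means $\P_0$ and $\P_P$ are \emph{not} mutually singular, contradicting the hypothesis. This forces $c=0$, i.e. $\X$ admits detection.

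The main obstacle is making the minimax step rigorous, since the natural ``action space'' (Borel subsets $A$ of $\R^\N$, or equivalently $[0,1]$-valued tests) is infinite-dimensional and the payoff $x\mapsto R(A,x)$ must be shown to have enough regularity (upper/lower semicontinuity in $x$) to apply a version of Sion's minimax theorem, together with the compactness of $\X$ and a suitable compactness/convexity property of the set of randomized tests (the unit ball of $L^\infty(\R^\N,\P_0)$ is weak-$*$ compact and convex, and $A\mapsto R(A,x)$ extends affinely to randomized tests). One must check that $x\mapsto \P_x(B)$ is continuous for fixed $B$ — this follows because the Gaussian shift $x\mapsto\P_x$ is continuous in total variation on $\ell^2$, but $\X\subset\ell^2$ may fail (indeed signals in $\X$ typically have $\sum x_n^2=\infty$), so one instead argues coordinatewise / on finite-dimensional projections and passes to the limit, using that admitting detection can be tested on cylinder events up to a tail zero-one law. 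The other subtlety is the reduction ``$\inf_A\sup_x R=0\iff$ detection'': one shows that if the infimum is $0$ then along a sequence $A_k$ with $\P_0(A_k)\to 0$ and $\sup_x\P_x(A_k^c)\to 0$ one can, by a Borel–Cantelli / diagonalization argument exploiting the zero-one law for the noise, extract a genuine detector $A_\infty$ with zero error on both sides. These are the points where care is needed; the probabilistic heart of the matter — "not-singular-for-some-prior" obstructs "testable" — is exactly the equivalence $\inf_A[\P_0(A)+\P_P(A^c)]=0\iff \P_0\perp\P_P$, which is elementary.
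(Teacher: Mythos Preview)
Your overall strategy---minimax over priors versus tests, then contradict the singularity hypothesis---is exactly the paper's. The gap is in the execution of the minimax step, and your two suggested fixes do not work. Using weak-$*$ compactness of the unit ball of $L^\infty(\R^\N,\P_0)$ is fatal: since the measures $\P_x$ are typically \emph{singular} to $\P_0$, an $L^\infty(\P_0)$-equivalence class does not determine $\int\phi\,d\P_x$, so the payoff is not even well defined on that space. And $x\mapsto\P_x(B)$ is not continuous for a general Borel $B$ (again because the shifts are not in $\ell^2$), so your coordinatewise/zero-one-law workaround is at best a program, not a proof.

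The paper resolves both issues by changing variables in the minimax. On the ``prior'' side it works not with points $x\in\X$ but with the compact convex set $K$ of mixture laws $\tilde P=\operatorname{law}(x+\xi)$, $x\sim P$, in the weak topology; compactness of $K$ follows from compactness of $\X$. On the ``test'' side it uses \emph{continuous} functions $g:\R^\N\to[0,1]$ with $\e g(\xi)\le\eps$, a convex set. The payoff $(\tilde P,g)\mapsto\int g\,d\tilde P$ is then bilinear and continuous, so Sion's theorem applies directly with no regularity to check. Singularity gives $\min_{\tilde P}\sup_g\int g\,d\tilde P=1$, hence $\sup_g\min_{\tilde P}\int g\,d\tilde P=1$, producing $g_n$ with $\e g_n(\xi)\le 2^{-n}$ and $\min_{\tilde P}\int g_n\,d\tilde P\ge 1-2^{-n}$. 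The upgrade to a genuine detector is then elementary (no zero-one law needed): set $h_n=\inf_{k\ge n}g_k$ and let $h=\lim h_n$; monotone convergence gives $\e h(\xi)=0$ and $\int h\,d\tilde P=1$ for every $\tilde P\in K$, and $\{h=1\}$ is the desired test. So the idea you are looking for is: replace Borel sets by continuous soft tests, and replace $\sup_x$ by $\min$ over mixture laws.
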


\subsection{Proof of
Theorem~\ref{thm:general_detection}}\label{sec:proof_of_general_non_detection_thm}

It is sufficient to show that the distributions of the random
sequences $\{\xi_n\}$ and $\{x_n + \xi_n\}$, where $\{x_n\}$ is
sampled from $P$ independently of $\{\xi_n\}$, are not mutually
singular. Indeed, if it is possible to detect a signal from $\X$ via
some $\{0,1\}$-valued Borel measurable mapping $T$, then, using
Fubini's theorem, the event $\{T=0\}$ has full measure under
$\{\xi_n\}$ and zero measure under $\{x_n + \xi_n\}$. The following
lemma gives a criterion for mutual singularity.
\begin{lemma}[{see, e.g.,\ \cite[p.\ 242]{durrett}}]
\label{lemma:durrett} Let $Q, R$ be two probability measures on a
measu\-rable space $(\Omega, \F)$. Let $\{\F_k\}$ be an increasing
sequence of $\sigma$-fields generating $\F$, and let $Q_k, R_k$ be
the restrictions to $\F_k$ of $Q, R$ respectively. Suppose that
$R_k$ is absolutely continuous with respect to $Q_k$, and let $f_k
:= dR_k / dQ_k$. Then a necessary and sufficient condition for the
measures $Q, R$ to be mutually singular is that $f_k \to 0$
$Q$-almost surely as $k \to \infty$.
\end{lemma}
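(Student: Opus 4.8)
The plan is to prove both implications at once by passing to a common dominating measure and invoking martingale convergence, rather than trying to restrict a Lebesgue decomposition of $R$ directly to the sub-$\sigma$-fields $\F_k$. Set $\mu := Q + R$, a finite measure on $(\Omega,\F)$ with $Q \ll \mu$ and $R \ll \mu$. For each $k$ I would write $a_k := dQ_k/d\mu_k$ and $b_k := dR_k/d\mu_k$, so that $a_k, b_k \ge 0$ and $a_k + b_k = 1$ hold $\mu$-almost surely. By the chain rule for Radon--Nikodym derivatives the given density satisfies $f_k = b_k/a_k$ on $\{a_k > 0\}$, and since $Q(\{a_k = 0\}) = \int_{\{a_k=0\}} a_k\, d\mu = 0$ the set $\{a_k = 0\}$ is $Q$-null; hence it suffices to control the bounded pair $(a_k, b_k)$.

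First I would check that $(a_k)$ and $(b_k)$ are martingales under $\mu$ with respect to $\{\F_k\}$: for $A \in \F_k$ one has $\int_A a_{k+1}\, d\mu = Q(A) = \int_A a_k\, d\mu$, and likewise for $b_k$. As they take values in $[0,1]$, the martingale convergence theorem yields $\mu$-almost sure (and $L^1(\mu)$) limits $a_\infty, b_\infty$ with $a_\infty + b_\infty = 1$. Because $\{\F_k\}$ generates $\F$, L\'evy's upward theorem identifies these limits as the global Radon--Nikodym derivatives $a_\infty = dQ/d\mu$ and $b_\infty = dR/d\mu$. Restricting to $Q$ (legitimate since $Q \ll \mu$) and using that $a_\infty > 0$ holds $Q$-almost surely, I obtain that $f_k = b_k/a_k \to b_\infty/a_\infty =: f_\infty$ holds $Q$-almost surely.

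It then remains to translate the two sides of the claimed equivalence into statements about $a_\infty, b_\infty$. On the one hand, $Q$ and $R$ are mutually singular precisely when $a_\infty b_\infty = 0$ $\mu$-almost surely: if $Q(N) = 0$ and $R(N^c) = 0$ then a direct computation gives $a_\infty = \1_{N^c}$ and $b_\infty = \1_N$, while conversely $a_\infty b_\infty = 0$ lets one take $N = \{a_\infty = 0\}$ and verify $Q(N) = 0$, $R(N^c) = 0$. On the other hand, $f_k \to 0$ $Q$-almost surely is equivalent to $f_\infty = b_\infty/a_\infty = 0$ $Q$-a.s., i.e. to $b_\infty = 0$ on the set $\{a_\infty > 0\}$ where $Q$ is concentrated; using $a_\infty + b_\infty = 1$ this is exactly $a_\infty b_\infty = 0$ $\mu$-a.s. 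Matching the two characterizations closes the proof.

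The main obstacle I anticipate is conceptual rather than computational: one must resist the temptation to decompose $R = R_{ac} + R_s$ and argue directly on $\F_k$, since absolute continuity is not inherited by restriction to a coarser $\sigma$-field (the singular part of $R$ may well have absolutely continuous restrictions $R_k$). Introducing $\mu = Q + R$ and working with the bounded pair $(a_k, b_k)$ is precisely what sidesteps this and makes both implications fall out symmetrically from a single martingale limit; the one step requiring genuine care is the appeal to L\'evy's theorem, and hence to the hypothesis that $\{\F_k\}$ generates $\F$, in order to identify $a_\infty, b_\infty$ with the full Radon--Nikodym derivatives.
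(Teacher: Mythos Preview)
Your proof is correct and complete. Note, however, that the paper does not supply its own proof of this lemma: it simply quotes the result with a reference to Durrett, and the argument you give---passing to the dominating measure $\mu = Q + R$, applying L\'evy's upward theorem to the bounded martingales $a_k = dQ_k/d\mu_k$ and $b_k = dR_k/d\mu_k$, and reading off singularity as $a_\infty b_\infty = 0$ $\mu$-a.s.---is precisely the standard textbook proof found there.
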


In the notation of the lemma, we take $(\Omega, \F)$ to be the set
$\R^\N$ equipped with its Borel $\sigma$-field. The elements of
$\Omega$ will be denoted by $z = \{z_n\}$. Let $\F_k \subset \F$
denote the Borel $\sigma$-field generated by the projections onto
the coordinates $z_n$ with $1\le n\le k$. Let $Q$ be the
distribution on $\Omega$ of the noise sequence $\xi = \{\xi_n\}$,
and $R$ be the distribution of the random sequence $\{x_n +
\xi_n\}$, where $\{x_n\}$ is sampled from $P$ independently of
$\{\xi_n\}$. Let also $Q_k$ and $R_k$ be the restrictions to $\F_k$
of the measures $Q$ and $R$ respectively.

When needed to avoid ambiguity, the expectations with respect to the
distributions of $\{\xi_n\}$ and $\{x_n\}$ will be denoted by
$\e_\xi$ and $\e_x$, respectively. We will similarly use $\P_\xi$
and $\P_x$.

Theorem \ref{thm:general_detection} will follow from Lemma
\ref{lemma:durrett} if we show that the Radon-Nikodym derivative
$f_k = dR_k/dQ_k$ does not tend to zero $Q$-almost surely as $k \to
\infty$, or equivalently, that the random variable $f_k(\xi)$ does
not tend to zero almost surely.
%This will be achieved using a second
%moment calculation.
We will show in Lemmas~\ref{lemma:radon-nikodym-fk} and
\ref{lemma:moment-fk} below that $f_k$ exists and satisfies
\begin{equation}\label{eq:fk_second_moment}
\e \, f_k(\xi) = 1\quad\text{ and }\quad \e \, f_k(\xi)^2 = \e_x \,
\e_y \, \exp \bigg\{\sum_{n=1}^k x_n y_n \bigg\},
\end{equation}
where $\{x_n\}$ and $\{y_n\}$ are sampled independently from $P$. By
the Paley-Zygmund inequality (see, e.g., \cite[p.
8]{kahane-random}),
\[
\P(f_k(\xi)\ge 1/2)\ge \frac{1}{4\e \, f_k(\xi)^2}.
\]
Thus, under the condition \eqref{eq:non_detection_cond}, this
probability is bounded below uniformly on some subsequence
$k_j\to\infty$ and hence $f_k(\xi)$ does not tend to 0 almost
surely, as required.

It remains only to prove the following two lemmas.

\begin{lemma}
\label{lemma:radon-nikodym-fk} $R_k$ is absolutely continuous with
respect to $Q_k$, and the Radon-Nikodym derivative $f_k = dR_k/dQ_k$
is given by
\begin{equation}
\label{eq:radon-nikodym-fk} f_k(z) = \e_x \, \exp \Big\{
\sum_{n=1}^k \Big(-\frac{x_n^2}{2} + x_n \, z_n\Big) \Big\},
\end{equation}
where $\{x_n\}$ is sampled from $P$.
\end{lemma}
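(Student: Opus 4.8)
The plan is to realize both $Q_k$ and $R_k$ as measures on $\R^k$ (via the projection $z\mapsto(z_1,\dots,z_k)$) that are absolutely continuous with respect to $k$-dimensional Lebesgue measure, and then to obtain $f_k$ simply as the ratio of their densities. Write $\varphi_k$ for the standard Gaussian density on $\R^k$,
\[
  \varphi_k(w) = (2\pi)^{-k/2}\exp\Big\{-\tfrac12\sum_{n=1}^k w_n^2\Big\},
\]
and note $\varphi_k>0$ everywhere, so a measure on $\R^k$ is absolutely continuous with respect to $Q_k$ if and only if it is absolutely continuous with respect to Lebesgue measure, and in that case the Radon--Nikodym derivative with respect to $Q_k$ is the Lebesgue density divided by $\varphi_k$.

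First I would record that $Q_k$, being the law of $(\xi_1,\dots,\xi_k)$, is exactly the measure with Lebesgue density $\varphi_k$. Next, for $R_k$: using the independence of $\{x_n\}$ and $\{\xi_n\}$ and Tonelli's theorem (the integrand is nonnegative and jointly measurable, being a continuous function of coordinate projections), for a Borel set $A\subset\R^k$,
\[
  R_k(A) = \int_{\X} \P_\xi\big((x_1+\xi_1,\dots,x_k+\xi_k)\in A\big)\, dP(x)
         = \int_{\X}\int_A \varphi_k\big(z-x^{(k)}\big)\, dz\, dP(x)
         = \int_A \e_x\,\varphi_k\big(z-x^{(k)}\big)\, dz,
\]
where $x^{(k)}:=(x_1,\dots,x_k)$. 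Hence $R_k$ has Lebesgue density $g_k(z):=\e_x\,\varphi_k(z-x^{(k)})$, so $R_k\ll Q_k$ and
\[
  f_k(z) = \frac{g_k(z)}{\varphi_k(z)} = \e_x\,\frac{\varphi_k(z-x^{(k)})}{\varphi_k(z)}
         = \e_x\exp\Big\{-\tfrac12\sum_{n=1}^k (z_n-x_n)^2 + \tfrac12\sum_{n=1}^k z_n^2\Big\}
         = \e_x\exp\Big\{\sum_{n=1}^k\Big(-\tfrac{x_n^2}{2}+x_nz_n\Big)\Big\},
\]
which is the asserted formula (and it holds for every $z$, in particular $Q_k$-almost surely).

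The only point requiring any attention is the use of Tonelli's theorem to pull $\e_x$ outside the Lebesgue integral; this is legitimate because the integrand is nonnegative, and joint measurability of $(x,z)\mapsto\varphi_k(z-x^{(k)})$ is immediate. No integrability condition on $P$ is needed at this stage — such considerations enter only in the second-moment computation of Lemma~\ref{lemma:moment-fk}. So I expect no genuine obstacle: once $R_k$ is recognized as a Gaussian mixture, the lemma is a one-line density computation.
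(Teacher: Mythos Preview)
Your proof is correct and follows essentially the same route as the paper: both compute the Lebesgue densities of $Q_k$ and $R_k$ explicitly (the latter as a Gaussian mixture) and take their ratio. Your version is slightly more explicit about invoking Tonelli's theorem to justify the interchange of $\e_x$ and the Lebesgue integral, which the paper leaves implicit.
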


\begin{proof}
The measures $Q_k, R_k$ are both absolutely continuous with respect
to the product Lebesgue measure in the coordinates $1\le n\le k$,
and they are given by
\[
dQ_k(z) = \prod_{n=1}^k \frac1{\sqrt{2\pi}} \, \exp \Big\{
-\frac1{2} z_n^2 \Big\} \, dz_n
\]
and
\[
dR_k(z) = \e_x \left[\prod_{n=1}^k \frac1{\sqrt{2\pi}} \, \exp
\Big\{ -\frac1{2} \Big( z_n - x_n \Big)^2 \, \Big\}\, dz_n\right].
\]
Hence the Radon-Nikodym derivative $f_k$ exists and satisfies
\eqref{eq:radon-nikodym-fk}.
\end{proof}

\begin{lemma}
\label{lemma:moment-fk} The random variable $f_k(\xi)$ satisfies
\eqref{eq:fk_second_moment}.
\end{lemma}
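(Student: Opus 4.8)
The plan is to substitute the explicit formula for $f_k$ from Lemma~\ref{lemma:radon-nikodym-fk} and evaluate the two expectations by Fubini's theorem together with the Gaussian moment generating function. For the first moment there is nothing to compute: since $f_k = dR_k/dQ_k$ and $Q_k$ is the law of $(\xi_1,\dots,\xi_k)$, we have $\e\,f_k(\xi) = \int f_k\,dQ_k = R_k(\R^k) = 1$ because $R_k$ is a probability measure. (Alternatively, one can integrate $\exp\{\sum_{n=1}^k(-x_n^2/2+x_n\xi_n)\}$ over $\xi$ coordinate by coordinate, using $\e\exp\{x_n\xi_n\}=\exp\{x_n^2/2\}$, and then over $x$.) In particular this shows $f_k$ is finite $Q_k$-almost everywhere, so all expressions below are legitimate.

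For the second moment, write the square of $f_k(\xi)$ using a second sequence $\{y_n\}$ sampled from $P$ independently of $\{x_n\}$ and of $\xi$:
\[
f_k(\xi)^2 = \e_x\,\e_y\,\exp\Big\{\sum_{n=1}^k\Big(-\tfrac{x_n^2}{2}-\tfrac{y_n^2}{2}+(x_n+y_n)\xi_n\Big)\Big\}.
\]
Taking $\e_\xi$ and interchanging the order of integration — which is permitted since the integrand is nonnegative, so Tonelli's theorem applies without any a priori integrability assumption — gives
\[
\e\,f_k(\xi)^2 = \e_x\,\e_y\,\exp\Big\{\sum_{n=1}^k\Big(-\tfrac{x_n^2}{2}-\tfrac{y_n^2}{2}\Big)\Big\}\;\e_\xi\exp\Big\{\sum_{n=1}^k(x_n+y_n)\xi_n\Big\}.
\]
Since the $\xi_n$ are independent standard Gaussians, $\e_\xi\exp\{\sum_{n=1}^k(x_n+y_n)\xi_n\} = \exp\{\tfrac12\sum_{n=1}^k(x_n+y_n)^2\}$, and the exponent collapses via the elementary identity $-\tfrac{x_n^2}{2}-\tfrac{y_n^2}{2}+\tfrac12(x_n+y_n)^2 = x_n y_n$. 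This yields exactly $\e\,f_k(\xi)^2 = \e_x\,\e_y\,\exp\{\sum_{n=1}^k x_n y_n\}$, which is \eqref{eq:fk_second_moment}.

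There is essentially no hard step here: the entire content is the Gaussian computation, and the only point requiring a word of care is the application of Fubini, which is trivialized by the positivity of the integrands. (As a side remark, since $\{f_k(\xi)\}$ is a nonnegative martingale with respect to $\{\F_k\}$, the identity just proved shows that condition \eqref{eq:non_detection_cond} is precisely the statement that this martingale is bounded in $L^2$, which is why it implies, via Paley–Zygmund, that $f_k(\xi)$ does not tend to $0$ almost surely.)
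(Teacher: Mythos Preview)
Your proof is correct and follows essentially the same route as the paper: the first moment is read off from $f_k=dR_k/dQ_k$, and the second moment is computed by squaring via an independent copy $\{y_n\}$, applying Fubini/Tonelli, and using the Gaussian moment generating function to collapse the exponent to $\sum x_n y_n$. The only addition is your explicit mention of Tonelli for the interchange, which the paper leaves implicit.
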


\begin{proof}
The fact that the expectation of $f_k(\xi)$ is equal to $1$ follows
from the definition of $f_k$ as the Radon-Nikodym derivative
$dR_k/dQ_k$. To calculate the second moment, we use
Lemma~\ref{lemma:radon-nikodym-fk} to obtain
\begin{equation*}
  f_k(z)^2 = \e_x \, \e_y \, \exp \Big\{ \sum_{n=1}^k
\Big(-\frac{x_n^2 + y_n^2}{2} + \big(x_n+y_n\big) \, z_n\Big)
\Big\},
\end{equation*}
where $\{x_n\}$ and $\{y_n\}$ are sampled independently from $P$.
Thus
\begin{equation}\label{eq:exp_f_k_squared}
\e_\xi \, f_k(\xi)^2 = \e_x \, \e_y \, \Big[\exp \Big\{
-\sum_{n=1}^k \frac{x_n^2 + y_n^2}{2}\Big\} \, \prod_{n=1}^k \e_\xi
\, \exp\Big\{\big(x_n+y_n\big) \, \xi_n \Big\}\Big],
\end{equation}
where we have used the independence of the $\xi_n$. Formula
\eqref{eq:fk_second_moment} now follows upon observing that
\begin{equation*}
\e_\xi \, \exp\Big\{\big(x_n+y_n\big) \, \xi_n \Big\} =
\exp\Big\{\frac{\big(x_n+y_n\big)^2}{2}\Big\}.\qedhere
\end{equation*}
\end{proof}
Although not necessary for the above proof, it is instructive to
make the following observations. Note that, by definition, the
sequence $(f_k(\xi))$ is a positive martingale and hence converges
almost surely. Since $\e f_k(\xi) = 1$, a sufficient condition for
the limit to not be identically zero is that the martingale be
uniformly integrable (in fact, it also implies that $R$ is
absolutely continuous with respect to $Q$). Our proof establishes
this by showing that under condition~\eqref{eq:non_detection_cond},
$(f_k(\xi))$ is even bounded in $L^2$. Finally, recalling that the
square of a martingale is a submartingale we see that $\e
f_k(\xi)^2$ is non-decreasing in $k$. Thus the $\liminf$ in
\eqref{eq:non_detection_cond} may be replaced by a $\lim$.

\subsection{Proof
of Theorem~\ref{thm:detection_singularity_criterion}} The only if
part of the theorem was explained in the beginning of the section.
Therefore we focus on proving the if part. The proof uses Sion's
minimax theorem, a special case of which we now cite.
\begin{theorem}(Special case of Sion's minimax theorem \cite{sion})\label{thm:Sion}
Let $K$ be a compact convex subset of a linear topological space and
$V$ be a convex subset of a linear topological space. If $f:K\times
V\to\R$ is a continuous bilinear mapping then
\begin{equation*}
  \min_{x\in K} \sup_{y\in V} f(x,y) = \sup_{y\in V} \min_{x\in K}
  f(x,y).
\end{equation*}
\end{theorem}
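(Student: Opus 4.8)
The plan is to establish the two inequalities separately, the easy one being automatic. Assume $K$ and $V$ are nonempty. For any $f$ and any fixed $y_0$ one has $\sup_{y\in V} f(x,y)\ge f(x,y_0)\ge \min_{x'\in K} f(x',y_0)$ for all $x$, and taking $\min_{x\in K}$ on the left and then $\sup_{y_0\in V}$ gives the trivial direction $\sup_{y\in V}\min_{x\in K} f(x,y)\le \min_{x\in K}\sup_{y\in V} f(x,y)$. Write $v:=\sup_{y\in V}\min_{x\in K} f(x,y)$; each inner minimum is attained and finite because $K$ is compact and $f(\cdot,y)$ is continuous, and if $v=+\infty$ then both sides above are infinite and equality is immediate. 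So assume $v<\infty$. It now suffices to produce a single point $x^*\in K$ with $f(x^*,y)\le v$ for every $y\in V$, since this forces $\min_{x\in K}\sup_{y\in V} f(x,y)\le \sup_{y\in V} f(x^*,y)\le v$, which together with the trivial direction yields equality.

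First I would recast the search for $x^*$ as a nonempty-intersection statement. Bilinearity makes $x\mapsto f(x,y)$ affine, and it is continuous, so for each $y\in V$ the sublevel set $C_y:=\{x\in K: f(x,y)\le v\}$ is closed and convex; being a closed subset of the compact set $K$ it is compact, and it is nonempty because $\min_{x\in K} f(x,y)\le v$ by the definition of $v$. A point $x^*$ as required is exactly an element of $\bigcap_{y\in V} C_y$. Since the $C_y$ are closed subsets of the compact space $K$, the finite intersection property reduces the task to showing $C_{y_1}\cap\cdots\cap C_{y_m}\ne\emptyset$ for every finite family $y_1,\dots,y_m\in V$.

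To handle a fixed finite family I would project into $\R^m$. Let $\Phi:K\to\R^m$ be $\Phi(x):=(f(x,y_1),\dots,f(x,y_m))$; each coordinate is affine and continuous in $x$, so $S:=\Phi(K)$ is a compact convex subset of $\R^m$. The goal $\bigcap_i C_{y_i}\ne\emptyset$ says exactly that $S$ meets the closed convex set $N:=\{u\in\R^m: u_i\le v\ \text{for all } i\}$. Suppose it does not. Then $S$ and $N$ are disjoint, $S$ is compact and $N$ is closed and convex, so the separating hyperplane theorem furnishes $\lambda\in\R^m\setminus\{0\}$ with $\inf_{u\in S}\dotprod{\lambda}{u}>\sup_{w\in N}\dotprod{\lambda}{w}$. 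Since $N$ extends to $-\infty$ along each direction $-e_i$, finiteness of the right-hand side forces $\lambda_i\ge 0$ for all $i$; normalizing so that $\sum_i\lambda_i=1$ (legitimate as $\lambda\ne 0$, and harmless since it preserves the strict inequality) makes $\sup_{w\in N}\dotprod{\lambda}{w}=v$, whence $\min_{x\in K}\dotprod{\lambda}{\Phi(x)}>v$.

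The main obstacle, and the only point where concavity of $f$ in its second argument is used, is to contradict this last strict inequality. Here I would invoke convexity of $V$ together with bilinearity: the convex combination $y_\lambda:=\sum_i\lambda_i y_i$ lies in $V$, and $\dotprod{\lambda}{\Phi(x)}=\sum_i\lambda_i f(x,y_i)=f(x,y_\lambda)$, so $\min_{x\in K}\dotprod{\lambda}{\Phi(x)}=\min_{x\in K} f(x,y_\lambda)\le v$ by the definition of $v$, contradicting the inequality just derived. This settles the finite case, completes the finite-intersection argument, and proves the theorem. The delicate steps to get right are the orientation of the separating hyperplane, so that the normalized $\lambda$ is a genuine probability vector whose associated $y_\lambda\in V$ exposes the defining inequality for $v$, and the observation that unboundedness of $N$ in the coordinate directions is what forces $\lambda\ge 0$.
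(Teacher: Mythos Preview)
Your proof is correct. The argument via the finite intersection property on the compact sublevel sets $C_y$, reduction to a finite family, projection to $\R^m$, and the separating hyperplane theorem is a standard route to the minimax equality, and you have handled the details carefully (in particular the sign of $\lambda$ forced by unboundedness of $N$, and the use of bilinearity and convexity of $V$ to place $y_\lambda$ back in $V$).

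However, there is nothing to compare against: the paper does not prove this statement. It is quoted as a special case of Sion's minimax theorem with a citation to \cite{sion}, and is then used as a black box in the proof of Theorem~\ref{thm:detection_singularity_criterion}. So your proposal supplies a self-contained proof where the paper simply invokes the literature.
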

Since $\X$ is compact, the space of probability measures on $\X$ is
convex and compact, under the topology of weak convergence of
measures. For each probability measure $P$ on $\X$, let $\tilde{P}$
be the probability measure on $\R^\N$ obtained as the distribution
of $x+\xi$, where $x$ is sampled from $P$, independently of $\xi$.
Let $K$ be the space of all probability measures $\tilde{P}$ as $P$
ranges over all probability measures on $\X$. It follows that $K$ is
convex and compact  (under the same topology), since the mapping $P
\mapsto \tilde{P}$ is linear and continuous. Now, fix $\eps>0$ and
let $V_\eps$ be the space of continuous functions $g:\R^\N\to[0,1]$
satisfying $\e g(\xi)\le \eps$. Observe that $V_\eps$ is a convex
subset of the space of bounded continuous functions on $\R^\N$ (with
the sup-norm). Let $f:K\times V_\eps\to\R$ be the expectation
operator, defined by $f(\tilde{P},g) := \int g \, d\tilde{P}$. Since
$f$ is a bilinear continuous mapping, we may apply
Theorem~\ref{thm:Sion} and conclude that
\begin{equation}\label{eq:Sion_application}
   \min_{\tilde{P}\in K} \sup_{g\in V_\eps} \int g \, d\tilde{P}
   = \sup_{g\in V_\eps} \min_{\tilde{P}\in K} \int g \,  d\tilde{P}.
\end{equation}
Now, if we assume that the distribution of $\xi$ is singular to
every measure $\tilde{P}$ in $K$, then the left-hand side of
\eqref{eq:Sion_application} equals 1. Thus, the right-hand side also
equals 1, and we conclude that there exists a function $g\in V_\eps$
such that
\begin{equation*}
  \min_{\tilde{P}\in K} \int g \, d\tilde{P} \ge 1-\eps.
\end{equation*}
Since $\eps>0$ is arbitrary, this implies that there exists a
sequence of continuous functions $g_n:\R^\N\to[0,1]$ satisfying
\begin{equation}\label{eq:g_n_def}
  \e g_n(\xi) \le 2^{-n}\quad\text{ and }\quad
  \min_{\tilde{P}\in K} \int g_n \, d\tilde{P} \ge 1 - 2^{-n}.
\end{equation}
Define a sequence of measurable functions $h_n:\R^\N\to[0,1]$ by
$h_n(x):=\inf_{k\geq n} g_k(x)$. Then $h_n(x)\le g_n(x)$ and hence
it follows that $\e h_n(\xi)\le 2^{-n}$. On the other hand, by the
second inequality in \eqref{eq:g_n_def}, it follows that $\int h_n\,
d\tilde{P} \ge 1-2^{-n+1}$ for all $\tilde{P}\in K$. Finally, since
$h_n(x)$ is an increasing, bounded sequence of functions, it
converges to a measurable limit $h:\R^\N\to[0,1]$ satisfying
\begin{equation}\label{eq:h_properties}
\e h(\xi)=0\quad\text{ and }\quad\min_{\tilde{P}\in K} \int h \,
d\tilde{P} = 1.
\end{equation}
We may thus use $h$ to show that $\X$ admits detection. Indeed,
given a noisy signal $z$ we may distinguish the two cases $z = \xi$
and $z = x + \xi$ for some $x\in\X$ according to whether $h(z)=0$ or
$h(z)=1$. To see this, observe that if $z=\xi$ then by
\eqref{eq:h_properties}, $\P(h(z)=0)=1$, where the probability is
over $\xi$. In addition, if $z = x+\xi$ for some $x\in\X$ then,
since the distribution of $z$ is in $K$, \eqref{eq:h_properties}
implies that $\P(h(z)=1) = 1$. \qed

% ========================================================================

\section{Fourier transforms and Hausdorff dimension}
\label{sec:hausdorff}

\subsection{}
In this section we consider signals which are Fourier transforms of
finite, complex Borel measures on the circle group $\T = \R / \Z$.
The Fourier transform $\{\ft{\mu}(n)\}$ of a measure $\mu$ on $\T$
is given by
\[
\ft{\mu}(n) = \int_{\T} e^{-2 \pi i n t} \, d\mu(t), \quad n \in \Z.
\]

We say that $\mu$ is carried by a Borel set $E$ if $|\mu|(\T
\setminus E) = 0$. Our motivating idea is that measures carried by
``small'' subsets of the circle will have some ``redundancy'' in
their Fourier transforms. Thus we may hope that by restricting to
measures with small support, in various senses, we will obtain
spaces of signals admitting recovery.

In the first part of the section we consider the space of Fourier
transforms of all measures carried by a given subset $E$ of the
circle. We show that this space admits recovery if $E$ has Lebesgue
measure zero. It is easy to see that this condition is sharp: if $E$
has positive Lebesgue measure, then distinct measures on $E$ may
differ by an $L^2$ function, violating the necessary condition
\eqref{eq:recover_nec_cond} by Parseval's equality. Similarly, if
$E$ has positive Lebesgue measure, the space will not admit
detection since condition \eqref{eq:detection_nec_cond} will be
violated.

In the second part of the section we consider all measures which are
carried by sets of Hausdorff dimension no larger than a given number
$\alpha$. We find that this space admits recovery if $\alpha<1/2$
and does not even admit detection if $\alpha>1/2$. The case
$\alpha=1/2$ is left open. In what follows $\mes(E)$ denotes the
Lebesgue measure of $E$ and $\dim(\mu)$ is the dimension of a
measure $\mu$ defined in \eqref{eq:dimension_of_mu}.

\begin{theorem}
\label{thm:known_support} Let $E\subset\T$ be a Borel set and let
$\F_E$ consist of the Fourier transforms of all finite, complex
measures carried by $E$. Then
\begin{enumerate-math}
  \item\label{part:thm_known_support_part_I} If $\mes(E)=0$ then $\F_E$ admits recovery.
  \item\label{part:thm_known_support_part_II} If $\mes(E)>0$ then $\F_E\setminus\{0\}$ does not admit detection.
\end{enumerate-math}
\end{theorem}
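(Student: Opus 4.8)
I would treat the two parts of Theorem~\ref{thm:known_support} separately; part \eqref{part:thm_known_support_part_II} is a quick consequence of earlier material, and part \eqref{part:thm_known_support_part_I} is the substantive claim. For part \eqref{part:thm_known_support_part_II} the plan is simply to exhibit a single nonzero square-summable element of $\F_E$, which already rules out detection by the necessary condition \eqref{eq:detection_nec_cond} (equivalently, by applying Theorem~\ref{thm:general_detection} to the point mass at that element). When $\mes(E)>0$, set $\nu:=\1_E\,dt$, a nonzero finite positive measure carried by $E$. Its Fourier transform lies in $\F_E\setminus\{0\}$, and by Parseval $\sum_n|\ft\nu(n)|^2=\|\1_E\|_{L^2(\T)}^2=\mes(E)<\infty$. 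Thus $\F_E\setminus\{0\}$ contains a nonzero $\ell^2$ sequence and cannot admit detection.

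For part \eqref{part:thm_known_support_part_I} the idea is to reconstruct each coordinate $\ft\mu(n)=\int_\T e^{-2\pi int}\,d\mu(t)$ separately, by a Fej\'er averaging of the noisy data localized to a shrinking neighbourhood of $E$: localizing to a set of small Lebesgue measure makes the noise contribution negligible, while Fej\'er summation still recovers the signal because $\mu$ lives on $E$. Using $\mes(E)=0$ and outer regularity of Lebesgue measure, I first fix a decreasing sequence of open sets $U_i\supseteq E$ with $\mes(U_i)\le 4^{-i}$; crucially these depend only on $E$. Writing $\sigma_N(w)(t):=\sum_{|m|\le N}\big(1-\tfrac{|m|}{N+1}\big)w_m e^{2\pi imt}$ for the Fej\'er mean of a sequence $w=\{w_m\}_{m\in\Z}$, define $T:\C^\Z\to\C^\Z$ by
\[
  T(y)_n := \lim_{i\to\infty}\;\lim_{N\to\infty}\;\int_{U_i} e^{-2\pi int}\,\sigma_N(y)(t)\,dt,
\]
with the convention $T(y)_n:=0$ when this double limit fails to exist. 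Each inner integral is a fixed finite linear combination of the coordinates $y_m$, so (handling $\limsup,\liminf$ of real and imaginary parts in the usual way) $T$ is Borel. It then suffices to check that $T(\{\ft\mu(n)+\xi_n\})=\{\ft\mu(n)\}$ almost surely for every finite complex measure $\mu$ carried by $E$; as this is a countable family of coordinates I may fix $n$ and split the data into a signal part and a noise part.

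For the signal part, Fubini's theorem gives $\int_{U_i}e^{-2\pi int}\,(F_N*\mu)(t)\,dt=\int_\T (h_n*F_N)\,d\mu$, where $F_N$ is the (even, nonnegative) Fej\'er kernel and $h_n:=e^{-2\pi in\cdot}\,\1_{U_i}$. At every $s\in U_i$ the function $h_n$ agrees on a neighbourhood of $s$ with the continuous function $e^{-2\pi in\cdot}$, hence is continuous at $s$, so Fej\'er's theorem gives $(h_n*F_N)(s)\to e^{-2\pi ins}$; moreover $|h_n*F_N|\le 1$ everywhere. Since $\mu$ is carried by $E\subseteq U_i$, dominated convergence with respect to the finite measure $|\mu|$ yields $\int_\T(h_n*F_N)\,d\mu\to\ft\mu(n)$ as $N\to\infty$, for each fixed $i$. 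For the noise part, $\int_{U_i}e^{-2\pi int}\,\sigma_N(\xi)(t)\,dt$ equals the Ces\`aro average of the partial sums $\sum_{|m|\le N}\xi_m b_m$, where $b_m:=\int_{U_i}e^{2\pi i(m-n)t}\,dt$ satisfies $\sum_m|b_m|^2=\|\1_{U_i}\|_{L^2}^2=\mes(U_i)<\infty$; hence these partial sums converge almost surely, so do their Ces\`aro averages, and the limit $Z_{i,n}:=\sum_m\xi_m b_m$ is Gaussian with $\e|Z_{i,n}|^2$ equal to a fixed constant times $\mes(U_i)\le 4^{-i}$. Combining the two parts, $\lim_N\int_{U_i}e^{-2\pi int}\,\sigma_N(\{\ft\mu(m)+\xi_m\})(t)\,dt=\ft\mu(n)+Z_{i,n}$ almost surely; since $\sum_i\e|Z_{i,n}|^2<\infty$, Borel--Cantelli gives $Z_{i,n}\to 0$ almost surely, so $T(\{\ft\mu(n)+\xi_n\})_n=\ft\mu(n)$ almost surely, as required.

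The step I expect to demand the most care is the signal convergence. The tempting approach — estimating $\sigma_N(\mu)$ on $\T\setminus U_i$ via the concentration of the Fej\'er kernel near the origin — requires $\supp\mu$ to lie at positive distance from $\T\setminus U_i$, which may simply fail (for instance when $\overline E=\T$). The argument above circumvents any such quantitative separation by transferring the localization $\1_{U_i}$ onto the test function before convolving with $F_N$, so that only pointwise Fej\'er convergence at points of the open set $U_i$ is needed, together with the fact that $\mu$ charges only $E\subseteq U_i$. The remaining ingredients — almost sure convergence of $\sum_m b_m\xi_m$ when $\sum_m|b_m|^2<\infty$ and its stability under Ces\`aro averaging, Parseval for $\|\1_{U_i}\|_{L^2}$, and Borel measurability of the double limit — are routine.
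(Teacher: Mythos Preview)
Your proof is correct and follows essentially the same strategy as the paper: pick open sets $U_i\supset E$ with $\mes(U_i)$ summable, build a Borel map that computes $\int_{U_i}e^{-2\pi in t}\,d\mu$ from the noisy Fourier data, and kill the Gaussian noise term (of variance proportional to $\mes(U_i)$) via Borel--Cantelli. The only difference is in how the integral over $U_i$ is extracted from the coefficients---the paper approximates $\1_{U_i}$ by smooth functions (its Lemma~\ref{lem:T_E_prop}), while you use Fej\'er summation---but the resulting noise variable and the rest of the argument coincide.
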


\begin{theorem}
\label{thm:reconstruction-small-dimension} Let $\F_\alpha$ consist
of the Fourier transforms of all finite, complex measures $\mu$,
such that $\dim(\mu)\le\alpha$. Then
\begin{enumerate-math}
  \item\label{it:recovery_measures} If $\alpha<1/2$ then $\F_\alpha$ admits recovery.
\item\label{it:non-detection_measures} If $\alpha>1/2$ then $\F_\alpha\setminus\{0\}$ does not admit
detection.
\end{enumerate-math}
\end{theorem}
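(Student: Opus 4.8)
The two parts call for different tools: I would deduce part (i) from Theorem~\ref{thm:known_support}(i), and prove part (ii) as an application of Theorem~\ref{thm:general_detection}.

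\emph{Part (i).} Fix $\alpha<\alpha'<\tfrac12$. The plan is to reconstruct, almost surely from the noisy data $z=\{\ft\mu(n)+\xi_n\}$, a Borel set $\widehat E\subset\T$ with $\mes(\widehat E)=0$ carrying $\mu$, and then apply Theorem~\ref{thm:known_support}(i) to $\F_{\widehat E}$. The quantitative heart is an estimate of masses of dyadic arcs: for a dyadic arc $I$ of length $2^{-j}$ pick a smooth $G$ with $\1_I\le G\le\1_{I^+}$ ($I^+$ a bounded dilate of $I$); then $\ft G\in\ell^1$, a real linear statistic of $z$ equals $\int G\,d\mu$ plus Gaussian noise of variance $\le\|G\|_{L^2}^2\asymp 2^{-j}$, so $\mu(I)$ is estimated with standard deviation $\asymp 2^{-j/2}$. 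On the other hand, since $\dim(\mu)\le\alpha$, the lower local dimension of $\mu$ is $\le\alpha$ at $\mu$-almost every point (a standard characterisation of $\dim(\mu)$), so for $\mu$-a.e.\ $x$ there are infinitely many $j$ with $\mu(I)\ge 2^{-j\alpha'}$ for some dyadic arc $I$ of length $2^{-j}$ near $x$. Because $\alpha'<\tfrac12$, the ``signal'' $2^{-j\alpha'}$ dominates the ``noise'' $2^{-j/2}$: the test ``declare $I$ active when the estimate exceeds $\tfrac14\,2^{-j\alpha'}$'' has error probability $\le\exp(-c\,2^{j(1-2\alpha')})$ per arc, and $\sum_j 2^j\exp(-c\,2^{j(1-2\alpha')})<\infty$, so by Borel--Cantelli all level-$j$ tests are correct for $j\ge j_0$. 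Then the active arcs of level $j$ (suitably enlarged) have total measure $O(2^{-j(1-\alpha')})$ for $j\ge j_0$, so $\widehat E:=\limsup_j\{\text{active arcs of level }j\}$ satisfies $\mes(\widehat E)=0$ while carrying $\mu$ by the previous sentence; hence $\mu\in\F_{\widehat E}$ and Theorem~\ref{thm:known_support}(i) finishes.

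One point in part (i) needs care: $\widehat E$ is extracted from the noisy data, whereas the recovery map of Theorem~\ref{thm:known_support}(i) uses cut-off functions adapted to the carrying set, so one must both assemble the composition into a single Borel map $T:\R^\Z\to\R^\Z$ and, more substantially, decouple those cut-offs from the additive noise. Since almost surely the level-$j$ tests are correct for \emph{all} large $j$, the cut-offs ultimately depend only on $\mu$; carrying the construction along a deterministic subsequence of scales removes the circularity. I expect this bookkeeping, rather than the main idea, to be the fiddly part here.

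\emph{Part (ii).} The plan is to produce a probability measure $P$ on $\F_\alpha\setminus\{0\}$ satisfying the hypothesis of Theorem~\ref{thm:general_detection} in its complex form $\lim_k\e\,\bigl|\exp\{\sum_{|n|\le k}x_n\overline{y_n}\}\bigr|<\infty$. Fix $\beta\in(\tfrac12,\min(\alpha,1))$ and let $\mu$ be a random Cantor measure of dimension $\beta$: repeatedly partition $\T$ into $M$ equal arcs, keep a uniformly chosen $\ell=M^{\beta}$ of them, and let $\mu$ be the natural probability measure on the (a.s.\ non-empty) limit set, so $\dim\mu=\beta\le\alpha$ a.s.\ and $\mu\ne0$; take $P$ to be the law of $\mu$, and note $\e\mu$ is Lebesgue measure (so there is no ``mean'' blow-up). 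For independent copies $\mu,\nu$ one has $\sum_{|n|\le k}\ft\mu(n)\overline{\ft\nu(n)}=\iint D_k(x-y)\,d\mu(x)\,d\nu(y)=:s_k$, with $D_k(t)=\sum_{|n|\le k}e^{-2\pi int}$ the Dirichlet kernel; $s_k$ is real, so the hypothesis becomes $\sup_k\e\,e^{s_k}<\infty$. Now $\e[s_k^{m}]=\langle P_m,\Pi_k P_m\rangle_{L^2(\T^{m})}$, where $P_m$ is the $m$-point correlation function of $\mu$ and $\Pi_k$ is the orthogonal projection onto frequencies in $[-k,k]^{m}$; since $\Pi_k$ is an orthogonal projection this gives $\e[s_k^{m}]\le\|P_m\|_{L^2(\T^{m})}^{2}$ \emph{uniformly in $k$}, and the matter reduces to the static estimate $\sum_m\|P_m\|_{L^2(\T^{m})}^{2}/m!<\infty$. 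The singularities of $P_m$ occur where the points cluster; the worst is $\asymp(\operatorname{diam})^{-(m-1)(1-\beta)}$, which lies in $L^2(\T^m)$ \emph{precisely because $\beta>\tfrac12$} (the same mechanism makes $\iint|x-x'|^{-2(1-\beta)}\,dx\,dx'<\infty$ for the two-point function), and a cluster/cumulant bookkeeping then yields $\|P_m\|_{L^2}^{2}\le C^{m}B_m$ with $C$ independent of $k$ and $B_m$ the Bell numbers, so $\sum_m\|P_m\|_{L^2}^{2}/m!<\infty$ (after replacing $\mu$ by a small multiple $c\mu$ if necessary). Hence $\sup_k\e\,e^{s_k}<\infty$ and Theorem~\ref{thm:general_detection} gives the claim. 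The main obstacle is this multiscale estimate of $\|P_m\|_{L^2(\T^{m})}$ — showing that the Dirichlet-kernel cancellation together with $\beta>\tfrac12$ tames every cluster singularity — and I note that the randomness of the support is essential: a fixed Frostman measure with random rotations provably fails here, the relevant partition function degenerating to zero.
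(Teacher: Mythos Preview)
Your quantitative mechanism --- comparing the signal scale $2^{-j\alpha'}$ against the noise scale $2^{-j/2}$ on dyadic arcs --- is the same one the paper exploits. But the paper does \emph{not} reconstruct a carrier and then invoke Theorem~\ref{thm:known_support}. It searches directly over all admissible covers: for each $N_1<N_2$ it forms the finite family $\Omega_{N_1,N_2}$ of unions of dyadic intervals $I_j$ with $N_1\le\rank(I_j)\le N_2$ and $\sum|I_j|^\alpha\le 1$, proves the \emph{uniform} noise bound $\max_{E\in\Omega_{N_1,N_2}}|\xi(E)|\le Z_{N_1}\to 0$ a.s., and recovers $\int d\mu_1$ as $\limsup_{N_1}\limsup_{N_2}\max_{E\in\Omega_{N_1,N_2}}\Re\,T_E(y)$.

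Your two-step plan has a real gap at precisely the point you flag. The recovery map of Theorem~\ref{thm:known_support}(i) for $\F_E$ chooses open $U_j\supset E$ with $\mes(U_j)\to 0$ and uses $\xi(U_j)\to 0$ a.s. When $E=\widehat E$ is built from $\xi$, the $U_j$ are correlated with $\xi$ and this fails: indeed $\sup\{|\xi(U)|:\mes(U)<\eps\}=\infty$ a.s. (pick, at dyadic level $N$, the $\eps 2^N$ intervals on which the noise is largest and let $N\to\infty$). Your claim that ``the cut-offs ultimately depend only on $\mu$'' is not correct: arcs whose mass lies between your two thresholds may or may not be declared active depending on the noise realisation, so $\widehat E$ genuinely depends on $\xi$. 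The cure is exactly a uniform noise bound over a restricted family of covers --- and once you have that, the detour through $\widehat E$ is unnecessary and you are back to the paper's direct argument.

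\textbf{Part (ii).} Here your route is genuinely different and harder. The paper also randomises over Cantor measures, but first passes to \emph{Walsh} coefficients in base $q$ (Proposition~\ref{prop:Fourier_Walsh} shows detection in the Fourier and Walsh spaces is equivalent). With $q^{1/2}<p<q^\alpha$, the $q$-adic Walsh partial sum collapses exactly:
\[
\sum_{n=0}^{q^k-1}x_n\overline{y_n}=\Big(\tfrac{q}{p^2}\Big)^k Z_k,
\]
where $Z_k$ counts the level-$k$ intervals common to $\mu$ and $\nu$. This $Z_k$ is a supercritical Galton--Watson process with bounded offspring; the normalised martingale $W_k=Z_k/\e Z_k$ converges, and a theorem of Harris gives $\sup_k\e\,e^{W_k}\le\e\,e^W<\infty$ in one line. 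The Walsh basis is what makes this exact: Fourier partial sums do not localise to $q$-adic cells.

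Your Fourier/Dirichlet route and the identity $\e[s_k^m]=\langle P_m,\Pi_k P_m\rangle\le\|P_m\|_{L^2(\T^m)}^2$ are correct, and the reduction to $\sum_m\|P_m\|_{L^2}^2/m!<\infty$ is clean. But the bound $\|P_m\|_{L^2}^2\le C^m B_m$ is asserted, not proved; you would need to control the $m$-point intensity of the random Cantor measure over every coincidence pattern in $\T^m$, with constants tracked well enough to sum. This is plausible (the hierarchical structure makes $P_m$ fairly explicit and the condition $\beta>1/2$ is exactly what makes each diagonal singularity $L^2$-integrable), but it is substantially more work than the branching-process shortcut the paper takes.
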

The rest of the section is devoted to the proofs of these two
theorems.

\subsection{Integration of noisy signals}
To each Borel set $E\subset \T$ we associate the random variable
\begin{equation*}
  \xi(E) := \sum_{n\in\Z} \ft{\1}_E(-n)\xi_n.
\end{equation*}
The sum converges in $L^2$ and almost surely. Recalling that in our
convention $\e|\xi_0|^2 = 2$ when the noise is complex, we see that
$\xi(E)$ is a centered complex Gaussian random variable with
$\e|\xi(E)|^2 = 2\mes(E)$.

Intuitively, $\xi(E)$ is the ``integral'' over $E$ of the formal
Fourier series $\sum \xi_n e^{2\pi int}$, which may be seen as white
noise on the circle.

\begin{lemma}\label{lem:T_E_prop}
  Let $E\subset\T$ be either an open or closed set. There exists a Borel measurable mapping $T_E:\C^\Z\to\C$ such
that, almost surely,
\begin{equation*}
  T_E(\ft{\mu} + \xi) = \int_E d\mu + \xi(E)
\end{equation*}
for every complex Borel measure $\mu$ on $\T$.
\end{lemma}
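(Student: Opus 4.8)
The plan is to obtain $T_E$ as a limit of the elementary ``recovery'' maps attached to smooth test functions, for which the desired identity is an immediate computation, and then to transfer the identity from smooth functions to $\1_E$ by approximation.

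First I would treat a fixed smooth $\phi\colon\T\to\C$. Its Fourier coefficients are absolutely summable, so $g_\phi(z):=\sum_{n\in\Z}\widehat\phi(-n)z_n$ converges absolutely whenever $z$ is bounded; I define $g_\phi$ on all of $\C^\Z$ as the limit of the symmetric partial sums where that limit exists, and $g_\phi:=0$ on the (Borel) set where the partial sums diverge, so that $g_\phi$ is Borel measurable, being a pointwise limit of continuous functions on the convergence set. Writing $z=\widehat\mu+\xi$, the ``signal part'' $\sum_n\widehat\phi(-n)\widehat\mu(n)$ converges absolutely to $\int_\T\phi\,d\mu$ (Fourier inversion for $\phi$, then Fubini against $|\mu|$), deterministically for every $\mu$; the ``noise part'' $\xi(\phi):=\sum_n\widehat\phi(-n)\xi_n$ converges almost surely and in $L^2$ because $\sum_n|\widehat\phi(n)|^2=\|\phi\|_{L^2(\T)}^2<\infty$ (a Gaussian series converges a.s.\ iff its partial sums converge in $L^2$). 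The point to stress is that the exceptional null set here is merely the event that $\sum_n\widehat\phi(-n)\xi_n$ diverges, and does not depend on $\mu$; so on one fixed event of full probability, $g_\phi(\widehat\mu+\xi)=\int_\T\phi\,d\mu+\xi(\phi)$ simultaneously for all complex measures $\mu$.

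Next I would approximate $\1_E$, and this is where the hypothesis that $E$ is open or closed enters. In either case one can pick $\phi_k\in C^\infty(\T)$ with $0\le\phi_k\le 1$ and $\phi_k(t)\to\1_E(t)$ for \emph{every} $t\in\T$: for $E$ open, take $\phi_k=1$ on $\{\operatorname{dist}(\cdot,\T\setminus E)\ge 1/k\}$ and $\phi_k=0$ off $E$ (smooth Urysohn on $\T$); for $E$ closed, take $\phi_k=1$ on $E$ and $\phi_k=0$ on $\{\operatorname{dist}(\cdot,E)\ge1/k\}$. Dominated convergence against $|\mu|$ gives $\int_\T\phi_k\,d\mu\to\mu(E)$ for every complex $\mu$, and dominated convergence on $\T$ gives $\phi_k\to\1_E$ in $L^2(\T)$, hence $\xi(\phi_k)\to\xi(E)$ in $L^2$; I then pass to a subsequence $\phi_{k_j}$ along which $\xi(\phi_{k_j})\to\xi(E)$ almost surely. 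Setting $T_E(z):=\lim_j g_{\phi_{k_j}}(z)$ where the limit exists and $T_E(z):=0$ otherwise, $T_E$ is Borel; and intersecting over $j$ the full-probability events from the previous step, together with the event that $\xi(\phi_{k_j})\to\xi(E)$ and that $\xi(E)$ itself converges, yields one event of probability one on which, for every $\mu$,
\[
g_{\phi_{k_j}}(\widehat\mu+\xi)=\int_\T\phi_{k_j}\,d\mu+\xi(\phi_{k_j})\longrightarrow \mu(E)+\xi(E),
\]
so that $T_E(\widehat\mu+\xi)=\int_E d\mu+\xi(E)$, as claimed.

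The step I expect to be the real obstacle is the bookkeeping of quantifiers: the map $T_E$ must be fixed once and for all, and the identity must hold on a \emph{single} almost-sure event valid for all $\mu$ at once. The noise contribution never sees $\mu$, so the binding requirement is that the deterministic approximation $\int\phi_k\,d\mu\to\mu(E)$ hold uniformly over all $\mu$, which amounts exactly to pointwise-everywhere convergence $\phi_k\to\1_E$ with a common bound — available for open and closed $E$ but not for a general Borel set, which is precisely why the lemma is stated only in those cases. A minor subsidiary point is to check that $g_\phi$, and then $T_E$, is a genuinely well-defined Borel function on all of $\C^\Z$ even though the partial sums diverge for many $z$; defining it to be $0$ on the Borel set of divergence is harmless, since $\widehat\mu+\xi$ lands outside that set almost surely.
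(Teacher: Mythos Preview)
Your proof is correct and follows essentially the same approach as the paper's: approximate $\1_E$ pointwise by smooth functions (using that $E$ is open or closed), define $T_E$ as the limit of the associated linear functionals, and split the argument into a deterministic signal part (bounded convergence against $|\mu|$) and a random noise part ($L^2$ convergence to $\xi(E)$). The only cosmetic difference is that the paper builds the rate $\|\psi_j-\1_E\|_{L^2}\le 1/j$ into the choice of approximants and then invokes Borel--Cantelli, whereas you obtain $L^2$ convergence from dominated convergence and pass to an almost-surely convergent subsequence; both devices achieve the same end.
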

\begin{proof}
We choose a sequence $\psi_j$ of smooth functions on $\T$ such that
\begin{equation*}
  0\le \psi_j\le 1,\quad\|\psi_j-\1_E\|_{L^2(\T)}\le 1/j,\quad \psi_j(t)\to
  \1_E(t)\quad \forall t\in\T.
\end{equation*}
This is possible since $E$ is either an open or closed set. The
mapping $T_E$ is defined by
\begin{equation*}
  T_E(y):=\lim_{j\to\infty} \sum_{n\in\Z} \ft{\psi}_j(-n)y_n,
\end{equation*}
where, if the limit does not exist, we define $T_E(y)=0$. It is
simple to check that this is a Borel measurable mapping.

Let us first show that $T_E(\ft{\mu}) = \int_E d\mu$ for any complex
Borel measure $\mu$ on $\T$. Indeed, since $\psi_j$ is smooth, we
have
\begin{equation*}
  \sum_{n\in\Z} \ft{\psi}_j(-n)\ft{\mu}(n) = \int_\T \psi_j d\mu,
\end{equation*}
where the series on the left-hand side converges absolutely. Now,
the bounded convergence theorem implies that $T_E(\ft{\mu}) = \int_E
d\mu$.

Next, we show that $T_E(\xi) = \xi(E)$ almost surely. Observe that
\begin{equation*}
  \eta_j := \sum_{n\in\Z} \ft{\psi}_j(-n)\xi_n
\end{equation*}
converges almost surely and defines a complex Gaussian random
variable. Moreover,
\begin{equation*}
  \e|\eta_j - \xi(E)|^2 = 2\sum_{n\in\Z} |\ft{\psi}_j(n) -
  \ft{\1}_E(n)|^2 = 2\|\psi_j-\1_E\|_{L^2(\T)}^2 \le \frac{2}{j^2}.
\end{equation*}
An application of the Borel-Cantelli lemma shows that $\eta_j\to
\xi(E)$ almost surely.

Finally, the lemma follows from the linearity of $T_E$.
\end{proof}

\subsection{Recovery when the support is known. Proof of Theorem~\ref{thm:known_support}} Let us fix a Borel set $E \subset \T$.
Part~\ref{part:thm_known_support_part_II} of the theorem has already
been explained so it remains to prove
Part~\ref{part:thm_known_support_part_I}.

We thus suppose that $E$ has Lebesgue measure zero, and take a
signal from $\F_E$, which is of the form $\{\ft{\mu}(n)\}$ where
$\mu$ is a Borel measure carried by $E$. The noisy version of the
signal will be denoted by
\begin{equation}\label{eq:noisy_measure}
y(n) = \ft{\mu}(n) + \xi(n).
\end{equation}
We need a recovery algorithm for $\ft{\mu}(n)$. Certainly it
suffices to present such an algorithm for $\ft{\mu}(0) = \int d\mu$,
since the noise distribution and the support of the measure are
unchanged when shifting $y$. We choose a sequence of open sets $U_j$
such that $E\subset U_j$ and $\mes(U_j) < j^{-2}$. Our recovery
procedure consists of calculating
\[
T(y):=\lim_{j\to \infty} T_{U_j}(y),
\]
where $T_{U_j}$ is given by Lemma~\ref{lem:T_E_prop} and, if the
limit does not exist, we define $T(y)=0$. Certainly $T$ is a Borel
measurable mapping since $T_{U_j}$ is Borel measurable for every
$j$. We claim that, almost surely,
\begin{equation*}
  T(\ft{\mu} + \xi) = \int d\mu
\end{equation*}
for every measure $\mu$ carried by $E$. Indeed, by
Lemma~\ref{lem:T_E_prop}, almost surely,
\begin{equation*}
  T_{U_j}(\ft{\mu} + \xi) = \int_{U_j} d\mu + \xi(U_j) = \int_{E} d\mu + \xi(U_j)
\end{equation*}
for every measure $\mu$ carried by $E$. Thus the claim follows by
noting that, almost surely,
\begin{equation*}
  \xi(U_j)\to 0
\end{equation*}
which follows from our assumption that $\mes(U_j)<j^{-2}$ and the
Borel-Cantelli lemma.\qed

\subsection{Recovery with unknown support. Proof of part
\ref{it:recovery_measures} of
Theorem~\ref{thm:reconstruction-small-dimension}} As $\F_\alpha$ is
increasing in $\alpha$ it suffices to prove
part~\ref{it:recovery_measures} with $\le$ replaced by $<$ in the
definition of $\F_\alpha$. Fix $\alpha<1/2$. An interval
$I\subset\T$ of the form $[\frac{k}{2^N}, \frac{k+1}{2^N}]$ will be
called a dyadic interval of rank $N$, and we denote $\rank(I)=N$. We
denote by $\D_N$ the family of all sets which are unions of dyadic
intervals of rank $N$. We also define
\begin{equation*}
\D_N^\alpha:=\left\{E\in\D_N\,:\, \mes(E)\le
2^{-(1-\alpha)N}\right\}
\end{equation*}
and a random variable,
\begin{equation*}
Y_N:=\max\left\{|\xi(E)|\,:\, E\in \D_N^\alpha\right\}.
\end{equation*}
\begin{lemma}
For all $N\ge N(\alpha)$ we have
\begin{equation*}
  \P(Y_N\ge \exp(-cN))\le \exp\left(-c2^{\frac{1}{2}N}\right),
\end{equation*}
where $c>0$ is a constant depending only on $\alpha$.
\end{lemma}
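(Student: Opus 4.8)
The plan is to bound $Y_N$ by a union bound over the (finitely many) sets $E \in \D_N^\alpha$, using a Gaussian tail estimate for each individual $|\xi(E)|$ together with a crude count of how many such sets there are. Recall that $\xi(E)$ is a centered complex Gaussian with $\e|\xi(E)|^2 = 2\,\mes(E)$, so for $E \in \D_N^\alpha$ we have $\e|\xi(E)|^2 \le 2\cdot 2^{-(1-\alpha)N}$. The standard Gaussian tail bound then gives, for any $t>0$,
\[
  \P\big(|\xi(E)| \ge t\big) \le C_1 \exp\!\Big(-\frac{t^2}{4\,\mes(E)}\Big) \le C_1 \exp\!\Big(-\tfrac14 t^2 \, 2^{(1-\alpha)N}\Big).
\]
Applying this with $t = \exp(-cN)$ (so $t^2 = \exp(-2cN) \ge 2^{-3cN}$, say) yields a single-set bound of the form $\exp(-\tfrac14 2^{(1-\alpha - 3c)N})$, which, provided $c$ is chosen small enough that $1-\alpha-3c > \tfrac12 + \delta$ for some $\delta>0$ — possible exactly because $\alpha < 1/2$ — is at most $\exp(-2^{(1/2 + \delta)N})$.

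Next I would count $|\D_N^\alpha|$. A set $E \in \D_N$ is a union of some subcollection of the $2^N$ dyadic intervals of rank $N$; the constraint $\mes(E) \le 2^{-(1-\alpha)N}$ forces $E$ to consist of at most $2^{\alpha N}$ such intervals. Hence
\[
  |\D_N^\alpha| \le \sum_{j=0}^{\lfloor 2^{\alpha N}\rfloor} \binom{2^N}{j} \le 2^{\alpha N} \binom{2^N}{\lfloor 2^{\alpha N}\rfloor} \le \big(2^N\big)^{2^{\alpha N}} \cdot 2^{\alpha N} = \exp\!\big(O(N\,2^{\alpha N})\big).
\]
Since $\alpha < 1/2$, the exponent $O(N\,2^{\alpha N})$ is negligible compared to $2^{(1/2+\delta)N}$ from the single-set bound. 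Taking a union bound,
\[
  \P(Y_N \ge \exp(-cN)) \le |\D_N^\alpha| \cdot C_1 \exp\!\big(-2^{(1/2+\delta)N}\big) \le \exp\!\big(-c'\,2^{N/2}\big)
\]
for $N \ge N(\alpha)$, after absorbing the lower-order terms; shrinking $c$ if necessary so that the same constant works in both places gives the claimed form $\P(Y_N \ge \exp(-cN)) \le \exp(-c\,2^{N/2})$.

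The only genuine point requiring care — the ``main obstacle,'' though it is mild — is the bookkeeping with the three competing exponential scales: the target threshold $\exp(-cN)$ (scale $N$), the Gaussian decay $\exp(-\tfrac14 t^2 2^{(1-\alpha)N})$ (scale $2^{(1-\alpha)N}$), and the entropy $\exp(O(N 2^{\alpha N}))$ (scale $2^{\alpha N}$). One must verify that the exponents combine with the right signs, i.e. that $(1-\alpha)N$ beats $\alpha N$ in the exponent of $2$, which is precisely the inequality $1-\alpha > \alpha$, equivalent to $\alpha < 1/2$ — the hypothesis of this part of the theorem. The role of $t^2 = \exp(-2cN)$ being only polynomially-in-$2^N$ small is that it costs a factor of at most $2^{-O(cN)}$ in the Gaussian exponent, which does not disturb the leading-order comparison, so any sufficiently small $c = c(\alpha) > 0$ works. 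I would also note in passing that the almost-sure and $L^2$ convergence of the series defining $\xi(E)$, needed to make the Gaussian computation legitimate, was already established in the discussion preceding Lemma~\ref{lem:T_E_prop}.
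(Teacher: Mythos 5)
Your proof is correct and follows essentially the same route as the paper: a union bound over $\D_N^\alpha$ combined with the Gaussian tail bound $\exp(-t^2/(2\mes(E)))$ and the count $|\D_N^\alpha|\le\exp(N2^{\alpha N})$ via $\binom{n}{k}\le n^k$. The only cosmetic difference is that the paper substitutes the intermediate threshold $t=2^{-\frac{1}{2}(\frac{1}{2}-\alpha)N}$ and then observes this exceeds $\exp(-cN)$ for small $c$, whereas you work with $t=\exp(-cN)$ directly; the exponent bookkeeping is identical.
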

\begin{proof}
  We recall that if $Z$ is a complex Gaussian random variable, with
  independent real and imaginary parts of variance $\sigma^2$, then
  \begin{equation}\label{eq:complex_Gaussian_bound}
    \P(|Z|>t) = \exp\left(-t^2/(2\sigma^2)\right).
  \end{equation}
  Observe that
  \begin{equation}\label{eq:K_equality}
    |\D_N^\alpha|=\sum_{j=0}^{\lfloor 2^{\alpha N}\rfloor} \binom{2^N}{j}\le
    (\lfloor 2^{\alpha N} \rfloor + 1) \binom{2^N}{\lfloor 2^{\alpha
    N}\rfloor} \le (\lfloor 2^{\alpha N} \rfloor + 1) 2^{N2^{\alpha N}} \le \exp\left(N2^{\alpha N}\right),
  \end{equation}
  where we have used the inequality $\binom{n}{k}\le n^k$ valid for $n,k\ge 1$. Since for
  each $E\in \D_N^\alpha$ we have
  $\e|\xi(E)|^2 \le 2^{1-(1-\alpha)N}$, a union bound using \eqref{eq:complex_Gaussian_bound} gives
  \begin{equation*}
    \P(Y_N \ge t) \le |\D_N^\alpha|e^{-t^2 2^{(1-\alpha)N-1}} \le \exp\left(N2^{\alpha N}-t^2 2^{(1-\alpha)N-1}\right).
  \end{equation*}
  Substituting $t=2^{-\frac{1}{2}(\frac{1}{2} - \alpha)N}$ finishes
  the proof.
\end{proof}
Define a random variable
\begin{equation}\label{eq:Z_N_def}
  Z_N := \sum_{n\ge N} Y_n
\end{equation}
and observe that by the Borel-Cantelli lemma and the previous lemma,
\begin{equation}\label{eq:Z_N_small_prop}
  Z_N \le \exp(-cN)\quad\text{for all but finitely many $N$, with probability
  one}.
\end{equation}

As before, the noisy signal has the form \eqref{eq:noisy_measure},
where now $\mu$ is a complex measure carried by a Borel set $E$ of
Hausdorff dimension $<\alpha$. We need a recovery algorithm for
$\ft{\mu}(n)$. Certainly it suffices to present such an algorithm
for $\ft{\mu}(0) = \int d\mu$.

A complex measure $\mu$ has a unique decomposition
\begin{equation*}
\mu = \mu_1 - \mu_2 + i\mu_3 - i\mu_4,
\end{equation*}
where each $\mu_j$ is a positive measure, where $\mu_1$ and $\mu_2$
are mutually singular, and the same for $\mu_3$ and $\mu_4$. We
describe an algorithm for recovering $\int d\mu_1$. One may recover
$\int d\mu_j$ in a similar way.

The recovery algorithm is as follows.

Given $N_1<N_2$, let $\Omega_{N_1,N_2}$ denote the (finite) family
of all sets $E$ admitting a representation of the form $E=\cup I_j$
where each $I_j$ is a dyadic interval with $N_1\le \rank(I_j)\le
N_2$, and such that $\sum |I_j|^\alpha \le 1$. We define the mapping
\begin{equation*}
  T(y) := \limsup_{N_1\to\infty} \limsup_{N_2\to\infty}
  \max_{E\in \Omega_{N_1, N_2}} \Re\left(T_E(y)\right),
\end{equation*}
where $T_E$ is the mapping given by Lemma~\ref{lem:T_E_prop}. Here,
$\Re(z)$ denotes the real part of a complex number $z$. Certainly
$T$ is a Borel measurable mapping since $T_E$ is Borel measurable
for every $E$.

We claim that, almost surely,
\begin{equation}\label{eq:measure_recovery_algorithm}
  T(\ft{\mu} + \xi) = \int d\mu_1
\end{equation}
for any complex Borel measure $\mu$ carried by a Borel set $S$ of
Hausdorff dimension $<\alpha$.

We now prove \eqref{eq:measure_recovery_algorithm}. We start with a
deterministic lemma.
\begin{lemma}\label{lem:mu_recovery_lemma}
  Let $\mu$ be a complex Borel measure, carried by a Borel set $S$ of
  Hausdorff dimension $<\alpha$. Then
  \begin{equation*}
    \limsup_{N_1\to\infty} \limsup_{N_2\to\infty}
  \max_{E\in \Omega_{N_1, N_2}} \Re\left(\int_E d\mu\right) = \int
  d\mu_1.
  \end{equation*}
\end{lemma}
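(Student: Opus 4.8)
The plan is to prove the two inequalities separately. For the upper bound ``$\le$'', fix $N_1 < N_2$ and any $E \in \Omega_{N_1,N_2}$, say $E = \bigcup_j I_j$ with $N_1 \le \rank(I_j) \le N_2$ and $\sum_j |I_j|^\alpha \le 1$. Then $\Re\left(\int_E d\mu\right) \le |\mu|(E) \le \sum_j |\mu|(I_j)$, so it suffices to show that $\sum_j |\mu|(I_j) \to \int d\mu_1$ as $N_1 \to \infty$, uniformly over such families; more precisely, that $\limsup$ of $\sum_j |\mu|(I_j)$ over $E \in \Omega_{N_1,N_2}$ is at most $\int d\mu_1$ once we let $N_1 \to \infty$. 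Here is where the dimension hypothesis enters: since $S$ has Hausdorff dimension $< \alpha$, for every $\eps > 0$ we can cover $S$ by countably many intervals $J_i$ with $\sum_i |J_i|^\alpha < \eps$ and each $|J_i|$ arbitrarily small. Intervals $I_j$ with $\rank(I_j) \ge N_1$ that meet $S$ must (up to a bounded overlap factor for dyadic intervals versus arbitrary intervals) be contained in a small neighbourhood of $S$; combining with $\sum_j |I_j|^\alpha \le 1$ and the smallness of the $\alpha$-content of $S$, one deduces that the portion of $E$ ``near $S$'' carries total mass close to all of $|\mu|(\T)$ only through the part of $S$ it actually hits, and that $|\mu|$ restricted to $E$ can exceed $\int d\mu_1 + \eps$ only by catching mass of $\mu_2$, $\mu_3$, $\mu_4$ — but those are carried on sets that, being subsets of $S$, are again of $\alpha$-content zero and mutually singular with the carrier of $\mu_1$. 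The key point is that for any set $A$ carried inside $S$, $\Re\left(\int_A d\mu\right) \le \mu_1(A) \le \mu_1(S) = \int d\mu_1$; so in fact $\Re\left(\int_E d\mu\right) \le \mu_1(E) \le \int d\mu_1$ for \emph{every} $E$, whence the upper bound holds trivially without any limit. Let me restate: the inequality $\Re\left(\int_E d\mu\right) \le \mu_1(E)$ holds for all $E$, and $\mu_1(E) \le \int d\mu_1$, so the $\limsup\limsup\max$ is at most $\int d\mu_1$ immediately.

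For the lower bound ``$\ge$'', the goal is, given $\eps > 0$, to produce for all large $N_1$ and then all large $N_2$ a set $E \in \Omega_{N_1,N_2}$ with $\Re\left(\int_E d\mu\right) \ge \int d\mu_1 - \eps$. First choose a Borel set $S_1$ carrying $\mu_1$ with $S_1 \subset S$ and $S_1$ disjoint from a carrier $S_2$ of $\mu_2$ (possible by mutual singularity of $\mu_1, \mu_2$); intersecting with $S$ keeps Hausdorff dimension $< \alpha$. By regularity of the finite measure $\mu_1$ we may pass to a compact $K \subset S_1$ with $\mu_1(K) \ge \mu_1(\T) - \eps/4$; note $\mu_2(K) = \mu_3(K) = \mu_4(K) = 0$ since $K \subset S_1 \subset S_2^c$ and we may likewise arrange $K$ to avoid carriers of $\mu_3,\mu_4$ up to a further $\eps$-loss (using countable additivity and that finitely many mutually-singular-in-pairs conditions still allow a common small-loss compact core — or simply work coordinate by coordinate, recovering $\mu_1$ against the complex measure $\mu$ whose $\mu_1$-part dominates on $K$). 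Since $K$ is compact with $\dim(K) < \alpha$, for every $\delta > 0$ there is a finite cover of $K$ by dyadic intervals $I_1, \ldots, I_m$ with $\sum_\ell |I_\ell|^\alpha < 1$ and with $\min_\ell \rank(I_\ell)$ as large as we wish: indeed a cover by arbitrary intervals of small $\alpha$-content inflates by at most a bounded factor when replaced by dyadic intervals of comparable length, and rescaling all ranks up by a fixed amount (splitting each dyadic interval into dyadic subintervals) only decreases $\sum |I_\ell|^\alpha$ since $\alpha < 1$. Thus for any prescribed $N_1$ we get such a cover with all ranks $\ge N_1$ and $\le$ some $N_2 = N_2(N_1,\delta)$, so $E := \bigcup_\ell I_\ell \in \Omega_{N_1,N_2}$. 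Then $\Re\left(\int_E d\mu\right) = \int_E d\mu_1 - \int_E d\mu_2 + \Im$-free terms $\ge \mu_1(K) - \mu_1(E \setminus K) - \mu_2(E)$; choosing the cover tight enough around $K$ (which is where compactness of $K$ and outer regularity of the finite measures $\mu_1,\mu_2$ are used: $\mu_j(E)$ can be made within $\eps/4$ of $\mu_j(K)$) gives $\Re\left(\int_E d\mu\right) \ge \int d\mu_1 - \eps$. Taking $\eps \to 0$ along the double $\limsup$ completes the lower bound.

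The main obstacle is the lower bound, and specifically the uniform choice of dyadic covers with all ranks exceeding an arbitrary threshold $N_1$ while keeping $\sum |I_\ell|^\alpha$ bounded: one must confirm that raising ranks costs nothing, which relies on $\alpha < 1$ (each dyadic interval of rank $N$ splits into two of rank $N+1$, halving each length and contributing $2 \cdot 2^{-\alpha} < 1$ times the old $\alpha$-content), and that passing from an efficient cover by arbitrary small intervals to one by dyadic intervals inflates the $\alpha$-content by at most a universal constant (each interval of length $h$ is covered by at most three dyadic intervals of length in $[h, 2h)$, costing a factor $\le 3 \cdot 2^\alpha \le 6$), which is absorbed by taking the original content below $1/6$. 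A secondary technical point is the bookkeeping with the Jordan-type decomposition $\mu = \mu_1 - \mu_2 + i\mu_3 - i\mu_4$: one wants a single compact $K$ on which $\mu_1$ nearly attains its full mass while the other three parts are negligible, which follows because each $\mu_j$ for $j \ne 1$ is carried by a set whose intersection with a carrier of $\mu_1$ is $\mu_1$-null, so outer regularity lets us shrink to such a $K$ with total loss $< \eps$.
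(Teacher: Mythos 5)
Your overall strategy coincides with the paper's: the upper bound is immediate from $\Re\left(\int_E d\mu\right)=\mu_1(E)-\mu_2(E)\le\mu_1(E)\le\int d\mu_1$ for \emph{every} Borel set $E$ (the detour through Hausdorff dimension in your first paragraph is not needed, as you yourself conclude); and the lower bound is obtained by fixing a compact $K$ inside a carrier of $\mu_1$ with $\mu_1(K)\ge\int d\mu_1-\eps$, covering $K$ by finitely many dyadic intervals of rank at least $N_1$ with $\sum_\ell|I_\ell|^\alpha\le 1$, and arranging that the cover picks up almost no $\mu_2$-mass. You do the last step by outer regularity of $\mu_2$ around $K$ (using $\mu_2(K)=0$), the paper by requiring the intervals to be disjoint from a compact carrier $K_2$ of most of $\mu_2$; both work. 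The measures $\mu_3,\mu_4$ indeed play no role, since they contribute only to the imaginary part, and your bound $\mu_1(K)-\mu_1(E\setminus K)$ is a harmless weakening of $\mu_1(E)\ge\mu_1(K)$.

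One step, however, is false as written: splitting a dyadic interval of rank $N$ into its two children of rank $N+1$ multiplies its contribution to $\sum_\ell|I_\ell|^\alpha$ by $2\cdot 2^{-\alpha}=2^{1-\alpha}$, which is \emph{greater} than $1$ when $\alpha<1$, not less. So you cannot raise the minimum rank of a given cover ``for free'' by subdividing; repeated subdivision blows the $\alpha$-content up without bound. Fortunately the step is unnecessary. Since $\dim(K)<\alpha$, the $\alpha$-dimensional Hausdorff measure of $K$ vanishes, and because the mesh-$\delta$ pre-measures only increase as $\delta$ decreases, for \emph{every} $\delta>0$ and $\eps>0$ there is a cover of $K$ by intervals of length at most $\delta$ with $\alpha$-content below $\eps$ (you in fact invoke exactly this in your first paragraph). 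Taking $\delta\le 2^{-N_1-1}$ and $\eps<1/6$, extracting a finite subcover by compactness, and applying your dyadic conversion already produces dyadic intervals of rank at least $N_1$ with $\sum_\ell|I_\ell|^\alpha\le 1$. With that correction the argument is complete and is essentially the paper's proof.
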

\begin{proof}
  We first note that for any Borel set $E$,
  \begin{equation*}
    \Re\left(\int_E d\mu\right) \le \int d\mu_1.
  \end{equation*}
  Since $\mu_1$ and $\mu_2$ are mutually singular, there exist two disjoint Borel sets $S_1, S_2\subset S$ such that
  $\mu_1$ is carried by $S_1$ and $\mu_2$ is carried by $S_2$. Given $\eps>0$ we choose two compact sets $K_1\subset S_1$ and $K_2\subset S_2$
  such that
  \begin{equation*}
    \int_{K_j} d\mu_j \ge \int d\mu_j - \eps\quad j=1,2.
  \end{equation*}
  Given $N_1$, since $K_1$ is compact and has Hausdorff dimension $<\alpha$ we may cover it by
  a finite number of dyadic intervals $(I_j)$ such that $\sum
  |I_j|^\alpha \le 1$, $\rank(I_j)\ge N_1$ and $I_j\cap K_2=\emptyset$ for all $j$. Then
  the set $E = \cup I_j$ belongs to $\Omega_{N_1, N_2}$ for some
  $N_2$ and
  \begin{equation*}
    \Re\left(\int_E d\mu\right) = \int_E d\mu_1 - \int_E d\mu_2 \ge \int d\mu_1 -
    2\eps.
  \end{equation*}
  Since $N_1$ and $\eps$ are arbitrary, the lemma follows.
\end{proof}
\begin{lemma}\label{lem:max_noise_on_Omega}
  Given $N_1<N_2$, almost surely,
  \begin{equation*}
    \max_{E\in\Omega_{N_1, N_2}} |\xi(E)| \le Z_{N_1}.
  \end{equation*}
\end{lemma}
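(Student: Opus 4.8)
The plan is to bound $|\xi(E)|$ for a single $E\in\Omega_{N_1,N_2}$ in terms of the random variables $Y_n$ introduced above, and then note that the bound is uniform over the finite family $\Omega_{N_1,N_2}$. First I would recall that any $E\in\Omega_{N_1,N_2}$ is a finite disjoint union $E=\bigcup_j I_j$ of dyadic intervals with $N_1\le\rank(I_j)\le N_2$ and $\sum_j |I_j|^\alpha\le 1$. Grouping the intervals by rank, write $E=\bigcup_{n=N_1}^{N_2} E_n$, where $E_n$ is the union of those $I_j$ with $\rank(I_j)=n$; the $E_n$ are pairwise disjoint, so $\xi(E)=\sum_{n=N_1}^{N_2}\xi(E_n)$ by the $L^2$-additivity of $E\mapsto\xi(E)$, and hence $|\xi(E)|\le\sum_{n=N_1}^{N_2}|\xi(E_n)|$.

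Next I would check that each $E_n$ lies in $\D_n^\alpha$, so that $|\xi(E_n)|\le Y_n$. Indeed $E_n$ is a union of dyadic intervals of rank exactly $n$, so $E_n\in\D_n$; and since the intervals making up $E_n$ are among the $I_j$, with $|I_j|=2^{-n}$ for those of rank $n$, their number is at most $\sum_j|I_j|^\alpha\cdot 2^{n\alpha}\le 2^{n\alpha}$, whence $\mes(E_n)\le 2^{n\alpha}\cdot 2^{-n}=2^{-(1-\alpha)n}$, i.e. $E_n\in\D_n^\alpha$. Therefore $|\xi(E)|\le\sum_{n=N_1}^{N_2}Y_n\le\sum_{n\ge N_1}Y_n=Z_{N_1}$ by the definition \eqref{eq:Z_N_def} of $Z_{N_1}$. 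Taking the maximum over the finitely many $E\in\Omega_{N_1,N_2}$ gives $\max_{E\in\Omega_{N_1,N_2}}|\xi(E)|\le Z_{N_1}$, as claimed.

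The only subtlety — and it is mild — is the additivity of $\xi$ over a \emph{finite} disjoint union, which follows from the fact that $\1_{E_n}$ has disjoint supports so $\sum_n\1_{E_n}=\1_E$ pointwise and in $L^2$, combined with the $L^2$-convergence of the series defining $\xi(\cdot)$; the "almost surely" in the statement absorbs the null set on which these $L^2$-identities fail to hold pointwise in $\omega$ for the (finitely many) sets in play. I expect no real obstacle here; the work is entirely in the bookkeeping that splitting $E$ by rank produces sets in $\D_n^\alpha$, which the constraint $\sum|I_j|^\alpha\le1$ was designed to guarantee.
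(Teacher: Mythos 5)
Your proof is correct and follows essentially the same route as the paper: split $E$ by rank into $E_n=\bigcup\{I_j:\rank(I_j)=n\}$, check via the constraint $\sum|I_j|^\alpha\le 1$ that $E_n\in\D_n^\alpha$ (so $|\xi(E_n)|\le Y_n$), and sum using the finite additivity of $E\mapsto\xi(E)$ over (essentially) disjoint unions. The only cosmetic difference is that you spell out the counting argument for $\mes(E_n)\le 2^{-(1-\alpha)n}$, which the paper leaves as an "observe"; as in the paper, one should note that the representation can be taken with intervals of pairwise disjoint interiors (discard any dyadic interval contained in another, which only decreases $\sum|I_j|^\alpha$), since the definition of $\Omega_{N_1,N_2}$ does not itself impose disjointness.
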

\begin{proof}
  Let $E\in\Omega_{N_1, N_2}$. We may choose a representation $E =
  \cup I_j^N$ where each $I_j^N$ is a dyadic
  interval of rank $N$, $N_1\le N\le N_2$, the $I_j^N$ have disjoint interiors, and $\sum
  |I_j^N|^\alpha \le 1$. Now let $E_N:=\cup_j I_j^N$ and observe
  that $E_N\in\D_N^\alpha$. Since the $E_N$ are disjoint up to measure zero,
  we have, almost surely,
  \begin{equation*}
    \xi(E) = \sum_{N=N_1}^{N_2} \xi(E_N).
  \end{equation*}
  Finally, the lemma follows from the definition~\eqref{eq:Z_N_def}
  of $Z_N$.
\end{proof}
It follows from Lemmas~\ref{lem:T_E_prop} and
\ref{lem:max_noise_on_Omega} that for any $N_1<N_2$, almost surely,
\begin{equation*}
  \left|\max_{E\in \Omega_{N_1, N_2}} \Re\left(T_E(\ft{\mu} + \xi)\right)
  - \max_{E\in \Omega_{N_1, N_2}} \Re\left(\int_E d\mu\right)\right|
  \le Z_{N_1}
\end{equation*}
for every complex Borel measure $\mu$. The equality
\eqref{eq:measure_recovery_algorithm} now follows from
Lemma~\ref{lem:mu_recovery_lemma} and \eqref{eq:Z_N_small_prop},
proving the correctness of the recovery algorithm and completing the
proof of part \ref{it:recovery_measures} of
Theorem~\ref{thm:reconstruction-small-dimension}.

\subsection{Non-detection with unknown support. Proof of part \ref{it:non-detection_measures} of Theorem~\ref{thm:reconstruction-small-dimension}}
In this section we prove that $\F_\alpha$ does not admit detection
when $\alpha>1/2$, establishing part \ref{it:non-detection_measures}
of Theorem~\ref{thm:reconstruction-small-dimension}. The result will
follow from our general non-detection condition
(Theorem~\ref{thm:general_detection}). However, it will be
convenient to apply the condition not to the space $\F_\alpha$
directly but to the space $\Wa_\alpha$ (defined below) of Walsh
transforms of the corresponding measures. As we will prove,
detection in the space $\Wa_\alpha$ is equivalent to detection in
$\F_\alpha$.

Let $q\ge 2$ be a fixed integer. Each non-negative integer $n$
admits a unique representation in base $q$ in the form
\begin{equation}\label{eq:n_base_q}
  n = \sum_{j\ge 1} n_j q^{j-1}, \quad n_j\in\{0,1,\ldots,
  q-1\},\quad \sum n_j<\infty.
\end{equation}
Similarly, any $t\in[0,1)$ admits an expansion
\begin{equation}\label{eq:t_base_q}
  t = \sum_{j=1}^\infty t_j q^{-j},\quad t_j\in\{0,1,\ldots,
  q-1\}.
\end{equation}
We let $\{w_n(t)\}$, $n\ge 0$, denote the Walsh functions in base
$q$, defined by
\begin{equation*}
  w_n(t) = \exp\left\{\frac{2\pi i}{q}\sum_{j\ge 1} n_j
  t_j\right\}.
\end{equation*}
As a convention we define $w_n(t)$ to be continuous from the right,
avoiding ambiguity when $t$ admits two different expansions
\eqref{eq:t_base_q}. It is well-known that the Walsh system
$\{w_n(t)\}$ forms a complete orthonormal system in the space
$L^2(\T)$. If $\mu$ is a finite measure on $\T$ then it is uniquely
determined by its Walsh coefficients
\begin{equation*}
  \dotprod{\mu}{w_n} := \int_\T \overline{w_n(t)} \, d\mu(t),\quad n\ge 0.
\end{equation*}
To avoid confusion, in this section we will denote the Fourier
coefficients of $\mu$ by
\begin{equation*}
  \dotprod{\mu}{e_n} := \int_\T \overline{e_n(t)} \, d\mu(t),\quad n\in\Z,
\end{equation*}
where $e_n(t) = \exp(2\pi int)$.

We will use the following property of the Walsh system in base $q$.
If $f(t)$ is an integrable function on $\T$ then the partial sum
\begin{equation}\label{eq:partial_sum}
  \sum_{n=0}^{q^s-1} \dotprod{f}{w_n}\, w_n(t)
\end{equation}
is constant on each $q$-adic interval $\big[j/q^s, (j+1)/q^s \big)$
and coincides there with the mean value of $f$ on this interval. In
particular, if $f$ is continuous on $\T$ then the sums
\eqref{eq:partial_sum} converge to $f$ uniformly as $s\to\infty$.

The following proposition shows that detection of Fourier
coefficients is equivalent to detection of Walsh coefficients.
Intuitively, this follows from the fact that the change-of-basis
operator between two orthonormal bases of $\ell^2$ is unitary, and
from the fact that the distribution of a sequence of independent
standard Gaussian random variables is preserved under unitary
transformations.

\begin{proposition}\label{prop:Fourier_Walsh}
  Let $\A$ be a family of measures. If the space $\F_\A$ of Fourier
  transforms of all measures $\mu\in \A$ admits detection, then the
  same is true for the space $\Wa_\A$ of Walsh transforms of the
  measures $\mu\in\A$.
\end{proposition}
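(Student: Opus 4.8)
The plan is to build a single Borel map $\Phi\colon\C^\N\to\C^\Z$ which, applied to the noisy \emph{Walsh} signal of a measure, produces (in distribution) its noisy \emph{Fourier} signal, and then to set $S:=T\circ\Phi$ where $T$ is a detection map for $\F_\A$. The conceptual point is exactly the one noted before the proposition: the Fourier system $\{e_n\}_{n\in\Z}$ and the Walsh system $\{w_n\}_{n\ge0}$ are both orthonormal bases of $L^2(\T)$, so the change-of-basis matrix $u_{m,n}:=\dotprod{e_m}{w_n}$ is unitary, and the law of an i.i.d.\ standard complex Gaussian sequence is invariant under (infinite) unitary transformations; the work is in making this rigorous in the presence of measures whose coefficient sequences are only bounded, not in $\ell^2$.

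First I would define $\Phi$. For $y=\{y_n\}_{n\ge0}\in\C^\N$ and $m\in\Z$ put $\phi_m^{(s)}(y):=\sum_{n=0}^{q^s-1}\overline{u_{m,n}}\,y_n$, a linear function of finitely many coordinates, and set $\Phi(y):=\{\lim_{s\to\infty}\phi_m^{(s)}(y)\}_{m\in\Z}$ whenever all these limits exist, and $\Phi(y):=0$ otherwise. The set on which all the limits exist is Borel, so $\Phi$ is a Borel map. For the signal part, let $\mu$ be any finite complex measure; since the inner sum is finite we may interchange summation and integration to get
\[
\phi_m^{(s)}\big(\{\dotprod{\mu}{w_n}\}\big)=\int_\T\overline{\Big(\sum_{n=0}^{q^s-1}u_{m,n}w_n(t)\Big)}\,d\mu(t),
\]
and $\sum_{n=0}^{q^s-1}u_{m,n}w_n=\sum_{n=0}^{q^s-1}\dotprod{e_m}{w_n}w_n$ is precisely the $q^s$-th partial Walsh sum of the continuous function $e_m$, which converges uniformly to $e_m$ as $s\to\infty$ (the property recalled above). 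As $\mu$ is a finite measure this forces $\phi_m^{(s)}(\{\dotprod{\mu}{w_n}\})\to\dotprod{\mu}{e_m}=\ft{\mu}(m)$ for every $m\in\Z$. Note that this uses the special subsequence $q^s$; absolute convergence of $\sum_n|u_{m,n}|$ is not available and is not needed.

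For the noise part, let $\{\zeta_n\}_{n\ge0}$ be i.i.d.\ standard complex Gaussians. For each fixed $m$ the series $\xi_m:=\sum_{n\ge0}\overline{u_{m,n}}\zeta_n$ converges in $L^2$ and almost surely, since $\sum_n|u_{m,n}|^2=\|e_m\|_{L^2(\T)}^2=1$ by Parseval; in particular $\phi_m^{(s)}(\{\zeta_n\})\to\xi_m$ a.s., simultaneously for all $m\in\Z$. Each $\xi_m$ is a centered complex Gaussian, the family $\{\xi_m\}_{m\in\Z}$ is jointly Gaussian, and with the convention $\e|\zeta_0|^2=2$ one computes
\[
\e\,\xi_m\overline{\xi_{m'}}=2\sum_n\overline{u_{m,n}}u_{m',n}=2\,\overline{\dotprod{e_m}{e_{m'}}}=2\delta_{m,m'},\qquad \e\,\xi_m\xi_{m'}=0,
\]
so $\{\xi_m\}_{m\in\Z}$ is again an i.i.d.\ standard complex Gaussian sequence. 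Combining the two computations: if $\mu$ is any finite complex measure and $y=\{\dotprod{\mu}{w_n}+\zeta_n\}$ is its noisy Walsh signal, then almost surely $\Phi(y)=\{\ft{\mu}(m)+\xi_m\}_{m\in\Z}$, whose law is exactly that of the noisy Fourier signal of $\mu$. Applying this with $\mu\in\A$ and with $\mu=0$, the map $S:=T\circ\Phi$ satisfies $S(\{\dotprod{\mu}{w_n}+\zeta_n\})=1$ a.s.\ for every $\mu\in\A$ and $S(\{\zeta_n\})=0$ a.s., so $\Wa_\A$ admits detection.

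The main obstacle is precisely this handling of the infinite change-of-basis sums. Because the rows of $(u_{m,n})$ need not be absolutely summable, one cannot simply transpose-conjugate the unitary matrix coefficientwise; instead one exploits two special features separately — uniform convergence of Walsh partial sums along the subsequence $q^s$ on the continuous functions $e_m$ for the signal part, and $L^2$ orthogonal-series convergence together with the covariance identity $\sum_n\overline{u_{m,n}}u_{m',n}=\overline{\dotprod{e_m}{e_{m'}}}=\delta_{m,m'}$ (the unitarity, i.e.\ the rotation invariance of Gaussian noise in disguise) for the noise part. Everything else — measurability of $\Phi$, and the passage from equality of laws to the detection properties of $S=T\circ\Phi$ via a Fubini-type argument — is routine.
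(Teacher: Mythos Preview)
Your proof is correct and is essentially identical to the paper's: you build the same change-of-basis map $\Phi$ (the paper calls it $S$) via the coefficients $u_{m,n}=\dotprod{e_m}{w_n}$ (the paper's $c_{nk}$), use uniform convergence of the $q^s$-th Walsh partial sums of $e_m$ for the signal part, and use unitarity of $(u_{m,n})$ for the noise part, then compose with the Fourier detection map $T$. If anything you are slightly more explicit than the paper in spelling out the covariance computation $\e\,\xi_m\overline{\xi_{m'}}=2\delta_{m,m'}$ and in noting that absolute summability of the rows is neither available nor needed.
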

\begin{proof}
  Let $T$ be a detection mapping for $\F_\A$. According to
  \eqref{eq:partial_sum}, the exponential function $e_n(t)$,
  $n\in\Z$, admits an expansion
  \begin{equation}\label{eq:e_n_expansion}
    e_n(t) = \lim_{s\to\infty} \sum_{k=0}^{q^s-1} c_{nk} w_k(t)
  \end{equation}
  uniformly convergent on $\T$. We define a measurable mapping $S$
  from $\C^\infty$ to $\C^\infty$ by $S(y) := y'$, where
  \begin{equation*}
    y_n' := \lim_{s\to\infty} \sum_{k=0}^{q^s-1} \overline{c_{nk}} y_k
  \end{equation*}
  if the limit exists, otherwise $y'$ may be arbitrarily defined
  (e.g., by taking limsup, etc.). Let $y_n = \dotprod{\mu}{w_n} + \xi_n$ be
  a noisy version of a signal in $\Wa_\A$. Then by
  \eqref{eq:e_n_expansion},
  \begin{equation}\label{eq:e_n_coeff_expansion}
    \lim_{s\to\infty} \sum_{k=0}^{q^s-1} \overline{c_{nk}} \dotprod{\mu}{w_k}
    = \dotprod{\mu}{e_n}.
  \end{equation}
  On the other hand, the limit
  \begin{equation*}
    \lim_{s\to\infty} \sum_{k=0}^{q^s-1} \overline{c_{nk}} \xi_k =:\xi_n'
  \end{equation*}
  exists almost surely and defines a sequence of random variables $\{\xi_n'\}$ which are also
  Gaussian and independent due to the unitarity of the operator
  $\{c_{nk}\}$.
  It follows that $y' = S(y)$ is a noisy version of the Fourier
  transform of $\mu$. Hence it is clear that $T\circ S$ provides a
  detection mapping for the space $\Wa_\A$.
\end{proof}

\begin{remark*}
  One can show that also the converse to
  Proposition~\ref{prop:Fourier_Walsh} holds, namely, detection in
  the space $\Wa_{\A}$ implies detection in $\F_{\A}$. This can be
  proved in a similar way, exchanging the roles of the Walsh system
  $\{w_n\}$ and the trigonometric system $\{e_n\}$. One may need to
  replace the $q$-adic partial sums in \eqref{eq:e_n_expansion} by
  Fej\'er sums in the corresponding representation of $w_n(t)$ by a
  trigonometric series, in order to justify the corresponding
  \eqref{eq:e_n_coeff_expansion}. We leave the details to the
  reader.
  We also remark that similarly one can show that \emph{recovery} in
  the spaces $\F_\A$ and $\Wa_\A$ is equivalent, in the same way.
\end{remark*}

Let $\Wa_\alpha$ consist of the Walsh transforms of all positive
measures $\mu$, such that $\mu$ is carried by a Borel set of
Hausdorff dimension $\le \alpha$. In view of
Proposition~\ref{prop:Fourier_Walsh}, it will be enough to prove
that detection is not possible in $\Wa_\alpha$ if $\alpha>1/2$. This
will be done below, based on Theorem~\ref{thm:general_detection}.

Given $\alpha>1/2$, we will construct a probability distribution on
the space of measures $\mu$ on $\T$ which are carried by a set of
Hausdorff dimension $\le \alpha$. We choose and fix two positive
integers $q=q(\alpha)$ and $p=p(\alpha)$, such that
\begin{equation}\label{eq:p_and_q}
  q^{1/2} < p < q^{\alpha}.
\end{equation}
We partition the interval $[0,1)$ into $q$ consecutive intervals of
equal length, and choose $p$ of them randomly and uniformly among
the $\binom{q}{p}$ possible choices. We thus obtain a system
$\{I_j^1\}$ of $p$ intervals, each of length $1/q$. To each one of
these intervals we apply a similar procedure: we partition each
$I_j^1$ into $q$ equal length intervals, and choose $p$ of them
randomly and independently of the other choices. We obtain a system
$\{I_j^2\}$ of $p^2$ intervals of length $1/q^2$ each. Continuing
this way, on the $k$'th stage we obtain a (random) system
$\{I_j^k\}$ of $p^k$ intervals of length $1/q^k$. It follows that
the set
\begin{equation*}
  E = \bigcap_{k=1}^\infty \bigcup_j I_j^k
\end{equation*}
is a (random) subset of $\T$ whose Hausdorff dimension is not
greater than $\log p / \log q$ (in fact, the dimension equals $\log
p /\log q$ almost surely, but we do not use this). According to
\eqref{eq:p_and_q}, the Hausdorff dimension of $E$ is $< \alpha$.

The set $E$ carries a natural probability measure $\mu$, which by
definition assigns mass $1/p^k$ to each one of the intervals
$\{I_j^k\}$ of the $k$'th step. By considering the Walsh transform
(in base $q$) of this measure, $x_n = \dotprod{\mu}{w_n}$, as a
random element of $\Wa_\alpha$, we obtain a probability distribution
$\P$ on $\Wa_\alpha$.
\begin{proposition}\label{prop:measure_non-detection}
  We have
  \begin{equation}\label{eq:measure_non-detection}
    \liminf_{k\to\infty} \; \e \,\left|\exp\left\{\sum_{n=0}^{k-1}
    x_n\overline{y_n}\right\}\right| <\infty\, ,
  \end{equation}
  where $x$ and $y$ are sampled independently from the measure $\P$ above.
\end{proposition}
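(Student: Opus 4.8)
The plan is to compute, or at least bound from above along a subsequence, the quantity $\e\,|\exp\{\sum_{n=0}^{k-1} x_n\overline{y_n}\}|$ by exploiting the explicit self-similar structure of the random measure $\mu$ and the fact that partial Walsh sums average over $q$-adic intervals. The starting point is the identity from \eqref{eq:partial_sum}: for $k = q^s$, the sum $\sum_{n=0}^{q^s-1} \dotprod{\mu}{w_n} \overline{\dotprod{\nu}{w_n}}$ equals $\int_\T f_s^\mu(t)\,\overline{f_s^\nu(t)}\,dt$ where $f_s^\mu$ is the density of the $q$-adic conditional expectation of $\mu$ at level $s$, i.e. $f_s^\mu$ is constant on each $q$-adic interval $J$ of rank $s$ with value $\mu(J)/|J| = q^s\mu(J)$. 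So I would restrict attention to the subsequence $k = q^s$ and rewrite
\[
\sum_{n=0}^{q^s-1} x_n\overline{y_n} = q^s \sum_{J:\,\rank(J)=s} \mu(J)\,\overline{\nu(J)},
\]
where $\mu,\nu$ are two independent copies of the random Cantor-type measure. For a $q$-adic interval $J$ of rank $s$, the mass $\mu(J)$ is $p^{-s}$ if $J$ survives all $s$ levels of the random selection defining $E$, and $0$ otherwise; likewise for $\nu(J)$. Hence $\mu(J)\overline{\nu(J)} = p^{-2s}\1\{J\subset E_\mu\}\1\{J\subset E_\nu\}$, and the whole exponent becomes $(q/p^2)^s \cdot \#\{J:\ \rank(J)=s,\ J\subset E_\mu\cap E_\nu\} =: (q/p^2)^s \cdot W_s$, where $W_s$ counts the common surviving rank-$s$ intervals.

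The key observation is that by \eqref{eq:p_and_q} we have $p > q^{1/2}$, so $q/p^2 < 1$; write $\rho := q/p^2 \in (0,1)$. Thus I need to control $\e\,\exp\{\rho^s W_s\}$ and show its $\liminf$ over $s$ is finite. Now $W_s$ is the size of the intersection of two independent copies of the random $p$-out-of-$q$ survival tree, so it is itself a branching-type random variable: at each level, a surviving common interval (one that lies in both trees) has, among its $q$ children, a random number that survive in $\mu$ (a uniform $p$-subset) and an independent random number that survive in $\nu$; the number surviving in both has expectation $q\cdot(p/q)^2 = \rho$ per common parent. So $(W_s)$ is a Galton–Watson process with mean offspring $\rho<1$, hence subcritical; it dies out almost surely and $\e W_s = \rho^s \to 0$. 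The task is then to show $\e\,\exp\{\rho^s W_s\}$ stays bounded (indeed tends to $1$) along $s\to\infty$. I would do this by a direct recursion on the generating-function-type quantity $\varphi_s(\lambda):=\e\,\exp\{\lambda\rho^s W_s\}$ conditioned on $W_s$ being rooted at a single common interval; the self-similarity gives $\varphi_s(\lambda) = G\bigl(\varphi_{s-1}(\lambda)\bigr)$ where $G$ is the offspring PGF-like map $z\mapsto \e\, z^{\,B_1\wedge B_2\text{-overlap}}$ evaluated appropriately after rescaling $\lambda\mapsto\rho\lambda$. Since $\rho<1$ and $G'(1)=\rho<1$, a standard contraction/fixed-point argument shows $\varphi_s(\lambda)$ converges (to $1$), uniformly for $\lambda$ in a bounded set; in particular $\liminf_s \varphi_s(1) < \infty$.

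The main obstacle will be handling the dependence structure in $W_s$ cleanly enough to get the recursion: within a single copy of the measure the $p$-subset of children is chosen \emph{without} replacement, so the numbers of survivors in different sub-branches are not independent at a fixed level, and the overlap count $W_s$ is a sum over a random set of parents of dependent contributions. The cleanest fix, which I would adopt, is to pass to the natural tree/branching-process description from the outset: condition on the common-survival tree $\mathcal T$ (whose vertices at level $j$ are the rank-$j$ $q$-adic intervals lying in $E_\mu\cap E_\nu$), note that $\mathcal T$ \emph{is} exactly a Galton–Watson tree with offspring distribution $D$ = (number of children surviving in a uniform $p$-subset and independently in another uniform $p$-subset) which has $\e D = \rho$ and bounded support (at most $q$), and then $W_s = Z_s$, the $s$-th generation size. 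Boundedness of the offspring distribution makes all exponential moments finite and the recursion $\varphi_s(\lambda) = \psi_D(\varphi_{s-1}(\rho^{-1}\lambda\cdot\rho))$... — more precisely, letting $m_s(\lambda) = \e[\lambda^{Z_s}]$ one has $m_s = m_{s-1}\circ \psi_D$ with $\psi_D$ the offspring PGF, $\psi_D(1)=1$, $\psi_D'(1)=\rho$ — is then entirely standard, and substituting $\lambda = \exp(\rho^s)$ and using $\rho^s\to0$ together with $\psi_D'(1)=\rho<1$ gives $m_s(\exp(\rho^s))\to 1$, which is \eqref{eq:measure_non-detection}. The only remaining care is that the statement asks for $\liminf$ over \emph{all} $k$, not just $k=q^s$; but since the exponents $\sum_{n=0}^{k-1}x_n\overline{y_n}$ along the subsequence $k=q^s$ already have bounded exponential moments, and this subsequence tends to infinity, the $\liminf$ over all $k$ is at most this bounded value, which suffices.
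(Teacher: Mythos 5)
Your reduction of the exponent along $k=q^s$ to $(q/p^2)^s\cdot W_s$, with $W_s$ the number of rank-$s$ $q$-adic intervals surviving in both independent copies of the construction, is exactly the paper's starting point (the paper gets it via Parseval for the Walsh partial sums). But the analysis that follows contains a fatal algebra error: the expected number of children of a common surviving interval that survive in both copies is $q\cdot(p/q)^2=p^2/q$, not $q/p^2$. Since \eqref{eq:p_and_q} gives $p>q^{1/2}$, this mean is $p^2/q>1$, so the common-survival tree is a \emph{supercritical} Galton--Watson process, not a subcritical one. It does not die out almost surely, $\e[(q/p^2)^sW_s]=1$ for every $s$ (indeed $(q/p^2)^sW_s$ is precisely the normalized martingale $Z_s/\e Z_s$ of the supercritical process), and consequently $\e\exp\{(q/p^2)^sW_s\}$ does not tend to $1$ --- it is a nondecreasing sequence bounded below by $e$. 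Your proposed contraction/fixed-point argument, which rests entirely on $G'(1)=\rho<1$, therefore collapses.

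What is actually needed, and what the paper supplies, is a nontrivial fact about supercritical branching processes: by conditional Jensen, $\e\exp(W_s)\le\e\exp(W)$ where $W$ is the a.s.\ and $L^1$ limit of the normalized martingale, and a theorem of Harris guarantees that $\e\exp(\theta W)<\infty$ for all $\theta\ge0$ when the offspring distribution has finite support. Your proposal has no substitute for this ingredient; without it, boundedness of the exponential moments along the subsequence $k=q^s$ is not established. (Your closing remark that bounding the $\liminf$ along the subsequence $k=q^s$ suffices is fine, and your identification of the offspring distribution as the overlap of two independent uniform $p$-subsets of a $q$-set is also correct; the gap is solely in the criticality direction and the resulting moment bound.)
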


Part~\ref{it:non-detection_measures} of
Theorem~\ref{thm:reconstruction-small-dimension} follows from
Propositions~\ref{prop:Fourier_Walsh},
\ref{prop:measure_non-detection} and
Theorem~\ref{thm:general_detection}.
\begin{proof}[Proof of Proposition~\ref{prop:measure_non-detection}]
Let $\{x_n=\dotprod{\mu}{w_n}\}$ and $\{y_n=\dotprod{\nu}{w_n}\}$ be
the Walsh transforms of two measures $\mu$ and $\nu$, respectively,
constructed using two independent iterations of the random process
above. For each $k$, the partial sums
\begin{equation*}
  \varphi_k(t) = \sum_{n=0}^{q^k-1} \dotprod{\mu}{w_n}\, w_n(t)
\end{equation*}
and
\begin{equation*}
  \psi_k(t) = \sum_{n=0}^{q^k-1} \dotprod{\nu}{w_n}\, w_n(t)
\end{equation*}
are constant on each of the intervals chosen on the $k$'th step of
the construction, and are equal to $(\frac{q}{p})^k$ on these
intervals. By Parseval's equality,
\begin{equation*}
  \sum_{n=0}^{q^k-1} x_n \, \overline{y_n} = \sum_{n=0}^{q^k-1}
  \dotprod{\varphi_k}{w_n} \, \overline{\dotprod{\psi_k}{w_n}} = \int_\T
  \varphi_k(t) \, \overline{\psi_k(t)} \, dt =
  \left(\frac{q}{p^2}\right)^k\cdot Z_k\, ,
\end{equation*}
where $Z_k$ is the number of intervals of the $k$'th step common to
both $\mu$ and $\nu$. By the definition of the random process above,
the random variable $Z_1$ has the hypergeometric distribution,
obtained when sampling $p$ times without replacement from an urn
with $q$ balls, of which $p$ balls are red and $q-p$ balls are
black, and counting the number of red balls sampled.

By the same definition, the random variable $Z_k$ has the
distribution of the population size at the $k$'th generation of a
Galton-Watson branching process, whose offspring distribution has
the law of $Z_1$.

These observations allow us to conclude
\eqref{eq:measure_non-detection} easily using the theory of
branching processes. Indeed, the expected number of offsprings is
\begin{equation*}
  \e(Z_1) = p^2/q > 1,
\end{equation*}
according to \eqref{eq:p_and_q}. Thus, the expectation in
\eqref{eq:measure_non-detection} can be expressed as $\e \exp(W_k)$,
where $W_k$ is the normalized population size, $W_k = Z_k / \e
(Z_k)$. It is well-known that $W_k$ is a non-negative martingale
convergent almost surely to a limit $W$. Moreover, since the
offspring distribution $Z_1$ has finite support, $W_k$ converges
also in $L^1$ to $W$ (see, e.g., \cite[Thm. 2.1]{harris}). By the
conditional Jensen's inequality we have
\begin{equation*}
  \e[\exp (W)\ |\ W_k] \ge \exp \,\e[W\ |\ W_k] = \exp (W_k).
\end{equation*}
Taking expectations of both sides we obtain
\begin{equation*}
  \e \exp(W_k)\le \e \exp(W).
\end{equation*}
A result of Harris \cite[Thm. 3.4]{harris} implies that the moment
generating function $\e \exp (\theta W)$, $\theta\ge 0$, of the
limit $W$ is finite everywhere, whenever the Galton-Watson process
is super-critical and has an offspring distribution with finite
support. This result yields that the right-hand side of the above
inequality is finite, and since it is independent of $k$, this
proves \eqref{eq:measure_non-detection}.
\end{proof}

% ========================================================================

\section{General detection and recovery}
\label{section:preliminaries} In this section we present conditions
for signal spaces to admit detection and recovery. We start by
describing operations on spaces which preserve the properties of
admitting detection or recovery. We then present explicit detection
and recovery algorithms applicable to a variety of signal spaces.

\subsection{New spaces from old}
Suppose $\X$ consists of just one signal $x$. It was already
mentioned in the introduction that if $x\in\ell^2$ then $\X$ does
not admit detection. This follows from Kakutani's theorem on
singularity of product measures \cite{kakutani} and is also a
consequence of our Theorem~\ref{thm:general_non_detection_intro}. In
the converse direction, if $x\notin\ell^2$ then $\X$ admits
detection. This can be deduced, for example, from
Theorem~\ref{thm:general_detection_intro} (to be proved later, see
Corollary~\ref{cor:vol_growth_detect} (i)). It is useful, however,
to describe an explicit detection map for this case.
%One may also see this directly via the following detection map.
Assume without loss of generality that $x\in\R^\N$. Declare that an
observed noisy signal $z$ is pure noise (that is, $z=\xi$) if and
only if
\begin{equation}\label{eq:one-point_space_detection_map}
  \limsup_{N\to\infty}\sum_{n=1}^N \left(x_n z_n - \frac{1}{2}x_n^2\right)\le 0.
\end{equation}
Observing that the random walk $(\sum_{n\le N} x_n \xi_n)_{N\ge 1}$
has the same distribution as $(B(\sum_{n\le N} x_n^2))_{N\ge 1}$,
where $B$ is a standard Brownian motion, the validity of this
detection map follows from the law of the iterated logarithm or the
law of large numbers for Brownian motion.

Now consider the case that $\X$ consists of exactly two signals
$x,y$. By subtracting $x$ from the observed signal we see that $\X$
admits recovery if and only if $\X':=\{y-x\}$ admits detection. Thus
we conclude that $\X$ admits recovery if and only if
$x-y\notin\ell^2$ and we may obtain an explicit recovery map by
replacing $x$ with $x-y$ in the condition
\eqref{eq:one-point_space_detection_map}. An immediate consequence
of the next proposition is that these results extend to countable
signal spaces. Precisely, a countable space $\X$ admits detection if
and only if $\X\cap\ell^2 = \emptyset$ and it admits recovery if and
only if $(\X - \X)\cap\ell^2 = \{0\}$.
\begin{proposition}\label{prop:union_of_spaces}
  Let $(\X_i)$, $i\in\N$, be signal spaces on the same index set and $\X =
  \cup \X_i$.
  \begin{enumerate-math}
    \item $\X$ admits detection if and only if each $\X_i$ admits detection.
    \item If $\X$ admits recovery then each of the spaces $\X_i$ admits
    recovery.
    \item Suppose that each of the spaces $\X_i$ is a Borel set.
    If each of the spaces $\X_i$ admits
    recovery and $\sum |x_n - y_n|^2 =
    \infty$ for every distinct $x,y\in \X$ then $\X$ admits recovery.
  \end{enumerate-math}
\end{proposition}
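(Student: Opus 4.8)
Parts (i) and (ii) I would dispose of first, as they are immediate. For (ii), if $T$ recovers $\X$ then, since each $\X_i\subseteq\X$, the same map recovers $\X_i$; the same remark handles the ``only if'' direction of (i) for detection. For the ``if'' direction of (i), given detection maps $T_i$ for the $\X_i$ I would set $T(z):=1$ if $T_i(z)=1$ for some $i$ and $T(z):=0$ otherwise; this is Borel (a countable supremum of $\{0,1\}$-valued Borel maps), it satisfies $\p(T(\xi)=1)\le\sum_i\p(T_i(\xi)=1)=0$, and for $x\in\X_i$ it gives $T(x+\xi)\ge T_i(x+\xi)=1$ almost surely.

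For (iii) the plan is as follows. Using that each $\X_i$ is Borel, I would first replace $T_i(z)$ by a fixed point $p_i\in\X_i$ whenever $T_i(z)\notin\X_i$; this is a Borel modification and leaves $T_i$ a recovery map for $\X_i$, so I may assume every $T_i$ maps $\R^\N$ into $\X_i$, and then the ``candidates'' $w^{(i)}(z):=T_i(z)$ always lie in $\X$. Next I would introduce the Borel comparison $g\colon\R^\N\times\R^\N\times\R^\N\to\{0,1\}$ defined by $g(u,v,z)=1$ iff $u=v$ or
\[
\limsup_{N\to\infty}\ \sum_{n=1}^N (u_n-v_n)\Bigl(z_n-\tfrac{u_n+v_n}{2}\Bigr)>0,
\]
whose role is to formalize ``$z$ fits $u+\xi$ at least as well as $v+\xi$''. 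The recovery map for $\X$ would then be $T(z):=w^{(i^*)}(z)$, where $i^*=i^*(z)$ is the least index $i$ with $g\bigl(w^{(i)}(z),w^{(k)}(z),z\bigr)=1$ for every $k$ (and, say, $i^*:=1$ if there is no such index); measurability of $T$ is a routine verification.

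The verification that this $T$ works reduces, I claim, to a single statement: for each fixed $x\in\X$ and each $i$, almost surely on the event $\{w^{(i)}(x+\xi)\ne x\}$ one has
\[
\tfrac12\sum_{n=1}^N\bigl(x_n-w^{(i)}_n\bigr)^2+\sum_{n=1}^N\bigl(x_n-w^{(i)}_n\bigr)\xi_n\ \xrightarrow[N\to\infty]{}\ +\infty .
\]
Indeed, write $z=x+\xi$ with $x\in\X_{i_0}$. Almost surely $w^{(i_0)}(z)=x$; and for $z=x+\xi$ the $\limsup$ in the definition of $g(x,w^{(k)}(z),z)$ is exactly the $\limsup$ of the displayed sum with $i$ replaced by $k$, so the claim yields $g(x,w^{(k)}(z),z)=1$ for all $k$, whence $i^*\le i_0$. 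Conversely, for $u=w^{(i)}$ and $v=x$ the relevant $\limsup$ is the $\limsup$ of the \emph{negative} of the displayed sum, so whenever $w^{(i)}(z)\ne x$ the claim forces $g\bigl(w^{(i)}(z),w^{(i_0)}(z),z\bigr)=0$, which disqualifies $i$ from being $i^*$. Putting these together, $w^{(i^*)}(z)=x$ almost surely, i.e.\ $T(z)=x$.

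It remains to prove the claim, and this is where the real work lies. When every $\X_i$ is countable it is easy: $w^{(i)}(x+\xi)$ then takes values in a countable set $\{v^{(i,m)}\}_m$, and for each fixed pair $(i,m)$ with $v^{(i,m)}\ne x$ the hypothesis gives $\sum_n(x_n-v^{(i,m)}_n)^2=\infty$, so by the law of large numbers for Brownian motion $\sum_{n\le N}(x_n-v^{(i,m)}_n)\xi_n=o\bigl(\sum_{n\le N}(x_n-v^{(i,m)}_n)^2\bigr)$ almost surely and the displayed sum tends to $+\infty$ on $\{w^{(i)}(x+\xi)=v^{(i,m)}\}$; intersecting these countably many almost-sure events settles the countable case (in particular the case of a countable $\X$, which is the main application). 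For general Borel $\X_i$ the candidate $w^{(i)}(x+\xi)$ may take uncountably many values and depend on $\xi$, so the union over values breaks down; here I would disintegrate the law of $x+\xi$ according to the value $v$ of $w^{(i)}(x+\xi)$ and exploit that $T_i$ recovers $\X_i$ — so that $\{T_i=v\}$ has full $\mathrm{law}(v+\xi)$-measure — together with the fact that for every fixed $v\ne x$ the event that the displayed sum tends to $+\infty$ has full $\mathrm{law}(x+\xi)$-measure, in order to conclude that, conditionally on $w^{(i)}(x+\xi)=v$, the displayed sum still tends to $+\infty$ for almost every $v$. Making this disintegration step rigorous — controlling the conditional behavior of the noise given the value of the candidate — is the point I expect to be the main obstacle.
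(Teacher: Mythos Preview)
Parts (i) and (ii) are correct and coincide with the paper's argument. For (iii) your construction is exactly the paper's: form one candidate $x_i=T_i(z)$ per subspace and then run the explicit two-point log-likelihood comparison (the paper packages it as the recovery map $T_{x,y}$ for the two-point space $\{x,y\}$, your $g$) to single out the true signal among the candidates. The paper restricts attention to the index set $\mathcal{I}=\{i:T_i(z)\in\X_i\}$ rather than forcing $T_i$ into $\X_i$ as you do, and it selects ``the unique'' qualifying index rather than the least one, but these differences are cosmetic (your ``least index'' device in fact sidesteps the minor imprecision that two indices $i,j$ with $x_i=x_j=x$ would both qualify).

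The step you isolate as the main obstacle---that the rival candidate $w^{(i)}(x+\xi)=T_i(x+\xi)$ is a measurable function of $\xi$, so one cannot simply take a countable union over its possible values---is precisely the place where the paper is terse. The paper writes only ``By our assumptions, $x_i-x_j\notin\ell^2$ whenever $i,j\in\mathcal{I}$, $i\neq j$. Hence, almost surely, $i_0$ is the unique $i\in\mathcal{I}$ for which $T_{x_i,x_j}(z)=x_i$ for all $j\in\mathcal{I}$, $j\neq i$'', and moves on; in particular it does not carry out any disintegration or any other device addressing the $\xi$-dependence of $x_j$. So you have not overlooked a trick that the paper employs: the published proof simply asserts the conclusion at the point where you pause. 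Regarding your proposed fix, note that the fact ``$\{T_i=v\}$ has full $\mathrm{law}(v+\xi)$-measure'' concerns a measure \emph{singular} to $\mathrm{law}(x+\xi)$, and hence gives no direct control over the conditional behaviour of $\xi$ under $\mathrm{law}(x+\xi)$ given $T_i(x+\xi)=v$; if a disintegration argument is to work it will need a different leverage point. For countable $\X_i$ (and in particular for the headline application to countable $\X$) your argument is complete and matches the paper.
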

\begin{proof}
  Since $\X_i\subset\X$ for each $i$, it is clear that if $\X$
  admits detection or recovery then so does $\X_i$. Suppose now that
  each $\X_i$ admits detection and let $T_i$ be the corresponding
  detection function. It is simple to check that the mapping $T$
  defined by $T(z):=1 - \prod (1 - T_i(z))$ defines a detection
  function for $\X$.

  Now suppose that each $\X_i$ admits
  recovery and that $(\X-\X)\cap\ell^2 = \{0\}$. Let $T_i$ be the recovery
  function for each $\X_i$. Denote by $T_{x,y}$ the recovery
  function for the two-point space $\{x,y\}$ which arises from \eqref{eq:one-point_space_detection_map}. Observe that $T_{x,y}$ is Borel measurable jointly in its argument and in the
  pair $x,y$.
  Suppose that $z$ is the observed noisy signal from the space $\X$ and set
  $x_i:=T_i(z)$. Let $\mathcal{I}:=\{i\colon x_i\in\X_i\}$. It follows that, almost surely, $z = x_{i_0} + \xi$ for some $i_0\in\mathcal{I}$. By our assumptions, $x_i-x_j\notin\ell^2$ whenever $i,j\in\mathcal{I}$, $i\neq
  j$. Hence, almost surely, $i_0$ is the unique $i\in\mathcal{I}$ for
  which
  \begin{equation}\label{eq:recovery_in_union_cond}
  T_{x_i,x_j}(z) = x_i\text{ for all $j\in\mathcal{I}$, $j\neq i$}.
  \end{equation}
  Finally, we may obtain a recovery
  function $T$ for $\X$ by setting $T(z)$ to equal $x_i$ for the unique $i\in\mathcal{I}$ satisfying \eqref{eq:recovery_in_union_cond}, if such an $i$ exists, and setting $T(z)$ to be an arbitrary fixed
  sequence otherwise. The properties of the recovery functions $T_i$, $T_{x_i,x_j}$ and the assumption that each $\X_i$ is Borel ensure
  that $T$ is indeed a Borel measurable recovery function.
\end{proof}
We may use the above ideas to formalize the intuitive fact that
detection is easier than recovery.
\begin{proposition}\label{prop:recovery_implies_detection}
  Let $\X$ be a signal space satisfying $\X\cap\ell^2 = \emptyset$.
  If $\X$ admits recovery then $\X$ admits detection.
\end{proposition}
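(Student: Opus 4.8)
The plan is to reduce detection of $\X$ to recovery of the slightly enlarged space $\X\cup\{0\}$, and to obtain the latter from Proposition~\ref{prop:union_of_spaces}(iii). The intuition is that a procedure which correctly identifies \emph{which} element of $\X\cup\{0\}$ was transmitted in particular tells apart the zero signal (pure noise) from the nonzero signals in $\X$, and this is exactly detection.

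Concretely, I would proceed as follows. First, the enlarged space $\X\cup\{0\}$ satisfies the separation condition \eqref{eq:recover_nec_cond}: for distinct $x,y\in\X$ this holds because $\X$ admits recovery, while for a pair $x,0$ with $x\in\X$ it says $\sum_n|x_n|^2=\infty$, which is precisely the hypothesis $\X\cap\ell^2=\emptyset$. Second, $\X$ admits recovery by assumption, the one-point space $\{0\}$ admits recovery via the constant map $z\mapsto 0$, and both $\X$ and $\{0\}$ are Borel (the latter is a single closed point of $\R^\N$). Proposition~\ref{prop:union_of_spaces}(iii) thus yields a Borel recovery map $R:\R^\N\to\R^\N$ for $\X\cup\{0\}$. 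Finally, I would set $T'(z):=\1[R(z)\neq 0]$, which is Borel since $\{0\}$ is a Borel subset of $\R^\N$. For $x\in\X$ we have $x\neq 0$ (because $0\in\ell^2$ but $x\notin\ell^2$) and $R(x+\xi)=x$ almost surely, so $T'(x+\xi)=1$ almost surely; and since $0\in\X\cup\{0\}$, we have $R(\xi)=R(0+\xi)=0$ almost surely, so $T'(\xi)=0$ almost surely. Hence $T'$ witnesses that $\X$ admits detection.

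The step where the real content lies is the appeal to Proposition~\ref{prop:union_of_spaces}(iii): the delicate point one would otherwise face in building a detector directly from a recovery map $T$ — namely the possible adversarial correlation, when $z$ is pure noise, between the recovered signal $T(z)$ and the residual $z-T(z)$ — is exactly what is handled, through the two-point recovery maps arising from \eqref{eq:one-point_space_detection_map}, inside the proof of that proposition; here we simply quote it. The one bookkeeping requirement is that the pieces of the union be Borel sets, so one needs (and it is natural to assume throughout) that $\X$ itself is Borel; this is the only hypothesis one has to keep track of.
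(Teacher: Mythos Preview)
Your argument is correct (under the Borel hypothesis you flag) and is essentially a repackaging of the paper's own proof. The paper proceeds directly: given the recovery map $T$, it declares $z$ to be pure noise iff $T(z)\in\ell^2$ or $D_{T(z)}(z)=0$, where $D_x$ is the one-point detector arising from \eqref{eq:one-point_space_detection_map}. If you unwind Proposition~\ref{prop:union_of_spaces}(iii) for the two-piece union $\X\cup\{0\}$, you recover exactly this mechanism: the recovery map $R$ first computes $T(z)$ and then, when $T(z)\in\X$, uses the two-point map $T_{T(z),0}$ --- which is nothing but $D_{T(z)}$ --- to break the tie against $0$. So the ``delicate point'' you correctly isolate is not something extra that Proposition~\ref{prop:union_of_spaces}(iii) handles for you; it is the very same step, carried out in the very same way, just hidden inside that proposition rather than written out explicitly.

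The one substantive difference is that your route imposes a Borel hypothesis on $\X$ (needed for the measurability argument in Proposition~\ref{prop:union_of_spaces}(iii)), whereas the paper's direct construction avoids it: the composite $z\mapsto D_{T(z)}(z)$ is Borel as soon as $T$ is, because $(x,z)\mapsto D_x(z)$ is jointly Borel, and no structural assumption on $\X$ is needed. Thus the paper's version is marginally more general, while yours has the virtue of reducing the proposition to an already-stated result.
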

\begin{proof}
  Let $T$ be the recovery function for $\X$. Write $D_{x}$ for the detection function from the one-point space $\{x\}$ which arises from
  \eqref{eq:one-point_space_detection_map}. Declare that an observed
  signal $z$ is pure noise if either $T(z) \in \ell^2$ or
  $D_{T(z)}(z) = 0$. It is simple to check, using
  that $\ell^2$ is a Borel set, that this defines a
  valid detection function for $\X$.
\end{proof}

In our setup we take the same noise level for all coordinates. I.e.,
our noise is a standard Gaussian for all coordinates. Suppose
instead that we replace the noise $\xi_n$ in coordinate $n$ by
$\sigma_n\xi_n$ for some positive sequence $\{\sigma_n\}$. Upon
observing the noisy signal $\{x_n + \sigma_n \xi_n\}$, the receiver
may scale each coordinate and obtain the signal
$\{\frac{1}{\sigma_n} x_n + \xi_n\}$. Thus, changing the noise level
is equivalent to scaling the signal space $\X$. It seems intuitively
clear that reducing the noise level should make the tasks of
recovery and detection easier. The next proposition makes this
precise.
\begin{proposition}\label{prop_monotonicity_in_noise}
  Let $\X$ be a signal space and $\{c_n\}$ be a sequence such that $c_n\ge 1$ for all $n$. Let $\X_c:=\{\{c_n x_n\}\colon x\in\X\}$.
  \begin{enumerate-math}
    \item If $\X$ admits detection then $\X_c$ admits detection.
    \item If $\X$ admits recovery then $\X_c$ admits recovery.
  \end{enumerate-math}
\end{proposition}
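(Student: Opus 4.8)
The plan is to reduce both statements to the corresponding statements for $\X$ by the standard ``adding extra independent noise'' trick. Fix the sequence $\{c_n\}$ with $c_n\ge 1$ and write $d_n := \sqrt{c_n^2 - 1}\ge 0$. The key observation is distributional: if $\{\xi_n\}$ and $\{\eta_n\}$ are two independent standard Gaussian sequences, then $\{c_n\xi_n\}$ has the same distribution as $\{\xi_n + d_n\eta_n\}$, since for each $n$ the variance adds up, $1 + d_n^2 = c_n^2$, and the coordinates are independent. Consequently, given a noisy signal $z = \{c_n x_n + c_n\xi_n\}$ from $\X_c$, the receiver may generate an auxiliary independent Gaussian sequence $\{\eta_n\}$, form the sequence $z'_n := z_n/c_n - \frac{d_n}{c_n}\eta_n$, and check that $z' = \{x_n + \xi'_n\}$ where $\xi'_n := \xi_n - \frac{d_n}{c_n}\eta_n$ has variance $(1 + d_n^2)/c_n^2 = 1$ and the $\xi'_n$ are independent. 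Wait --- I need $\xi'$ to actually be a standard Gaussian sequence given only the observation, so the cleaner route is: since $\{c_n\xi_n\} \stackrel{d}{=} \{\xi_n + d_n\eta_n\}$, we may \emph{couple} so that the observed $z$ from $\X_c$ equals $\{c_n x_n + \xi_n + d_n\eta_n\}$; then $z_n/c_n = x_n + \xi_n/c_n + (d_n/c_n)\eta_n$, and since $\xi_n/c_n$ and $(d_n/c_n)\eta_n$ are independent Gaussians of variances $1/c_n^2$ and $d_n^2/c_n^2$ summing to $1$, the sequence $\tilde\xi_n := \xi_n/c_n + (d_n/c_n)\eta_n$ is a standard Gaussian sequence independent of everything else. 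So $z/c =: \{x_n + \tilde\xi_n\}$ is precisely a noisy observation of $x\in\X$ in our standard model.

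With this in hand, part (i) follows: given a detection map $T$ for $\X$, define a detection map $T_c$ for $\X_c$ as follows. On input $z\in\R^\N$ (or $\C^\N$), sample an independent auxiliary Gaussian sequence $\{\eta_n\}$ and output $T\big(\{z_n/c_n\}\big)$ --- but to stay within the definition, which requires a deterministic Borel map $\R^\N\to\{0,1\}$, we instead realize the randomization internally: enlarge the probability space by regarding the auxiliary randomness as a measurable function of a single extra coordinate, or, more simply, observe that a measurable selection argument (or Fubini, exactly as in Section~\ref{sec:proof_of_general_non_detection_thm}) lets us fix a deterministic map achieving detection with probability one. Concretely: the randomized procedure $z\mapsto T(z/c + (\text{noise}))$ satisfies, for each $x\in\X_c$, that it outputs $1$ a.s.\ (over the noise and the auxiliary randomness), and outputs $0$ a.s.\ when $z=\xi$; by Fubini there is a fixed realization of the auxiliary randomness for which the resulting \emph{deterministic} map works simultaneously --- more carefully, one appends the auxiliary Gaussians as genuine extra coordinates and then argues that detection on the enlarged index set $\N\sqcup\N$ for the space $\{(c_n x_n)_n\}\times\{0\}_n$ is equivalent, by the coupling above, to detection for $\X$, and a space admits detection iff its ``padding by zero coordinates'' does. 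Part (ii) is identical, replacing the detection map by the recovery map: from the observation $z\in\X_c + \text{noise}$ we reconstruct $\{x_n + \tilde\xi_n\}$, apply the recovery map $T$ for $\X$ to obtain $\{x_n\}$, and then output $\{c_n x_n\}$.

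The only genuine subtlety --- and the step I expect to require the most care --- is the passage from the \emph{randomized} algorithm (which clearly works) to a \emph{deterministic} Borel map as demanded by the definitions of admitting detection/recovery. This is a soft, measure-theoretic point and is handled exactly as the author handles the analogous issue elsewhere (e.g.\ the appeal to Fubini at the start of Section~\ref{sec:proof_of_general_non_detection_thm}, or the measurable-selection phrasing in Proposition~\ref{prop:union_of_spaces}): one should note that $\X$ admits detection (resp.\ recovery) if and only if the space $\X' := \{(x, 0)\} \subset \R^{\N\sqcup\N}$ obtained by adjoining infinitely many identically-zero coordinates does, since the extra coordinates carry only pure noise and may be ignored; then the distributional identity $\{c_n\xi_n\}\stackrel{d}{=}\{\xi_n + d_n\eta_n\}$ exhibits a noisy observation of $\X_c$ as a fixed Borel function (coordinatewise rescaling and subtraction) of a noisy observation of $\X'$, so composing with the detection/recovery map for $\X'$ gives the required Borel map for $\X_c$. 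I would write this out in a couple of lines, emphasizing that no randomization is ultimately needed because the auxiliary Gaussians are absorbed into the enlarged index set.
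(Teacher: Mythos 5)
Your overall strategy is the paper's: convert a noisy observation of $\X_c$ into a noisy observation of $\X$ by rescaling and injecting auxiliary Gaussian noise, then apply the map for $\X$. But the execution contains a concrete error. In this paper the noise is always standard, so the observation of a signal in $\X_c$ is $z_n=c_nx_n+\xi_n$, \emph{not} $c_nx_n+c_n\xi_n$. Your ``cleaner route'' couples the noise with $\xi_n+d_n\eta_n$, which has variance $c_n^2$, so it couples with the wrong distribution; the recipe it produces --- apply $T$ to $z/c$ --- feeds $T$ the input $x_n+\xi_n/c_n$, whose noise has variance $1/c_n^2<1$, and nothing in the definition of a recovery or detection map guarantees correct behaviour there: the map is only required to work almost surely against the exact standard noise law, and could be tuned to fail on any other input distribution. (Under your reading the proposition would be a triviality valid for every nonzero $c_n$, which should have been a warning sign; the hypothesis $c_n\ge 1$ is exactly what makes $\sqrt{c_n^2-1}$ real.) Your abandoned first formula was the correct one: with $z_n=c_nx_n+\xi_n$ one sets $z_n':=z_n/c_n+\frac{\sqrt{c_n^2-1}}{c_n}\eta_n=x_n+\frac{\xi_n+\sqrt{c_n^2-1}\,\eta_n}{c_n}$, whose noise is exactly standard --- this is precisely the paper's $z_n'=(z_n+\sqrt{c_n^2-1}\cdot\xi_n')/c_n$.

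The second gap is the derandomization. Fubini does not produce a single realization of the auxiliary sequence $\eta$ that works simultaneously for every $x$: for each fixed $x$ almost every $\eta$ works, but the exceptional null set depends on $x$, and $\X_c$ may be uncountable. Padding the index set does not escape this either: the nontrivial direction (a detection/recovery map for the padded space yields one for the unpadded space) is exactly the statement that the extra random coordinates can be dispensed with, which is the point at issue. The paper resolves it by averaging the \emph{output} over the auxiliary noise: it sets $T'(z)_n:=\e_{\xi'}\bigl(T(z')_n\bigr)$ and notes that since $T(z')=x$ almost surely on the product space of $\xi$ and $\xi'$, Fubini gives $\e_{\xi'}\bigl(T(z')_n\bigr)=x_n$ for almost every $\xi$; multiplying by $c_n$ then returns the signal of $\X_c$, and for detection one thresholds $\e_{\xi'}T(z')$ at $1/2$. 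Some such conditional-expectation step is needed to turn your randomized procedure into the deterministic Borel map the definitions demand.
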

\begin{proof}
  Suppose $\X$ admits recovery via the function $T$ and
  let us show that $\X_c$ admits recovery. Let $z$ be the observed
  noisy signal and suppose that $z_n = c_n x_n + \xi_n$ for some
  $x\in\X$. Let $\{\xi_n'\}$ be an independent noise sequence, i.e., a
  sequence of independent standard normal random variables,
  independent of $\xi$. Observe that the sequence $z'$ defined by
  \begin{equation}\label{eq:z_n_prime_def}
  z_n':=\frac{z_n + \sqrt{c_n^2-1}\cdot\xi_n'}{c_n}
  \end{equation}
  has the same distribution as $\{x_n +
  \xi_n\}$. Thus, almost surely (on the product space of $\xi$ and $\xi'$),
  $T(z') = x$. Hence we may define the recovery function $T'$ for
  $\X_c$ as follows. Given $z$, we consider the random sequence $z'$ defined by
  \eqref{eq:z_n_prime_def}. Then we set $T'(z)=x$ for $x$ defined by $x_n = \e(T(z')_n)$, where the expectation is over $\xi'$ (or we set $x_n$ to zero, say, if the expectation does not exist).
  The fact that $T'$ is a recovery function for $\X_c$ follows from the above arguments by Fubini's
  theorem.
  This establishes the second part of the proposition. The proof of the first part is analogous.
\end{proof}
As a corollary we deduce that there exists a critical noise level
for detection and recovery. Precisely, for any space $\X$ there
exists some $0\le c_d\le \infty$ such that if $0<c< c_d$ then
$c\X:=\{\{cx_n\}\colon x\in\X\}$ does not admit detection and if
$c_d<c<\infty$ then $c\X$ admits detection. An analogous threshold
exists for recovery from $c\X$.
Theorem~\ref{thm:trig_space_recovery_intro} and the example in
Section~\ref{sec:tree_trail} show that these thresholds may
sometimes be strictly between zero and infinity.

Another natural way to combine spaces is via a product construction.
Given two spaces $\X_1$ and $\X_2$ one may form the product space
\begin{equation*}
  \X_1 \times \X_2 := \{(x_1, x_2)\colon x_1\in\X_1,\, x_2\in\X_2\}
\end{equation*}
and consider the recovery and detection problems for it (under noise
of the form $(\xi_1, \xi_2)$ with $\xi_1$ and $\xi_2$ being
independent sequences of independent standard Gaussian random
variables).
\begin{proposition}\label{prop:product_spaces}
Let $\X_1$ and $\X_2$ be non-empty signal spaces.
\begin{enumerate-math}
  \item $\X_1\times\X_2$ admits detection if and only if at least one of $\X_1$
  and $\X_2$ admits detection.
  \item $\X_1\times\X_2$ admits recovery if and only if both $\X_1$
  and $\X_2$ admit recovery.
\end{enumerate-math}
\end{proposition}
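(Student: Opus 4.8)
The plan is to prove Proposition~\ref{prop:product_spaces} by reducing each direction to results already established in this section, principally Proposition~\ref{prop:union_of_spaces} together with the explicit one- and two-point detection and recovery maps arising from \eqref{eq:one-point_space_detection_map}. The key observation is that a noisy observation of a signal in $\X_1\times\X_2$ decomposes as $(z^{(1)}, z^{(2)})$ where $z^{(1)}$ is a noisy observation of a signal in $\X_1$ and $z^{(2)}$ is a noisy observation of a signal in $\X_2$, with the two halves of the noise independent. This lets the receiver run any procedure designed for $\X_1$ on $z^{(1)}$ alone, and similarly for $\X_2$.

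For part (i): if $\X_1$ admits detection via a map $T_1$, then defining $T(z^{(1)}, z^{(2)}) := T_1(z^{(1)})$ gives a detection map for $\X_1\times\X_2$ --- under pure noise it outputs $0$ a.s., and on any $(x_1 + \xi^{(1)}, x_2 + \xi^{(2)})$ it outputs $T_1(x_1 + \xi^{(1)}) = 1$ a.s.; symmetrically if $\X_2$ admits detection. Conversely, suppose $\X_1\times\X_2$ admits detection via $T$. Fix any $x_2^{0}\in\X_2$ (using non-emptiness). If $x_2^0\notin\ell^2$ then $\X_2$ already admits detection by the one-point argument following \eqref{eq:one-point_space_detection_map}, and we are done. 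Otherwise, I claim $\X_1$ admits detection: define $T'(z^{(1)}) := T(z^{(1)}, x_2^0 + \xi^{(2)})$ where $\xi^{(2)}$ is a fresh independent noise sequence, and output, say, $1$ if $\p_{\xi^{(2)}}(T(z^{(1)}, x_2^0 + \xi^{(2)}) = 1) > 1/2$ and $0$ otherwise; by Fubini this is a valid detection map for $\X_1$, since feeding $z^{(1)} = \xi^{(1)}$ makes $(z^{(1)}, x_2^0 + \xi^{(2)})$ distributed like pure noise plus the $\ell^2$ signal $(0, x_2^0)$. The one subtlety here is that $(0,x_2^0)$ with $x_2^0\in\ell^2$ need not lie in $\X_1\times\X_2$, so $T$ need not classify it correctly; the clean way around this is to note that since $x_2^0\in\ell^2$, the distribution of $(\xi^{(1)}, x_2^0 + \xi^{(2)})$ is \emph{mutually absolutely continuous} with respect to that of pure noise $(\xi^{(1)},\xi^{(2)})$ (Kakutani's theorem), so $T$ outputs $0$ a.s.\ on it as well, while on $(x_1 + \xi^{(1)}, x_2^0 + \xi^{(2)})$ with $x_1\in\X_1$ it outputs $1$ a.s.; measurable selection of a Borel map $T'$ follows.

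For part (ii): if both $\X_1$ and $\X_2$ admit recovery via $T_1, T_2$, then $T(z^{(1)}, z^{(2)}) := (T_1(z^{(1)}), T_2(z^{(2)}))$ clearly recovers every signal in $\X_1\times\X_2$ almost surely. Conversely, if $\X_1\times\X_2$ admits recovery then a fortiori $\X_1$ admits recovery, because fixing any $x_2^0\in\X_2$ the subspace $\X_1\times\{x_2^0\}\subset\X_1\times\X_2$ admits recovery (subspaces of recoverable spaces are recoverable), and subtracting the known shift $(0, x_2^0)$ and projecting onto the first block recovers $\X_1$; symmetrically for $\X_2$. The main obstacle in the whole argument is the absolute-continuity point in the converse of part (i) --- one must be careful that plugging an $\ell^2$ "nuisance" signal into the second block does not fool the detector, and this is precisely where Kakutani's theorem (already invoked in the excerpt) does the work, rather than any property of $T$ on $\X_1\times\X_2$ itself. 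All measurability claims are routine given that the maps involved are built from Borel maps by composition, conditional probabilities, and measurable selection, exactly as in the proof of Proposition~\ref{prop:union_of_spaces}.
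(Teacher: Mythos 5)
Your treatment of the easy directions and of part (ii) is essentially the paper's (for the converse of (ii) you still need the derandomization step --- the receiver sees only $z^{(1)}$ and must manufacture the second block from auxiliary noise, so one passes to $\e\bigl(T(z^{(1)},x_2^0+\xi^{(2)})\,\big|\,z^{(1)}\bigr)$ and invokes Fubini, as the paper does --- but this is the routine point you allude to). The genuine gap is in the converse of part (i). Your case split on whether $x_2^0\in\ell^2$ does not cover the main case: when $x_2^0\notin\ell^2$ you assert that ``$\X_2$ already admits detection by the one-point argument following \eqref{eq:one-point_space_detection_map}.'' That argument only shows that the \emph{one-point space} $\{x_2^0\}$ admits detection; it says nothing about $\X_2$, which must be detected uniformly over all of its elements. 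A space can consist entirely of non-$\ell^2$ signals and still fail to admit detection --- e.g.\ the Rademacher space with $\sum\sigma_n^2=\infty$ and $\sum\sigma_n^4<\infty$ (Theorem~\ref{thm:detection_phases}(i)). So in the generic situation $\X_1\cap\ell^2=\X_2\cap\ell^2=\emptyset$, which is exactly the interesting one, your argument proves nothing: the Kakutani trick is unavailable (the law of $(\xi^{(1)},x_2^0+\xi^{(2)})$ is then \emph{singular} to pure noise, and $T$ is unconstrained on such inputs), and the fallback claim about $\X_2$ is false.

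The paper closes this case by a dichotomy on the behavior of $T$ itself rather than on membership in $\ell^2$: either there exists $x_1\in\X_1$ such that, with positive probability in $\xi^{(1)}$, $\P\bigl(T(x_1+\xi^{(1)},\xi^{(2)})=0\mid\xi^{(1)}\bigr)=1$, in which case one detects $\X_2$ by testing whether $\e\bigl(T(x_1+\xi^{(1)},z^{(2)})\mid z^{(2)}\bigr)<1$; or no such $x_1$ exists, in which case one detects $\X_1$ by testing whether $\e\bigl(T(z^{(1)},\xi^{(2)})\mid z^{(1)}\bigr)=0$. Your Kakutani observation is a clean shortcut for the degenerate case $x_2^0\in\ell^2$ (where $\X_2$ certainly cannot admit detection, so one must detect $\X_1$), but some such dichotomy is needed to handle the general case, and it is the real content of this direction.
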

\begin{proof}
  It is clear that if both $\X_1$ and $\X_2$ admit recovery then
  $\X_1\times\X_2$ admits recovery since we may apply the recovery
  mappings of $\X_1$ and $\X_2$ separately to the relevant
  coordinates of the signals in the product space. It is similarly
  clear that if at least one of $\X_1$ and $\X_2$ admits detection
  then also $\X_1\times\X_2$ admits detection.

  Now suppose that $\X_1\times\X_2$ admits recovery, via the mapping
  $T$, and let us prove that $\X_1$, say, admits recovery. Let $z_1$ be the observed
  noisy signal from the space $\X_1$ and write $z_1 = x_1 + \xi_1$ for some
  $x_1\in\X_1$ and a  noise sequence $\xi_1$. Fix some $x_2\in\X_2$. Let $\xi_2$ be a noise sequence, suitable for $\X_2$ and independent of
  $\xi_1$, and define $z_2 = x_2 + \xi_2$. Observe that, by Fubini's theorem and the properties of $T$,
  \begin{equation*}
    \text{almost surely with respect to $\xi_1$,}\quad\P(T(z_1, z_2) = (x_1, x_2)\, |\, z_1) =
    1.
  \end{equation*}
  Thus, in particular,
  \begin{equation*}
    \text{almost surely with respect to $\xi_1$,}\quad \e(T(z_1,
    z_2)\,|\,z_1) = (x_1, x_2)
  \end{equation*}
  yielding a recovery mapping for $\X_1$.

  Now suppose that $\X_1\times\X_2$ admits detection, via the mapping $T$. Let, again, $\xi_1$ and $\xi_2$ be independent noise sequences suitable for $\X_1$ and $\X_2$ respectively. Consider first
  the possibility that there exists some $x_1\in\X_1$ such that
  \begin{equation}\label{eq:detection_mapping_product_space_prop}
    \text{with positive probability with respect to
    $\xi_1$,}\quad\P(T(x_1 + \xi_1, \xi_2) = 0\,|\, \xi_1) = 1.
  \end{equation}
  We claim that, consequently, $\X_2$ admits detection. Indeed,
  suppose that $z_2$ is an observed noisy signal, where either $z_2
  = \xi_2$ or $z_2 = x_2 + \xi_2$ for some $x_2\in\X_2$. We may
  decide between these possibilities by declaring $z_2$ to be pure
  noise if and only if $\e(T(x_1 + \xi_1, z_2)\,|\,z_2) < 1$. It is
  simple to check, using \eqref{eq:detection_mapping_product_space_prop}, Fubini's theorem and the properties of $T$, that this defines a valid detection
  map for $\X_2$.

  Now suppose that \eqref{eq:detection_mapping_product_space_prop}
  is violated, which is to say that for every $x_1\in \X_1$,
  \begin{equation}\label{eq:detection_mapping_product_spaces_converse}
    \text{almost surely with respect to $\xi_1$,}\quad \P(T(x_1 + \xi_1, \xi_2) = 1\,|\, \xi_1) > 0.
  \end{equation}
  We claim that, consequently, $\X_1$ admits detection. Indeed,
  suppose now that $z_1$ is an observed noisy signal, where either $z_1
  = \xi_1$ or $z_1 = x_1 + \xi_1$ for some $x_1\in\X_1$. We may
  decide between these possibilities by declaring $z_1$ to be pure
  noise if and only if $\e(T(z_1, \xi_2)\,|\,z_1) = 0$.
  Again, \eqref{eq:detection_mapping_product_spaces_converse}, Fubini's theorem and the properties of $T$ ensure that this defines a valid detection
  map for $\X_1$.
\end{proof}
We remark that using an analogous proof one may show that if $\X$ is
a product of \emph{countably} many non-empty signal spaces $\X_i$
then $\X$ admits recovery if and only if each of the $\X_i$ admits
recovery. However, it is no longer true that if $\X$ admits
detection then at least one of the $\X_i$ admits detection. For
instance, if $\X_i = \{x_i\}$ for a single signal $x_i\in\ell^2$
then each of the $\X_i$ does not admit detection (since the
necessary condition \eqref{eq:detection_nec_cond} is violated), but
noting that $\X$ contains only a single signal we see that $\X$
admits detection if $\sum_i \|x_i\|_2^2 = \infty$.

\subsection{General detection and recovery
algorithms}\label{sec:general_detection_recovery_algorithms} In this
section we propose general algorithms for detection and recovery and
study conditions for them to apply to a given space $\X$.

Our algorithms depend on the following input: For each $k\ge 1$, let
$f_k:\R^\N\times\R^\N\to\R$ be a continuous function with respect to
the product topology on $\R^\N$. One may think of $f_k(x,z)$ as a
measure of the similarity of $x$ and $z$, assigning a ``score'' to
the possibility that $z$ is a noisy version of $x$, with this
``score'' taking into account more and more information about $x$
and $z$ as $k$ increases. A concrete example of such functions is
given by \eqref{eq:f_k_concrete_choice} below.

We denote by $\open_x$ the set of open sets in the product topology
on $\R^\N$ which contain a given point $x$.

We now describe the general recovery algorithm from a space $\X$.
Given a noisy signal $z\in\R^\N$ we say that $x\in\X$ is a
\emph{candidate} if
\begin{equation}\label{eq:candidate_def}
  \forall U\in\open_x:\quad\;f_k(x,z)>\sup_{y\in\X\setminus U} f_k(y,z)\;\;\text{ for all
  but finitely many $k$}.
\end{equation}
We note that the number of candidates is necessarily either zero or
one. We define the recovery mapping $T$ by $T(z)=x$ if $x$ is the
(unique) candidate, and by $T(z)=0$ if there are no candidates.

We mention the useful fact that if $d$ is an arbitrary metric on
$\R^\N$ generating the product topology then $x\in\X$ is a candidate
if and only if
\begin{equation*}
  \forall \eps>0:\quad\;f_k(x,z)>\sup_{\substack{y\in\X\\d(x,y)>\eps}} f_k(y,z)\;\;\text{ for all
  but finitely many $k$}.
\end{equation*}
\begin{lemma}\label{lem:general_recovery_measurable}
  Suppose that $\X$ is a Borel subset of $\R^\N$. Then the mapping $T$ defined above is Borel measurable.
\end{lemma}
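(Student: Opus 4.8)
The plan is to show that the graph of the candidate relation,
\[
  G := \{(z,x)\in\R^\N\times\X : x \text{ is a candidate for } z\},
\]
is a Borel subset of $\R^\N\times\R^\N$, and then to read off the measurability of $T$ from this. Once $G$ is known to be Borel, observe that the projection $\pi:G\to\R^\N$, $\pi(z,x)=z$, is continuous and injective, since for each $z$ there is at most one candidate. By the Lusin--Souslin theorem, a Borel injection between standard Borel spaces is a Borel isomorphism onto its (Borel) image; hence $D:=\pi(G)$ is a Borel subset of $\R^\N$ and $\pi^{-1}:D\to G$ is Borel, so $z\mapsto x$ (the second coordinate of $\pi^{-1}(z)$) is a Borel map $D\to\R^\N$. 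Since $T$ agrees with this map on $D$ and is identically $0$ on the complement of $D$, and since $D$ is Borel, $T$ is Borel measurable.

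It remains to prove that $G$ is Borel. Here I use the metric reformulation of ``candidate'' noted just before the lemma: fixing a metric $d$ on $\R^\N$ generating the product topology, $x$ is a candidate for $z$ iff for every $\eps>0$ one has $f_k(x,z)>\sup\{f_k(y,z):y\in\X,\ d(x,y)>\eps\}$ for all but finitely many $k$. Since the set $\{y\in\X:d(x,y)>\eps\}$ shrinks as $\eps$ grows, it is equivalent to require this only for $\eps=1/m$, $m\ge 1$. Thus
\[
  G=\bigcap_{m\ge 1}\ \bigcup_{K\ge 1}\ \bigcap_{k\ge K}\ A_{k,m},\qquad
  A_{k,m}:=\Big\{(z,x)\in\R^\N\times\X : f_k(x,z)>g_{k,m}(z,x)\Big\},
\]
where $g_{k,m}(z,x):=\sup\{f_k(y,z):y\in\X,\ d(x,y)>1/m\}$, with the convention $\sup\emptyset=-\infty$. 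So the lemma reduces to checking that each $g_{k,m}$ is a Borel function, whence each $A_{k,m}$ is Borel (intersect $\{(z,x):f_k(x,z)>g_{k,m}(z,x)\}$, a Borel set since $(z,x)\mapsto f_k(x,z)$ is continuous, with the Borel set $\R^\N\times\X$).

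The only genuine point is that $g_{k,m}$, despite being an uncountable supremum, is Borel. Let $\{y_j\}$ be a countable dense subset of $\X$ (it exists since $\R^\N$ is separable). For fixed $x$ the set $U=\{y\in\R^\N:d(x,y)>1/m\}$ is open, so $\{y_j\}\cap U$ is dense in $\X\cap U$; combined with the continuity of $y\mapsto f_k(y,z)$, a short density argument gives
\[
  g_{k,m}(z,x)=\sup\{f_k(y_j,z):d(x,y_j)>1/m\}=\sup_j h_j(z,x),
\]
where $h_j(z,x):=f_k(y_j,z)$ when $d(x,y_j)>1/m$ and $h_j(z,x):=-\infty$ otherwise. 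Each $h_j$ is Borel, being continuous on the open set $\{(z,x):d(x,y_j)>1/m\}$ and constant elsewhere, so $g_{k,m}=\sup_j h_j$ is Borel, and therefore $G$ is Borel. I expect the main obstacle to be precisely this reduction of the uncountable supremum to a countable one, together with the bookkeeping needed to invoke Lusin--Souslin correctly; the rest is routine.
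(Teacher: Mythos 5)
Your proof is correct and follows essentially the same route as the paper's: reduce the candidate condition to the metric form with $\eps=1/m$, show the graph of the candidate relation is Borel via a countable intersection/union of sets defined by comparing $f_k(x,z)$ with the supremum over $\{y\in\X: d(x,y)>1/m\}$, and then apply the Lusin--Souslin theorem to the injective projection. The only (equally valid) deviation is in establishing Borel measurability of that supremum: the paper observes it is lower semi-continuous as a supremum of lower semi-continuous functions, whereas you reduce it to a countable supremum over a dense subset of $\X$, using separability and the continuity of $f_k$.
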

\begin{proof}
  Fix an arbitrary metric $d$ generating the product topology on $\R^\N$. Without loss of generality, we may assume that $f_k$ takes values in $(0,1)$. For each $y\in\R^\N$ and $\eps>0$, define a
  function $g_{k,y,\eps}:\R^\N\times\R^\N\to\R$ by
  \begin{equation*}
    g_{k,y,\eps}(x,z):=f_k(y,z)\cdot \1_{A(y,\eps)}(x),
  \end{equation*}
  where $A(y,\eps):=\{x\in\R^\N\,:\, d(x,y)>\eps\}$. Observe that
  $g_{k,y,\eps}$ is a lower semi-continuous function (since $f_k$ are continuous and $A_{y,\eps}$ is open). Hence the
  function
  \begin{equation*}
    h_{k,\eps}(x,z):=\sup_{y\in\X} g_{k,y,\eps}(x,z)
  \end{equation*}
  is also lower semi-continuous, and in particular Borel. Define the
  sets
  \begin{equation*}
    \Omega:=\bigcap_{j\ge 1} \bigcup_{k_0\ge 1} \bigcap_{k\ge k_0} \Omega_{j,k},\quad \Omega_{j,k}:=\{(x,z)\in\R^\N\times\R^\N\,:\,
    f_k(x,z)>h_{k,1/j}(x,z)\},
  \end{equation*}
  which are Borel by the above. Observe that $(x,z)\in\Omega_{j,k}$
  if and only if
  \begin{equation*}
    f_k(x,z)>\sup_{\substack{y\in\X\\d(x,y)>1/j}} f_k(y,z).
  \end{equation*}
  Hence $x$ is a candidate for a noisy signal $z$ if and only if
  $x\in\X$ and $(x,z)\in\Omega$.

  Finally, to see that $T$ is Borel it suffices to show that for any Borel set
  $A\subset\R^\N\setminus\{0\}$ the set
  \begin{equation*}
    T^{-1}(A) = P(((A\cap\X)\times \R^\N)\cap\Omega)
  \end{equation*}
  is Borel, where $P$ is the projection mapping $P(x,z)=z$. This
  follows from a theorem of descriptive set theory (see, e.g., \cite[Theorem 15.1]{kechris}) which states that if $B$ is a Borel subset of $\R^\N\times\R^\N$
  such that $P$ is one-to-one on $B$ then $P(B)$ is Borel.
\end{proof}
For a space $\X$ to admit recovery via the above algorithm, it is
necessary and sufficient that for every $x\in\X$ and every
$U\in\open_x$, almost surely,
\begin{equation}\label{eq:f_k_general_cond}
  f_k(x,x+\xi)>\sup_{y\in\X\setminus U} f_k(y,x+\xi)\;\;\text{ for all
  but finitely many $k$}.
\end{equation}

To obtain a concrete test, we need to specify the functions $f_k$.
Since the $f_k$ measure the similarity of $x$ and $z$ ``at level
$k$'', a natural choice is
\begin{equation}\label{eq:f_k_concrete_choice}
  f_k(x,z):=-\sum_{n=1}^k (z_n-x_n)^2.
\end{equation}
The functions $f_k$ are essentially the log-likelihood of seeing the
first $k$ coordinates of the noisy signal $z$ when the original
signal is $x$. It is also sometimes useful in applications to
consider a fixed subsequence of $k$'s. This yields the following
statement.
\begin{theorem}\label{thm:recover_likelihood2}
  Let $\X$ be a Borel subset of $\R^\N$, and $\{k_j\}$ be an
increasing sequence tending to infinity. Suppose that for every
  $x\in\X$ and every $U\in\open_x$, almost surely,
  \begin{equation}\label{eq:recover_likelihood_cond2}
    \inf_{y\in\X\setminus U}\; \sum_{n=1}^{k_j} \left[(y_n-x_n)^2 -
    2\xi_n(y_n-x_n)\right] > 0\;\;\text{ for all
  but finitely many $j$}.
  \end{equation}
  Then $\X$ admits recovery (via the above recovery mapping).
\end{theorem}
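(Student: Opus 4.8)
The plan is to recognize that the theorem is just the general recovery algorithm, run on the reindexed scoring functions, together with one elementary algebraic identity. First I would set
\[
  g_j(x,z):=f_{k_j}(x,z)=-\sum_{n=1}^{k_j}(z_n-x_n)^2,\qquad j\ge 1,
\]
where $f_k$ is the concrete choice \eqref{eq:f_k_concrete_choice}. Each $g_j$ factors through the projection of $\R^\N\times\R^\N$ onto the first $k_j$ coordinates of each factor, hence is continuous for the product topology, so $(g_j)_{j\ge 1}$ is an admissible input for the general recovery algorithm of Section~\ref{sec:general_detection_recovery_algorithms}. Since $\X$ is Borel, Lemma~\ref{lem:general_recovery_measurable} shows that the associated recovery mapping $T$ is Borel measurable; this is the mapping referred to in the statement. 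It then remains only to verify that $T$ recovers every signal in $\X$, which by the criterion stated just before the theorem means checking \eqref{eq:f_k_general_cond} with $f_k$ replaced by $g_j$: for every $x\in\X$ and every $U\in\open_x$, almost surely, $g_j(x,x+\xi)>\sup_{y\in\X\setminus U}g_j(y,x+\xi)$ for all but finitely many $j$.

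The second step is the identity that turns this into the hypothesis \eqref{eq:recover_likelihood_cond2}. Writing $z=x+\xi$, one has $g_j(x,z)-g_j(y,z)=\sum_{n=1}^{k_j}\big[(z_n-y_n)^2-(z_n-x_n)^2\big]$, and substituting $z_n-x_n=\xi_n$ and expanding $(z_n-y_n)^2=\big((x_n-y_n)+\xi_n\big)^2$ gives
\[
  g_j(x,x+\xi)-g_j(y,x+\xi)=\sum_{n=1}^{k_j}\left[(y_n-x_n)^2-2\xi_n(y_n-x_n)\right].
\]
Because $\inf_{y\in\X\setminus U}\big(g_j(x,z)-g_j(y,z)\big)=g_j(x,z)-\sup_{y\in\X\setminus U}g_j(y,z)$, the requirement ``$g_j(x,x+\xi)>\sup_{y\in\X\setminus U}g_j(y,x+\xi)$ for all but finitely many $j$'' is verbatim the requirement that $\inf_{y\in\X\setminus U}\sum_{n=1}^{k_j}\big[(y_n-x_n)^2-2\xi_n(y_n-x_n)\big]>0$ for all but finitely many $j$, which is exactly \eqref{eq:recover_likelihood_cond2}. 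Hence \eqref{eq:f_k_general_cond} holds for the sequence $(g_j)$, and by the necessary-and-sufficient criterion the algorithm recovers every $x\in\X$, proving the theorem.

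I do not expect any genuine obstacle: the substance is entirely contained in the general framework already set up (the candidate definition, the at-most-one-candidate observation, and the measurability statement of Lemma~\ref{lem:general_recovery_measurable}), and in the remark that the displayed quadratic-plus-linear expression is precisely the log-likelihood difference between the hypotheses ``$x$ was transmitted'' and ``$y$ was transmitted''. The only points deserving a sentence of care are that one is free to feed the general algorithm the subsequence scores $g_j=f_{k_j}$ in place of $f_k$ (legitimate since the algorithm accepts any sequence of continuous scoring functions), and that each $g_j$ is indeed continuous for the product topology, which holds because it depends on only finitely many coordinates.
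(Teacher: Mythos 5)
Your proposal is correct and follows exactly the route the paper intends: the theorem is the general recovery algorithm of Section~\ref{sec:general_detection_recovery_algorithms} run with the scoring functions $f_{k_j}$ from \eqref{eq:f_k_concrete_choice}, with measurability supplied by Lemma~\ref{lem:general_recovery_measurable} and correctness reduced to \eqref{eq:f_k_general_cond}, which your algebraic identity shows is verbatim \eqref{eq:recover_likelihood_cond2}. The paper treats this as immediate and writes no separate proof, so your write-up is simply a fuller version of the same argument.
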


We now describe our general detection algorithm from a space $\X$,
which relies on similar principles. Given a noisy signal $z\in\R^\N$
we declare that it is pure noise if
\begin{equation}\label{eq:general_detection_test}
  \forall U\in\open_0:\quad\;f_k(0,z)>\sup_{y\in\X\setminus U} f_k(y,z)\;\;\text{ for infinitely many $k$}.
\end{equation}
Otherwise we declare that $z$ is a noisy version of a signal from
$\X$. The fact that this detection mapping is Borel measurable,
whenever $\X$ is Borel, follows similarly to the proof of
Lemma~\ref{lem:general_recovery_measurable} above.

Let us specialize to the choice \eqref{eq:f_k_concrete_choice} of
the functions $(f_k)$. We have the following statement.
\begin{theorem}\label{thm:detect_likelihood2}
  Let $\X$ be a Borel subset of $\R^\N\setminus\{0\}$. Suppose that for every $U\in\open_0$, almost surely,
  \begin{equation}\label{eq:detect_likelihood_cond2}
    \inf_{y\in\X\setminus U}\; \sum_{n=1}^{k} \left[y_n^2 -
    2\xi_n y_n\right] > 0\;\;\text{ for infinitely many $k$}.
  \end{equation}
  Then $\X$ admits detection (via the above detection mapping).
\end{theorem}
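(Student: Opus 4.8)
The plan is to check directly that the detection mapping defined by \eqref{eq:general_detection_test} with the choice \eqref{eq:f_k_concrete_choice} of the functions $f_k$ meets the two requirements in the definition of detection; its Borel measurability was already observed to follow exactly as in the proof of Lemma~\ref{lem:general_recovery_measurable}. The starting point is the elementary identity
\[
  f_k(0,z)-f_k(y,z)=\sum_{n=1}^{k}\bigl(y_n^2-2z_ny_n\bigr),\qquad y,z\in\R^\N,\ k\ge 1,
\]
which shows that, for a fixed $U$, the inequality $f_k(0,z)>\sup_{y\in\X\setminus U}f_k(y,z)$ appearing in \eqref{eq:general_detection_test} is the same as $\inf_{y\in\X\setminus U}\sum_{n=1}^{k}(y_n^2-2z_ny_n)>0$. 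I would then fix a metric $d$ generating the product topology on $\R^\N$, put $U_j:=\{x\in\R^\N:d(0,x)<1/j\}$, and note that since $\{U_j\}$ is a neighbourhood base at $0$ and $\inf_{y\in\X\setminus U}\ge\inf_{y\in\X\setminus U_j}$ whenever $U_j\subseteq U$ (as then $\X\setminus U\subseteq\X\setminus U_j$), the quantifier ``$\forall U\in\open_0$'' in \eqref{eq:general_detection_test} may be replaced by the countable quantifier ``$\forall j\in\N$''. With this in hand, the requirement $T(\{\xi_n\})=0$ almost surely reduces to showing that, almost surely, for every $j$ the quantity $\inf_{y\in\X\setminus U_j}\sum_{n=1}^{k}(y_n^2-2\xi_ny_n)$ is positive for infinitely many $k$; and this is, for each fixed $j$, precisely hypothesis \eqref{eq:detect_likelihood_cond2} with $U=U_j$, so a countable intersection over $j$ settles this half.

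The other requirement, $T(\{x_n+\xi_n\})=1$ almost surely for a fixed $x\in\X$, is where some probability is needed. Since $x\ne 0$, fix $j_0$ with $x\notin U_{j_0}$, so $x\in\X\setminus U_{j_0}$, and write $V_k:=\sum_{n=1}^{k}x_n^2$. First I would argue that \eqref{eq:detect_likelihood_cond2} forces $V_k\to\infty$: were $\sum_n x_n^2$ finite, then substituting $y=x$ would give $\inf_{y\in\X\setminus U_{j_0}}\sum_{n=1}^{k}(y_n^2-2\xi_ny_n)\le\sum_{n=1}^{k}(x_n^2-2x_n\xi_n)$, whose right-hand side converges almost surely to $\|x\|_2^2-2W$ with $W$ a centered Gaussian of variance $\|x\|_2^2>0$; this limit is negative with positive probability, so the left-hand side is eventually negative with positive probability, contradicting \eqref{eq:detect_likelihood_cond2} for $U=U_{j_0}$. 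Hence $V_k\to\infty$. Next, as recalled at the start of Section~\ref{section:preliminaries}, the process $\bigl(\sum_{n=1}^{k}x_n\xi_n\bigr)_{k\ge1}$ has the same law as $\bigl(B(V_k)\bigr)_{k\ge1}$ for a standard Brownian motion $B$, so from $B(t)/t\to 0$ almost surely as $t\to\infty$ together with $V_k\to\infty$ we obtain $\sum_{n=1}^{k}x_n\xi_n=o(V_k)$ almost surely. Therefore, with $z=x+\xi$, the identity of the previous paragraph gives $f_k(0,z)-f_k(x,z)=-V_k-2\sum_{n=1}^{k}x_n\xi_n\to-\infty$ almost surely; since $x\in\X\setminus U_{j_0}$ we have $\sup_{y\in\X\setminus U_{j_0}}f_k(y,z)\ge f_k(x,z)$, and hence $f_k(0,z)<\sup_{y\in\X\setminus U_{j_0}}f_k(y,z)$ for all but finitely many $k$. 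Thus the inequality in \eqref{eq:general_detection_test} holds for $U_{j_0}$ only finitely often, so $z=x+\xi$ is not declared pure noise, i.e.\ $T(\{x_n+\xi_n\})=1$ almost surely.

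I do not anticipate a genuine obstacle here: apart from the elementary identity and the manipulation of the quantifier over $\open_0$, the only input is the law of large numbers for Brownian motion, invoked through the standard time-change representation of the Gaussian random walk already used in Section~\ref{section:preliminaries}. The single point where \eqref{eq:detect_likelihood_cond2} is exploited beyond a direct restatement is the deduction that $V_k\to\infty$, and this is exactly the ingredient that makes the ``signal present'' case work.
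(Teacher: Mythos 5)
Your proposal is correct and follows essentially the same route as the paper's proof: the pure-noise case is a direct restatement of hypothesis \eqref{eq:detect_likelihood_cond2}, the hypothesis is used (via the substitution $y=x$) to force $\sum_n x_n^2=\infty$, and the signal case is settled by the time-change representation of $\sum_{n\le k}x_n\xi_n$ as $B(V_k)$ together with the law of large numbers for Brownian motion. Your explicit reduction of the quantifier over $\open_0$ to a countable neighbourhood base is a slightly more careful rendering of a step the paper dismisses as ``simple to check,'' but it is not a different argument.
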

\begin{proof}
  Suppose first that $z=\xi$ is pure noise. It is simple to check that
  condition \eqref{eq:general_detection_test} is equivalent to
  condition \eqref{eq:detect_likelihood_cond2} for the choice
  \eqref{eq:f_k_concrete_choice} of the functions $(f_k)$. Thus the
  above detection algorithm will declare $z$ to be pure noise.

  Next, we show that $\X\cap\ell^2 = \emptyset$. Indeed, suppose
  that $y\in \X\cap\ell^2$. Then $\sum_{n=1}^\infty \xi_n y_n$
  converges almost surely to a centered Gaussian random variable.
  Moreover, with positive probability,
  \begin{equation*}
    2\sum_{n=1}^\infty \xi_n y_n > \sum_{n=1}^\infty y_n^2,
  \end{equation*}
  which contradicts our assumption that
  \eqref{eq:detect_likelihood_cond2} holds with probability one.

  Finally, suppose that $z = y+\xi$ for some $y\in\X$. Let us
  show that, almost surely,
  \begin{equation}\label{eq:zero_y_comparison}
    f_k(0,z)<f_k(y,z)\;\;\text{ for all but finitely many $k$}.
  \end{equation}
  This will imply that the detection algorithm will declare $z$ to
  be a noisy version of a signal. Substituting the definition of $z$
  and the $(f_k)$ into \eqref{eq:zero_y_comparison} yields the
  equivalent condition
  \begin{equation}\label{eq:random_walk_slln_cond}
    \sum_{n=1}^k y_n^2>-2\sum_{n=1}^k \xi_n y_n\;\;\text{ for all but finitely many $k$}.
  \end{equation}
  To verify this condition let $B(t)$ be a standard Brownian motion
  and observe that the random walk $(\sum_{n=1}^k \xi_n
  y_n)_k$ has the same distribution as $(B(\sum_{n=1}^k
  y_n^2))_k$. Thus, since $\sum y_n^2 = \infty$,
  condition \eqref{eq:random_walk_slln_cond} follows from the law of
  the iterated logarithm or the law of large numbers for Brownian
  motion.
\end{proof}

The above general tests connect the notions of recovery and
detection with the general theory of extrema of Gaussian processes.
This allows to give conditions for detection and recovery using
notions from the general theory, such as metric entropy, the Dudley
integral and the generic chaining (see, e.g., \cite{talagrand}). We
illustrate this possibility in its simplest manifestation, for
spaces with only finitely many possible values in each coordinate.
For such spaces one can give very simple, albeit imprecise,
conditions for detection and recovery based on volume growth.
%To illustrate this possibility we first consider discrete spaces,
%spaces with only finitely many possible values in each coordinate.
%For such spaces one can give very simple, albeit imprecise, criteria
%for detection and recovery based on volume growth.
Denote by $\proj_k(\X)$ the projection of the space $\X$ to the
first $k$ coordinates, i.e.,
\begin{equation}\label{eq:proj_def}
 \proj_k(\X):=\{(x_1,\ldots, x_k)\,:\, x\in\X\}.
\end{equation}
\begin{corollary}\label{cor:vol_growth_detect}
  Let $\X$ be a Borel subset of $\R^\N$ or $\C^\N$.
\begin{enumerate-math}
\item  If $\X\cap\ell^2=\emptyset$ and
\begin{equation}\label{eq:discrete_entropy_cond}
  \liminf_{k\to\infty} \frac{\log|\proj_k(\X)|}{\inf_{y\in\X}\sum_{n=1}^k |y_n|^2} <
  \frac{1}{8}
\end{equation}
then $\X$ admits detection.
\item
Let $\{k_j\}$ be an increasing sequence tending to infinity. Let
\begin{equation*}
\X_i^x:=\{y\in \X\,:\, (y_1,\ldots, y_i)\neq (x_1,\ldots, x_i)\}.
\end{equation*}
Suppose that for every $x\in\X$ and every $i\ge 1$,
%If for each $x\in\X$ there exist Borel subsets $\{\X_i^x\}$, $i\in\N$, satisfying $\cup_i \X_i^x=\X\setminus\{x\}$ and
\begin{equation}\label{eq:discrete_recover_entropy_cond}
  \sum_j |\proj_{k_j}(\X_i^x)|\exp\bigg(-\frac{1}{8} \inf_{y\in\X_i^x}\sum_{n=1}^{k_j} |y_n-x_n|^2\bigg)
  < \infty.
\end{equation}
Then $\X$ admits recovery.
\end{enumerate-math}
\end{corollary}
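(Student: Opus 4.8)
The plan is to deduce both parts from the general tests proved above: part~(i) from Theorem~\ref{thm:detect_likelihood2} and part~(ii) from Theorem~\ref{thm:recover_likelihood2}. In each case one verifies the relevant ``for (in)finitely many $k$'' hypothesis by combining a Gaussian tail bound, a union bound over the finite projections $\proj_k$, and the Borel--Cantelli lemma. Throughout one reduces the complex case to the real case as in Section~\ref{section:definitions_related_works}, which does not change $\sum_{n=1}^k|y_n|^2$. Note also that the hypotheses force the projections appearing in them to be finite: if $|\proj_k(\cdot)|=\infty$ then, since the accompanying exponential factor is strictly positive, the corresponding ratio in \eqref{eq:discrete_entropy_cond} or term in \eqref{eq:discrete_recover_entropy_cond} is infinite. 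Hence all infima below are in fact minima over finite sets, and the measurability of the resulting maps is already covered by Lemma~\ref{lem:general_recovery_measurable} and its detection analogue.

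\emph{Part (i).} Set $D_k:=\inf_{y\in\X}\sum_{n=1}^k|y_n|^2$, a nondecreasing sequence. If $(D_k)$ were bounded, then since $|\proj_k(\X)|$ is nondecreasing and bounded along the subsequence on which the ratio in \eqref{eq:discrete_entropy_cond} is small, it would be bounded for all $k$; a tree (K\"onig-type) argument would then show $\X$ is finite, which together with $\X\cap\ell^2=\emptyset$ forces $D_k\to\infty$ (a minimum of finitely many divergent sums) --- a contradiction. So $D_k\to\infty$. Now fix $U\in\open_0$ and pick $m$ and $\delta>0$ with $\{y:|y_n|<\delta\text{ for }n\le m\}\subseteq U$. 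For $k\ge m$ every $v\in\proj_k(\X\setminus U)$ has $\sum_{n=1}^k v_n^2\ge\delta^2>0$, so the left-hand side of \eqref{eq:detect_likelihood_cond2} is a minimum over $\proj_k(\X\setminus U)$ of the quantities $\|v\|^2-2\sum_{n=1}^k\xi_n v_n$, where $\|v\|^2:=\sum_{n=1}^k v_n^2\ge D_k$. Since $\sum_{n=1}^k\xi_n v_n\sim N(0,\|v\|^2)$, a union bound gives
\[
\P\Big(\inf_{y\in\X\setminus U}\sum_{n=1}^k\big[y_n^2-2\xi_n y_n\big]\le 0\Big)\le\sum_{v\in\proj_k(\X\setminus U)}\P\big(N(0,\|v\|^2)\ge \tfrac12\|v\|^2\big)\le|\proj_k(\X)|\,e^{-D_k/8}.
\]
Choosing a subsequence $k_j\to\infty$ along which $\log|\proj_{k_j}(\X)|<(\tfrac18-\eps)D_{k_j}$ for some fixed $\eps>0$ (possible since the $\liminf$ in \eqref{eq:discrete_entropy_cond} is $<\tfrac18$) and then thinning it so that $D_{k_j}\ge j/\eps$ (possible since $D_k\to\infty$), these probabilities are at most $e^{-j}$, hence summable. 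By Borel--Cantelli, almost surely \eqref{eq:detect_likelihood_cond2} holds along $(k_j)$, a fortiori for infinitely many $k$, and Theorem~\ref{thm:detect_likelihood2} yields detection.

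\emph{Part (ii).} We verify \eqref{eq:recover_likelihood_cond2} for the prescribed sequence $\{k_j\}$ and invoke Theorem~\ref{thm:recover_likelihood2}. Fix $x\in\X$ and $U\in\open_x$, and pick $i$ and $\delta>0$ with $\{y:|y_n-x_n|<\delta\text{ for }n\le i\}\subseteq U$; then $\X\setminus U\subseteq\X_i^x$, so $\inf_{y\in\X\setminus U}(\cdots)\ge\inf_{y\in\X_i^x}(\cdots)$ and it suffices to show that almost surely
\[
\inf_{y\in\X_i^x}\sum_{n=1}^{k_j}\big[(y_n-x_n)^2-2\xi_n(y_n-x_n)\big]>0\quad\text{for all but finitely many }j.
\]
For $j$ with $k_j\ge i$, each $y\in\X_i^x$ differs from $x$ within the first $i$ coordinates, so $\sum_{n=1}^{k_j}(y_n-x_n)^2>0$ and the infimum is a minimum over the finite set $\proj_{k_j}(\X_i^x)$. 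Since $\sum_{n=1}^{k_j}\xi_n(y_n-x_n)\sim N\big(0,\sum_{n=1}^{k_j}(y_n-x_n)^2\big)$, the union bound gives
\[
\P\Big(\inf_{y\in\X_i^x}\sum_{n=1}^{k_j}\big[(y_n-x_n)^2-2\xi_n(y_n-x_n)\big]\le 0\Big)\le|\proj_{k_j}(\X_i^x)|\exp\Big(-\tfrac18\inf_{y\in\X_i^x}\sum_{n=1}^{k_j}|y_n-x_n|^2\Big).
\]
Summing over $j$, the right-hand sides are summable by the hypothesis \eqref{eq:discrete_recover_entropy_cond}, which is precisely this statement, so Borel--Cantelli gives the displayed event almost surely, and Theorem~\ref{thm:recover_likelihood2} yields recovery.

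\emph{Main obstacle.} The only step requiring genuine care is the reduction in part~(i) to the regime $D_k\to\infty$: one must dispose of the degenerate possibility that the infimal energy of the first $k$ coordinates stays bounded, which is exactly where the finiteness of the projections (forced by \eqref{eq:discrete_entropy_cond}) and the assumption $\X\cap\ell^2=\emptyset$ enter. Everything else is a routine Gaussian tail estimate together with a union bound and Borel--Cantelli, with the arrangement of the two thinnings of the subsequence being the only minor bookkeeping.
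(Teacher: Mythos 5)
Your proof is correct and follows essentially the same route as the paper's: both parts are reduced to Theorems~\ref{thm:detect_likelihood2} and \ref{thm:recover_likelihood2} via a Gaussian tail bound, a union bound over the finite projections, and a summability argument, with your explicit verifications that the projections are finite and that $D_k\to\infty$ being careful versions of what the paper asserts more briefly. The only cosmetic difference is in part~(i), where the paper gets ``infinitely many $k$'' from the Fatou-type inequality $\P(\liminf_k\Omega_k)\le\liminf_k\P(\Omega_k)=0$ rather than by thinning the subsequence and applying Borel--Cantelli; both work.
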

We remark that the conditions in the corollary are sharp, in the
sense that the constant $\frac{1}{8}$ cannot be raised to an
arbitrarily large value. This can be seen from the results in
Section~\ref{sec:unknown_phase} or Section~\ref{sec:tree_trail}. We
remark also that the first part of the corollary remains true (with
an analogous proof) if one changes
condition~\eqref{eq:discrete_entropy_cond} into the weaker condition
that for every $U\in\open_0$,
\begin{equation*}
  \liminf_{k\to\infty} \frac{\log|\proj_k(\X)|}{\inf_{y\in\X\setminus U}\sum_{n=1}^k |y_n|^2} <
  \frac{1}{8}.
\end{equation*}
However, this modified statement can in fact be deduced from the
current statement by using the decomposition $\X:=\cup
\left(\X\setminus U\right)$, where the union is over $U$ in a
countable basis for $\open_0$, and applying
Proposition~\ref{prop:union_of_spaces}.
\begin{proof}[Proof of Corollary~\ref{cor:vol_growth_detect}]
(i) The case that $\X\subseteq\C^\N$ follows from the case that
$\X\subseteq\R^\N$ by writing each complex number as two reals. Thus
we suppose that $\X\subseteq\R^\N$. We claim that the assumptions
imply that
\begin{equation}\label{eq:detect_cor_cond}
  \text{with probability one, }\,\sup_{y\in\X}\, \frac{\sum_{n=1}^{k} y_n\xi_n}{\sum_{n=1}^{k}
  y_n^2}< \frac{1}{2}\,
  \text{ for infinitely many
  $k$},
\end{equation}
This will imply \eqref{eq:detect_likelihood_cond2} due to the fact
that for any $U\in\open_0$,
\begin{equation*}
\inf_{y\in\R^\N\setminus U}\;\sum_{n=1}^{k}y_n^2>0\quad\text{for all
sufficiently large $k$}.
\end{equation*}
Thus, Theorem~\ref{thm:detect_likelihood2} shows that $\X$ admits
detection.

Define the events
\begin{equation*}
\Omega_k := \Bigg\{\exists y\in\X,\, \sum_{n=1}^{k} y_n\xi_n\ge
  \frac{1}{2}\sum_{n=1}^{k} y_n^2\Bigg\}.
\end{equation*}
The fact that $\P(\liminf_{k\to\infty} \Omega_k) \le
\liminf_{k\to\infty} \P(\Omega_k)$ together with our assumption that
$|\proj_k(\X)|<\infty$ for all $k$ will imply
\eqref{eq:detect_cor_cond} once we show that
\begin{equation}\label{eq:Omega_k_prob_liminf}
  \liminf_{k\to\infty}\, \P(\Omega_k) = 0.
\end{equation}
To see \eqref{eq:Omega_k_prob_liminf}, denote
\begin{equation*}
   \sigma_k^2 :=
\inf_{y\in\X}\, \sum_{n=1}^k y_n^2
\end{equation*}
and observe first that
\begin{equation}\label{eq:limsup_sigma_k_log_proj}
  \limsup_{k\to\infty}\,\frac{1}{8}\sigma_k^2 - \log|\proj_k(\X)| =
  \infty.
\end{equation}
This is clear if $\X$ is finite since $\X\cap\ell^2=\emptyset$, and
follows also if $\X$ is infinite by
%This follows, using the assumption that $\X$ is infinite, from
\eqref{eq:discrete_entropy_cond}. Denoting by $G$ a standard normal
random variable we have
\begin{align*}
  \P(\Omega_k)&\le \sum_{y\in\proj_{k}(\X)}\P\left(\sum_{n=1}^{k} y_n\xi_n\ge
  \frac{1}{2}\sum_{n=1}^{k} y_n^2\right) = \sum_{y\in\proj_{k}(\X)} \P\left(G\ge
  \frac{1}{2}\Big(\sum_{n=1}^{k} y_n^2\Big)^{1/2}\right)\\
  &\le |\proj_{k}(\X)|\,\P\left(G\ge
  \frac{1}{2}\sigma_{k}\right).
\end{align*}
Thus, by standard estimates for normal random variables,
\begin{equation}\label{eq:Omega_k_upper_bound}
  \P(\Omega_k) \le c |\proj_{k}(\X)|e^{-\frac{1}{8}\sigma_{k}^2}
\end{equation}
for some universal constant $c>0$. Hence
\eqref{eq:limsup_sigma_k_log_proj} implies
\eqref{eq:Omega_k_prob_liminf}, from which the first part of the
corollary follows.

(ii) We may again suppose WLOG that $\X\subseteq\R^\N$. We claim
that the assumptions imply that for every $x\in\X$ and every $i$,
\begin{equation}\label{eq:recover_cor_cond}
  \text{with probability one, }\,\sup_{y\in\X_i^x}\, \frac{\sum_{n=1}^{k_j} (y_n-x_n)\xi_n}{\sum_{n=1}^{k_j}
  (y_n-x_n)^2}< \frac{1}{2}\,
  \text{ for all but finitely many
  $j$}.
\end{equation}
This will imply \eqref{eq:recover_likelihood_cond2} due to the fact
that for any $U\in\open_x$, $\X_i^x\subseteq \X\setminus U$ for some
$i=i(x,U)$.
% that $\inf_{y\in\X_\eps^x}\sum_{n=1}^{k_j}(y_n-x_n)^2>0$ for all sufficiently large $j$
Thus, Theorem~\ref{thm:recover_likelihood2} shows that $\X$ admits
recovery.

Now, to see that \eqref{eq:recover_cor_cond} holds, observe that if
we define
\begin{equation*}
  \Omega_k^{x,i} := \Bigg\{\exists y\in\X_i^x,\, \sum_{n=1}^{k} (y_n-x_n)\xi_n\ge
  \frac{1}{2}\sum_{n=1}^{k} (y_n-x_n)^2\Bigg\}
\end{equation*}
and
\begin{equation*}
  (\sigma_k^{x,i})^2 := \inf_{y\in\X_i^x}\, \sum_{n=1}^{k} (y_n-x_n)^2,
\end{equation*}
then, just as in the proof of \eqref{eq:Omega_k_upper_bound}, we
have
\begin{equation*}
  \P(\Omega_k^{x,\eps})\le c |\proj_{k}(\X_i^x)|e^{-\frac{1}{8}(\sigma_{k}^{x,i})^2}
\end{equation*}
for some universal constant $c>0$. Thus, \eqref{eq:recover_cor_cond}
follows from \eqref{eq:discrete_recover_entropy_cond} and the
Borel-Cantelli lemma.
\end{proof}

We end the section with two remarks.

First, we briefly comment on the optimality of the above tests. It
is natural to ask whether every space $\X$ admitting recovery or
detection satisfies the conditions of
Theorems~\ref{thm:recover_likelihood2} or
\ref{thm:detect_likelihood2}, respectively. This is not the case, at
least for the detection problem. Indeed,
Theorem~\ref{thm:Rademacher_detection} (to be proved later, see
Proposition~\ref{sense_prop}) shows that if $\sigma=\{\sigma_n\}$ is
a sequence of positive numbers satisfying $\sum \sigma_n^4 = \infty$
then the ``Rademacher space'',
\[ \X(\sigma)=\left\{x : x_n = \pm\sigma_n\text{ for all
$n$}\right\}
\]
admits detection. Now note that for every possible noise $\xi$ there
exists a $y\in\X(\sigma)$ for which the sign of $y_n$ equals the
sign of $\xi_n$ for all $n$. Plugging this $y$ in condition
\eqref{eq:detect_likelihood_cond2} of
Theorem~\ref{thm:detect_likelihood2} shows that if the theorem is to
apply to $\X(\sigma)$ then, in particular, almost surely,
\begin{equation}\label{eq:detect_cond_counterexample}
  \sum_{n=1}^{k} \sigma_n|\xi_n|<
  \frac{1}{2}\sum_{n=1}^{k} \sigma_n^2\text{ for infinitely many $k$}.
\end{equation}
However, it is simple to check that if $\sigma_n\to0$ and
$\sum_{n=1}^\infty \sigma_n=\infty$ then
\eqref{eq:detect_cond_counterexample} is almost-surely violated.
Thus, there exist spaces admitting detection to which
Theorem~\ref{thm:detect_likelihood2} does not apply. It may be worth
mentioning though, that the detection test used in the proof of
Theorem~\ref{thm:Rademacher_detection} still uses the notion of
likelihood but in a different manner than in
Theorem~\ref{thm:detect_likelihood2}, see the remark after the proof
of Proposition~\ref{sense_prop}. We expect that
Theorem~\ref{thm:recover_likelihood2} is similarly non-optimal,
though we do not present an example to this end. Nevertheless, the
above detection and recovery tests will prove useful in several of
our subsequent examples.

Second, we mention a possible variant of the above tests. As
explained in the beginning of the section, our approach to the
recovery problem is based on identifying a candidate for the
transmitted signal using the condition~\eqref{eq:candidate_def}. The
following is an alternative condition. Given a noisy signal
$z\in\R^\N$ we say that $x\in\X$ is a \emph{candidate} if
\begin{equation}\label{eq:candidate_def2}
  \forall y\in\X\setminus\{x\}:\quad\;f_k(x,z)>f_k(y,z)\;\;\text{ for all
  but finitely many $k$}.
\end{equation}
Again, we have the property that the number of candidates is
necessarily either zero or one and we may define the recovery
mapping $T$ by $T(z)=x$ if $x$ is the (unique) candidate, and by
$T(z)=0$ if there are no candidates. This definition may seem more
natural than \eqref{eq:candidate_def} as it avoids the use of
topology (except possibly in the choice of the $f_k$) and since it
is formally stronger than \eqref{eq:candidate_def} in the sense that
if $x$ is a candidate according to \eqref{eq:candidate_def} then it
is also a candidate according to \eqref{eq:candidate_def2}. However,
the main disadvantage of this definition is that it is not clear
whether the resulting recovery mapping $T$ is Borel measurable or
even universally measurable, even when specializing to the choice
\eqref{eq:f_k_concrete_choice} and assuming that $\X$ is Borel.

A detection mapping of a similar nature may be defined as follows.
Given a noisy signal $z\in\R^\N$ declare that it is pure noise if
\begin{equation*}
  \forall y\in\X:\quad\;f_k(0,z)>f_k(y,z)\;\;\text{ for infinitely many $k$}.
\end{equation*}
This mapping has similar advantages and disadvantages as the above
recovery mapping except that, while it is not clear whether it is
Borel measurable, it is not difficult to check at least that the set
of $z$ which are not declared pure noise by it is an analytic set.

\section{Spaces with unknown phase}
\label{sec:unknown_phase}

\subsection{} In this section we consider signal spaces in which
the amplitudes of the signals are known to the receiver. The
amplitudes will be given by a sequence $\{\sigma_n\}$, given in
advance, and the signals differ solely in their signs, or phases in
the complex case. It is natural to let $\{\sigma_n\}$ be positive
numbers but the discussion below remains true for any complex
numbers.

First, we discuss two representative examples:
\begin{enumerate-math}
  \item The ``Rademacher space'' consisting of signals $x$ of the form
  $x_n = \pm \sigma_n$, with all possible choices of signs
  allowed.
  \item The ``Walsh space'', also consisting of signals $x$ of the form $x_n = \pm \sigma_n$,
  but allowing only a restricted set of possibilities for the signs.
  The signals are parametrized by
  all sequences $\eps=(\eps_j)_{j\ge 0}$ with $\eps_j=\pm 1$. To define $x(\eps)$ we
represent each integer $n\ge 0$ with its binary expansion
\begin{equation}
\label{eq:n-representation} n = \sum_{j \geq 0} n_j 2^j, \quad n_j
\in \{0,1\}, \quad \sum n_j < \infty,
\end{equation}
and let $x_n(\eps) = \sigma_n \prod \eps_j$ where the product is
over all $j$ such that $n_j\neq 0$.
\end{enumerate-math}

The following theorem shows that the conditions for detection and
recovery are rather different in these two examples.

\begin{theorem}\label{thm:detection_phases}\quad
\begin{enumerate-math}
  \item Detection in the Rademacher space is possible if and only if
  \begin{equation}\label{eq:Rademacher_detection_cond}
    \sum |\sigma_n|^4 = \infty,
  \end{equation}
  while recovery is not possible unless the $\{\sigma_n\}$ are all zero.
  \item Suppose the absolute values $\{|\sigma_n|\}$ are non-increasing. There exist absolute constants $0<a<b<\infty$ such that, if
  \begin{equation}\label{eq:cond_non_detect_Walsh}
    \limsup_{k\to\infty} \frac1{\log k} \sum_{n=0}^{k-1} |\sigma_n|^2
  \end{equation}
  is larger than $b$ then recovery is possible in the Walsh
  space, while if this limit is smaller than $a$
  then detection is impossible.
\end{enumerate-math}
\end{theorem}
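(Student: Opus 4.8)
The statement decomposes into two essentially independent problems — one for the Rademacher space (part (i)), one for the Walsh space (part (ii)) — and in each the negative (non-detection) half will come from the randomization criterion of Theorem~\ref{thm:general_detection}, while the positive half comes from an explicit test.

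\emph{Part (i).} For non-detection when $\sum|\sigma_n|^4<\infty$, I would take $P$ to be the uniform law on sign sequences, so that a sample is $x_n=\eps_n\sigma_n$ with $(\eps_n)$ i.i.d.\ uniform on $\{\pm1\}$. If $x,y$ are independent samples then $x_ny_n=\eta_n\sigma_n^2$ with $(\eta_n)$ i.i.d.\ uniform on $\{\pm1\}$, so $\e\exp\{\sum_{n=1}^k x_ny_n\}=\prod_{n=1}^k\cosh(\sigma_n^2)\le\exp\{\tfrac12\sum_n|\sigma_n|^4\}<\infty$, which is exactly \eqref{eq:non_detection_cond}; the same identity shows this quantity diverges when $\sum|\sigma_n|^4=\infty$ (consistent with, but not by itself establishing, the converse). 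Failure of recovery is immediate from \eqref{eq:recover_nec_cond}: flipping a single nonzero sign produces distinct $x,y$ with $\sum|x_n-y_n|^2=4\sigma_n^2<\infty$, unless at most one $\sigma_n$ is nonzero, in which case the space has at most two points and the same remark applies unless all $\sigma_n$ vanish. For detection when $\sum|\sigma_n|^4=\infty$, I would exploit that the known amplitudes give $z_n^2-1=\sigma_n^2+2x_n\xi_n+(\xi_n^2-1)$ and test on $S_N:=\sum_{n=1}^N\sigma_n^2(z_n^2-1)$ normalized by $A_N:=\sum_{n=1}^N\sigma_n^4$, declaring a signal present iff $\limsup_N S_N/A_N\ge\tfrac12$. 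One first restricts to the coordinates carrying the divergence — those with $\sigma_n\le1$, keeping the weights $\sigma_n^2$, or those with $\sigma_n>1$, using unit weights instead — which only makes detection easier, since any fixed sign pattern on a subset of coordinates is again an element of the corresponding sub-space. Then $A_N\to\infty$, a telescoping estimate gives $\sum_N\sigma_N^4/A_N^2<\infty$, and since the relevant martingale part has $n$-th variance $\sigma_n^4(4\sigma_n^2+2)$ (the cross term vanishes), bounded by a constant multiple of $\sigma_n^4$ on the tested coordinates, Kolmogorov's strong law yields $S_N/A_N\to0$ a.s.\ under pure noise and $S_N/A_N\to1$ a.s.\ under a signal; this is a valid Borel detection map.

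\emph{Part (ii), recovery when \eqref{eq:cond_non_detect_Walsh} exceeds $b$.} The Walsh space is a compact set with two values $\pm\sigma_n$ per coordinate, so I would apply Corollary~\ref{cor:vol_growth_detect}(ii). Since $x_0,\dots,x_{k-1}$ depend only on the sign bits $\eps_0,\dots,\eps_{\lfloor\log_2(k-1)\rfloor}$, we have $|\proj_k(\X)|\le 2k$ — polynomial volume growth. The crux is the energy lower bound: if $y$ is obtained from $x$ by flipping the sign bits in a set $D$, then $x_n\neq y_n$ precisely on a coset of a hyperplane of $(\Z/2)^{\lfloor\log_2 k\rfloor}$, and the constraint $(y_0,\dots,y_{i-1})\neq(x_0,\dots,x_{i-1})$ forces $D$ to contain a bit $j_0$ with $2^{j_0}<i$; splitting each $n$ into its digits below and above $j_0$ and using that $\{|\sigma_n|\}$ is non-increasing shows that the flipped set captures at least half of $\sum_{n<k}|\sigma_n|^2$, up to a quantity depending only on $i$. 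Hence $\inf_{y\in\X_i^x}\sum_{n<k}|x_n-y_n|^2\gtrsim\sum_{n<k}|\sigma_n|^2$ for $k$ large relative to $i$. Choosing a subsequence $\{k_j\}$ along which $\sum_{n<k_j}|\sigma_n|^2>(b-\delta)\log k_j$, the series in \eqref{eq:discrete_recover_entropy_cond} is dominated, after finitely many initial terms, by $\sum_j 2k_j^{\,1-c(b-\delta)}$ for a fixed $c>0$, which converges once $b$ exceeds an absolute constant; Corollary~\ref{cor:vol_growth_detect}(ii) then gives recovery.

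\emph{Part (ii), non-detection when \eqref{eq:cond_non_detect_Walsh} is below $a$.} Again take $P$ uniform on sign sequences. Writing $x_n=\sigma_n w_n$ with $w_n$ the base-$2$ Walsh character evaluated at the dyadic point encoded by $(\eps_j)$, two independent samples satisfy $\sum_{n=0}^{2^m-1}x_n\overline{y_n}=\sum_{n=0}^{2^m-1}|\sigma_n|^2 w_n(t)$ with $t$ uniform on $[0,1)$ (this is real-valued, since base-$2$ Walsh functions are). By the Walsh partial-sum property this equals $2^m\int_{I_m(t)}\Phi$, where $I_m(t)$ is the dyadic interval of length $2^{-m}$ about $t$ and $\Phi\in L^2(\T)$ is the function with Walsh coefficients $|\sigma_n|^2$ (well defined since $\sum_{n<k}|\sigma_n|^2=O(\log k)$ together with monotonicity forces $\sum|\sigma_n|^4<\infty$). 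Thus $(2^m\int_{I_m(\cdot)}\Phi)_m=(\e[\Phi\mid\mathcal F_m])_m$ is a martingale, and a conditional-Jensen argument (with Fatou) reduces \eqref{eq:non_detection_cond} to the single inequality $\int_\T e^{\Phi}<\infty$, after which Theorem~\ref{thm:general_detection} gives non-detection. \emph{This exponential-integrability estimate is the main obstacle.} One has to show that $\sum_{n<k}|\sigma_n|^2\le a\log k$ plus monotonicity forces $\Phi$ to have only logarithmic-type peaks, so that $e^\Phi$ is integrable provided $a$ lies below an absolute constant; the plan is to do this by estimating the dyadic blocks $\Delta_m=\sum_{2^m\le n<2^{m+1}}|\sigma_n|^2 w_n$ — whose $L^2$ norms are summable while their sup-norms grow only linearly in $m$ — and bounding $\e\prod_m e^{\Delta_m}$ recursively through the tensor structure of the Walsh system.
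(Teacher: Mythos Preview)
Your treatment of Part~(i) and of the recovery half of Part~(ii) is correct and essentially coincides with the paper's: the same randomization for Rademacher non-detection, a quadratic statistic (the paper uses Chebyshev along a subsequence rather than Kolmogorov's SLLN, but the idea is identical) for Rademacher detection, and the same volume-growth argument via Corollary~\ref{cor:vol_growth_detect}(ii) for Walsh recovery, including the pairing $n\leftrightarrow n\oplus 2^{j_0}$ that extracts the factor~$2$ in the energy lower bound.

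The non-detection half of Part~(ii) has a genuine gap. Your martingale reduction is correct --- $M_m=\sum_{n<2^m}|\sigma_n|^2 w_n(t)=\e[\Phi\mid\mathcal F_m]$ and conditional Jensen gives $\sup_m\e e^{M_m}=\e e^{\Phi}$ --- but this does not simplify the problem, only restates it. Your recursive plan via $\e e^{M_{m+1}}=\e[e^{M_m}\cosh(g_m)]$ with $g_m=\sum_{k<2^m}|\sigma_{k+2^m}|^2 w_k$ fails with the only available pointwise bound: $\cosh(g_m)\le e^{\|g_m\|_\infty^2/2}$ and $\|g_m\|_\infty\le b_m:=\sum_{2^m\le n<2^{m+1}}|\sigma_n|^2$, but under the hypothesis one typically has $b_m\asymp a\log 2$ for all $m$ (take $|\sigma_n|^2\sim a/(n+1)$), so $\prod_m e^{b_m^2/2}$ diverges. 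The $L^2$ information $\sum_m\|g_m\|_2^2<\infty$ does not help here because $g_m$ is correlated with $M_m$ through the shared Rademacher variables $r_0,\dots,r_{m-1}$, blocking any factorization.

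The paper bypasses this entirely by working directly with $W_k=\sum_{n=1}^k|\sigma_n|^2\varphi_n(\omega)\overline{\varphi_n(\omega')}$ and proving a tail bound uniform in $k$ via a Montgomery--Smith $\ell^1/\ell^2$ split: for each $r$ choose a cutoff $m(r)=e^{r/a}$, bound the head $\sum_{n\le m(r)}$ in $\ell^1$ by $\sum_{n\le m(r)}|\sigma_n|^2\le a\log m(r)=r$ (using $|\varphi_n|\le1$), and bound the tail in $\ell^2$ by orthogonality and the monotonicity-forced decay $|\sigma_n|^2\lesssim(\log n)/n$, yielding $\P(|W_k|>r+1)\le Cr^2e^{-r/a}$. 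Since $a<1$ this gives $\sup_k\e e^{|W_k|}<\infty$ directly. This splitting is the missing idea; nothing in your dyadic-block plan produces the exponential tail.
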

The theorem and the remarks after
Proposition~\ref{prop_monotonicity_in_noise} imply the existence of
a space of signals $\X$ with the property that $\delta\X$ admits
recovery for large $\delta$ and does not admit detection for small
$\delta$, e.g., one may take the Walsh space with
$\sigma_n=1/\sqrt{n+1}$. Another example of this transition, with an
explicit calculation of the critical $\delta$ and a broader
discussion, is presented in Section~\ref{sec:tree_trail}. Such a
threshold phenomenon is not available in the Rademacher space.

The result about detection in  the Rademacher space extends to the
larger space of signals $\left\{x : |x_n|\ge \sigma_n\text{ for all
$n$}\right\}$, where in this case we take $\sigma_n\ge 0$. This is
proved in Proposition~\ref{sense_prop} below. It is interesting to
compare this also with the space $\left\{x : x_n\ge \sigma_n\text{
for all $n$}\right\}$, in which there is no absolute value on $x_n$,
where the condition for detection is $\sum \sigma_n^2 = \infty$. An
explicit detection function for this case arises by replacing
$\{x_n\}$ with $\{\sigma_n\}$ in
\eqref{eq:one-point_space_detection_map}.

Finally, observe that in a space with known amplitudes, a necessary
condition for both detection and recovery is $\sum
|\sigma_n|^2=\infty$, and that under this condition recovery implies
detection by Proposition~\ref{prop:recovery_implies_detection}.

\subsection{} The above examples can be seen as special cases of the
following setup. Let $\{\varphi_n\}$ be a sequence of functions
defined on a set $\Omega$. We consider the space consisting of
signals $\{x(\omega)\}$, parametrized by $\omega\in\Omega$, defined
by
\begin{equation}\label{eq:x_n_general}
x_n(\omega) = \sigma_n\, \varphi_n(\omega).
\end{equation}
In our examples, the $\varphi_n$ have modulus one and hence play the
role of phases. However, this restriction is sometimes relaxed
below.

The Rademacher space corresponds to the case where $\{\varphi_n\}$
are the Rademacher functions $\{r_n\}$, $n\ge 0$, defined as the
coordinate functions on the product space $\Omega=\{-1,1\}^\infty$.
The Walsh space is obtained by taking $\{\varphi_n\}$ to be the
Walsh functions $\{w_n\}$, $n\ge 0$, defined by $w_n(\omega)=\prod
r_j(\omega)$, where the product is over those $j$ such that the
$n_j$ in the expansion \eqref{eq:n-representation} are non-zero.

We consider two additional examples which form trigonometric
counterparts to the above spaces:
\begin{enumerate-math}
  \item The ``trigonometric space'' obtained by taking the trigonometric
  system $\varphi_n(t) = e^{2\pi int}$, $n\in\Z$, on the circle
  $\T$.
  \item The ``lacunary space'' obtained by taking
  $\varphi_n(t) = e^{2\pi i2^nt}$, $n\ge 0$, a lacunary subsequence of the trigonometric system.
\end{enumerate-math}

The trigonometric space is, in a sense, a counterpart to the Walsh
space. Indeed, in both systems $\{\varphi_n\}$ are the continuous
characters of a compact group, the Walsh functions for the Cantor
group $\{-1,1\}^\infty$ and the trigonometric functions for the
circle group $\T$. Similarly, the lacunary space is a trigonometric
counterpart to the Rademacher space. It is obtained as a lacunary
subsequence of the trigonometric system, whereas the Rademacher
functions can be seen as a lacunary subsequence of the Walsh system,
$r_n = w_{2^n}$.

One motivation for considering these spaces comes from the following
interpretation. Suppose that $\{\sigma_n\}$ are the Fourier
coefficients of an object $\mu$ on the circle $\T$ (say, a function
or a measure). Then the trigonometric space consists exactly of the
Fourier transforms of the rotations of $\mu$ by all possible angles.
Thus, the receiver seeks to recover the unknown rotation from the
noisy signal. The Walsh space example admits a similar
interpretation as Walsh transforms of the translates of an object,
however, not on the circle group $\T$ but rather on the Cantor group
$\{-1,1\}^\infty$.

The results stated in Theorem~\ref{thm:detection_phases} are true
also for the trigonometric and lacunary spaces. Precisely, the
theorem is true exactly as stated with the words ``Rademacher''
replaced by ``lacunary'', and ``Walsh'' replaced by
``trigonometric''. In addition, in the trigonometric case, since
signals are indexed by $\Z$, we need to take the sum in
\eqref{eq:cond_non_detect_Walsh} running over $|n|< k$, and to
interpret the non-increasing condition as meaning that
$|\sigma_{n+1}|\le|\sigma_n|$ for $n\ge 0$ and
$|\sigma_{n-1}|\le|\sigma_n|$ for $n\le 0$.

The rest of the section is devoted to the proofs. In fact, some of
the results hold in greater generality, as seen below.

\subsection{} We start by establishing the non-detection results
for the Rademacher and lacunary spaces under the assumption $\sum
|\sigma_n|^4<\infty$. In both cases we use the general non-detection
condition of Theorem~\ref{thm:general_detection}.

For the Rademacher space, we let $\P$ be the uniform measure on
$\Omega=\{-1,1\}^\infty$. Let $\omega,\omega'\in\Omega$ be sampled
independently according to $\P$. By
Theorem~\ref{thm:general_detection}, it suffices to show that
\begin{equation}\label{eq:Rademacher_non_detect_cond}
  \liminf_{k \to\infty} \; \e \, \exp \bigg\{\sum_{n=1}^k |\sigma_n|^2 r_n(\omega) r_n(\omega') \bigg\} <
  \infty,
\end{equation}
where the $\{r_n\}$ are the Rademacher functions. Since
$\{r_n(\omega)r_n(\omega'\}$ are independent random variables, each
distributed uniformly on $\{-1,1\}$, the above expression equals
\begin{equation}\label{eq:sigma_moment_calc}
  \liminf_{k \to\infty} \prod_{n=1}^k \left(\frac{1}{2}\exp(\sigma_n^2) +
  \frac{1}{2}\exp(-\sigma_n^2)\right).
\end{equation}
Thus, since $\frac{1}{2}(e^x+e^{-x})\le e^{x^2/2}$, $x\in\R$, and
$\sum |\sigma_n|^4 < \infty$, we deduce
\eqref{eq:Rademacher_non_detect_cond}.

For the lacunary space, letting $\P$ be the Lebesgue measure on
$\T$, the condition of Theorem~\ref{thm:general_detection} becomes
\begin{equation}\label{eq:lacunary_non_detect_cond}
  \liminf_{k\to\infty} \; \int_\T \, \exp \big(\Re f_k(t)\big)dt <
  \infty,
\end{equation}
where
\begin{equation*}
f_k(t):=\sum_{n=1}^k |\sigma_n|^2 e^{2\pi i2^n t}.
\end{equation*}
The assumption $\sum |\sigma_n|^4 < \infty$ implies, according to
\cite[Section V.8, Theorem 8.20]{zygmund}, the existence of $\mu>0$
small, not depending on $k$, such that
\begin{equation*}
  \int_{\T}\,\exp \big(\mu \Re(f_k(t))^2\big)dt \le C
\end{equation*}
with $C$ an absolute constant. This implies
condition~\eqref{eq:lacunary_non_detect_cond}.

\subsection{}
In this section we prove a positive detection result, which applies
in particular to the Rademacher and lacunary spaces. It is clear
that recovery in these spaces is not possible since the spaces
contain distinct elements whose difference is finitely supported
(unless the $\sigma_n$ are all zero).
\begin{proposition}\label{sense_prop}
Suppose the $\sigma_n$ are non-negative. Then detection in the space
\[ \X=\left\{x : |x_n|\ge \sigma_n\text{ for all
$n$}\right\}
\]
is possible if and only if $\sum \sigma_n^4 = \infty$.
\end{proposition}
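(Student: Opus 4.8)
The two directions call for rather different arguments. The ``only if'' direction is an immediate consequence of Theorem~\ref{thm:general_detection}, whereas the ``if'' direction requires constructing an explicit likelihood-type detection test, and that is where the substance lies. Note throughout that $\X=\bigcap_n\{x:|x_n|\ge\sigma_n\}$ is closed, hence Borel.

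For the ``only if'' part I would argue the contrapositive: assuming $\sum\sigma_n^4<\infty$, take $P$ to be the distribution on $\X$ of the random signal $\{\eps_n\sigma_n\}$, where $(\eps_n)$ are i.i.d.\ symmetric $\pm1$ variables. Since $|\eps_n\sigma_n|=\sigma_n$, this $P$ is supported on $\X$. If $x,y$ are independent samples from $P$ then $\sum_{n=1}^k x_ny_n=\sum_{n=1}^k\sigma_n^2\eps_n\eps_n'$ is a sum of independent symmetric variables, so
\[
  \e\,\exp\Big\{\sum_{n=1}^k x_ny_n\Big\}=\prod_{n=1}^k\cosh(\sigma_n^2)\le\exp\Big(\tfrac12\sum_n\sigma_n^4\Big),
\]
which stays bounded in $k$. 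Hence condition~\eqref{eq:non_detection_cond} holds and Theorem~\ref{thm:general_detection} shows $\X$ does not admit detection. (This is exactly the computation of \eqref{eq:Rademacher_non_detect_cond}--\eqref{eq:sigma_moment_calc}; the only additional remark is that the Rademacher measure is itself a probability measure on $\X$.)

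For the ``if'' part, assume $\sum\sigma_n^4=\infty$. I would first reduce to the case $0<\sigma_n\le1$: coordinates with $\sigma_n=0$ are unconstrained and may be dropped by reindexing, since a detection map for the remaining coordinates extends trivially; and replacing $\sigma_n$ by $\min(\sigma_n,1)$ only enlarges $\X$ while keeping $\sum\sigma_n^4=\infty$ (either infinitely many $\sigma_n$ exceed $1$, so the truncated sum is still infinite, or only finitely many do and truncation changes the sum by a finite amount), so by monotonicity of detectability under enlarging the space it suffices to treat the truncated $\sigma$. Now set $V_k:=\sum_{n=1}^k\sigma_n^4\to\infty$ and, given an observed sequence $z$, define
\[
  S_k(z):=\sum_{n=1}^k\sigma_n^2\,(z_n^2-1),
\]
which is (up to a constant factor) the leading term of the Rademacher log-likelihood ratio $\sum_n[\log\cosh(\sigma_nz_n)-\sigma_n^2/2]$. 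The detection map is: $T(z)=0$ if $S_k(z)\le\tfrac12V_k$ for all sufficiently large $k$, and $T(z)=1$ otherwise. This is Borel because $\{T=0\}=\bigcup_{k_0\ge1}\bigcap_{k\ge k_0}\{z:S_k(z)\le\tfrac12V_k\}$.

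Both defining properties of $T$ then come from one mechanism: Kolmogorov's one-series theorem together with Kronecker's lemma, applied to a sum of independent increments that becomes square-summable after dividing by the appropriate normalisation. Under pure noise $z=\xi$, $S_k=\sum_{n\le k}\sigma_n^2(\xi_n^2-1)$ has independent mean-zero increments with variance $2\sigma_n^4=2(V_n-V_{n-1})$, and $\sum_n(V_n-V_{n-1})/V_n^2<\infty$ by comparison with $\int dt/t^2$; hence $S_k/V_k\to0$ almost surely and $T(\xi)=0$ a.s. Under a signal $x\in\X$, write $z_n=x_n+\xi_n$, $b_k:=\sum_{n\le k}\sigma_n^2x_n^2$ (so $b_k\ge V_k$), and $\zeta_n:=\sigma_n^2(z_n^2-x_n^2-1)$, so that $S_k=b_k+\sum_{n\le k}\zeta_n$ with the $\zeta_n$ independent, mean zero and $\var(\zeta_n)=\sigma_n^4(4x_n^2+2)$; using $\sigma_n\le1$, $|x_n|\ge\sigma_n$ and $b_n\ge V_n$ one bounds $\var(\zeta_n)/b_n^2\le4(b_n-b_{n-1})/b_n^2+2(V_n-V_{n-1})/V_n^2$, which is summable, so $\sum\zeta_n/b_n$ converges a.s.\ and $S_k/b_k\to1$; in particular $S_k\ge\tfrac34V_k>\tfrac12V_k$ for all large $k$, giving $T(x+\xi)=1$ a.s. The main obstacle is this ``if'' direction: the unweighted statistic $\sum(z_n^2-1)$ has noise fluctuations of order $\sqrt{k\log\log k}$ and so fails to detect slowly growing $\sigma$, which forces the weighting by $\sigma_n^2$ and brings the relevant scale down to $\sqrt{V_k}=\sqrt{\sum\sigma_n^4}$; and because signals in $\X$ may have unbounded coordinates, the second-moment bookkeeping in the signal case genuinely requires the preliminary truncation and the split of $\var(\zeta_n)$ into its $4\sigma_n^4x_n^2$ and $2\sigma_n^4$ parts, controlled against $b_k$ and $V_k$ respectively.
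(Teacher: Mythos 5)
Your proof is correct, and both directions follow the paper's strategy in essence: the ``only if'' part is word-for-word the paper's argument (push forward the uniform Rademacher measure, compute $\prod\cosh(\sigma_n^2)\le\exp(\frac12\sum\sigma_n^4)$, invoke Theorem~\ref{thm:general_detection}), and the ``if'' part uses the same weighted energy statistic $\sum_{n\le k}\sigma_n^2(|z_n|^2-\text{const})$ after the same truncation $\sigma_n\le1$. The one genuine difference is how concentration of that statistic is established. The paper centers at $2+\frac12\sigma_n^2$ so that the two hypotheses are separated by the sign of $S_N$, picks a sparse subsequence $N_k$ with $\sum_k(\sum_{n\le N_k}\sigma_n^4)^{-1}<\infty$, and applies Chebyshev plus Borel--Cantelli along that subsequence only; you instead run the test along every $k$, proving the strong-law statements $S_k/V_k\to0$ (noise) and $S_k/b_k\to1$ (signal) via the Khintchine--Kolmogorov convergence theorem and Kronecker's lemma, with the integral comparison $\sum(V_n-V_{n-1})/V_n^2\le 1/V_1$ replacing the choice of subsequence. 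Your route is marginally stronger (an a.s.\ limit rather than a statement along a subsequence, and no arbitrary choice of $N_k$), at the cost of invoking the series-convergence theorem where the paper needs only Chebyshev; your bookkeeping $\operatorname{Var}(\zeta_n)/b_n^2\le 4(b_n-b_{n-1})/b_n^2+2(V_n-V_{n-1})/V_n^2$ plays exactly the role of the paper's bound $\operatorname{Var}(S_N)\le 4\sum\sigma_n^2(\sigma_n^2+|x_n|^2)$. The only cosmetic discrepancy is that the paper carries out the computation for complex-valued signals and noise (whence the centering constant $2=\e|\xi_n|^2$), while you work in the real case; your argument transfers verbatim upon replacing $z_n^2-1$ by $|z_n|^2-2$.
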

\begin{proof}
The ``only if'' part follows from the non-detection result of the
previous section for the Rademacher (or lacunary) space. It remains
to prove the ``if'' part.

We may assume that $\sigma_n\le 1$ for all $n$, since otherwise we
can replace $\sigma_n$ by $\min(\sigma_n,1)$, thereby enlarging the
space of signals. Below we consider, without loss of generality, the
case that $\X$ is a space of complex-valued signals. One should keep
in mind that this means taking complex-valued noise, i.e., the real
and imaginary part of each $\xi_n$ are standard Gaussian random
variables. Finally, we assume that the signals are indexed by the
natural numbers.

The proof proceeds by analyzing an explicit detection algorithm. Let
$(N_k)$, $k\ge 1$, be a strictly increasing sequence satisfying
\begin{equation} \label{N_k_choice}
\sum_{k=1}^\infty \Bigg(\sum_{n=1}^{N_k}
\sigma_n^4\Bigg)^{-1}<\infty.
\end{equation}
For a sequence $y$, define
\begin{equation*}
S_N(y):=\sum_{n=1}^N \sigma_n^2
\left(|y_n|^2-(2+\tfrac{1}{2}\sigma_n^2)\right).
\end{equation*}
We define the detection map $T$ by setting $T(y)=1$ if
$S_{N_k}(y)\ge 0$ for infinitely many $k$, and $T(y)=0$ otherwise.
The map $T$ is clearly Borel measurable. Our goal is to show that
$T(\{\xi_n\})=0$ almost surely and that for every $x\in \X$,
$T(\{x_n+\xi_n\})=1$ almost surely.

We start with the pure noise case. We have
\begin{equation*}
\begin{split}
\e S_N(\{\xi_n\}) &= -\frac{1}{2}\sum_{n=1}^N \sigma_n^4,\\
\var(S_N(\{\xi_n\})) &= \sum_{n=1}^N \sigma_n^4 \var(|\xi_n|^2) = 4
\sum_{n=1}^N \sigma_n^4.
\end{split}
\end{equation*}
Define the events $A_k:=\{S_{N_k}(\{\xi_n\})\ge 0\}$ for $k\ge 1$.
By Chebyshev's inequality and our assumption that $0\le \sigma_n\le
1$, we have
\begin{equation*}
\P(A_k)\le \frac{\var(S_{N_k}(\{\xi_n\}))}{\left(\e
S_{N_k}(\{\xi_n\})\right)^2} = \frac{16\sum_{n=1}^{N_k}
\sigma_n^4}{\left(\sum_{n=1}^{N_k} \sigma_n^4\right)^2} =
\frac{16}{\sum_{n=1}^{N_k} \sigma_n^4}.
\end{equation*}
Applying \eqref{N_k_choice} we deduce that $\sum_k \P(A_k)<\infty$
and hence only finitely many of the $A_k$ occur almost surely. Thus
$T(\{\xi_n\})=0$ almost surely, as required.

We turn next to the noisy signal case. Fix $x\in \X$. We have
\begin{equation*}
\begin{split}
\e S_N(\{x_n + \xi_n\}) &= \sum_{n=1}^N
\sigma_n^2(|x_n|^2-\frac{1}{2}\sigma_n^2) \ge
\frac{1}{4}\sum_{n=1}^N \sigma_n^2(\sigma_n^2+|x_n|^2),\\
\var(S_N(\{x_n + \xi_n\})) &= \sum_{n=1}^N \sigma_n^4
\var(|x_n+\xi_n|^2)
=\\
&= 4 \sum_{n=1}^N \sigma_n^4(1+|x_n|^2) \le 4\sum_{n=1}^N
\sigma_n^2(\sigma_n^2+|x_n|^2),
\end{split}
\end{equation*}
where in the last inequality we have used our assumption that $0\le
\sigma_n\le 1$. Define the events $B_k:=\{S_{N_k}(\{x_n+\xi_n\})\le
0\}$ for $k\ge 1$. As before, we apply Chebyshev's inequality to
deduce
\begin{equation*}
\P(B_k)\le \frac{\var(S_{N_k}(\{x_n+\xi_n\}))}{\left(\e
S_{N_k}(\{x_n+\xi_n\})\right)^2} \le \frac{64\sum_{n=1}^{N_k}
\sigma_n^2(\sigma_n^2+|x_n|^2)}{\left(\sum_{n=1}^{N_k}
\sigma_n^2(\sigma_n^2+|x_n|^2)\right)^2}\le
\frac{64}{\sum_{n=1}^{N_k} \sigma_n^4},
\end{equation*}
Applying \eqref{N_k_choice} we conclude that $\sum_k \P(B_k)<\infty$
and hence only finitely many of the $B_k$ occur almost surely. Thus
$T(\{x_n+\xi_n\})=1$ almost surely, as required.
\end{proof}
We remark that the detection algorithm used in the proof is based on
the log-likelihood test to distinguish between the null hypothesis
of the noisy signal distributed as pure noise and the alternative
hypothesis of the noisy signal distributed as pure noise plus an
independent sequence $\{X_n\}$, where the $\{X_n\}$ are independent
and each $X_n$ has the distribution of $\sigma_n$ times a standard
Gaussian random variable.

\subsection{}\label{sec:Walsh_and_trigonometric_non_detection} We now turn to analyze the Walsh and trigonometric
spaces. Here we prove a rather general non-detection result which
applies in particular to these spaces. Let $\X$ be the space of
signals defined by \eqref{eq:x_n_general}. We assume that $\Omega$
is endowed with a measurable structure and a probability measure
$\P$, the $\{\varphi_n\}$, $n\ge 1$, are orthogonal with respect to
$\P$ and $|\varphi_n(\omega)|\le 1$.

For the Walsh space, $\P$ will be the uniform measure on
$\{-1,1\}^\infty$ while for the trigonometric space it will be the
Lebesgue measure on $\T$. The functions on these spaces are
reordered so that the $\{\varphi_n\}$ are indexed by the natural
numbers.

\begin{proposition}\label{prop:general_non_detect_phi}
  Under the above assumptions, if the absolute values $\{|\sigma_n|\}$ are
  non-increasing and
  \begin{equation}\label{eq:sum_non_detect_cond}
    \limsup_{k\to\infty}\,\frac1{\log k} \sum_{n=1}^k |\sigma_n|^2 <
    1,
  \end{equation}
  then detection from the space $\X$ is impossible.
\end{proposition}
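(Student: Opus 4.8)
The plan is to apply Theorem~\ref{thm:general_detection}, in the complex form stated just after it. As the measure $P$ on $\X$ I would take the law of the random signal $\{\sigma_n\varphi_n(\omega)\}$ with $\omega$ drawn from $\P$. If $x_n=\sigma_n\varphi_n(\omega)$ and $y_n=\sigma_n\varphi_n(\omega')$ are two independent samples, then $\sum_{n=1}^{k}x_n\overline{y_n}=\sum_{n=1}^{k}b_n\varphi_n(\omega)\overline{\varphi_n(\omega')}$ with $b_n:=|\sigma_n|^{2}$, so it suffices to show that $\e\exp\{\Re\sum_{n=1}^{k}b_n\varphi_n(\omega)\overline{\varphi_n(\omega')}\}$ stays bounded along some subsequence of $k$'s. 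We may assume $\sum_n b_n=\infty$, as otherwise this expectation is trivially at most $\exp\{\sum_n b_n\}$. By \eqref{eq:sum_non_detect_cond} we may fix $c<1$ and $K$ with $\sum_{n\le N}b_n\le c\log N$ for all $N\ge K$; in particular $b_N\le 2c(\log N)/N\to 0$.

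For the Walsh and trigonometric systems the summand $\varphi_n(\omega)\overline{\varphi_n(\omega')}$ is itself a character of the underlying compact abelian group $G$ (the Cantor group $\{-1,1\}^{\infty}$ for Walsh, the circle for the trigonometric system), hence depends on $(\omega,\omega')$ only through the group element $u:=\omega\omega'^{-1}$, which is again Haar-distributed; so the quantity to bound becomes $\e_u\exp\{\Re P_k(u)\}$ with $P_k:=\sum_{n\le k}b_n\varphi_n$ the corresponding Walsh, resp.\ trigonometric, polynomial. The core of the proof is to bound this exponential integral uniformly in $k$. I would partition $G$ into the dyadic shells $S_m$ ($m\ge 1$) — the points at distance in $(2^{-m-1},2^{-m}]$ from the identity, of Haar measure $O(2^{-m})$ — and on $S_m$ split $P_k=H_m+T_m$ with $H_m:=\sum_{n\le N_m}b_n\varphi_n$, $T_m:=\sum_{N_m<n\le k}b_n\varphi_n$, and cutoff $N_m:=\min(k,\lceil\lambda 2^{m}\rceil)$, where $\lambda=\lambda(c)$ is a large constant to be fixed. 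The head satisfies $\Re H_m(u)\le\sum_{n\le N_m}b_n\le c\log N_m\le cm\log 2+O_\lambda(1)$ by \eqref{eq:sum_non_detect_cond}. The tail is handled by Abel summation: since $\{b_n\}$ is non-increasing, $|T_m(u)|\le 2b_{N_m}\sup_{M}|D_M(u)|$, where $D_M:=\sum_{n\le M}\varphi_n$ is the Walsh, resp.\ trigonometric, Dirichlet kernel; the classical bound $|D_M(u)|\le C2^{m}$ on $S_m$ together with $b_{N_m}\le N_m^{-1}\sum_{n\le N_m}b_n\le c(\log N_m)/(\lambda 2^{m})$ gives $|T_m(u)|\le C'c(\log N_m)/\lambda\le\eta m+O_\lambda(1)$ with $\eta\to 0$ as $\lambda\to\infty$. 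Hence $\Re P_k(u)\le c(1+\eta)m\log 2+O_\lambda(1)$ on $S_m$, so $\int_{S_m}\exp\{\Re P_k\}\le C_\lambda 2^{-m(1-c(1+\eta))}$; choosing $\lambda$ so large that $c(1+\eta)<1$ makes $\sum_m\int_{S_m}$ a convergent geometric series, while the innermost region, where $u$ is within distance $1/k$ of the identity (there $N_m=k$ and $\Re P_k\le c\log k$ on a set of measure $O(1/k)$), contributes $O(k^{c-1})=o(1)$. This yields $\e_u\exp\{\Re P_k\}\le C(c)<\infty$ for every $k\ge K$, and the proposition follows.

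I expect the tail estimate to be the main obstacle — in particular the choice of the cutoff $N_m$ at a constant multiple of the reciprocal of the distance from $u$ to the identity, and the verification that the frequencies above this critical scale contribute only a lower-order term. This is exactly where both hypotheses are used together: the non-increasing property of $\{|\sigma_n|\}$ is what makes Abel summation against the Dirichlet kernel effective, while \eqref{eq:sum_non_detect_cond} is what turns the head into a factor that behaves like $r^{-c}$ with $c<1$ ($r$ denoting the distance to the identity), so that the two combine into an integrable singularity there. For the Rademacher and lacunary systems (lacunary subsequences of the Walsh and trigonometric systems) the corresponding exponential integral is much easier and needs no dyadic argument: in the Rademacher case it factors as $\prod_n\cosh(b_n)\le\exp\{\tfrac12\sum_n b_n^{2}\}$, and $\sum_n b_n^{2}<\infty$ is immediate from $b_n\le 2c(\log n)/n$; the lacunary case follows from Zygmund's exponential bounds for lacunary trigonometric series, just as in the earlier part of this section. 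Finally one should record the routine points: that the pushforward of $\P$ is an admissible choice of $P$ in Theorem~\ref{thm:general_detection}; that \eqref{eq:sum_non_detect_cond} forces all but finitely many $\sigma_n$ to be at most $1$, so the remaining ones only multiply the exponential integral by a bounded constant; and that the complex case reduces to the real one as in Section~\ref{section:definitions_related_works}.
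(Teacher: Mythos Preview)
Your argument does not prove the proposition as stated. The ``above assumptions'' are only that the $\varphi_n$ are orthogonal with $|\varphi_n|\le 1$; there is no group structure on $\Omega$, no reason for $\varphi_n(\omega)\overline{\varphi_n(\omega')}$ to depend only on $\omega\omega'^{-1}$, and no Dirichlet kernel to sum by parts against. Your dyadic-shell argument therefore covers only the Walsh and trigonometric cases (and, separately, the Rademacher and lacunary ones), not the general orthogonal system the proposition is about.

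The paper's proof avoids all of this by working directly with the random variable $W_k=\sum_{n\le k}|\sigma_n|^2\varphi_n(\omega)\overline{\varphi_n(\omega')}$ and estimating its tail via an $\ell^1/\ell^2$ splitting (after Montgomery--Smith). Fix $a$ strictly between the limsup in \eqref{eq:sum_non_detect_cond} and $1$, and for each $r$ split the sum at $m(r)=e^{r/a}$. The low part satisfies $|W_k'|\le\sum_{n\le m(r)}|\sigma_n|^2\le a\log m(r)=r$ by the hypothesis and $|\varphi_n|\le 1$; the high part satisfies $\P(|W_k''|>1)\le\e|W_k''|^2\le\sum_{n>m(r)}|\sigma_n|^4$ by orthogonality of the products $\varphi_n(\omega)\overline{\varphi_n(\omega')}$ in $L^2(\Omega\times\Omega)$, and monotonicity of $|\sigma_n|$ gives $|\sigma_n|^2\le a(\log n)/n$, whence $\sum_{n>m(r)}|\sigma_n|^4\le Cr^2e^{-r/a}$. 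Thus $\P(|W_k|>r+1)\le Cr^2e^{-r/a}$ uniformly in $k$, and since $a<1$ this tail bound yields $\sup_k\e|\exp W_k|<\infty$. This uses exactly the two stated hypotheses (orthogonality and boundedness), nothing more, and is also considerably shorter than your kernel argument even in the character cases.
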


\begin{proof}
The result is obtained by applying the general non-detection
condition of Theorem~\ref{thm:general_detection}. Let $x(\omega),
x(\omega')$ be two random signals in $\X$, obtained by sampling
$\omega$ and $\omega'$ independently from $\P$. For each $k\ge 1$,
define the random variable
\begin{equation*}
  W_k = \sum_{n=1}^k x_n(\omega)\,\overline{x_n(\omega')} = \sum_{n=1}^k |\sigma_n|^2\,
  \varphi_n(\omega)\,\overline{\varphi_n(\omega')}.
\end{equation*}
Theorem~\ref{thm:general_detection} implies that detection from $\X$
is impossible if
\begin{equation}\label{eq:exp_W_k_cond}
  \sup_k\, \e |\exp(W_k)| < \infty.
\end{equation}
To show this, we will estimate the tail probability $\P(|W_k|>r)$.
Choose a number $a$, greater than the limsup in
\eqref{eq:sum_non_detect_cond} but smaller than $1$, and fix $m_0$
large such that
\begin{equation}\label{eq:m_0_property}
  \sum_{n=1}^m |\sigma_n|^2 \le a\log m\quad (m\ge m_0).
\end{equation}
We denote $m(r):=\exp(r/a)$, and let $r_0$ be such that $m(r_0) =
m_0$.

Fix $r\ge r_0$ and break up the sum defining $W_k$ as follows,
\begin{equation*}
  W_k = \sum_{1\le n\le m(r)} |\sigma_n|^2
  \varphi_n(\omega)\overline{\varphi_n(\omega')} + \sum_{m(r) < n\le k} |\sigma_n|^2
  \varphi_n(\omega)\overline{\varphi_n(\omega')} = W_k' + W_k'',
\end{equation*}
where it is understood that $W_k''$ is zero if $m(r)\ge k$. Using
the assumption that $|\varphi_n|\le 1$ we have
\begin{equation*}
  |W_k'| \le \sum_{1\le n\le m(r)} |\sigma_n|^2 \le a\log m(r) = r.
\end{equation*}
Thus,
\begin{equation}\label{eq:W_k_estimate}
  \P(|W_k| > r+1) \le \P(|W_k'| > r) + \P(|W_k''| > 1) = \P(|W_k''| > 1).
\end{equation}
We estimate the right-hand side using Markov's inequality and obtain
\begin{equation}\label{eq:W_k_1_estimate}
  \P(|W_k''| > 1) \le \e(|W_k''|^2) \le \sum_{m(r) < n\le k} |\sigma_n|^4,
\end{equation}
by the orthogonality of the $\{\varphi_n\}$. Since $\{|\sigma_n|\}$
is non-increasing, we may apply \eqref{eq:m_0_property} to deduce
that $|\sigma_n|^2\le a\log n / n$ for $n>m_0$. Thus,
\begin{equation}\label{eq:sigma_n_4_estimate}
  \sum_{m(r) < n\le k} |\sigma_n|^4 \le a^2\sum_{n>m(r)}
  \frac{\log^2 n}{n^2} \le \frac{Ca^2\log^2 m(r)}{m(r)} =
  Cr^2e^{-r/a},
\end{equation}
for some absolute constant $C>0$. Combining \eqref{eq:W_k_estimate},
\eqref{eq:W_k_1_estimate} and \eqref{eq:sigma_n_4_estimate} we
conclude that
\begin{equation*}
  \P(|W_k| > r+1) \le Cr^2e^{-r/a}\quad(r\ge r_0).
\end{equation*}
Finally, since $a<1$ and $r_0$ does not depend on $k$, this estimate
implies condition \eqref{eq:exp_W_k_cond}, and the proposition
follows.
\end{proof}
We remark that the above method of bounding the tail probability of
$W_k$ using an ``$\ell^1/\ell^2$'' splitting of the sum is adopted
from the paper \cite{montgomery-smith} of Montgomery-Smith.

We observe also that the non-increasing condition appearing in the
proposition cannot be removed completely. To see this, consider the
case of the Walsh space and the choice $\sigma_n=0$ if $n$ is not a
power of $2$ and $\sigma_n = \delta$ otherwise, where $\delta$ is a
small positive constant. This choice of $\{\sigma_n\}$ satisfies the
condition \eqref{eq:sum_non_detect_cond}. However, the space defined
by this choice is a Rademacher space, and hence admits detection by
Theorem~\ref{thm:detection_phases}.

\subsection{}\label{sec:Walsh_and_trigonometric_recovery}
Finally, we prove our positive result on recovery in the Walsh and
the trigonometric spaces. In both cases we rely on our general tests
for recovery given in
Section~\ref{sec:general_detection_recovery_algorithms}.

We require also the univariate case of the following lemma, a result
due to Salem and Zygmund. The multivariate case will be useful later
in the paper.
\begin{lemma}[{see \cite{kahane-random}, p.\ 70}]
\label{lemma:multivariate-salem-zygmund} Let
\begin{equation}
\label{eq:multivariate-trigonometric-random} P(t_1, \dots, t_d) =
\sum_{|n_1| + \, \cdots \, + |n_d| \leq K} c(n_1,\dots,n_d) \,
\xi(n_1,\dots,n_d) \, e^{2 \pi i (n_1 t_1 + \, \cdots \, + n_d t_d)}
\end{equation}
be a trigonometric polynomial in $d$ variables, where the $\xi(n_1,
\dots, n_d)$ are independent standard Gaussian random variables.
There is an absolute constant $C$ such that
\[
\p \Big( \|P\|_\infty \geq C \sqrt{d \log K \sum
|c(n_1,\dots,n_d)|^2} \Big) \leq K^{-2} \, e^{-d}.
\]
\end{lemma}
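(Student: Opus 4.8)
The plan is to prove the lemma by the classical discretization route used for the univariate Salem--Zygmund inequality: bound $|P|$ pointwise via the Gaussian tail, control how fast $P$ can vary using Bernstein's inequality for trigonometric polynomials of bounded degree, and then take a union bound over a sufficiently fine net of $\T^d$. Set $\sigma^2:=\sum|c(n_1,\dots,n_d)|^2$. For each fixed $t\in\T^d$ the value $P(t)$ is a Gaussian random variable with $\e|P(t)|^2=\sigma^2$ (when the coefficients are complex, split into real and imaginary parts, each of variance at most $\sigma^2$), so $\p(|P(t)|\ge u\sigma)\le C_1e^{-c_1u^2}$ for all $u>0$ and absolute constants $C_1,c_1>0$.

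Next I would exploit that every frequency of $P$ satisfies $|n_j|\le|n_1|+\cdots+|n_d|\le K$: freezing all variables but $t_j$ turns $P$ into a univariate trigonometric polynomial of degree $\le K$ in $t_j$, so Bernstein's inequality gives $\|\partial P/\partial t_j\|_\infty\le 2\pi K\|P\|_\infty$ for each $j$. Moving one coordinate at a time, $|P(s)-P(t)|\le 2\pi Kd\,\delta\,\|P\|_\infty$ whenever $|s_j-t_j|\le\delta$ for all $j$, and choosing $\delta:=(4\pi Kd)^{-1}$ makes the right side $\le\tfrac12\|P\|_\infty$. Taking a $\delta$-net $\mathcal N$ of $\T^d$ in the sup-metric with $|\mathcal N|\le(CKd)^d$, and using that $\|P\|_\infty$ is attained (continuity, compactness of $\T^d$), one gets $\|P\|_\infty\le 2\max_{t\in\mathcal N}|P(t)|$. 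A union bound over $\mathcal N$ then gives $\p(\|P\|_\infty\ge 2u\sigma)\le C_1(CKd)^de^{-c_1u^2}$, and choosing $u=C'\sqrt{d\log K}$ makes this at most $K^{-2}e^{-d}$ once $c_1(C')^2d\log K$ absorbs $d\log(CKd)+d+2\log K$, which holds for a suitable absolute $C'$ whenever $\log d=O(\log K)$ — in particular whenever $d\le K$, the range relevant for the applications. Unwinding, $\|P\|_\infty\le C\sqrt{d\log K\sum|c(n)|^2}$ off an event of probability $\le K^{-2}e^{-d}$.

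The step I expect to be the main obstacle is the passage from the net back to the whole torus with a constant $C$ that is truly absolute for all $d$: the crude modulus-of-continuity estimate above costs a factor $d$ in the net spacing, hence a spurious $\log d$ inside the square root, which is harmless only when $d$ is at most polynomial in $K$. To remove it I would instead estimate $\e\|P\|_\infty$ by Dudley's entropy integral, noting that the increments of the Gaussian field $t\mapsto P(t)$ satisfy $\e|P(s)-P(t)|^2\le\min\{4\sigma^2,(2\pi K\sigma)^2|s-t|_\infty^2\}$, so its $\eps$-covering numbers on $\T^d$ are at most $(CK\sigma/\eps)^d$ and the integral evaluates to $\lesssim\sigma\sqrt{d\log K}$ (using $\int_0^1\sqrt{\log(1/v)}\,dv<\infty$). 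Adding Gaussian concentration (Borell--TIS) for the functional $(\xi(n))\mapsto\|P\|_\infty$, which is $\sigma$-Lipschitz in the coefficients with variance proxy $\sup_t\e|P(t)|^2=\sigma^2$, upgrades the mean bound to $\p(\|P\|_\infty\ge C''\sigma\sqrt{d\log K})\le K^{-2}e^{-d}$, which is the assertion.
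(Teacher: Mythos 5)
Your proof is correct, but note that the paper does not actually prove this lemma: it is quoted from Kahane's book (p.~70), where the classical Salem--Zygmund argument is used. That argument also rests on Bernstein's inequality, but deploys it differently from your net: one bounds $\e\int_{\T^d}e^{\lambda \Re P}\,dt\le e^{\lambda^2\sigma^2/2}$, observes via Bernstein that $|P|\ge\tfrac12\|P\|_\infty$ on a box of volume $\ge(cKd)^{-d}$ around the maximizer, hence $\int e^{\lambda|\Re P|}\ge(cKd)^{-d}e^{\lambda\|P\|_\infty/2}$, and concludes by Markov and optimization in $\lambda$. This produces exactly the same entropy factor $(CKd)^d$ as your union bound over the $\delta$-net, so both elementary routes share the spurious $d\log d$ you correctly identify, which is harmless in the regime $\log d=O(\log K)$ --- and in particular in the paper's only application of the multivariate case, where $K=N^{d+1}$ with $d$ fixed and $N\to\infty$, so the first half of your argument already suffices there. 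Your second route (Dudley's entropy integral for the canonical metric $\rho(s,t)\le\min\{2\sigma,2\pi K\sigma\|s-t\|_\infty\}$, followed by Borell--TIS concentration around the mean with variance proxy $\sigma^2=\sum|c(n)|^2$) is a genuinely different and more robust argument: it yields the stated bound with a truly absolute constant for all $d$ and all $K\ge2$, at the cost of invoking heavier machinery than anything used elsewhere in the paper. The only caveats are degenerate ones already present in the quoted statement rather than in your proof ($K\ge2$ is needed for $\log K>0$, and the claim is vacuously false if all coefficients vanish); for complex $P$ one should, as you implicitly do, either split into real and imaginary parts or run Dudley and Borell--TIS on the real process $(t,\theta)\mapsto\Re\bigl(e^{i\theta}P(t)\bigr)$, which changes nothing.
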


We start with the Walsh space. Assume that
\begin{equation}\label{eq:Walsh_recovery_assumption}
  \limsup_{k\to\infty} \frac1{\log k} \sum_{n=0}^{k-1} |\sigma_n|^2
  > 9.
\end{equation}
Let $x(\eps)$ and $x(\eps')$ be two distinct elements in the Walsh
space. We have
\begin{equation}\label{eq:Walsh_difference}
  \sum_{n=0}^{k-1} |x_n(\eps) - x_n(\eps')|^2 = \sum_{n=0}^{k-1} |\sigma_n|^2\, \big(1 -
  \prod_{j\,:\,n_j\neq 0}
  \eps_j \eps'_j\big)^2.
\end{equation}
Choose $i$ such that $\eps_i\neq \eps_i'$. Observe that if $n$ and
$m$ differ only in the $i$'th bit in the
expansion~\eqref{eq:n-representation} then exactly one of
$(1-\prod_{n_j\neq 0} \eps_j \eps'_j)$ and $(1-\prod_{m_j\neq 0}
\eps_j \eps'_j)$ is $0$ and the other is $2$. Since $|n-m|= 2^i$ and
the $\{|\sigma_n|\}$ are non-increasing, it follows that
\begin{equation*}
  |x_n(\eps) - x_n(\eps')|^2 + |x_{m}(\eps) - x_{m}(\eps')|^2 \ge 2\big(|\sigma_{n+2^i}|^2 + |\sigma_{m+2^i}|^2\big).
\end{equation*}
Hence, by pairing the summands in \eqref{eq:Walsh_difference}
according to this relation we obtain
\begin{equation}\label{eq:Walsh_difference_C_i}
  \sum_{n=0}^{k-1} |x_n(\eps) - x_n(\eps')|^2 \ge 2\sum_{n=2^i}^{k-1}
  |\sigma_n|^2 = 2\sum_{n=0}^{k-1} |\sigma_n|^2 - C_i.
\end{equation}
Observe that $C_i$ depends only on $i$ (and the sequence
$\{\sigma_n\}$).

We now check the conditions of
Corollary~\ref{cor:vol_growth_detect}. Choose an increasing sequence
$\{k_j\}$ on which the limsup in
\eqref{eq:Walsh_recovery_assumption} is attained. For each $x(\eps)$
and each $i$, define
$\X_i^{x(\eps)}:=\{x(\eps')\,:\,\eps_i'\neq\eps_i\}$. We have
\begin{equation}\label{eq:proj_k_Walsh_recovery}
  |\proj_k(\X_i^{x(\eps)})|\le |\proj_k(\X)|\le 2k.
\end{equation}
In addition, by \eqref{eq:Walsh_recovery_assumption} and
\eqref{eq:Walsh_difference_C_i},
\begin{equation}\label{eq:inf_space_Walsh_recovery}
  \inf_{x(\eps')\in\X_i^{x(\eps)}}
  \sum_{n=0}^{k_j-1} |x_n(\eps) - x_n(\eps')|^2 \ge 2\sum_{n=0}^{k_j-1} |\sigma_n|^2 -
  C_i \ge 17\log(k_j)
\end{equation}
for all but finitely many $j$. Combining
\eqref{eq:proj_k_Walsh_recovery} with
\eqref{eq:inf_space_Walsh_recovery} yields
condition~\eqref{eq:discrete_recover_entropy_cond} of
Corollary~\ref{cor:vol_growth_detect} and thus establishes recovery
in the Walsh space.

Now let $\X$ be the trigonometric space. We aim to use
Theorem~\ref{thm:recover_likelihood2} to show that $\X$ admits
recovery.

By evaluating a geometric series, for any real $\alpha$ we have
\begin{equation}\label{eq:circle_sum}
  \sum_{m\le n<m+L} |1-e^{2\pi i n\alpha}|^2 = 2L -
  2\Re\left(e^{2\pi i m \alpha} \frac{1-e^{2\pi i L\alpha}}{1-e^{2\pi i
  \alpha}}\right)
  \ge 2L - \frac{4}{|1-e^{2\pi i
  \alpha}|}.
\end{equation}
Let $x(t), x(s)$ be two distinct elements in the trigonometric
space. We have
\begin{equation}\label{eq:trig_difference}
  \sum_{|n|\le k} |x_n(t) - x_n(s)|^2 = \sum_{|n|\le k} |\sigma_n|^2\, |1-e^{2\pi i n(t-s)}|^2.
\end{equation}
Write $d(t,s):=|1-e^{2\pi i (t-s)}|$, the Euclidean distance on the
circle $\T$. By partitioning the last sum into blocks of length $L$,
using \eqref{eq:circle_sum} and the fact that $\{|\sigma_n|\}$ is
non-increasing we obtain
\begin{align}\label{eq:trig_recover_lower_bound}
  \sum_{|n|\le k} &|x_n(t) - x_n(s)|^2 \ge \sum_{j=1}^{\lfloor
  k/L\rfloor} \sum_{(j-1)L\le |n|<jL} |\sigma_n|^2\, |1-e^{2\pi i n(t-s)}|^2 \nonumber\\
  &\ge \left(2L - \frac{4}{d(t,s)}\right) \sum_{j=1}^{\lfloor
  k/L\rfloor} \left(|\sigma_{jL-1}|^2 + |\sigma_{-jL+1}|^2\right)\nonumber\\
  & \ge
  \left(2 - \frac{4}{L\cdot d(t,s)}\right) \sum_{j=1}^{\lfloor k/L
  \rfloor} \sum_{jL \le |n|<(j+1)L} |\sigma_n|^2\nonumber\\
  &  \ge \left(2 - \frac{4}{L\cdot d(t,s)}\right)\sum_{|n|\le k} |\sigma_n^2| \; - \;
  C_L,
\end{align}
where $C_L$ depends only on $L$ (and the sequence $\{\sigma_n\}$).

Let $\{k_j\}$ be an increasing sequence which will be chosen later.
Define $E_\delta(t):=\{s\in\T\,:\, d(s,t)\ge \delta\}$ for $t\in\T$.
Observe that the distance $d$ on $\T$ may be pushed forward to a
distance on $\X$ by the mapping $t\mapsto x(t)$, and the topology on
$\X$ thus obtained is the same as the topology induced from the
embedding of $\X$ in $\R^\N$. Hence, by
Theorem~\ref{thm:recover_likelihood2}, it suffices (by continuity
with respect to $s$) to show that for every $t\in\T$, the following
holds with probability one: for every $s\in E_\delta(t)$,
\begin{equation}\label{eq:trig_recover_thm_cond}
\begin{split}
  &\bigg|\sum_{|n|\le k_j} \sigma_n(e^{2\pi i ns} - e^{2\pi i n t})\xi_n\bigg|<
  \frac{1}{2}\sum_{|n|\le k_j} |\sigma_n|^2\,|e^{2\pi i ns} - e^{2\pi i n
  t}|^2,\\
  &\text{for all but finitely many $j$},
\end{split}
\end{equation}
To see this fix $t\in\T$. Then, on the one hand, by
\eqref{eq:trig_recover_lower_bound} we have
\begin{equation*}
  \inf_{s\in E_\delta}\, \frac{1}{2}\sum_{|n|\le k_j} |\sigma_n|^2\,|e^{2\pi i ns} - e^{2\pi i n
  t}|^2 \ge \frac{1}{2}\sum_{|n|\le k_j} |\sigma_n^2| - C(\delta),
\end{equation*}
provided that we take $L=L(\delta)$ sufficiently large. On the other
hand,
\begin{equation*}
  \sup_{s\in\T}\, \bigg|\sum_{|n|\le k_j} \sigma_n(e^{2\pi i ns} - e^{2\pi i n
  t})\xi_n\bigg| \le 2\sup_{s\in\T} \bigg|\sum_{|n|\le k_j} \sigma_n \xi_n e^{2\pi i
  ns}\bigg|.
\end{equation*}
By the Salem-Zygmund Lemma~\ref{lemma:multivariate-salem-zygmund}
(with $d=1$) and the Borel-Cantelli lemma, this supremum is almost
surely bounded by
\begin{equation*}
  C\bigg\{\log(2k_j+1)\sum_{|n|\le k_j}|\sigma_n|^2\bigg\}^{1/2}
\end{equation*}
for all but finitely many $j$, where $C$ is an absolute constant.
Thus, condition \eqref{eq:trig_recover_thm_cond} is satisfied with
probability one for all $s\in E_\delta$, as long as
\begin{equation}\label{eq:k_j_prop}
  \liminf_{j\to\infty}\, \frac{1}{\log k_j} \sum_{|n|\le k_j} |\sigma_n|^2
  > 4C^2.
\end{equation}
We assume that the limit in \eqref{eq:cond_non_detect_Walsh} is
greater than $4C^2$. This allows us to choose a sequence $\{k_j\}$
so that \eqref{eq:k_j_prop} is satisfied. Finally, by taking a
sequence of $\delta$'s tending to zero, and applying the above
argument to each element in the sequence, we conclude that condition
\eqref{eq:trig_recover_thm_cond} holds for any $s\in\T$, $s\neq t$.
This proves that the trigonometric space admits recovery.

We end this section with the following remark concerning the role of
the non-increasing condition in our positive results. It is not
difficult to see that this condition cannot be removed completely
from the results on recovery. E.g., for the Walsh space, if
$\sigma_n=0$ for all $n$ which are not a power of 2 (and
$\{\sigma_n\}$ is not identically zero), then the space contains two
distinct signals whose difference is finitely supported. However, it
turns out that for these spaces to admit \emph{detection}, the
condition that the limit in \eqref{eq:cond_non_detect_Walsh} is
sufficiently large suffices even without the requirement that the
$\{\sigma_n\}$ be non-increasing. This follows very similarly to the
above arguments using the results of
Section~\ref{sec:general_detection_recovery_algorithms}. We omit the
details.

% ========================================================================
\section{Detecting a trail on a tree}\label{sec:tree_trail}
\subsection{}
Let $G=(V,E)$ be the graph of an infinite rooted binary tree. We
define a space of signals $\X$ whose index set is the set $E$. In
other words, a signal in $\X$ is an assignment of numbers to the
edges of the infinite tree. Let $\E$ be the set of branches of the
tree, i.e., $\E$ is the set of infinite simple paths which start at
the root of the tree, where we consider each such path as a
collection of edges. For each path $p\in\E$ we let
$x(p)\in\{0,1\}^E$ be defined by
\begin{equation*}
  x_e(p) = \1_{(e\in p)},\quad e\in E,
\end{equation*}
and let $\X$ be the space of all such $x(p)$. In this section we
investigate the detection and recovery problems in the spaces
$\delta\X$. The following theorem establishes the existence of a
non-trivial threshold for these problems.
\begin{theorem}\label{thm:address_function}\quad
\begin{enumerate-math}
  \item If $\delta\ge \sqrt{2\log 2}$, the space $\delta\X$ admits
  recovery.
  \item If $\delta<\sqrt{2\log 2}$, the space $\delta\X$ does not
  admit detection.
\end{enumerate-math}
\end{theorem}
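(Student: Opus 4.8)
The plan is to attack the two parts by complementary routes, both powered by the theory of branching random walk (BRW) on the binary tree with standard Gaussian edge increments, whose maximal displacement has speed exactly $\sqrt{2\log2}$ --- this is what produces the critical value. Fix the ray $p_0$ under consideration, write $v_0,v_1,\dots$ for its vertices and $S_k:=\sum_{i<k}\xi_{v_iv_{i+1}}$ for the random walk formed by the noise along $p_0$; for each depth $m$ let $T_m$ be the subtree hanging off the side of $p_0$ at $v_m$, and $M_m(\ell):=\max\{\sum_e\xi_e:\text{path of length }\ell\text{ in }T_m\}$.

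For part (ii) (non-detection when $\delta<\sqrt{2\log2}$), let $P$ be the law of $\delta\,x(\mathbf p)$, where $\mathbf p$ is a uniformly random ray (independent fair coin at each vertex). Ordering the edges by depth, a short computation shows that on the subsequence of full levels the Radon--Nikodym martingale $f_k(\xi)$ of Lemma~\ref{lemma:radon-nikodym-fk} equals the additive martingale $W_n(\delta)=\sum_{|v|=n}\exp\{\delta S_v-n\psi(\delta)\}$ of this BRW, with $\psi(\beta)=\log2+\beta^2/2$. By Biggins' theorem $W_n$ is uniformly integrable with an a.s.\ strictly positive limit exactly when $\beta\psi'(\beta)<\psi(\beta)$, i.e.\ when $\delta<\sqrt{2\log2}$; hence $f_k(\xi)\not\to0$ a.s., so by Lemma~\ref{lemma:durrett} the laws of $\xi$ and $\delta\,x(\mathbf p)+\xi$ are not mutually singular, and the reduction opening Section~\ref{sec:general_conditions} shows $\delta\X$ does not admit detection. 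The bare $L^2$ form of Theorem~\ref{thm:general_detection} is not enough here, since $\e f_k(\xi)^2=\e\exp\{\delta^2N_k\}$ (with $N_k$ the common-prefix length of two independent random rays) stays bounded only for $\delta<\sqrt{\log2}$; one genuinely needs the uniform-integrability refinement of Section~\ref{sec:gap_between_ell_2_and_uniform_integrability}.

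For part (i) with $\delta>\sqrt{2\log2}$, order the edges by depth, let $k_j$ be the index of the last edge of depth $j$, and apply Theorem~\ref{thm:recover_likelihood2}. Since any $U\in\open_{\delta x(p_0)}$ excludes only rays that first diverge from $p_0$ at depths $\le m_\ast$ for some finite $m_\ast$, it suffices to verify \eqref{eq:recover_likelihood_cond2} with $\X\setminus U$ replaced by $\{\delta x(q):q\text{ diverges at depth}\le m_\ast\}$ for each $m_\ast$. The bracket in \eqref{eq:recover_likelihood_cond2} then becomes $2\delta\big[\delta(k_j-m_q)-B_q(k_j)+A_{m_q}(k_j)\big]$, where $B_q(k_j)$ is the noise sum along $q$'s edges outside $p_0$ and $A_m(k_j)=S_{k_j}-S_m$; taking the supremum over $q$ with a fixed divergence depth $m$ reduces the claim to: almost surely, for all large $j$, $\max_{0\le m\le m_\ast}\big[M_m(k_j-m)-\delta(k_j-m)+S_m\big]<S_{k_j}$. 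The law of large numbers for the BRW maximum gives $M_m(\ell)/\ell\to\sqrt{2\log2}$ a.s., so for $\delta>\sqrt{2\log2}$ the left side tends to $-\infty$ linearly in $k_j$ for each of the finitely many $m$, while the law of the iterated logarithm gives $|S_{k_j}|=o(k_j)$; this closes the argument. (The necessary condition \eqref{eq:recover_nec_cond} holds automatically, as distinct rays diverge after finitely many edges and then disagree on every edge; note also that the cruder Corollary~\ref{cor:vol_growth_detect}(ii) would only reach $\delta>2\sqrt{\log2}$, so the sharp test is needed.)

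The case $\delta=\sqrt{2\log2}$ is the crux and I expect it to require substantially more work. Now $M_m(\ell)-\sqrt{2\log2}\,\ell$ lags the speed line only by $\Theta(\log\ell)$, whereas $S_{k_j}$ has Gaussian fluctuations of order $\sqrt{k_j}$, so the above application of Theorem~\ref{thm:recover_likelihood2} with the log-likelihood functions $f_k$ genuinely fails: the event that some decoy ray beats $p_0$ at level $k_j$ has probability bounded away from $0$ and recurs. The plan would be to build a recovery map adapted to criticality --- comparing, side of the tree by side of the tree, a critical additive/derivative-martingale statistic of the observation rather than a truncated likelihood --- and to invoke the precise almost-sure behaviour of the critical BRW (the maximum lags by $\tfrac{3}{2\sqrt{2\log2}}\log\ell$ in the Bramson/Hu--Shi sense, and $\sqrt{n}\,W_n$ has a positive limit at $\beta_c$) to show the planted ray can still be separated from every decoy. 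Making this separation uniform over all decoys is where the real difficulty lies.
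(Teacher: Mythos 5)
Your part (ii) is correct and is essentially the paper's own argument: the paper also takes the uniform measure on rays, identifies the Radon--Nikodym martingale with the additive martingale of the Gaussian branching random walk, and invokes Biggins' uniform-integrability criterion to obtain non-detection exactly for $\delta<\sqrt{2\log 2}$; it likewise records (Lemma~\ref{lemma:tree_detection_not_sharp}) that the bare $L^2$ condition of Theorem~\ref{thm:general_detection} only reaches $\sqrt{\log 2}$. Your part (i) for strictly supercritical $\delta>\sqrt{2\log 2}$ is a valid but genuinely different route: you run the likelihood test of Theorem~\ref{thm:recover_likelihood2} directly and beat the decoys using the linear speed $\sqrt{2\log 2}$ of the BRW maximum, whereas the paper never applies that test to this space.

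However, the theorem asserts recovery at $\delta=\sqrt{2\log 2}$ itself, and there your proposal has a genuine gap: you only sketch a plan involving the Bramson correction and the derivative martingale, and you do not carry it out. The paper's resolution is much simpler and needs no fine critical-BRW asymptotics. First (Lemma~\ref{lem:detect_recover_equiv}), for this particular space detection and recovery are \emph{equivalent}: given a detector, apply it to the two subtrees at the root to learn the first step of the planted ray, and iterate down the tree using self-similarity. So it suffices to detect at $\delta=\sqrt{2\log 2}$. Second, detection is done not with the level-by-level likelihood statistic but with the max statistic along the sparse subsequence of depths $h^3$: declare a signal present if $\max_{p\in\E}S_{p,h^3}(z)\ge\delta h^3$ for infinitely many $h$. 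Under pure noise the union bound gives $\P\big(\max_{p}S_{p,h}(z)\ge\delta h\big)\le C\,2^h e^{-\delta^2h/2}/(\delta\sqrt h)=C/(\delta\sqrt h)$ at criticality --- the polynomial prefactor in the Gaussian tail is exactly what survives --- and along $h^3$ this is summable, so Borel--Cantelli leaves only finitely many exceedances. Under a planted ray $p$, the process $S_{p,h}(z)-\delta h$ is a centered random walk, so by Hewitt--Savage its $\limsup$ along $h^3$ is almost surely $+\infty$ and the planted path alone triggers the detector infinitely often. Your diagnosis that the full-level likelihood test fails at criticality is accurate, but the fix is to change the statistic and pass to a sparse subsequence, not to invoke the critical branching-random-walk machinery.
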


The detection part of this theorem appeared before in the work of
\cite{ACCHZ}, albeit with different terminology, along with results
for other graphs $G$.

We can view the above result in terms of having a critical
``signal-to-noise'' ratio for detection and recovery from the space
$\X$. Indeed, as mentioned in Section~\ref{section:preliminaries},
if $\delta x\in\delta\X$ is the transmitted signal and $z = \delta
x+\xi$ is its observed noisy version, then $\frac{1}{\delta} z = x +
\frac{1}{\delta}\xi$ is a noisy version of $x$, in which the noise
in each coordinate has variance $\frac{1}{\delta^2}$. Thus,
investigating detection and recovery in the spaces $\delta\X$ is
equivalent to investigating detection and recovery in the space $\X$
subject to different noise levels. The above result shows that for
the space $\X$ there exists a ``critical'' variance for the noise,
below which recovery is possible and above which even detection is
impossible. Other examples of such phenomena may be obtained from
the results of Section~\ref{sec:unknown_phase}.

One may use the space $\X$ to obtain examples of spaces with
different critical ``signal-to-noise'' ratios for the detection and
recovery problems. This follows from
Proposition~\ref{prop:product_spaces} by considering spaces of the
form $\delta_1\X\times\delta_2\X$.

The spaces $\delta\X$ with $\delta<\sqrt{2\log 2}$ also provide an
example of spaces in which the difference between any two distinct
signals is not in $\ell^p$ for any $p<\infty$, and yet recovery is
impossible.

One can also use the spaces $\delta\X$ to obtain a space of complex
signals which admits recovery (under complex noise), in which
neither the real nor imaginary parts admit detection. To see this,
one may consider $\X'\subseteq\C^V$ defined by $\X':=\{y\,:\,
\exists x\in\delta\X, \Re(y)=\Im(y)=x\}$ for some fixed $\sqrt{\log
2}\le \delta<\sqrt{2\log 2}$. Theorem~\ref{thm:address_function}
shows that detection is impossible from either the real or imaginary
parts of $\X'$. However, recovery from $\X'$ is possible since if
$x$ is the transmitted signal and $z$ is its noisy version, we may
obtain a noisy version of $\sqrt{2}x$ by taking
$\frac{\Re(z)+\Im(z)}{\sqrt 2}$. From this noisy version we may
recover $x$, again by Theorem~\ref{thm:address_function}.

\subsection{}
Theorem~\ref{thm:address_function} is an immediate consequence of
the next three lemmas.
\begin{lemma}\label{lem:detect_recover_equiv}
  For each $\delta>0$, the space $\delta\X$ admits detection if and
  only if it admits recovery.
\end{lemma}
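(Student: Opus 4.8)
The plan is to prove the two directions separately. The implication ``recovery $\Rightarrow$ detection'' is immediate: every signal $\delta x(p)$ has a coordinate equal to $\delta$ for each of the infinitely many edges of the branch $p$, so $\sum_{e}|\delta x_e(p)|^2=\infty$ and hence $\delta\X\cap\ell^2=\emptyset$; Proposition~\ref{prop:recovery_implies_detection} then gives that recovery from $\delta\X$ implies detection from $\delta\X$.

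For ``detection $\Rightarrow$ recovery'' I would exploit the self-similarity of the binary tree. Fix a detection map for $\delta\X$. For a vertex $v$ of $G$ let $G_v$ be the subtree of $G$ consisting of $v$ and all its descendants, rooted at $v$, let $\X_v$ be the corresponding space of branch-indicators on the edge set $E(G_v)$, and write $z|_{G_v}$ for the restriction of a sequence $z\in\R^E$ to $E(G_v)$. Since $G_v$ is isomorphic as a rooted tree to $G$, and since admitting detection is unaffected by a relabelling of the index set, $\delta\X_v$ admits detection; fix such a detection map $D_v:\R^{E(G_v)}\to\{0,1\}$, obtained by transporting the given one through the isomorphism. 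The structural point is the following dichotomy for a branch $p$: if $v\in p$ then the part of $p$ lying in $G_v$ is a branch of $G_v$, so $x(p)|_{G_v}$ is a nonzero element of $\X_v$; while if $v\notin p$ then, because in a tree the path from the root to any descendant of $v$ must pass through $v$, the branch $p$ meets no vertex of $G_v$ and hence $x(p)|_{G_v}=0$.

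Consequently, if $z=\delta x(p)+\xi$ is a noisy observation, then $z|_{G_v}$ is, almost surely, a noisy version of a nonzero element of $\delta\X_v$ when $v\in p$ and is pure noise when $v\notin p$; so, for each fixed $v$, the value $D_v(z|_{G_v})$ equals $\1_{\{v\in p\}}$ almost surely. As $G$ has only countably many vertices, intersecting these almost-sure events over all $v$ shows that, almost surely, $\{v:D_v(z|_{G_v})=1\}$ is exactly the vertex set of $p$, which determines $p$ and hence $\delta x(p)$. I would therefore define the recovery map to send $z$ to $\delta x(p')$, where $p'$ is the unique branch whose vertex set is $\{v:D_v(z|_{G_v})=1\}$ when that set is the vertex set of a branch, and to $0$ otherwise. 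Each $D_v(z|_{G_v})$ is a Borel function of $z$ (a Borel detection map composed with a coordinate projection), only countably many are used, and the collection of subsets of $V$ that are vertex sets of branches is Borel in $\{0,1\}^V$, so the recovery map is Borel; by the above it recovers every signal of $\delta\X$ almost surely.

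The only points needing care, rather than genuine obstacles, are the pure-noise/signal dichotomy above --- which rests on the fact that a branch either enters a rooted subtree and restricts there to a branch, or misses it entirely --- and the routine measurability bookkeeping for the recovery map.
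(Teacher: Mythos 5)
Your proof is correct and uses essentially the same idea as the paper: the recovery-implies-detection direction via Proposition~\ref{prop:recovery_implies_detection} (since $\delta\X\cap\ell^2=\emptyset$), and for the converse the self-similarity of the tree together with the dichotomy that the restriction of a noisy signal to a rooted subtree is either a noisy branch-signal of the isomorphic subspace or pure noise. The paper reads off the branch iteratively, one step at a time from the root, whereas you apply a detection map at every vertex in parallel and intersect countably many almost-sure events; this is only an organizational variant (and your version is somewhat more explicit about measurability).
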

\begin{proof}
  Fix $\delta>0$. Since $\delta\X\cap\ell^2=\emptyset$, it follows from Proposition~\ref{prop:recovery_implies_detection} that if $\delta\X$ admits recovery then it also admits
  detection.
  Now suppose that $\delta\X$ admits detection. Let
  $x(p)\in\delta\X$ and let $z$ be its noisy version. Denote
  by $z_\ell$ the restriction of $z$ to the left sub-tree of $G$ and by $z_r$ the
  restriction to the right sub-tree. Assume without loss of generality that the
  first step of the path $p$ is to the left sub-tree of $G$. Since the left sub-tree
  is isomorphic to the entire tree, it follows that $z_\ell$ is
distributed as a noisy version of the signal
  $x(p')$, with $p'$ the path obtained by following $p$ from its second
  step. Similarly, it follows that $z_r$ is distributed as pure noise on the whole of $E$.
  Thus, applying the detection algorithm for $\delta\X$ to both
  $z_\ell$ and $z_r$ will recover the fact that the first step of $p$
  was to the left sub-tree. Proceeding in the same manner
  iteratively allows to recover all steps of $p$.
\end{proof}

We proceed to establish the impossibility of detection from
$\delta\X$ for $\delta<\sqrt{2\log 2}$. It turns out that applying
Theorem~\ref{thm:general_detection} does not suffice for this
purpose, see Lemma~\ref{lemma:tree_detection_not_sharp} below. Thus
we employ a more precise analysis.

We make use of the following definition: For $h\ge 0$, let
$G_h=(V_h,E_h)$ be the induced subgraph of $G$ on vertices at
distance at most $h$ from the root (so that $G_h$ is a binary tree
with $2^h$ leaves).
\begin{lemma}\label{lem:tree_no_detection}
  The space $\delta\X$ does not admit
  detection if $\delta<\sqrt{2\log 2}$.
\end{lemma}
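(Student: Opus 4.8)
The plan is to invoke the reduction described at the start of Section~\ref{sec:general_conditions}: to show that $\delta\X$ does not admit detection it suffices to produce a probability measure $P$ on $\delta\X$ for which the distribution $R$ of the noisy signal $\{\delta x_e(p)+\xi_e\}$, with $p$ sampled from $P$ independently of $\xi$, is not mutually singular with respect to the distribution $Q$ of pure noise. Taking $\F_h$ to be the $\sigma$-field generated by the coordinates indexed by the edges of $G_h$, these $\sigma$-fields increase to the full Borel $\sigma$-field, and $R_h\ll Q_h$ (both being Gaussian mixtures equivalent to Lebesgue measure on $\R^{E_h}$), so by Lemma~\ref{lemma:durrett} it is enough to show that the Radon--Nikodym derivatives $f_h:=dR_h/dQ_h$ do \emph{not} converge to $0$ $Q$-almost surely.

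The natural choice is to let $P$ be the law of $x(p)$ when $p$ is a uniformly random branch of the tree (an independent fair coin flip at each vertex). Exactly as in Lemma~\ref{lemma:radon-nikodym-fk}, and using that a branch meets $E_h$ in exactly $h$ edges, one computes
\[
  f_h(z)=e^{-\delta^2 h/2}\,\e_p\big[\exp\big(\delta\textstyle\sum_{e\in p\cap E_h} z_e\big)\big]
  =2^{-h}e^{-\delta^2 h/2}\sum_{v}\exp\big(\delta\textstyle\sum_{e\in\mathrm{path}(v)} z_e\big),
\]
where $v$ ranges over the $2^h$ leaves of $G_h$ and $\mathrm{path}(v)$ is the set of edges from the root to $v$. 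Substituting $z=\xi$, we recognise $f_h(\xi)$ as the additive (Biggins) martingale $W_h(\delta)=m(\delta)^{-h}\sum_v e^{\delta S_v}$ of the branching random walk on the binary tree with i.i.d.\ standard Gaussian edge increments, at inverse temperature $\delta$, where $S_v$ is the sum of the increments along $\mathrm{path}(v)$ and $m(\delta)=2e^{\delta^2/2}$. Thus the lemma reduces to showing that $W_h(\delta)$ does not converge to $0$ when $\delta<\sqrt{2\log 2}$.

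This is a classical fact of branching random walk theory (Kahane--Peyri\`ere, Biggins; see also Lyons): the additive martingale is uniformly integrable --- hence $\e W_\infty(\delta)=1$ and $W_\infty(\delta)>0$ with positive probability --- precisely when $\delta<\delta_c$, where $\delta_c$ is the unique solution of $\delta_c\psi'(\delta_c)=\psi(\delta_c)$ with $\psi(\delta)=\log m(\delta)=\log 2+\delta^2/2$; one computes $\delta_c=\sqrt{2\log 2}$. A direct second moment computation shows $W_h(\delta)$ is bounded in $L^2$ only for $\delta<\sqrt{\log 2}$, which is exactly why the bare $L^2$ criterion of Theorem~\ref{thm:general_detection} falls short of the threshold (the content of Lemma~\ref{lemma:tree_detection_not_sharp}). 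For completeness one gives the standard size-biasing proof of uniform integrability valid in the full range: change measure on $\F_h$ by $W_h(\delta)$ and use the spinal decomposition, under which the increments along a distinguished spine become i.i.d.\ $N(\delta,1)$ and fresh independent copies of the branching random walk hang off the spine. The spine contribution to $W_h$ equals $2^{-h}e^{-\delta^2 h/2}e^{\delta S_{\mathrm{spine}_h}}$, which tends to $0$ since $S_{\mathrm{spine}_h}/h\to\delta$ by the strong law and $-\log 2-\tfrac{\delta^2}{2}+\delta^2<0$; the weight of the $j$-th subtree decays geometrically for the same reason, so that Ville's maximal inequality applied to the (independent) subtree martingales together with the Borel--Cantelli lemma yields $\sup_h W_h(\delta)<\infty$ almost surely under the size-biased measure, which is equivalent to $\e W_\infty(\delta)=1$.

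I expect the uniform integrability step in the regime $\sqrt{\log 2}\le\delta<\sqrt{2\log 2}$ to be the main obstacle: the $L^2$ method fails there, and a naive attempt to bound the size-biased limit in expectation is circular (it again succeeds only for $\delta<\sqrt{\log 2}$), so one genuinely needs the almost-sure control of the spine walk rather than a moment estimate. Granting this, $f_h(\xi)=W_h(\delta)$ converges to $W_\infty(\delta)$ which is positive with positive probability, hence $f_h(\xi)$ does not tend to $0$ $Q$-almost surely; by Lemma~\ref{lemma:durrett} the measures $Q$ and $R$ are not mutually singular, and therefore $\delta\X$ does not admit detection when $\delta<\sqrt{2\log 2}$.
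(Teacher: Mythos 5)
Your proposal is correct and follows essentially the same route as the paper: reduce non-detection to non-singularity via Lemma~\ref{lemma:durrett}, take $P$ to be the uniform measure on branches, identify $f_h(\xi)$ with the Biggins additive martingale of the binary branching random walk with Gaussian increments, and conclude from uniform integrability in the regime $\delta<\sqrt{2\log 2}$. The only difference is cosmetic: the paper invokes Biggins's necessary-and-sufficient criterion $m(\delta)\exp(-\delta m'(\delta)/m(\delta))>1$ as a black box, whereas you additionally sketch the spinal-decomposition proof behind it; both yield the same threshold.
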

\begin{proof}
Let $P$ be the ``uniform'' probability measure on paths $p\in\E$,
i.e., the measure induced by choosing each step left or right
uniformly and independently. An inspection of the proof of
Theorem~\ref{thm:general_detection} (see the remark at the end of
Section~\ref{sec:proof_of_general_non_detection_thm}) reveals that
to show non-detection, it suffices to show that the martingale
\begin{equation*}
  f_h = \e_p \, \exp \Big\{ \sum_{e\in E_h} \Big(-\frac{x_e(p)^2}{2} +
x_e(p) \, \xi(e)\Big) \Big\} = \exp\left(-\frac{\delta^2
h}{2}\right) \e_p \, \exp ( \delta S_{p,h})
\end{equation*}
is uniformly integrable, where $\xi$ is the noise sequence,
$S_{p,h}$ is the sum of the $\xi$ variables along the path $p$ from
the root to level $h$ and $\e_p$ denotes expectation with respect to
$p$. This martingale was investigated by Biggins \cite{biggins} (see
also \cite{lyons}) in the more general context of branching random
walk (where the tree itself is random, forming a Galton-Watson
process, and the variables on the edges have a general
distribution). In \cite[Lemma 5]{biggins}, Biggins gives a necessary
and sufficient condition for the uniform integrability which we now
describe in our setting. Let $Z$ be a standard normal random
variable and define
\begin{align*}
  &m_1(\delta) = 2\e\big(\exp(-\delta Z)\1_{(Z>0)}\big),\\
  &m_2(\delta) = 2\e\big(\exp(-\delta Z)\1_{(Z<0)}\big),\\
  &m(\delta) = m_1(\delta) + m_2(\delta) = 2\exp(\delta^2/2).
\end{align*}
Since $m_1(\delta)$ and $m_2(\delta)$ are smooth functions of
$\delta$, the criterion in \cite{biggins} says that $(f_h)$ is
uniformly integrable if and only if $\e |f_1\log f_1|<\infty$ and
$m(\delta)\exp(-\delta m'(\delta)/m(\delta)) > 1$. The first
condition is easily seen to hold for all $\delta$ (e.g., since $\e
f_1^2<\infty$ for all $\delta$), whereas the second holds exactly
when $\delta<\sqrt{2\log 2}$. Thus the lemma follows.
\end{proof}

We proceed to establish the recovery criterion for the spaces
$\delta\X$. It is possible to use the general tests of
Section~\ref{sec:general_detection_recovery_algorithms} for these
spaces, however, such tests do not establish the recovery all the
way to the threshold $\sqrt{2\log 2}$. Hence we use an ad-hoc
recovery algorithm.
\begin{lemma}\label{lem:address_admits_recovery}
The space $\delta\X$ admits recovery if $\delta\ge\sqrt{2\log 2}$.
\end{lemma}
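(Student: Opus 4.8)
The plan is to exhibit an explicit recovery map that compares the increments accumulated by the observed signal along downward paths with the ballistic speed $\sqrt{2\log2}$ of the maximum of a binary branching random walk with standard Gaussian steps. Write $z=\delta x(p)+\xi$ for the noisy signal, where $\xi=\{\xi(e)\}_{e\in E}$, and for a branch $q\in\E$ let $S_{q,h}:=\sum_{e\in q\cap E_h}\xi(e)$ be the noise accumulated along $q$ up to level $h$. Fix a constant $C>\sqrt{2/\log2}$ and, for a branch $q$, put
\[
  \Phi_h(q):=\sum_{e\in q\cap E_h}z_e-\sqrt{2\log2}\,h-C\log h .
\]
For a vertex $v$ of $G$ let $\Psi(v):=\limsup_{h\to\infty}\sup_{q\ni v}\Phi_h(q)$, the supremum taken over branches passing through $v$. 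The recovery map $T$ outputs the signal $\delta x(q_0)$, where $q_0$ is the branch through exactly those vertices $v$ with $\Psi(v)=+\infty$; if these vertices do not form a branch, an event the analysis below shows to have probability zero, then $T$ outputs the zero signal. For each $h$ the quantity $\sup_{q\ni v}\sum_{e\in q\cap E_h}z_e$ is a continuous function of finitely many coordinates, so $\Psi(v)$ is measurable for each of the countably many vertices $v$, and $T$ is Borel measurable; it remains to check that $T$ recovers every signal.

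Fix a branch $p\in\E$ and argue almost surely over $\xi$. \emph{Off-path estimate.} If $v$ is not a vertex of $p$, then $x_e(p)=0$ for every edge $e$ below $v$, so $\{z_e\}$ restricted to the subtree rooted at $v$ is pure Gaussian noise; hence $\sup_{q\ni v}\sum_{e\in q\cap E_h}z_e=c_v+M_v(h-|v|)$ for $h\ge|v|$, where $c_v$ is the ($h$-independent) sum of $z_e$ over the edges from the root to $v$ and $M_v(\ell)$ is the maximum, over vertices $u$ at distance $\ell$ below $v$, of the sum of the $\ell$ noise variables on the path from $v$ to $u$. A first-moment bound (expanding the square and using the Gaussian tail estimate) gives
\[
  \P\big(M_v(\ell)\ge\sqrt{2\log2}\,\ell+\tfrac{2}{\sqrt{2\log2}}\log\ell\big)\le 2^\ell\exp\!\big(-(\log2)\ell-2\log\ell\big)=\ell^{-2},
\]
so by Borel--Cantelli $M_v(\ell)\le\sqrt{2\log2}\,\ell+\tfrac{2}{\sqrt{2\log2}}\log\ell$ for all large $\ell$, almost surely; since $C>2/\sqrt{2\log2}$ this forces $\Psi(v)=-\infty$. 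Taking a union over the countably many vertices $v$ not on $p$, almost surely $\Psi(v)=-\infty$ for all of them. \emph{On-path estimate.} If $v$ is a vertex of $p$, the edges of $p$ carry signal $\delta$, so $\sup_{q\ni v}\Phi_h(q)\ge\Phi_h(p)=(\delta-\sqrt{2\log2})h+S_{p,h}-C\log h$; as $\delta\ge\sqrt{2\log2}$ the drift is nonnegative, and by the law of the iterated logarithm for the single random walk $(S_{p,h})_h$ one has $S_{p,h}\ge\sqrt{h}$ for infinitely many $h$, so $\Phi_h(p)\to+\infty$ along a subsequence and $\Psi(v)=+\infty$. Combining the two estimates, almost surely $\{v:\Psi(v)=+\infty\}$ is exactly the vertex set of $p$, which is a branch, so $T(\delta x(p)+\xi)=\delta x(p)$, as required.

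The case $\delta>\sqrt{2\log2}$ is soft: there $\Phi_h(p)\to+\infty$ already because of the linear term, and the $C\log h$ correction is not needed. The real difficulty is the critical case $\delta=\sqrt{2\log2}$, where the on-path statistic merely oscillates. The mechanism that resolves it is the separation of scales after subtracting the speed $\sqrt{2\log2}\,h$: the on-path statistic still fluctuates on the $\sqrt{h}$ scale, hence exceeds any fixed multiple of $\log h$ infinitely often, whereas the off-path suprema are governed by the maximum of a branching random walk, which — by the crude first-moment bound above, with no appeal to the sharp logarithmic correction of Bramson-type theorems — stays below $\sqrt{2\log2}\,h+\tfrac{2}{\sqrt{2\log2}}\log h$; choosing $C$ strictly between these two logarithmic thresholds is what drives the off-path suprema to $-\infty$ while leaving the on-path ones equal to $+\infty$. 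The only remaining work is the routine measurability bookkeeping and checking that the off-path union is genuinely over a countable set.
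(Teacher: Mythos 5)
Your argument is correct, including at the critical point $\delta=\sqrt{2\log 2}$, but it takes a genuinely different route from the paper's. The paper first shows (Lemma~\ref{lem:detect_recover_equiv}) that for the spaces $\delta\X$ detection and recovery are equivalent: the detection map, applied to the two subtrees hanging off the root, reveals the first step of the path, and iterating recovers all of $p$. It then only solves the detection problem, using the statistic $\max_{p\in\E}S_{p,h}(z)\ge\delta h$ tested along the sparse subsequence $h=m^3$ so that the union-bound probabilities, of order $2^h e^{-\delta^2h/2}/\sqrt h\le 1/\sqrt h$ at criticality, become summable; the signal case is then handled softly by the Hewitt--Savage zero--one law. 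You instead build a recovery map directly, and your key quantitative input --- that the off-path branching-random-walk maximum exceeds $\sqrt{2\log2}\,\ell$ by at most $\tfrac{2}{\sqrt{2\log2}}\log\ell$ eventually (a crude first-moment bound, correctly computed), while the on-path single random walk fluctuates on scale $\sqrt h$ by the LIL --- is exactly what lets a threshold of the form $\sqrt{2\log2}\,h+C\log h$ with $C>2/\sqrt{2\log2}$ separate the two regimes. What your approach buys is self-containedness: it avoids the detection-to-recovery reduction and identifies the whole path in one pass, at the cost of a slightly more delicate threshold choice; the paper's approach is shorter per step because the reduction offloads the work, and its sparse-subsequence trick replaces your logarithmic correction. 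The measurability bookkeeping you sketch (per-vertex $\Psi(v)$ as a limsup of maxima of finitely many linear functionals, countably many vertices) is routine and goes through.
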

\begin{proof}
By Lemma~\ref{lem:detect_recover_equiv} it suffices to show that
$\delta\X$ admits detection when $\delta\ge\sqrt{2\log 2}$. Fix such
a $\delta$ and define the detection map $T$ as follows: For each
$p\in\E$, let $S_{p,h}(z)$ be the sum of the values of $z$ along the
path $p$ from the root to level $h$. Set $T(z)=1$ if
%there exists a $p\in\E$ such that $S_{p,h^3}(z)\ge \delta h^3$ for infinitely many $h$, and otherwise set $T(z)=0$.
$\max_{p\in\E} S_{p,h^3}(z)\ge \delta h^3$ for infinitely many $h$,
and otherwise set $T(z)=0$. It is not difficult to check that $T$ is
Borel measurable. We need to show that if $z$ is a noisy version of
a signal in $\delta\X$ then $T(z)=1$ with probability one, and that
if $z$ is pure noise then $T(z)=0$ with probability one.

We start with the pure noise case. In this case, for every $p\in\E$,
$S_{p,h}(z)$ is distributed as a centered normal random variable
with variance $h$. Thus
\begin{equation*}
  \P(S_{p,h}(z)\ge \delta h)\le \frac{C}{\delta\sqrt{h}}\exp\left(-\frac{\delta^2
  h}{2}\right),
\end{equation*}
for some absolute constant $C>0$, and hence by a union bound,
\begin{equation*}
  \P\left(\max_{p\in\E} S_{p,h}(z)\ge \delta h\right) \le \frac{C2^h}{\delta
  \sqrt{h}}\exp\left(-\frac{\delta^2
  h}{2}\right) \le \frac{C}{\delta\sqrt{h}},
\end{equation*}
since $\delta\ge \sqrt{2\log 2}$. Thus the Borel-Cantelli lemma
implies that $\max_{p\in\E} S_{p,h^3}< \delta h^3$ for all but
finitely many $h$ with probability one, establishing that $T(z)=0$
almost surely.

Next, we consider the case that $z$ is a noisy version of a signal
$\delta x(p)\in\delta\X$. In this case, the process $S_{p,h}(z) -
\delta h$, indexed by $h$, forms a random walk with increments
distributed as standard normal random variables. Thus the
Hewitt-Savage 0-1 law implies that $\limsup_{h\to\infty}
S_{p,h^3}(z) - \delta h^3$ is almost surely constant. Since the
increments are symmetric and non-degenerate, we must have that this
constant is $\infty$. Thus $T(z)=1$ almost surely.
\end{proof}

\subsection{}\label{sec:gap_between_ell_2_and_uniform_integrability}
We now show that the sufficient condition for non-detection given by
Theorem~\ref{thm:general_detection} is not sharp in general, by
proving that it does not give the sharp threshold $\sqrt{2\log 2}$
for detection in the spaces $\delta\X$. Indeed, the most it can
yield, by using the ``uniform'' measure on $\X$, is that $\delta\X$
does not admit detection when $\delta<\sqrt{\log 2}$.
\begin{lemma}\label{lemma:tree_detection_not_sharp}
  For any probability measure $P$ on $\X$ and any $\delta\ge\sqrt{\log 2}$ we have
  \begin{equation*}
    \lim_{h \to\infty} \; \e \, \exp \bigg\{\delta^2\sum_{e\in E_h} x_e(p) x_e(q) \bigg\}
    = \infty,
\end{equation*}
when $x(p)$ and $x(q)$ are sampled independently from $P$.
\end{lemma}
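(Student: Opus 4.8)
The plan is to reduce the statement to an elementary second‑moment estimate on the boundary of the tree. Since the map $p\mapsto x(p)$ is a bijection from $\E$ onto $\X$, a probability measure $P$ on $\X$ is the same thing as a probability measure on $\E$, and for two branches $p,q\in\E$ we have $\sum_{e\in E_h} x_e(p)x_e(q)=\sum_{e\in E_h}\1_{(e\in p\cap q)}$, the number of edges of $E_h$ lying on both $p$ and $q$. Because $p$ and $q$ both start at the root, their common edges form an initial segment of each path, so this count equals $\min\big(h,L(p,q)\big)$, where $L(p,q)\in\{0,1,2,\dots\}\cup\{\infty\}$ denotes the total number of common edges (the ``coalescence depth''). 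As $h\to\infty$ the integrand $\exp\{\delta^2\min(h,L(p,q))\}$ increases pointwise to $\exp\{\delta^2 L(p,q)\}$ (with the usual convention when $L=\infty$), so by monotone convergence the limit in the statement equals $\e\exp\{\delta^2 L(p,q)\}$; since $\delta^2\ge\log 2$ it suffices to prove that $\e\,2^{L(p,q)}=\infty$.

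Next I would decompose this expectation over the level sets of $L$. For a vertex $v$ of the tree let $P_v$ be the $P$‑mass of the set of branches through $v$. Two independent branches $p,q$ satisfy $L(p,q)\ge d$ exactly when they pass through a common vertex at depth $d$, so $\P\big(L(p,q)\ge d\big)=\sum_{\operatorname{depth}(v)=d}P_v^2$. Combining this with the telescoping identity $2^{L}=1+\sum_{d\ge 1}2^{d-1}\1_{(L\ge d)}$ (valid also when $L=\infty$) gives
\[
  \e\,2^{L(p,q)}=1+\sum_{d\ge 1}2^{d-1}\sum_{\operatorname{depth}(v)=d}P_v^2 .
\]

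The key estimate is then a single application of the Cauchy--Schwarz inequality: every branch meets exactly one vertex at depth $d$, so $\sum_{\operatorname{depth}(v)=d}P_v=1$, and there are exactly $2^d$ vertices at depth $d$, whence
\[
  \sum_{\operatorname{depth}(v)=d}P_v^2\ \ge\ \frac{\big(\sum_{\operatorname{depth}(v)=d}P_v\big)^2}{2^d}\ =\ \frac{1}{2^d}.
\]
Substituting this into the previous display yields $\e\,2^{L(p,q)}\ge 1+\sum_{d\ge 1}2^{d-1}\cdot 2^{-d}=1+\sum_{d\ge 1}\tfrac12=\infty$, which finishes the proof. I do not expect a genuine obstacle here: the only points requiring care are the identification of $\sum_{e\in E_h}x_e(p)x_e(q)$ with the coalescence depth and the handling of the event $\{L=\infty\}$ in the monotone‑convergence and telescoping steps. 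The content of the lemma is precisely that, no matter how $P$ is chosen, its mass must spread over the $2^d$ vertices at level $d$, forcing $\sum_v P_v^2$ to decay no faster than $2^{-d}$ — which exactly cancels the critical exponent $\delta^2=\log 2$ and makes the series diverge. (For $\delta>\sqrt{\log 2}$ one could instead argue more crudely via $\e\exp\{\delta^2 L\}\ge e^{\delta^2 d}\P(L\ge d)\ge(e^{\delta^2}/2)^d\to\infty$; the summation over all levels above is what captures the borderline case as well.)
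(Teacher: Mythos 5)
Your proof is correct and follows essentially the same route as the paper: both arguments identify $\sum_{e\in E_h}x_e(p)x_e(q)$ with the truncated coalescence depth, bound the tail probability $\P(L\ge d)$ from below by $2^{-d}$ via Cauchy--Schwarz applied to the level-$d$ masses (the paper uses the $2^h$ edges incident to the leaves rather than the vertices, which is the same thing), and then sum by parts over the tails to get a divergent series. The only cosmetic difference is that you pass to the limit by monotone convergence before estimating, while the paper lower-bounds the finite-$h$ expectation directly.
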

\begin{proof}
  For paths $p,q\in\E$ define $N(p,q)$ to be the number of
  edges common to $p$ and $q$. Let $\partial V_h$ be the $2^h$ edges of $E_h$ incident to the leaves of the truncated tree $G_h$.
  Observe that if $x(p)$ and $x(q)$ are sampled independently
  from any probability measure $P$ on $\X$ then
  \begin{equation*}
    \P(N(p,q) \ge h) = \sum_{e\in \partial V_h} \P(e\in p\cap q) =
    \sum_{e\in \partial V_h} \P(e\in p)^2 \ge 2^{-h},
  \end{equation*}
  by the fact that $\sum_{e\in \partial V_h} \P(e\in
  p) = 1$ and the Cauchy-Schwartz inequality.
 Thus,
  \begin{align*}
    \e \, &\exp \bigg\{\delta^2\sum_{e\in E_h} x_e(p) x_e(q) \bigg\} = \e \, \exp \bigg\{\delta^2
    \min(N(p,q),h)
    \bigg\}\\
    &= 1 + \sum_{i=1}^{h} \P(N(p,q)\ge i) \big(\exp(\delta^2 i) -
    \exp(\delta^2 (i-1))\big)
    \ge \sum_{i=0}^{h} 2^{-(i+1)}\exp(\delta^2 i),
  \end{align*}
  which tends to infinity as $h\to\infty$, when $\delta \ge \sqrt{\log 2}$.
\end{proof}

% ========================================================================

\section{Almost periodic and polynomial phase functions}
\label{sec:poly_phase}

\subsection{}
The space of almost periodic functions on $\Z$ may be defined as the
uniform closure (i.e., closure in the $\ell^\infty$ norm) of the
linear combinations of functions of the form $e^{2 \pi i n \theta}$,
$\theta \in \T$. This is a translation-invariant algebra of
functions which we denote by $\AP$. In this section we show that the
space of almost periodic functions admits recovery, thus proving
Theorem~\ref{thm:reconstruction-polynomial-intro}.

In addition, we briefly consider the space of polynomial phase
functions. Following \cite{furstenberg:recurrence} we denote this
space by $\W$ (after Weyl). It is defined as the uniform closure of
the linear combinations of functions on $\Z$ of the form $e^{2 \pi i
p(n)}$, where $p(x)$ is a real polynomial. This is a
translation-invariant algebra of functions, which contains the
almost periodic functions. We leave open the question of whether
$\W$ admits recovery but give some motivation for why this may be
the case.

We shall consider complex-valued noise since our spaces contain
complex-valued signals.

\subsection{Recovery of almost periodic functions. Proof of Theorem~\ref{thm:reconstruction-polynomial-intro}}
In this section we describe a Borel measurable recovery mapping for
the space $\AP$ of almost periodic functions.

The ``mean value'' of a function $f$ may be defined as the limit
\begin{equation}\label{eq:mean_value_def}
M\{f(n)\} := \lim_{N \to \infty} \frac1{2N+1} \sum_{-N}^{N} f(n).
\end{equation}
This limit exists for any $f \in \AP$. This is easy to see directly
when $f$ has the form $e^{2 \pi i n\theta}$, $\theta\in\T$ (in fact,
the limit equals $0$ unless $\theta=0$). It then extends simply to
linear combinations of such functions and finally to the whole space
$\AP$, as the uniform closure of these linear combinations.

Since $\AP$ is an algebra, the latter implies that
\begin{equation}\label{eq:additional_mean_values_def}
M\{f(n) \overline{g(n)} \} \quad \text{and} \quad M\{|f(n)|^2\}
\end{equation}
also exist, for any $f, g \in \AP$. In
Section~\ref{lemma:mean_value_recovery_polynomial_phase} below we
show that the above limits exist also in the larger space $\W$.

Define the auto-correlation mapping
\begin{equation*}
  \AC_k(g):=M\{g(n)\overline{g(n-k)}\},\quad g\in\C^\Z,
\end{equation*}
where we set $\AC_k(g)=0$ if the limit in the definition of $M$ does
not exist.

Define inductively the mapping, for $j\ge 1$ and $g\in\C^\Z$,
\begin{equation}\label{eq:L_g_j_def}
\begin{split}
  L_0(g)&:=0,\\
  L_j(g)&:=\min\{k>L_{j-1}(g)\,:\,|\AC_k(g) - \AC_0(g) - 2|\le
  1/j\},
\end{split}
\end{equation}
where we set $L_j(g)=0$ if there is no $k$ satisfying the condition.
By the translation invariance of $\AP$ it suffices to present a
recovery mapping for the central element of the transmitted signal.
We define this recovery mapping by
\begin{equation}\label{eq:almost_periodic_recovery}
  T(g):=\lim_{m\to\infty} \frac{1}{m}\sum_{j=1}^m g(L_j(g)),\quad
  g\in\C^\Z,
\end{equation}
where again, we set $T(g)=0$ if the above limit does not exist. The
mapping $T$, as well as $A_k$ and $L_j$, are easily seen to be Borel
measurable. It remains to prove that for every $f\in \AP$ we have
$T(f + \xi) = f(0)$ almost surely.

We make use of the well-known fact that a function $f$ is almost
periodic if and only if
%if it is in $\ell^\infty$ and
for every $\eps>0$ there exists a syndetic set $(M_j)\subset\Z$ of
\emph{$\eps$-almost periods} (see, e.g., \cite[Chapter
5]{katznelson}). Here, a syndetic set is a set with bounded gaps and
an integer $M$ is called an $\eps$-almost period if
\begin{equation*}
  \sup_n |f(n+M) - f(n)|\le \eps.
\end{equation*}

The following sequence of lemmas concludes the proof.
\begin{lemma}\label{lem:auto_correlation_with_noise}
  For each $f\in\AP$ and each integer $k$, almost surely,
  \begin{equation*}
    \AC_k(f+\xi) = \begin{cases} \AC_k(f)&k\neq
    0\\\AC_0(f)+2&k=0\end{cases}.
  \end{equation*}
\end{lemma}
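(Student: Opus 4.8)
The plan is to expand the product defining $\AC_k(f+\xi)$ into four pieces and to show that each piece has a limiting Cesàro average (deterministic in one case, almost sure in the others), so that $M$ of the sum exists almost surely and equals the sum of these averages. Writing $g=f+\xi$, for a fixed integer $k$ we have
\[
g(n)\overline{g(n-k)} = f(n)\overline{f(n-k)} + f(n)\overline{\xi_{n-k}} + \xi_n\overline{f(n-k)} + \xi_n\overline{\xi_{n-k}}.
\]
The first term contributes $M\{f(n)\overline{f(n-k)}\}=\AC_k(f)$, whose existence is precisely the existence of the mean values in \eqref{eq:additional_mean_values_def} for the algebra $\AP$, already noted above. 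It remains to treat the three noisy terms; note that for this I only use $f\in\ell^\infty$, which holds since $\AP\subset\ell^\infty$, and that $k$ and $f$ are fixed so no uniformity is needed.

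For the two mixed terms I would use the strong law for weighted sums of independent variables. The sequence $\{\xi_n\overline{f(n-k)}\}$ is independent and centered with $\var(\xi_n\overline{f(n-k)}) = \e|\xi_n|^2\,|f(n-k)|^2 \le 2\|f\|_\infty^2$, so $\sum_{n\ge1} n^{-2}\var(\xi_n\overline{f(n-k)})<\infty$; by Kolmogorov's two-series theorem $\sum_{n\ge1} n^{-1}\xi_n\overline{f(n-k)}$ converges almost surely, and Kronecker's lemma gives $\frac1N\sum_{n=1}^N \xi_n\overline{f(n-k)}\to0$ a.s.; the same holds for the negative indices and for $\{f(n)\overline{\xi_{n-k}}\}$, and splitting $\sum_{-N}^{N}$ into its two halves shows $M\{f(n)\overline{\xi_{n-k}}\}=M\{\xi_n\overline{f(n-k)}\}=0$ almost surely. (A maximal inequality plus Borel--Cantelli, bounding the partial sums by $O(\sqrt N\log N)$, would give the same conclusion.)

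The only genuinely non-independent piece is $\sum_n \xi_n\overline{\xi_{n-k}}$ with $k\neq0$, and this is the step I expect to require the most care. I would decompose $\Z$ into residue classes modulo $k$: along a fixed class the relevant terms are $T_j:=\xi_{r+(j+1)k}\,\overline{\xi_{r+jk}}$, which are centered, have second moment $\e|\xi_0|^2\cdot\e|\xi_0|^2<\infty$, and involve disjoint pairs of noise variables (hence are independent) whenever the indices $j$ differ by at least $2$. Splitting further into even and odd $j$ yields two genuinely i.i.d.\ families, so Kolmogorov's strong law applies to each and the average over the residue class tends to $0$ almost surely; summing over the $|k|$ residue classes gives $M\{\xi_n\overline{\xi_{n-k}}\}=0$ a.s. For $k=0$ the term is $M\{|\xi_n|^2\}$, which equals $\e|\xi_0|^2=2$ almost surely by the ordinary strong law of large numbers for the i.i.d.\ sequence $|\xi_n|^2$. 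Adding the four limits yields $\AC_k(f+\xi)=\AC_k(f)$ for $k\neq0$ and $\AC_0(f+\xi)=\AC_0(f)+2$, as claimed; apart from the residue-class and even/odd splitting needed to reduce the $\xi\overline{\xi}$ term to sums of independent variables, everything is a routine application of the strong law, Kronecker's lemma, and the already-established existence of mean values in $\AP$.
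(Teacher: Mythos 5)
Your proof is correct and follows essentially the same route as the paper: the same four-term expansion of $g(n)\overline{g(n-k)}$, the same SLLN treatment of the $\xi\overline{\xi}$ term via residue classes (your mod $k$ plus even/odd split is the paper's mod $2|k|$ split), and the same SLLN for $k=0$. The only difference is cosmetic: for the mixed terms you use Kolmogorov's two-series theorem plus Kronecker's lemma where the paper uses a fourth-moment bound plus Borel--Cantelli; both are routine and valid.
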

\begin{proof}
  Fix $f\in\AP$ and $k\in\Z$. Let $g:=f+\xi$. Observe that
  \begin{equation*}
    g(\cdot)\overline{g(\cdot-k)} = f(\cdot)\overline{f(\cdot-k)} +
    f(\cdot)\overline{\xi(\cdot-k)} + \xi(\cdot)\overline{f(\cdot-k)} +
    \xi(\cdot)\overline{\xi(\cdot-k)} =: g_1 + g_2 + g_3 + g_4.
  \end{equation*}
  Thus it suffices to show that,
  almost surely,
  \begin{equation*}
    M\{g_i\}=0,\quad i=2,3
  \end{equation*}
  and
  \begin{equation}\label{eq:M_g_4}
    M\{g_4\} = \begin{cases}
      0&k\neq 0\\
      2&k=0
    \end{cases}.
  \end{equation}
  We start with $g_2$. Since $f\in\ell^\infty$ it follows that for any $\eps>0$ and
  all $N\ge 1$,
  \begin{equation*}
    \P\left(\frac1{2N+1} \left|\sum_{n=-N}^{N}
    f(n)\overline{\xi(n-k)}\right|>\eps\right) \le
    \frac{1}{((2N+1)\eps)^4}\e\left(\left|\sum_{n=-N}^{N}
    f(n)\overline{\xi(n-k)}\right|^4\right)\le
    \frac{C(f)}{N^{2}\eps^4}.
  \end{equation*}
  for some constant $C(f)$. Thus, the Borel-Cantelli lemma implies
  that $M\{g_2\}=0$ almost surely. A similar argument shows that
  $M\{g_3\}=0$ almost surely.

  It remains to show that \eqref{eq:M_g_4} occurs almost surely. If
  $k=0$ we have $g_4(n) = |\xi(n)|^2$ and hence $M\{g_4\} = 2$
  almost surely by the strong law of large numbers, upon recalling that
  $\e|\xi(n)|^2 = 2$ (since we have complex-valued noise). If $k\neq 0$ we may write
  \begin{equation*}
    \frac1{2N+1} \sum_{n=-N}^{N}
    \xi(n)\overline{\xi(n-k)} = \frac1{2|k|} \sum_{r=0}^{2|k|-1}
    \frac{2|k|}{2N+1}\sum_{\substack{-N\le n\le N\\ n\equiv r \bmod{2|k|}}}
    \xi(n)\overline{\xi(n-k)}.
  \end{equation*}
  Then, by considering for each fixed $r$ the limit of the expression inside the outer sum, we
  conclude that $M\{g_4\}=0$ almost surely by the strong law of large numbers.
\end{proof}
\begin{lemma}\label{lem:f_recovery_lemma}
For $f\in\AP$ and $j\ge 0$ define inductively
\begin{equation}\label{eq:L_prime_f_def}
\begin{split}
  L'_0(f)&:=0,\\
  L'_j(f)&:=\min\{k> L_{j-1}(f)\,:\,|\AC_k(f) - \AC_0(f)|\le 1/j\}.
\end{split}
\end{equation}
Then
\begin{equation*}
  \sup_n |f(n) - f(n + L'_j(f))| \to 0\quad\text{as $j\to \infty$}.
\end{equation*}
\end{lemma}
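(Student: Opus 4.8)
The plan is to verify first that $L'_j(f)$ is well defined --- that the set in \eqref{eq:L_prime_f_def} is non-empty for every $j$ --- and then to upgrade the mean-square smallness forced by the condition $|\AC_k(f)-\AC_0(f)|\le 1/j$ to the uniform smallness claimed in the lemma, using crucially that $f$ is almost periodic. The starting point is the elementary identity, valid for all $k\in\Z$ because $M$ is translation invariant and $\AC_0(f)=M\{|f(n)|^2\}$ is real and non-negative:
\[
  M\{|f(n)-f(n+k)|^2\}=2\AC_0(f)-2\Re\AC_k(f)=2\Re(\AC_0(f)-\AC_k(f))\le 2\,|\AC_0(f)-\AC_k(f)|.
\]
For well-definedness, fix $j$ (the case $f\equiv 0$ being trivial, assume $\|f\|_\infty>0$) and set $\eta:=1/(j\|f\|_\infty)$. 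By the characterization of almost periodic functions recalled above, the set of $\eta$-almost periods of $f$ is syndetic, hence unbounded, so it contains some $M$ exceeding the lower bound in \eqref{eq:L_prime_f_def}; for such an $M$ one has $|\AC_M(f)-\AC_0(f)|=|M\{f(n)(\overline{f(n-M)}-\overline{f(n)})\}|\le\|f\|_\infty\sup_n|f(n+M)-f(n)|\le\|f\|_\infty\,\eta=1/j$. Hence the set in \eqref{eq:L_prime_f_def} is non-empty, $L'_j(f)$ is a well-defined positive integer, and by the displayed identity $M\{|f(n)-f(n+L'_j(f))|^2\}\le 2/j$ whenever $L'_j(f)\neq 0$.

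Next I would argue by contradiction. If $\sup_n|f(n)-f(n+L'_j(f))|$ does not tend to $0$, there are $\eps>0$ and a strictly increasing sequence $(j_m)$ with $\|f-S_{L'_{j_m}(f)}f\|_\infty\ge\eps$ for all $m$, where $S_k$ denotes the shift $(S_kg)(n):=g(n+k)$; in particular $L'_{j_m}(f)\neq 0$, so $M\{|f-S_{L'_{j_m}(f)}f|^2\}\le 2/j_m\to 0$. Since $f\in\AP$, the orbit $\{S_kf:k\in\Z\}$ is relatively compact in $(\ell^\infty,\|\cdot\|_\infty)$ (Bochner's characterization of almost periodicity), so passing to a subsequence we may assume $S_{L'_{j_m}(f)}f\to h$ uniformly, with $h\in\AP$ because $\AP$ is closed in $\ell^\infty$. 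Then $\|f-h\|_\infty=\lim_m\|f-S_{L'_{j_m}(f)}f\|_\infty\ge\eps$, so $f-h\not\equiv 0$. On the other hand the map $\phi\mapsto M\{|\phi|^2\}$ is continuous on $\AP$ in the $\ell^\infty$ norm (because $|M\{\psi\}|\le\|\psi\|_\infty$ and $\||\phi|^2-|\psi|^2\|_\infty\le\|\phi-\psi\|_\infty(\|\phi\|_\infty+\|\psi\|_\infty)$), so $M\{|f-h|^2\}=\lim_m M\{|f-S_{L'_{j_m}(f)}f|^2\}=0$.

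Finally I would observe that $M\{|\phi|^2\}=0$ is impossible for a non-zero $\phi\in\AP$: picking $n_0$ with $a:=|\phi(n_0)|>0$, the set of $(a/2)$-almost periods of $\phi$ is syndetic with gaps bounded by some $B$, hence $|\phi(n)|\ge a/2$ on a set of lower density at least $1/B$, so $M\{|\phi|^2\}\ge (a/2)^2/B>0$. Applied to $\phi=f-h$ this contradicts the previous step, completing the proof. The only genuinely non-trivial ingredient is this passage from mean-square to uniform control, which is false for general bounded sequences; here it is forced by the relative compactness of the orbit of $f$ together with the fact that $M\{|\cdot|^2\}^{1/2}$ is a norm (not merely a seminorm) on $\AP$. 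This step --- and the choice of whether to route it through orbit-compactness as above or through a Bochner--Fej\'er approximation argument --- is where I expect the main work to lie; the rest is bookkeeping with the mean functional.
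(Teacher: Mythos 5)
Your proof is correct and follows essentially the same route as the paper's: the identity $M\{|f(n)-f(n+k)|^2\}=2(\AC_0(f)-\Re\AC_k(f))$ together with syndetic almost periods gives non-emptiness of the minimizing set and the mean-energy bound $2/j$, and the upgrade to uniform convergence is done via precompactness of the translate orbit in $\ell^\infty$, which is exactly the hint the paper gives. The only difference is that you spell out the details the paper leaves implicit (the subsequence/compactness argument and the fact that $M\{|\cdot|^2\}$ does not vanish on a non-zero almost periodic function, which is the $\AP$ analogue of the paper's Lemma~\ref{lemma:polynomial-parseval}), and you replace the paper's Cauchy--Schwarz step by a direct $\|f\|_\infty$ bound; both are fine.
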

\begin{proof}
First, note that the limit in the definition of $\AC_k(f)$ exists
for all $k$ since $M\{f(n)\}$ exists for all $f\in\AP$ and the
almost periodic functions form an algebra. Next, observe that for
any $k$,
\begin{equation*}
  M\{|f(n) - f(n-k)|^2\} = 2(\AC_0(f) - \Re(\AC_k(f))).
\end{equation*}
Also, the Cauchy-Schwartz inequality implies that $|\AC_k(f)|\le
\AC_0(f)$. Thus, recalling that $f$ has a syndetic set of
$\eps$-almost periods for every $\eps$ implies that the minimum in
\eqref{eq:L_prime_f_def} is taken over a non-empty set. In addition,
we conclude that
\begin{equation}\label{eq:z_j_mean_energy_conv}
  M\{|f(n) - f(n+L'_j(f))|^2\} \le 2/j.
\end{equation}

Let us denote by $z_j$ the function $\{f(n) - f(n+L'_j(f))\}$. We
may interpret \eqref{eq:z_j_mean_energy_conv} as saying that the
$z_j$ converge to zero in \emph{mean energy}. We wish to conclude
from this that they also tend to zero in the uniform norm. This may
be deduced, for instance, using the following well-known
characterization of almost-periodic functions. A function $f$ is
almost-periodic if and only if the set of its translates
$(f(\cdot-k))$ is precompact in $\ell^\infty$ \cite[Remark 1.8, p.
139]{petersen}.
\end{proof}
\begin{lemma}
  The recovery mapping defined by
  \eqref{eq:almost_periodic_recovery} satisfies that for each
  $f\in\AP$, almost surely, $T(f+\xi) = f(0)$.
\end{lemma}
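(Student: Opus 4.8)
The plan is to reduce the assertion to the two preceding lemmas together with the strong law of large numbers, exploiting that the locations at which we sample the noisy signal are ultimately \emph{deterministic}. Fix $f\in\AP$ and write $g:=f+\xi$. First I would apply Lemma~\ref{lem:auto_correlation_with_noise} for every integer $k$ and intersect these countably many full-measure events; almost surely one then has simultaneously $\AC_k(g)=\AC_k(f)$ for all $k\neq 0$ and $\AC_0(g)=\AC_0(f)+2$. On this event, substituting these identities into the condition defining $L_j(g)$ in \eqref{eq:L_g_j_def} turns it, for $k\neq 0$, into the condition defining $L'_j(f)$ in \eqref{eq:L_prime_f_def} (and $k=0$ never qualifies in \eqref{eq:L_g_j_def}, since there the quantity inside the absolute value equals $2>1/j$); hence, by induction on $j$, $L_j(f+\xi)=L'_j(f)$ for all $j\ge 1$, almost surely. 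The structural point is that the sequence $\{L'_j(f)\}$ depends only on $f$, not on the noise, and by Lemma~\ref{lem:f_recovery_lemma} it is well-posed — each minimum is over a non-empty set — and is a strictly increasing sequence of positive integers.

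Next I would compute the average in \eqref{eq:almost_periodic_recovery} on the above event, separating signal from noise:
\begin{equation*}
  T(f+\xi)=\lim_{m\to\infty}\frac1m\sum_{j=1}^m g(L'_j(f))
  =\lim_{m\to\infty}\frac1m\sum_{j=1}^m f(L'_j(f))+\lim_{m\to\infty}\frac1m\sum_{j=1}^m \xi(L'_j(f)),
\end{equation*}
once the two limits on the right are shown to exist. For the first, Lemma~\ref{lem:f_recovery_lemma} gives $\sup_n|f(n)-f(n+L'_j(f))|\to 0$; taking $n=0$ yields $f(L'_j(f))\to f(0)$, and Ces\`aro means of a convergent sequence converge to the same limit, so the first term equals $f(0)$. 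For the second, since the $L'_j(f)$ are distinct integers, $\{\xi(L'_j(f))\}_{j\ge 1}$ is an i.i.d.\ sequence of standard complex Gaussian variables, so $\frac1m\sum_{j=1}^m\xi(L'_j(f))\to 0$ almost surely by the strong law of large numbers. Intersecting this full-measure event with the one from the first paragraph and adding, we get $T(f+\xi)=f(0)+0=f(0)$ almost surely. Since $T$ was already observed to be Borel measurable, this also completes the construction of the recovery mapping for $\AP$.

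The step I expect to require the most care is the bookkeeping in the first paragraph: one must check that on a \emph{single} full-measure event all the mean values $\AC_k(f+\xi)$ take their prescribed values and, simultaneously, that the minima defining $L_j(f+\xi)$ are then taken over non-empty sets, so that no default value is triggered in \eqref{eq:L_g_j_def} and the identification $L_j(f+\xi)=L'_j(f)$ holds literally for every $j$. The non-emptiness is exactly what Lemma~\ref{lem:f_recovery_lemma} supplies, via the syndetic set of $\eps$-almost periods of $f$; everything after that is the routine Ces\`aro/law-of-large-numbers computation above.
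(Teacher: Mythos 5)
Your argument is correct and is essentially the paper's own proof: combine Lemma~\ref{lem:auto_correlation_with_noise} (over all $k$ simultaneously) with Lemma~\ref{lem:f_recovery_lemma} to get $L_j(f+\xi)=L'_j(f)$ almost surely, note these sampling times are deterministic and strictly increasing, and then split the Ces\`aro average into the signal part (which tends to $f(0)$ by Lemma~\ref{lem:f_recovery_lemma} at $n=0$) and the noise part (which vanishes by the strong law of large numbers). You merely spell out the bookkeeping — the countable intersection of full-measure events and the non-emptiness of the minima — that the paper leaves implicit.
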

\begin{proof}
  Fix $f\in\AP$ and set $g := f+\xi$. Recalling the definitions of $L_j(g)$ and $L'_j(f)$ from \eqref{eq:L_g_j_def} and \eqref{eq:L_prime_f_def}, and putting together
  Lemmas~\ref{lem:auto_correlation_with_noise} and
  \ref{lem:f_recovery_lemma} we see that, almost surely, $L_j(g) =
  L'_j(f)$ for all $j$. In particular, the $L_j(g)$ are independent of $\xi$. Since the $L_j(g)$ are also strictly increasing, Lemma~\ref{lem:f_recovery_lemma}
  and the strong law of large numbers imply that
  \begin{equation*}
    T(g) = \lim_{m\to\infty} \frac{1}{m}\sum_{j=1}^m
    g(L(g)_{j}) = f(0) + \lim_{m\to\infty} \frac{1}{m}\sum_{j=1}^m
    \xi(L(g)_{j}) = f(0),
  \end{equation*}
  establishing the lemma.
\end{proof}

\subsection{Polynomial phase
functions}\label{sec:polynomial_phase_functions_subsec} In the rest
of the section we discuss the space $\W$ of polynomial phase
functions. We explain how a polynomial phase function $f\in\W$ is
uniquely determined by a certain set of parameters, and how these
parameters may be recovered almost surely from a noisy version of
$f$. This falls short of providing a measurable recovery mapping for
$\W$ since it is not clear how to measurably recover $f$ from its
parameter set. However, it gives some indication that the space $\W$
should admit recovery.

The following pair of lemmas summarize our results.
\begin{lemma}
\label{lemma:polynomial-uniqueness} A function $f \in \W$ is
uniquely determined by the values
\begin{equation}
\label{eq:polynomial-measurement} M\{f(n) \, e^{-2 \pi i p(n)}\},
\end{equation}
where $p(x)$ goes through all real polynomials.
\end{lemma}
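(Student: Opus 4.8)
The plan is to recast the statement as the implication: if $h\in\W$ satisfies $M\{h(n)\,e^{-2\pi i p(n)}\}=0$ for every real polynomial $p$, then $h\equiv 0$; two functions $f,g\in\W$ with identical measurements then yield $h=f-g\in\W$ with all measurements zero, by linearity of $M$. Two preliminary remarks: $\W$ is closed under complex conjugation, because $\overline{e^{2\pi i p(n)}}=e^{2\pi i(-p(n))}$ lies in the generating family; and $|M\{F\}|\le\|F\|_\infty$ for $F\in\W$, so $M$ is continuous for the $\ell^\infty$-norm. Combined with $\W$ being an algebra on which mean values exist (the mean-value results of Section~\ref{sec:polynomial_phase_functions_subsec}), this guarantees that $M\{h\,\overline g\}$ is defined for all $h,g\in\W$.

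\emph{Step 1: from the measurements to $M\{|h|^2\}=0$.} By the definition of $\W$, choose polynomial phase polynomials $g_k(n)=\sum_{j=1}^{J_k}c^{(k)}_j e^{2\pi i p^{(k)}_j(n)}$ with $\|h-g_k\|_\infty\to 0$. For each $k$ we have $M\{h\,\overline{g_k}\}=\sum_{j}\overline{c^{(k)}_j}\,M\{h(n)\,e^{-2\pi i p^{(k)}_j(n)}\}=0$ by hypothesis, while $h\,\overline{g_k}\to h\,\overline h=|h|^2$ uniformly; by continuity of $M$ we conclude $M\{|h|^2\}=0$.

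\emph{Step 2 (the crux): from $M\{|h|^2\}=0$ to $h\equiv 0$.} Since $M\{|h|^2\}=\lim_N(2N+1)^{-1}\sum_{|n|\le N}|h(n)|^2=0$, the set $A_\eps:=\{n\in\Z:|h(n)|\ge\eps\}$ has density zero for every $\eps>0$. Suppose, for contradiction, that $\|h\|_\infty=c>0$; set $\eps=c/4$, pick $n_0$ with $|h(n_0)|>3\eps$, and pick a polynomial phase polynomial $P(n)=\sum_{j=1}^{J}c_j e^{2\pi i p_j(n)}$ with $\|h-P\|_\infty<\eps$, so that $|P(n_0)|>2\eps$ and $\{n:|P(n)|>2\eps\}\subseteq A_\eps$. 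Write $P(n)=\Phi\big(p_1(n),\dots,p_J(n)\big)$, where $\Phi(\theta_1,\dots,\theta_J)=\sum_j c_j e^{2\pi i\theta_j}$ is continuous on $\T^J$ and the arguments are taken mod $1$. The set $V:=\{(\theta_1,\dots,\theta_J)\in\T^J:|\Phi(\theta_1,\dots,\theta_J)|>2\eps\}$ is open and contains $\big(p_1(n_0),\dots,p_J(n_0)\big)$. By Weyl's equidistribution theorem for polynomial sequences, $n\mapsto\big(p_1(n),\dots,p_J(n)\big)\bmod 1$ equidistributes with respect to a probability measure $\nu$ on $\T^J$ whose support contains every point of the sequence; hence $\nu(V)>0$, and therefore $\{n:(p_1(n),\dots,p_J(n))\in V\}$ has positive lower density. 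This contradicts its being contained in the density-zero set $A_\eps$. Hence $c=0$, i.e.\ $h\equiv 0$, which proves the lemma.

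I expect Step 2 to be the main obstacle, since it is the only place where the structure of $\W$ beyond ``closed, conjugation- and translation-invariant subalgebra of $\ell^\infty(\Z)$'' is used, and it rests on Weyl's theorem. An alternative and cleaner route for Step 2 is available if the mean-value discussion of this section identifies $\W$, via Gelfand theory, with $C(K)$ for a compact abelian group $K$ on which $M$ is integration against normalized Haar measure $m_K$, with $\Z$ embedded densely in $K$: then $M\{|h|^2\}=\int_K|h|^2\,dm_K=0$ and continuity of $h$ force $h\equiv 0$ on $K$, hence on $\Z$, because $m_K$ has full support. Either way, the remaining verifications are routine.
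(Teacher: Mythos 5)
Your overall strategy is the same as the paper's: reduce to showing that if $h\in\W$ has all measurements zero then $M\{|h|^2\}=0$ (your Step 1 is, up to rearrangement, exactly the paper's inequality $|M\{|f|^2\}-M\{f\overline g\}|\le\delta\|f\|_\infty$ together with the observation that $M\{f\overline g\}$ is a linear combination of measurements), and then to show that $M\{|h|^2\}=0$ forces $h\equiv 0$; the paper isolates this last implication as Lemma~\ref{lemma:polynomial-parseval}. Step 1 and the reduction are fine.

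The one genuine issue is the justification of Step 2. You assert that, ``by Weyl's equidistribution theorem for polynomial sequences,'' the orbit $n\mapsto(p_1(n),\dots,p_J(n))\bmod 1$ equidistributes with respect to a measure $\nu$ on $\T^J$ \emph{whose support contains every point of the sequence}. The existence of the limiting measure $\nu$ does follow from Weyl's criterion (Theorem~\ref{thm:weyl} applied to $\sum_j k_jp_j$ for each $k\in\Z^J$). But the statement that $\supp\nu$ contains every orbit point is not a formal consequence of equidistribution: in general a weak-$*$ limit of empirical measures can miss orbit points that are visited with density zero. What you actually need is that for every $n_0$ and every neighborhood $V$ of $(p_1(n_0),\dots,p_J(n_0))$ the set $\{n:(p_1(n),\dots,p_J(n))\in V\}$ has positive lower density --- i.e., a polynomial \emph{recurrence} statement. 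This is true, but it is precisely the content of Furstenberg's theorem (Theorem~\ref{thm:furstenberg-polynomial}), which gives that the set of approximate return times is syndetic and hence of positive lower density; that is exactly how the paper's proof of Lemma~\ref{lemma:polynomial-parseval} proceeds (bounding $|f|$ away from zero on a syndetic set, which is incompatible with $M\{|f|^2\}=0$). Alternatively one could justify your support claim by a structure-theoretic analysis of the orbit closure (a finite union of cosets of a subtorus, with Haar-type equidistribution on each), but that is extra work you have not supplied. So: correct architecture, with the crucial recurrence input under-justified; cite Furstenberg's polynomial recurrence theorem there and the argument closes.
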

\begin{lemma}\label{lemma:mean_value_recovery_polynomial_phase}
  For any $f\in\W$ we have, almost surely, that
  \begin{equation*}
    M\{(f(n)+\xi(n))e^{-2 \pi i p(n)}\} = M\{f(n) \, e^{-2 \pi i p(n)}\}
  \end{equation*}
  simultaneously for all real polynomials $p(x)$.
\end{lemma}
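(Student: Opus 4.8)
The plan is to reduce the statement to the claim that, almost surely, $M\{\xi(n)\,e^{-2\pi i p(n)}\}=0$ simultaneously for every real polynomial $p$. Granting this, and the existence of $M\{f(n)e^{-2\pi ip(n)}\}$ for $f\in\W$ (established separately in this section), linearity of $M$ applied to already-convergent averages gives $M\{(f(n)+\xi(n))e^{-2\pi ip(n)}\}=M\{f(n)e^{-2\pi ip(n)}\}$. For a single fixed $p$ the noise claim is trivial: since $\xi(n)$ is rotation invariant and $p(n)$ is deterministic, $\{e^{-2\pi ip(n)}\xi(n)\}_n$ is again a sequence of i.i.d.\ standard complex Gaussians, so its symmetric averages tend to $0$ by the strong law of large numbers. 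The entire difficulty is the word \emph{simultaneously}.

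To get uniformity I would first pass to a fixed degree on a compact torus. For $p(x)=a_0+a_1x+\cdots+a_dx^d$ the modulus $\bigl|\tfrac1{2N+1}\sum_{|n|\le N}\xi(n)e^{-2\pi ip(n)}\bigr|$ is unchanged if one drops $a_0$ and reduces each $a_j$ modulo $1$, hence equals $|S_N(a_1,\dots,a_d)|$ where
\[
  S_N(\alpha):=\frac1{2N+1}\sum_{|n|\le N}\xi(n)\,e^{-2\pi i(\alpha_1 n+\alpha_2 n^2+\cdots+\alpha_d n^d)}
\]
is a genuine function on the compact torus $\T^d$. It therefore suffices to show that for each fixed $d\ge 1$ we have $\sup_{\alpha\in\T^d}|S_N(\alpha)|\to 0$ almost surely as $N\to\infty$; intersecting these events over $d\in\N$ then yields the lemma.

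The key step is that $S_N$ is exactly a random multivariate trigonometric polynomial of the form handled by Lemma~\ref{lemma:multivariate-salem-zygmund}: its frequency vectors are $(-n,-n^2,\dots,-n^d)$, $|n|\le N$, each carrying the coefficient $1/(2N+1)$ and an independent standard complex Gaussian factor; these frequencies satisfy $|{-n}|+|{-n^2}|+\cdots+|{-n^d}|\le K:=dN^d$ and $\sum|c|^2=1/(2N+1)$. Splitting $\xi(n)$ into real and imaginary parts and applying Lemma~\ref{lemma:multivariate-salem-zygmund} (its parameter $d$ taken equal to the present $d$) to each, one obtains an absolute constant $C$ with
\[
  \P\Bigl(\sup_{\alpha\in\T^d}|S_N(\alpha)|\ge C\sqrt{\tfrac{d\log K}{2N+1}}\Bigr)\le 2K^{-2}e^{-d}.
\]
Since $\log K=O_d(\log N)$, the threshold is $O_d\bigl(\sqrt{(\log N)/N}\bigr)\to 0$, while the right-hand side is $O(N^{-2})$ and hence summable in $N$. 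The Borel--Cantelli lemma gives $\sup_{\alpha\in\T^d}|S_N(\alpha)|\to 0$ almost surely for each fixed $d$, and a countable intersection over $d$ completes the argument.

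I expect the only genuine obstacle to be the passage from pointwise to simultaneous control over the uncountable family of polynomials, and this is precisely what the sup-norm bound of Lemma~\ref{lemma:multivariate-salem-zygmund}, applied degree by degree on the compact coefficient torus, is designed to deliver. The remaining points — the rotation-invariance identification of $\{e^{-2\pi ip(n)}\xi(n)\}$ as i.i.d.\ Gaussians, the reduction modulo $1$, and the bookkeeping of $K$ and $\sum|c|^2$ — are routine.
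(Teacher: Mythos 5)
Your proposal is correct and follows essentially the same route as the paper: both view $M_N(\xi,p)$ as a random trigonometric polynomial in the coefficients of $p$, apply the multivariate Salem--Zygmund bound with $K$ polynomial in $N$ (the paper takes $K=N^{d+1}$, you take $K=dN^d$; either works), and conclude by Borel--Cantelli followed by a countable union over degrees. The extra care you take about dropping the constant term, reducing coefficients modulo one, and splitting the complex noise into real and imaginary parts is exactly the routine bookkeeping the paper leaves implicit.
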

\subsection{Preliminary
results}\label{sec:polynomial_phase_functions_preliminaries} We rely
on the following results.

\begin{theorem}[Weyl \cite{weyl}]
\label{thm:weyl} If $p(x)$ is a real polynomial with at least one
coefficient other than the constant term irrational, then the
sequence $\{p(n)\}$ is equidistributed modulo one.
\end{theorem}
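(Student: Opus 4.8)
The plan is to reduce the statement to an estimate on exponential sums via the classical Weyl criterion, and then prove that estimate by induction on $\deg p$, using van der Corput's differencing trick for the inductive step.

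\emph{Step 1: Reduction to exponential sums.} Recall Weyl's criterion: a sequence $\{x_n\}\subset\R$ is equidistributed modulo one if and only if $\frac1N\sum_{n=1}^N e^{2\pi i h x_n}\to 0$ as $N\to\infty$ for every nonzero integer $h$. Applying this with $x_n=p(n)$, it suffices to show $\frac1N\sum_{n=1}^N e^{2\pi i h p(n)}\to 0$ for each fixed $h\in\Z\setminus\{0\}$. Since $h$ is a nonzero integer, the polynomial $hp$ still has some non-constant coefficient irrational, so it is enough to treat $h=1$. Thus I will prove: for every real polynomial $q$ having at least one non-constant coefficient irrational,
\[
  E_N(q):=\frac1N\sum_{n=1}^N e^{2\pi i q(n)}\longrightarrow 0 \qquad (N\to\infty),
\]
arguing by induction on $d=\deg q$.

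\emph{Step 2: Base case and irrational leading coefficient.} If $d=1$, write $q(x)=\alpha x+\beta$ with $\alpha$ irrational; then $\sum_{n=1}^N e^{2\pi i q(n)}$ is a geometric sum of modulus at most $2/|1-e^{2\pi i\alpha}|$, so $E_N(q)\to 0$. Suppose now $d\ge 2$ and the leading coefficient $a_d$ of $q$ is irrational. For an auxiliary parameter $1\le H\le N$, van der Corput's inequality yields a bound of the form
\[
  |E_N(q)|^2 \le \frac{N+H}{HN} + \frac{2(N+H)}{HN^2}\sum_{h=1}^{H-1}\Bigl|\sum_{n=1}^{N-h} e^{2\pi i\,(q(n+h)-q(n))}\Bigr|.
\]
For each fixed $h\ge 1$, the polynomial $q_h(x):=q(x+h)-q(x)$ has degree $d-1$ with leading coefficient $d\,a_d\,h$, which is irrational; hence the induction hypothesis gives $\frac1N\bigl|\sum_{n=1}^{N-h}e^{2\pi i q_h(n)}\bigr|\to 0$. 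Holding $H$ fixed and letting $N\to\infty$, the first term tends to $1/H$ and the second (a fixed finite sum of terms each $o(1)$) tends to $0$, so $\limsup_N|E_N(q)|^2\le 1/H$; as $H$ is arbitrary, $E_N(q)\to 0$.

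\emph{Step 3: Rational leading coefficient.} Suppose $d\ge 2$, the leading coefficient is $a_d=u/v$ with $v\ge 1$ an integer, but some coefficient $a_j$ with $1\le j\le d-1$ is irrational; take $j$ maximal with this property. Split the sum over residues modulo $v$: writing $n=vk+s$, $0\le s<v$, expand $q(vk+s)$ in powers of $k$. The degree-$d$ term equals $u v^{d-1}k^d\in\Z$, so $e^{2\pi i q(vk+s)}=e^{2\pi i\widetilde q_s(k)}$ where $\widetilde q_s$ is $k\mapsto q(vk+s)$ with its top term deleted, a polynomial of degree $\le d-1$. The coefficient of $k^j$ in $\widetilde q_s$ is $v^j\sum_{i=j}^d a_i\binom ij s^{i-j}=v^j\bigl(a_j+(\text{a rational number})\bigr)$, hence irrational for every $s$; in particular $\widetilde q_s$ has degree $\ge j\ge 1$. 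By the induction hypothesis $\frac1M\sum_{k\le M}e^{2\pi i\widetilde q_s(k)}\to 0$ for each $s$, and summing over the $v$ residue classes gives $E_N(q)\to 0$. This completes the induction, and with Step 1 the theorem.

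\emph{Main obstacle.} The technical heart is Step 2: one must use van der Corput's inequality \emph{with} the auxiliary length $H$ (not simply $H=N$), since a naive differencing with $H=N$ leaves an error term that does not obviously vanish; introducing $H$ is exactly what converts the per-$h$ convergence supplied by induction into convergence of the full Cesàro average. Step 3 is routine bookkeeping once one tracks which coefficient stays irrational after passing to a residue class; alternatively, one may merge Steps 2 and 3 by first using the residue-class trick to reduce to the case $a_d$ irrational, performing the differencing argument only once.
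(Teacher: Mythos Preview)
Your argument is correct and is the standard Weyl--van der Corput proof of this classical result. Note, however, that the paper does not give its own proof of this statement: Theorem~\ref{thm:weyl} is quoted as a known theorem of Weyl, with a pointer to Furstenberg's book for details, and is used as a black box in Section~\ref{sec:polynomial_phase_functions_preliminaries}. So there is no ``paper's proof'' to compare against; what you have written is exactly the classical argument one would find in the cited references.

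One small expository remark: in Step~3 you might make explicit that the induction hypothesis you invoke is the full statement (some non-constant coefficient irrational) for all degrees strictly less than $d$, since $\widetilde q_s$ need not have irrational \emph{leading} coefficient, only an irrational coefficient at level $j$. Your Step~2 only handles the irrational-leading-coefficient case at degree $d$, so it is important that both Steps~2 and~3 together close the induction at degree $d$ while the hypothesis assumed is the complete statement at lower degrees. This is implicit in your writeup but worth a sentence to avoid any appearance of circularity.
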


\begin{theorem}[Furstenberg \cite{furstenberg:recurrence}]
\label{thm:furstenberg-polynomial} Let $p_1(x), \dots, p_k(x)$ be
real polynomials. For any $\eps > 0$, the set of integers $n$
satisfying simultaneously
\begin{equation}
\label{eq:furstenberg-polynomial} \big| e^{2 \pi i p_j(n)} - e^{2
\pi i p_j(0)} \big| < \eps \quad (j=1,\dots,k)
\end{equation}
is syndetic (i.e. has bounded gaps).
\end{theorem}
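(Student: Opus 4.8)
The plan is to realize the joint orbit $n\mapsto\big(p_1(n),\dots,p_k(n)\big)$ as the orbit of a single point under a \emph{distal} transformation of a compact space, and then to invoke the elementary fact that in a minimal topological dynamical system the return times of a point to any neighbourhood of itself form a syndetic set. First I would reduce to the case $p_j(0)=0$ by replacing each $p_j$ with $q_j:=p_j-p_j(0)$; since $|e^{2\pi i p_j(n)}-e^{2\pi i p_j(0)}|=|e^{2\pi i q_j(n)}-1|$, the set in question is exactly $\{n:\ |e^{2\pi i q_j(n)}-1|<\eps,\ j=1,\dots,k\}$. Let $d$ be the maximal degree of the $q_j$ and write $q_j(n)=\sum_{i=1}^d a_{ij}\binom ni$ with $a_{ij}\in\R$ (no $\binom n0$ term, as $q_j(0)=0$). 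Using the Pascal recursion $\binom ni=\binom{n-1}i+\binom{n-1}{i-1}$, one checks that $n\mapsto q_j(n)\bmod 1$ is the last coordinate of the orbit $A_j^n z_j$ of a suitable point $z_j\in\T^d$ under the affine map $A_j$ of $\T^d$ whose linear part is the integer matrix with $1$'s on the diagonal and on the first subdiagonal; here $z_j$ encodes the coefficients $a_{1j},\dots,a_{d-1,j}$ and has last coordinate $q_j(0)=0$. Taking the product, set $Z:=\T^{kd}$, $A:=A_1\times\dots\times A_k$, $z_0:=(z_1,\dots,z_k)$, and let $\pi_j:Z\to\T$ be the projection onto the $j$-th ``last'' coordinate, so that $\pi_j(A^nz_0)=q_j(n)\bmod 1$ for all $n$ and $\pi_j(z_0)=0$.

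The next step is to observe that $A$, being a unipotent affine transformation of a torus, is distal: it is built as a tower of isometric extensions over a rotation (already for $A(x,y)=(x+\alpha,y+x)$ on $\T^2$ the second coordinate is an isometric extension of the first, and the general unipotent case follows by iterating this), and in a distal system every point is almost periodic, i.e.\ its orbit closure is a minimal set. Hence, with $Y:=\overline{\{A^nz_0:n\in\Z\}}$, the system $(Y,A|_Y)$ is minimal. Now fix $\eps>0$ and put
\[
U:=\big\{y\in Y:\ |e^{2\pi i\pi_j(y)}-1|<\eps\ \text{ for }j=1,\dots,k\big\},
\]
a nonempty (it contains $z_0$) relatively open subset of $Y$; the set we must control is precisely $\{n\in\Z:\ A^nz_0\in U\}$. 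By minimality and compactness of $Y$ one has $\bigcup_{|m|\le N}(A|_Y)^{-m}U=Y$ for some finite $N$, so for every $j\in\Z$ there is $|m|\le N$ with $A^{m+j}z_0\in U$; thus the return set has gaps at most $2N+1$ and is syndetic, which is the assertion of the theorem.

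The only genuinely non-elementary ingredient is the distality of unipotent affine torus maps together with the resulting minimality of orbit closures; this is classical (Furstenberg, Hahn, Parry) and I would simply cite it, deciding only how much to spell out. Note that Weyl's equidistribution theorem, quoted above, already yields \emph{positive density} of such $n$; the extra content here is the passage from positive density to bounded gaps, which is exactly what minimality buys. A cleaner alternative reference, if one prefers to bypass the explicit affine model, is the structure theory of polynomial orbits on nilmanifolds: the orbit closure of $n\mapsto(q_1(n),\dots,q_k(n))$ in $\T^k$ is a sub-nilmanifold on which the generating translation acts minimally, after which the final paragraph applies verbatim. Either way the statement is a special case of Furstenberg's topological multiple recurrence theorem for polynomials, and I do not expect the elementary steps — the Pascal recursion computation and ``minimal $\Rightarrow$ syndetic return times'' — to cause any difficulty.
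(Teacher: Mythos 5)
The paper does not prove this statement --- it quotes it as a theorem of Furstenberg and refers the reader to \cite{furstenberg:recurrence}. Your argument is correct and is essentially the classical proof from that source: encode the polynomial orbit via iterated differences as the orbit of a point under a unipotent affine (skew-product) transformation of $\T^{kd}$, use distality to conclude that the orbit closure is minimal, and then use the standard compactness argument showing that in a minimal system the return times of a point to a neighbourhood of itself are syndetic. The one ingredient you cite rather than prove --- that unipotent affine torus maps, as towers of isometric extensions, are distal and hence have all orbit closures minimal --- is exactly the classical fact the paper's citation rests on, so nothing is missing.
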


For more details on these theorems we refer to the book
\cite{furstenberg:recurrence} (see pp.\ 31 and 69).

The ``mean value'' of a function $f$, defined by
\eqref{eq:mean_value_def}, exists for any $f \in \W$. To see this,
suppose first that $f(n)$ has the form $e^{2 \pi i p(n)}$, where
$p(x)$ is a real polynomial. If $p(x)$ has at least one coefficient
other than the constant term irrational, then Theorem \ref{thm:weyl}
implies that $M\{f(n)\}$ exists and is equal to zero. If $p(x)$ has
only rational coefficients (except for, possibly, the constant term)
then $f(n)$ is a periodic function, and it follows again that
$M\{f(n)\}$ exists. This extends easily to linear combinations of
functions of the form $e^{2 \pi i p(n)}$ and finally to the uniform
closure of these functions, namely to the whole space $\W$.

Since $\W$ is an algebra, the latter implies that the limits in
\eqref{eq:additional_mean_values_def} also exist, for any $f, g \in
\W$.

\begin{lemma}
\label{lemma:polynomial-parseval} If $f \in \W$ and is not
identically zero, then $M\{|f(n)|^2\}$ is non-zero.
\end{lemma}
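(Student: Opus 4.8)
The plan is to show that if $f\in\W$ does not vanish identically, then $|f|$ is bounded below by a positive constant along a syndetic set of integers; since a syndetic set has positive lower density, this forces $M\{|f(n)|^2\}>0$ (recall that this mean value exists precisely because $\W$ is an algebra, as noted above). The subtlety to keep in mind is that elements of $\W$ need not be Bohr almost periodic, so one cannot hope to produce a uniform almost-period: Theorem~\ref{thm:furstenberg-polynomial} only asserts that the \emph{single} value $f$ takes near the origin recurs (approximately) along a syndetic set. Fortunately this is exactly what is needed.

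First I would fix $n_0\in\Z$ with $b:=|f(n_0)|>0$ and approximate $f$ uniformly to within $b/4$ by a finite linear combination $f_1:=\sum_{j=1}^{k} c_j\,e^{2\pi i p_j(\cdot)}$ of polynomial phase functions; write $C:=\sum_j|c_j|$ and note $C>0$ since $|f_1(n_0)|\ge 3b/4>0$. Next I would substitute $q_j(x):=p_j(x+n_0)$, which are again real polynomials, and apply Theorem~\ref{thm:furstenberg-polynomial} to $q_1,\dots,q_k$ with $\eps:=b/(4C)$. This produces a syndetic set $S\subset\Z$ such that $|e^{2\pi i q_j(n)}-e^{2\pi i q_j(0)}|<\eps$ for all $j$ and all $n\in S$. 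Since $q_j(0)=p_j(n_0)$, for $n\in S$ we get $|f_1(n+n_0)-f_1(n_0)|\le C\eps=b/4$, and hence, using $\|f-f_1\|_\infty\le b/4$ at both $n_0$ and $n+n_0$, $|f(n+n_0)|\ge|f(n_0)|-|f(n_0)-f_1(n_0)|-|f_1(n_0)-f_1(n+n_0)|-|f_1(n+n_0)-f(n+n_0)|\ge b-3b/4=b/4$.

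Thus $|f|\ge b/4$ on the translated set $S':=n_0+S$, which is again syndetic, say with all gaps at most $L$. Then every block of $L$ consecutive integers meets $S'$, so $|S'\cap[-N,N]|\ge\lfloor(2N+1)/L\rfloor$ and $\liminf_{N\to\infty}|S'\cap[-N,N]|/(2N+1)\ge 1/L$. Restricting the averaging sum to indices in $S'$ and letting $N\to\infty$, I would conclude $M\{|f(n)|^2\}=\lim_{N\to\infty}\frac{1}{2N+1}\sum_{|n|\le N}|f(n)|^2\ge(b/4)^2\liminf_{N\to\infty}\frac{|S'\cap[-N,N]|}{2N+1}\ge\frac{b^2}{16L}>0$, which completes the proof. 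The only real obstacle is the bookkeeping in the translation/recurrence step — making sure Theorem~\ref{thm:furstenberg-polynomial}, which is phrased at the origin, is correctly applied at the general point $n_0$ via the shift $q_j(x)=p_j(x+n_0)$; everything else is routine.
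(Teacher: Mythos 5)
Your proof is correct and follows essentially the same route as the paper's: approximate $f$ uniformly by a finite combination of polynomial phases, apply Theorem~\ref{thm:furstenberg-polynomial} to get a syndetic set on which $|f|$ stays bounded away from zero, and conclude that the mean of $|f|^2$ is positive. The only cosmetic difference is that you handle the base point $n_0$ by shifting the polynomials, where the paper simply invokes translation invariance to assume $f(0)\neq 0$.
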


\begin{proof}
By translation invariance we may suppose that $f(0) \neq 0$. Given
$\delta > 0$ we can find an element $g \in \W$ of the form
\begin{equation}
\label{eq:polynomial-approximation} g(n) = \sum_{j=1}^{k} c_j \,
e^{2 \pi i p_j(n)} \quad (n \in \Z)
\end{equation}
and such that $\|f - g\|_\infty < \delta$. Given $\eps > 0$ we use
Theorem \ref{thm:furstenberg-polynomial} to find a syndetic set $A
\subset \Z$ such that \eqref{eq:furstenberg-polynomial} is satisfied
for each $n \in A$. It follows that
\[
|g(n) - g(0)| \leq \eps \sum_{j=1}^{k} |c_j|, \quad n \in A.
\]
If we choose $\eps = \eps(\delta,g)$ small enough, this implies that
$|f(n) - f(0)| < 3\delta$ for $n \in A$. If $\delta = \delta(f)$ is
also chosen sufficiently small, it follows that $|f(n)|$ is bounded
away from zero on a syndetic set, and so $M\{|f(n)|^2\}$ cannot
vanish.
\end{proof}

\subsection{Proof of Lemma~\ref{lemma:polynomial-uniqueness}}
Suppose that the lemma is not true, then there exists a function $f
\in \W$ not identically zero, such that all the values
\eqref{eq:polynomial-measurement} vanish. Given $\delta > 0$ we can
find an element $g \in \W$ of the form
\eqref{eq:polynomial-approximation} and such that $\|f - g\|_\infty
< \delta$. We have
\[
\big| M\{|f(n)|^2\} - M\{f(n) \overline{g(n)}\} \big| = \big|
M\{f(n) \overline{(f(n)-g(n))}\} \big| \leq \delta \|f\|_\infty.
\]
According to Lemma \ref{lemma:polynomial-parseval}, $M\{|f(n)|^2\}$
is non-zero, so if we choose $\delta$ small enough this implies that
$M\{f(n) \overline{g(n)}\}$ is also non-zero. But, on the other
hand, $M\{f(n) \overline{g(n)}\}$ is a finite linear combination of
values of the form \eqref{eq:polynomial-measurement}, hence it must
vanish, a contradiction.

\subsection{Proof of Lemma~\ref{lemma:mean_value_recovery_polynomial_phase}} Fix $f \in \W$. Given a
real polynomial
\[
p(x) := a_1 x + a_2 x^2 + \cdots + a_d x^d
\]
with vanishing constant term, we consider the finite averages
\begin{equation*}
M_N(f,p):=\frac{1}{2N+1} \sum_{-N}^N f(n) \, e^{-2\pi i p(n)}.
\end{equation*}
Let $g := f + \xi$ denote a noisy version of $f$, then we have
\[
M_N(g,p) = M_N(f,p) + M_N(\xi,p).
\]
We may view the last term
\[
M_N(\xi,p) = \frac1{2N+1} \sum_{-N}^{N} \xi(n) \, e^{-2\pi i (a_1 n
+ a_2 n^2 + \cdots + a_d n^d)}
\]
as a random trigonometric polynomial in $d$ variables $a_1, \dots,
a_d$. We wish to apply Lemma~\ref{lemma:multivariate-salem-zygmund}
to it. This trigonometric polynomial has the form
\eqref{eq:multivariate-trigonometric-random}, with the coefficients
$c(n_1,\dots,n_d)$ being zero unless the vector $(n_1,\dots,n_d)$ is
of the form $(n,n^2,\dots,n^d)$, $|n| \leq N$. We may therefore
apply Lemma \ref{lemma:multivariate-salem-zygmund} with, say, $K =
N^{d+1}$. The lemma shows that, with probability at least $1 -
N^{-2(d+1)} e^{-d}$, we have
\[
\sup_{p} |M_N(\xi,p)| \leq C(d) \sqrt{\frac{\log N}{N}},
\]
where the supremum is taken over all real polynomials $p(x)$ with
vanishing constant term and with degree not greater than $d$. Using
the Borel-Cantelli lemma, and since every polynomial has a finite
degree, this implies that the condition
\[
\lim_{N \to \infty} M_N(\xi,p) = 0 \quad \text{for every real
polynomial $p(x)$}
\]
is satisfied with probability one (also for polynomials with
non-vanishing constant term, certainly). Hence $M_N(g,p)$ and
$M_N(f,p)$ both converge to the same value, namely to the value
\eqref{eq:polynomial-measurement}, simultaneously for all real
polynomials $p(x)$.

% ========================================================================

\section{Alternative models}
\label{sec:alternative_models}

In this section we provide a brief discussion of some variations on
our setup. We consider the effect of having noise distributions
other than the Gaussian and touch briefly on two alternative models
for the detection and recovery concepts.

\subsection{Alternative noise distributions}
In this section we comment briefly on the situation when the noise
sequence $(\xi_n)$ is independent and identically distributed with a
distribution other than the Gaussian. We denote the common
distribution of $\xi_n$ by $\mu$. We focus on the case that the
signals and noise are real-valued.

\subsubsection{Simple necessary conditions} As mentioned in the introduction, a necessary condition
for detection under Gaussian noise is that
\begin{equation}\label{eq:detection_nec_cond_alt_noise}
\X\cap\ell^2=\emptyset,
\end{equation}
and this condition is also sufficient when $\X$ is countable. Shepp
\cite{shepp} considered a general noise distribution $\mu$ and
investigated the singularity of the measures of $\xi$ and $x+\xi$
for a fixed sequence $x$. The results of \cite{shepp} imply the
following.
\begin{enumerate-math}
  \item Condition \eqref{eq:detection_nec_cond_alt_noise} remains necessary for
detection under any noise distribution $\mu$.
  \item If $\mu$ has finite Fisher information, that is, if $\mu$ is absolutely
continuous and its density $f$ is almost everywhere positive,
locally absolutely continuous and satisfies
\begin{equation*}
  \int_{-\infty}^\infty \frac{(f')^2}{f} < \infty,
\end{equation*}
then condition \eqref{eq:detection_nec_cond_alt_noise} is sufficient
for detection when $\X$ is countable.
  \item If $\mu$ does not have finite Fisher information, then there exists some signal
$x\notin\ell^2$ such that the space $\X=\{x\}$ does not admit
detection.
\end{enumerate-math}
Similar statements follow for the recovery problem with respect to
the condition $(\X-\X)\cap\ell^2=\{0\}$.

To illustrate what may replace condition
\eqref{eq:detection_nec_cond_alt_noise} when the measure $\mu$ does
not have finite Fisher information one may consider the case that
$\mu$ is the uniform measure on $[-1,1]$. For this noise
distribution, the distributions of the signals $\xi$ and $x+\xi$,
for a fixed sequence $x$, are singular if and only if either
\begin{equation}\label{eq:detection_nec_cond_uniform_dist}
\sup |x_n| \ge 2\text{ or }\sum |x_n|=\infty.
\end{equation}
This may be verified using Kakutani's dichotomy for product measures
\cite{kakutani} (see also the version \cite[Theorem 2]{shepp}). It
follows that the space $\X=\{x\}$ admits detection, when $\mu$ is
uniform on $[-1,1]$, if and only if condition
\eqref{eq:detection_nec_cond_uniform_dist} is satisfied. That is,
the $\ell^2$ condition \eqref{eq:detection_nec_cond_alt_noise} is
replaced by an $\ell^\infty$ and an $\ell^1$ condition.

\subsubsection{Convolution of two noise distributions}\label{sec:conv_general_noise} Suppose the distribution $\mu$ can be written as a
convolution of two distributions $\mu_1$ and $\mu_2$. Then any space
$\X$ which does not admit detection under $\mu_1$ or $\mu_2$ does
not admit detection also under $\mu$. To see this, informally,
suppose $\X$ admits detection under $\mu$. Then one may use the
following detection algorithm to show that $\X$ admits detection
under $\mu_1$, say. Upon receiving a signal with noise having
distribution $\mu_1$, one may add to it an additional, independent,
noise with the distribution $\mu_2$ and then apply the detection
algorithm corresponding to $\mu$. A formal argument along these
lines may be obtained as in the proof of
Proposition~\ref{prop_monotonicity_in_noise}. A similar statement is
true for the recovery problem.

This remark may be particularly useful for noise distributions $\mu$
which are the convolution of the standard Gaussian distribution with
another distribution. For such noises, we may apply directly the
non-detection and non-recovery results of this paper. Observe that
this class of noises includes distributions with heavy tails but
excludes distributions with bounded support.

\subsubsection{A condition for non-detection} The non-detection condition given by
  Theorem~\ref{thm:general_detection} has the following
  analogue for general noise distributions.
  \begin{theorem}\label{thm:general_detection_general_noise}
    Let $\X$ be a Borel subset of $\R^\N$. Let $\mu$ be an absolutely continuous distribution with an almost everywhere positive density $f$. Suppose that there is a
probability measure $P$ on $\X$ such that
\begin{equation}\label{eq:non_detection_cond_general_noise}
  \liminf_{k \to\infty} \; \e  \left[\,\prod_{n=1}^k \int \frac{f(z - x_n)f(z - y_n)}{f(z)} dz\right] <
  \infty,
\end{equation}
where $\{x_n\}$ and $\{y_n\}$ are sampled independently from $P$.
Then $\X$ does not admit detection under the noise distribution
$\mu$.
\end{theorem}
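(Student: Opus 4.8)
The plan is to repeat the proof of Theorem~\ref{thm:general_detection} almost verbatim, replacing the standard Gaussian density by the density $f$ of $\mu$ wherever it occurs. First I would reduce, exactly as in the opening paragraph of Section~\ref{sec:proof_of_general_non_detection_thm} (the Fubini argument there uses nothing about the noise being Gaussian), to showing that the distribution $Q$ on $\R^\N$ of the noise sequence $\xi=\{\xi_n\}$, now with each $\xi_n$ distributed according to $\mu$, and the distribution $R$ of the sequence $\{x_n+\xi_n\}$, with $\{x_n\}$ sampled from $P$ independently of $\xi$, are not mutually singular. Then I would apply Lemma~\ref{lemma:durrett} with $\F_k$ the $\sigma$-field generated by the first $k$ coordinates and $Q_k,R_k$ the restrictions of $Q,R$ to $\F_k$, which reduces the task to showing that the Radon-Nikodym derivative $f_k:=dR_k/dQ_k$ does not tend to $0$ $Q$-almost surely.

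The next step is to compute $f_k$. Since $f$ is positive almost everywhere, $Q_k$ is mutually absolutely continuous with Lebesgue measure on $\R^k$, so in particular $R_k\ll Q_k$; expressing $Q_k$ and $R_k$ as densities against Lebesgue measure in the coordinates $1\le n\le k$, as in Lemma~\ref{lemma:radon-nikodym-fk}, yields
\begin{equation*}
  f_k(z)=\e_x\prod_{n=1}^k \frac{f(z_n-x_n)}{f(z_n)},
\end{equation*}
where $\{x_n\}$ is sampled from $P$. Then $\e\,f_k(\xi)=1$ by definition of the Radon-Nikodym derivative, and, writing $f_k(z)^2=\e_x\e_y\prod_{n=1}^k \frac{f(z_n-x_n)f(z_n-y_n)}{f(z_n)^2}$ for $\{x_n\},\{y_n\}$ independent samples from $P$ and using the independence of the $\xi_n$ together with Tonelli's theorem,
\begin{equation*}
  \e\,f_k(\xi)^2=\e_x\e_y\prod_{n=1}^k \e_{\xi_n}\!\left[\frac{f(\xi_n-x_n)f(\xi_n-y_n)}{f(\xi_n)^2}\right]
  =\e_x\e_y\prod_{n=1}^k \int \frac{f(z-x_n)f(z-y_n)}{f(z)}\,dz,
\end{equation*}
which is exactly the expression appearing in \eqref{eq:non_detection_cond_general_noise}. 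As a sanity check, for the standard Gaussian density the inner integral equals $e^{x_n y_n}$, so the condition reduces to that of Theorem~\ref{thm:general_detection}.

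Finally, I would conclude as in the Gaussian case: by the Paley-Zygmund inequality $\P(f_k(\xi)\ge 1/2)\ge 1/(4\e\,f_k(\xi)^2)$, so under \eqref{eq:non_detection_cond_general_noise} this probability is bounded below uniformly along a subsequence $k_j\to\infty$, whence $f_k(\xi)$ does not tend to $0$ almost surely, and Lemma~\ref{lemma:durrett} gives that $Q$ and $R$ are not mutually singular. I do not expect a genuine obstacle; the two points requiring a word of care are the use of the hypothesis $f>0$ a.e.\ to guarantee $R_k\ll Q_k$ (and that the ratio defining $f_k$ is well defined), and the observation that the integrals $\int f(z-x_n)f(z-y_n)/f(z)\,dz$ need not be assumed finite a priori — they are automatically finite along $(k_j)$ because $\e\,f_{k_j}(\xi)^2$ stays bounded there.
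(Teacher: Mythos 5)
Your proposal is correct and is exactly the argument the paper intends: the paper proves this theorem by the remark that it "is proved in the same way as Theorem~\ref{thm:general_detection}," and your adaptation (the Radon--Nikodym derivative $f_k(z)=\e_x\prod_{n=1}^k f(z_n-x_n)/f(z_n)$, the second-moment computation reducing to the integrals in \eqref{eq:non_detection_cond_general_noise}, and the Paley--Zygmund conclusion via Lemma~\ref{lemma:durrett}) carries that out faithfully, including the correct points of care about $f>0$ a.e.
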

This is proved in the same way as
Theorem~\ref{thm:general_detection}.

\subsubsection{Critical signal-to-noise ratio} An interesting phenomenon present for the Gaussian
noise distribution is that of the critical ``signal-to-noise''
ratio, as presented, for instance, in
Theorem~\ref{thm:address_function}. It turns out that this
phenomenon is present for a rather large class of noise
distributions, as the following result shows.
\begin{theorem}
Let $\X$ be the space presented in Section~\ref{sec:tree_trail}. Let
$\mu$ be an arbitrary distribution.
\begin{enumerate-math}
  \item \label{part:recovery_tree_general_noise}There exists a $\delta_0(\mu)>0$ such that for
  any $\delta\ge \delta_0(\mu)$ the space $\delta\X$ admits
  recovery under the noise distribution $\mu$.
  \item \label{part:non-detection_tree_general_noise}Suppose that $\mu$ is absolutely continuous with an almost everywhere positive density $f$.
If $\delta$ satisfies
\begin{equation}\label{eq:mu_1_delta_def}
  \int \frac{f(z - \delta)^2}{f(z)} dz < 2
\end{equation}
  then the space $\delta\X$ does not
  admit detection under the noise distribution $\mu$.
\end{enumerate-math}
\end{theorem}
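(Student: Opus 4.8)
We take the two parts of the statement separately; the second is routine while the first requires a genuinely new argument, since $\mu$ is only assumed to be \emph{some} distribution.

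\emph{Part~\ref{part:recovery_tree_general_noise}.} First I would observe that the reduction of recovery to detection in Lemma~\ref{lem:detect_recover_equiv} uses nothing about the noise beyond independence across coordinates: restricting the observed signal to the left (resp.\ right) subtree of $G$ yields, by self-similarity of the binary tree and the i.i.d.\ structure of $\xi$, either a noisy copy of a signal in $\delta\X$ or pure noise on that subtree, so any detection algorithm for $\delta\X$ can be run recursively to identify the transmitted path edge by edge (the resulting recovery map being a composition of measurable maps). Hence it suffices to produce, for each $\mu$, a $\delta_0(\mu)$ such that $\delta\X$ admits detection under $\mu$ whenever $\delta\ge\delta_0(\mu)$.

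For the detection test I would use \emph{truncated path sums}. Fix a large $M>0$, set $\tilde z:=\max(\min(z,M),-M)$ coordinatewise (and write $\tilde\xi_n$ for the corresponding truncation of $\xi_n$), and put $\tilde S_{p,h}(z):=\sum_{e\in p,\,|e|\le h}\tilde z_e$, a sum of $h$ terms lying in $[-M,M]$. Let $m_0:=\e[\tilde\xi_1]$ and $m_1(\delta):=\e[\max(\min(\delta+\xi_1,M),-M)]$; the function $m_1$ is non-decreasing with $m_1(\delta)\to M$ as $\delta\to\infty$. With threshold $\theta:=M-\eps$ ($\eps>0$ small, in particular $\eps<M-m_0$), declare a signal present iff $\max_{p\in\E}\tilde S_{p,h}(z)\ge\theta h$ for infinitely many $h$; this is Borel measurable since $\tilde S_{p,h}$ depends only on the first $h$ edges of $p$, so the supremum is over the $2^h$ root-to-level-$h$ paths. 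In the pure-noise case a union bound over those $2^h$ paths reduces matters to a lower-deviation (Chernoff) estimate for $\p\bigl(\sum_{n\le h}(M-\tilde\xi_n)\le\eps h\bigr)$, a sum of i.i.d.\ non-negative bounded variables, whose exponential rate tends, as $\eps\downarrow0$, to $-\log\p(\xi_1\ge M)$. Choosing $M$ so large that $\p(\xi_1\ge M)<\tfrac14$ (possible for \emph{any} $\mu$) and then $\eps$ small makes this rate $c'$ exceed $\log 2$, so $\p(\max_p\tilde S_{p,h}(\xi)\ge\theta h)\le 2^h e^{-c'h}\le e^{-ch}$ with $c:=c'-\log 2>0$, and Borel--Cantelli gives that the test outputs $0$ on pure noise almost surely. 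In the signal case, fix $\delta_0$ with $m_1(\delta_0)>\theta$; then for $\delta\ge\delta_0$ the strong law of large numbers along the transmitted path $p^\star$ gives $\tilde S_{p^\star,h}(z)/h\to m_1(\delta)\ge m_1(\delta_0)>\theta$ almost surely, so the test outputs $1$. The main obstacle I anticipate is that a crude concentration inequality such as Hoeffding's is too weak to overcome the $2^h$ union bound; the point is that the truncated noise is bounded (and typically has an atom at $M$), so the relevant large-deviation rate can be pushed above $\log 2$ by enlarging $M$ — and it is this, rather than any moment hypothesis, that makes $\delta_0(\mu)$ exist for arbitrary $\mu$.

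\emph{Part~\ref{part:non-detection_tree_general_noise}.} Here I would apply the general-noise non-detection criterion, Theorem~\ref{thm:general_detection_general_noise}, with $P$ the uniform measure on paths $p\in\E$ (the choice of index set and of exhausting finite sets $E_h\uparrow E$ being immaterial there, as noted after Theorem~\ref{thm:general_detection}), mimicking Lemma~\ref{lemma:tree_detection_not_sharp}. Sampling $x(p),x(q)$ independently from $P$ and evaluating the integral in \eqref{eq:non_detection_cond_general_noise} one edge at a time: since $\delta x_e(p)\in\{0,\delta\}$, each edge contributes the factor $\int f=1$ unless it lies on both $p$ and $q$, in which case it contributes $\rho(\delta):=\int f(z-\delta)^2/f(z)\,dz$. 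Writing $N(p,q)$ for the number of common edges and using $\p(N(p,q)\ge i)=2^{-i}$ for the uniform measure,
\begin{align*}
  \e\Bigl[\,\prod_{e\in E_h}\int\frac{f(z-\delta x_e(p))\,f(z-\delta x_e(q))}{f(z)}\,dz\,\Bigr]
  &= \e\bigl[\rho(\delta)^{\min(N(p,q),h)}\bigr]\\
  &= \tfrac12\sum_{i=0}^{h-1}\bigl(\rho(\delta)/2\bigr)^i+\bigl(\rho(\delta)/2\bigr)^h,
\end{align*}
which stays bounded as $h\to\infty$ precisely when $\rho(\delta)<2$, i.e.\ exactly under hypothesis~\eqref{eq:mu_1_delta_def}; Theorem~\ref{thm:general_detection_general_noise} then yields non-detection. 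This part presents no real difficulty. As a sanity check, for the standard Gaussian one computes $\rho(\delta)=e^{\delta^2}$, so $\rho(\delta)<2$ reads $\delta<\sqrt{\log 2}$, which is the (non-sharp) Gaussian bound of Lemma~\ref{lemma:tree_detection_not_sharp}.
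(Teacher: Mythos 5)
Your proposal is correct. For part~\ref{part:non-detection_tree_general_noise} it coincides with the paper's proof: the same uniform measure on branches, the same edge-by-edge factorization of the integral in \eqref{eq:non_detection_cond_general_noise} (factor $1$ off the common edges, factor $\rho(\delta)=\int f(z-\delta)^2/f(z)\,dz$ on them), and the same geometric computation using $\p(N(p,q)\ge i)=2^{-i}$. For part~\ref{part:recovery_tree_general_noise} your strategy is also the paper's: reduce recovery to detection by the recursive left/right-subtree argument of Lemma~\ref{lem:detect_recover_equiv} (which, as you note, uses only the i.i.d.\ structure of the noise), then detect by thresholding a sum of bounded i.i.d.\ functionals of the coordinates along each root-to-level-$h$ path, with a union bound over the $2^h$ such paths plus Borel--Cantelli in the pure-noise case and the strong law of large numbers along the transmitted path. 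The only real difference is the choice of bounded functional: the paper uses the indicator $\1_{(z_e\ge a)}$ with $\p(\xi(0)\ge a)\le 1/5$ and threshold $h/2$, while you use the truncation of $z_e$ to $[-M,M]$ with threshold $(M-\eps)h$. These are interchangeable, and if anything your large-deviation bookkeeping is the more careful of the two: with the paper's constants the single-path exceedance probability is itself of order $(4/5)^h$, so it does not survive the $2^h$ union bound as stated and one must take $\p(\xi(0)\ge a)$ considerably smaller than $1/5$; your requirement $\p(\xi_1\ge M)<1/4$, combined with the atom of the truncated variable at $M$ that drives the Chernoff rate toward $-\log\p(\xi_1\ge M)$, genuinely pushes the per-path rate above $\log 2$.
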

We remark that for many natural distributions, such as the Cauchy or
Gaussian distribution, condition~\eqref{eq:mu_1_delta_def} is
satisfied for all $\delta$ in some open interval containing $0$.
Asking condition~\eqref{eq:mu_1_delta_def} to be satisfied in a
neighborhood of $0$ bears some resemblance to having finite Fisher
information although neither condition implies the other (in one
direction consider a density proportional to $\exp(-\exp(x^2))$ and
in the other direction consider a discontinuous density).
\begin{proof}
We start with part \ref{part:recovery_tree_general_noise}. Fix $a>0$
such that
\begin{equation}\label{eq:xi_a_p_cond}
  \P(\xi(0) \ge a)\le 1/5.
\end{equation}
Define $\delta_0(\mu)$ by the condition that for any $\delta\ge
\delta_0(\mu)$,
\begin{equation}\label{eq:delta_xi_p_cond}
  \P(\delta + \xi(0) < a) \le 1/3.
\end{equation}
It suffices to show that $\delta\X$ admits detection if $\delta\ge
\delta_0(\mu)$ since it will then follow that $\delta\X$ admits
recovery by the argument of Lemma~\ref{lem:detect_recover_equiv}.
Recall the definition of the space $\X$ from
Section~\ref{sec:tree_trail} and, in particular, the definition of
$\E$ as the set of branches of the underlying binary tree. Denote by
$z$ the noisy signal. Define the detection map $T$ as follows: For
each branch $p\in\E$ and $h\ge 1$ consider the elements of $z$ along
the branch $p$ from the root to level $h$. Set $S_{p,h}(z)$ to be
the number of these elements which are larger or equal to $a$. Set
$T(z)=1$ if there exists a $p\in\E$ such that $S_{p,h}(z)\ge h/2$
for all but finitely many values of $h$. Otherwise set $T(z)=0$.

To check the validity of this detection map suppose first that $z$
is a noisy version of the signal corresponding to the path $p\in\E$.
Then $S_{p,h}(z)$ is a sum of $h$ independent Bernoulli random
variables, each with probability at least $2/3$ to be $1$, by
\eqref{eq:delta_xi_p_cond}. Thus, the strong law of large numbers
implies that $S_{p,h}(z)\ge h/2$ for all but finitely many $h$, so
that $T(z)=1$ almost surely.

Now suppose that $z$ is pure noise. Then, for any path $p\in\E$ and
any $h\ge 1$, $S_{p,h}(z)$ is a sum of $h$ independent Bernoulli
random variables, each with probability at most $1/5$ to be $1$, by
\eqref{eq:xi_a_p_cond}. By a union bound, the chance that there
exists a path $p\in\E$ such that $S_{p,h}(z)\ge h/2$ is at most
$(4/5)^h$. Thus, by the Borel-Cantelli lemma, $T(z)=0$ almost
surely.

We continue with part \ref{part:non-detection_tree_general_noise}.
We wish to use Theorem~\ref{thm:general_detection_general_noise}.
Let $P$ be the measure induced on signals in $\delta \X$ by choosing
a path $p\in\E$ according to the ``uniform'' measure on paths, as in
the proof of Lemma~\ref{lem:tree_no_detection}. Denote by $E_h$ the
set of edges of the underlying tree up to level $h$. By
Theorem~\ref{thm:general_detection_general_noise} it suffices to
show that
\begin{equation*}
  \liminf_{h \to\infty} \; \e  \left[\,\prod_{e\in E_h} \int \frac{f(z - x_e)f(z - y_e)}{f(z)} dz\right] <
  \infty,
\end{equation*}
where $\{x_e\}$ and $\{y_e\}$ are sampled independently from $P$.
Recall that $x_e, y_e\in\{0,\delta\}$. Thus, if either $x_e$ or
$y_e$ equals zero we have
\begin{equation*}
  \int \frac{f(z - x_e)f(z - y_e)}{f(z)} dz = \int f(z)dz = 1.
\end{equation*}
It follows that
\begin{equation*}
  \prod_{e\in E_h} \int \frac{f(z - x_e)f(z - y_e)}{f(z)} dz
  = \left[\int \frac{f(z - \delta)^2}{f(z)}
  dz\right]^{\min(N(x,y),h)},
\end{equation*}
where $N(x,y)$ is the number of edges common to the paths defining
$x$ and $y$. Thus, noting that $\P(N(x,y)= k)=2^{-(k+1)}$ we deduce
that for any $\delta$ satisfying \eqref{eq:mu_1_delta_def},
\begin{equation*}
  \liminf_{h \to\infty} \; \e  \left[\,\prod_{e\in E_h} \int \frac{f(z - x_e)f(z - y_e)}{f(z)} dz\right] =
  \e \left[\int \frac{f(z - \delta)^2}{f(z)}
  dz\right]^{N(x,y)}
  %=\sum_{k=1}^h \left(\frac{1}{2}\cdot\frac{3}{2}\right)^k <
  <\infty.
\end{equation*}
Thus, for these $\delta$,
Theorem~\ref{thm:general_detection_general_noise} implies that
$\delta\X$ does not admit detection.
\end{proof}
The proof used the fact that detection implies recovery for the
space $\X$, by the arguments of
Lemma~\ref{lem:detect_recover_equiv}. The converse is also true.
Indeed, as in Proposition~\ref{prop:recovery_implies_detection}, one
need only exhibit a recovery mapping for each of the two-point
spaces $\X_p:=\{0,x(p)\}$, $p\in\E$, with this mapping depending
measurably on $x(p)$. This reduces to distinguishing a non-zero
constant signal from the zero signal, which is certainly possible
under any noise distribution.

Our methods do not show the existence of a threshold $\delta_c(\mu)$
such that $\delta\X$ admits recovery if $\delta>\delta_c(\mu)$ and
does not admit detection if $\delta<\delta_c(\mu)$. However, for
noise distributions which are part of a semigroup, such as the
Cauchy distribution, one may deduce the existence of such a
threshold from the remarks in Section~\ref{sec:conv_general_noise}
together with the fact that detection and recovery are equivalent
for the spaces $\delta\X$.

\subsection{Uniform recovery and detection}
According to our definitions, for $\X$ to admit recovery via the map
$T$ we require that for every $x\in\X$, $T(x + \xi) = x$ almost
surely. Thus, we allow an exceptional set of probability zero of
noises on which the recovery may fail, and this exceptional set may
depend on the signal $x$ being recovered. One may also consider a
uniform version of the recovery problem, in which the exceptional
set is required to be the same for all possible signals. In other
words, one may ask that with probability one, the recovery mapping
succeeds for all signals $x\in\X$ simultaneously. We focus on the
real-valued case.
\begin{definition*}
Let $I$ be a countable set. We say that a space $\X\subseteq\R^I$
\emph{admits uniform recovery} if there exists a Borel measurable
function $T:\R^I\to\R^I$ with the property that, almost surely, $T(x
+ \xi) = x$ for all $x\in \X$.
\end{definition*}
For this definition to make sense we need that the set of $\xi$ for
which $T(x + \xi) = x$ for all $x\in \X$ be measurable. We note that
when the space $\X$ is Borel, this set is co-analytic and hence
universally measurable.

Certainly, to admit uniform recovery is a more stringent requirement
than to admit (non-uniform) recovery. We point out, however, that
\emph{linear} recovery maps work equally well for both notions.
Here, by a linear recovery map we mean a recovery map $T$ for which
there exists a set $A$ of noises having probability one such that
$T(x + \xi) = T(x) + T(\xi)$ for all $x\in\X$, $\xi\in A$. Several
of the recovery maps introduced earlier are of this type, including
the maps used in Section~\ref{sec:hausdorff} for recovery of Fourier
transforms of measures, both with known and with unknown support,
and including the recovery map in
Lemma~\ref{lemma:mean_value_recovery_polynomial_phase} for the
parameters~\eqref{eq:polynomial-measurement} of polynomial phase
functions.

It is natural to ask whether our two notions of recovery in fact
coincide. We do not answer this question here, but show that the
answer is negative in the context of the detection problem.

\begin{definition*}
Let $I$ be a countable set. We say that a space $\X\subseteq\R^I$
\emph{admits uniform detection} if there exists a Borel measurable
function $T:\R^I\to\{0,1\}$ satisfying that, almost surely, $T(\xi)
= 0$ and $T(x + \xi) = 1$ for all $x\in \X$.
\end{definition*}

Again, we emphasize that this definition differs from our standard
one in that we require that with probability one, the detection map
succeeds for all $x\in\X$.

For $c>0$, define
\begin{equation*}
\X_c:=\Big\{x\in\R^\N\colon \lim_{N\to\infty} \frac{1}{N}
\sum_{n=1}^N x_n^2 = c^2\Big\}.
\end{equation*}
The following theorem shows that for $0<c\le 2$, detection in $\X_c$
is possible whereas uniform detection is not.

\begin{theorem}\label{thm:uniform_detection} \quad
  \begin{enumerate-math}
  \item \label{item:uniform_detection_nonuniform}
     $\bigcup_{c>0}\X_c$ admits (non-uniform) detection.
    \item \label{item:uniform_detection_possible}
    $\bigcup_{c>2}\X_c$ admits uniform detection.
     \item \label{item:uniform_detection_impossible}
    For $0<c\le 2$, $\X_c$ does not admit uniform detection.
 \end{enumerate-math}
\end{theorem}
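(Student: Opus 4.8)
\emph{Overview.} I would prove the three parts separately. Parts \ref{item:uniform_detection_nonuniform} and \ref{item:uniform_detection_possible} will follow from explicit detection tests built from the empirical second moment $\frac1N\sum_{n=1}^N z_n^2$, while part \ref{item:uniform_detection_impossible} will be established by a Gaussian coupling argument in which the threshold $c=2$ appears naturally. For part \ref{item:uniform_detection_nonuniform}, I would declare an observed $z$ to be pure noise exactly when $\limsup_{N\to\infty}\frac1N\sum_{n=1}^N z_n^2\le1$; this map is Borel. If $z=\xi$ is pure noise then $\frac1N\sum_{n\le N}\xi_n^2\to1$ almost surely by the strong law of large numbers, so the map outputs $0$. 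If $z=x+\xi$ with $x\in\X_c$ for some $c>0$, I would expand $\frac1N\sum_{n\le N}z_n^2=\frac1N\sum_{n\le N}x_n^2+\frac2N\sum_{n\le N}x_n\xi_n+\frac1N\sum_{n\le N}\xi_n^2$: the outer terms tend to $c^2$ and $1$, and the middle term tends to $0$ almost surely, because $\big(\sum_{n\le N}x_n\xi_n\big)_N$ has the distribution of $\big(B(\sum_{n\le N}x_n^2)\big)_N$ for a standard Brownian motion $B$, while $\sum_{n\le N}x_n^2\sim c^2N\to\infty$, so $\sum_{n\le N}x_n\xi_n=o\big(\sum_{n\le N}x_n^2\big)=o(N)$ by the law of large numbers for Brownian motion. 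Hence $\frac1N\sum_{n\le N}z_n^2\to1+c^2>1$ and the map outputs $1$, giving (non-uniform) detection for all of $\bigcup_{c>0}\X_c$.

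\emph{Part \ref{item:uniform_detection_possible}.} I would use the same kind of test but with a $\liminf$, declaring $z$ pure noise exactly when $\liminf_{N\to\infty}\frac1N\sum_{n\le N}z_n^2\le1$. The point is that the verification then goes through on the single almost-sure event $\Omega_0:=\{\frac1N\sum_{n\le N}\xi_n^2\to1\}$, which does not refer to the signal. On $\Omega_0$ we have $\liminf_N\frac1N\sum_{n\le N}\xi_n^2=1$, so the map outputs $0$ on pure noise; and for every $x\in\X_c$ with $c>2$, the reverse triangle inequality in $\ell^2$ gives
\[
\frac1N\sum_{n\le N}(x_n+\xi_n)^2\ \ge\ \Big(\sqrt{\tfrac1N\sum_{n\le N}x_n^2}-\sqrt{\tfrac1N\sum_{n\le N}\xi_n^2}\Big)^2,
\]
whose $\liminf$ along $\Omega_0$ equals $(c-1)^2>1$, so the map outputs $1$. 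Thus on the single event $\Omega_0$ the test correctly classifies pure noise and every signal of $\bigcup_{c>2}\X_c$ simultaneously, which is exactly uniform detection.

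\emph{Part \ref{item:uniform_detection_impossible}.} I would fix $0<c\le2$ and assume, toward a contradiction, that $\X_c$ admits uniform detection via a Borel map $T$. Put $\rho:=1-c^2/2$; the hypothesis $0<c\le2$ is precisely what makes $\rho\in[-1,1]$. Let $\mathbb{Q}$ be the law of a pair of random sequences $(\xi,z)$ in $\R^\N\times\R^\N$ for which the planar vectors $(\xi_n,z_n)$ are independent over $n$, each centered Gaussian with unit marginal variances and correlation $\rho$. Under $\mathbb{Q}$: $\xi$ is a standard Gaussian sequence, hence has the noise distribution; $z$ is a standard Gaussian sequence, hence also has the noise distribution; and $x_n:=z_n-\xi_n$ are independent $N(0,c^2)$ variables, so $\frac1N\sum_{n\le N}x_n^2\to c^2$ almost surely and therefore $x\in\X_c$ $\mathbb{Q}$-almost surely. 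Now I would invoke the two defining properties of $T$, each of which holds almost surely under the noise distribution. Since the $z$-marginal of $\mathbb{Q}$ is the noise distribution, $T(\xi)=0$ a.s.\ becomes $T(z)=0$ $\mathbb{Q}$-a.s. Since the $\xi$-marginal of $\mathbb{Q}$ is the noise distribution, the statement ``$T(x'+\xi)=1$ for all $x'\in\X_c$'' holds $\mathbb{Q}$-a.s.\ in $\xi$, and on the $\mathbb{Q}$-full-measure event where in addition $x=z-\xi\in\X_c$, taking $x'=x$ gives $T(z)=T(x+\xi)=1$ $\mathbb{Q}$-a.s. These two conclusions contradict each other, so $\X_c$ does not admit uniform detection.

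\emph{A measurability remark, and the crux.} The event ``$T(x'+\xi)=1$ for all $x'\in\X_c$'' used above is an uncountable intersection, but, exactly as noted after the definition of uniform detection, its complement $\{\xi:\exists\,x'\in\X_c,\ T(x'+\xi)=0\}$ is the projection onto the $\xi$-coordinate of a Borel subset of $\R^\N\times\R^\N$ (using here that $\X_c$, being defined by a condition on the $\liminf$ and $\limsup$ of Borel functions, is Borel), hence is analytic and universally measurable; so the passage from ``almost surely under the noise distribution'' to ``$\mathbb{Q}$-almost surely'', via the identification of the $\xi$-marginal of $\mathbb{Q}$ with the noise distribution, is legitimate. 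I expect part \ref{item:uniform_detection_impossible} to be the substantive step: its content is the choice of the exact correlation $\rho=1-c^2/2$, which is an admissible correlation precisely when $c\le2$ and which forces $z-\xi$ to lie in $\X_c$ while keeping both $\xi$ and $z=(z-\xi)+\xi$ distributed as pure noise. Parts \ref{item:uniform_detection_nonuniform} and \ref{item:uniform_detection_possible} should then be routine, given the strong law for the empirical second moment and the Brownian time-change already exploited elsewhere in the paper.
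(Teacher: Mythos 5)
Your proposal is correct and follows essentially the same route as the paper: a test based on the empirical second moment $\frac1N\sum_{n\le N}z_n^2$ for parts (i) and (ii) (with the reverse triangle inequality supplying uniformity when $c>2$), and for part (iii) a coupling of two standard Gaussian noise sequences $\xi,z$ with $\e(z_n-\xi_n)^2=c^2$, which forces $z-\xi\in\X_c$ almost surely and yields the contradiction $T(z)=0$ and $T(z)=1$. The only substantive difference is the coupling itself: you take $(\xi_n,z_n)$ jointly Gaussian with correlation $\rho=1-c^2/2$ (admissible exactly when $c\le 2$), whereas the paper uses a reflection coupling $\xi^2_n=\pm\xi^1_n$ according to whether $|\xi^1_n|$ exceeds a threshold $t(c)$; both realize the same variance constraint and play identical roles in the argument.
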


\begin{proof}[Proof of parts \ref{item:uniform_detection_nonuniform}
 and \ref{item:uniform_detection_possible}]
Define the mapping $T$ to equal 1 on the sequence $y$ if and only if
\begin{equation*}
  \limsup_{N\to\infty} \frac{1}{N}\sum_{n=1}^N y_n^2 > 1.
\end{equation*}
Let us check that $T$ solves the (non-uniform) detection problem for
$\bigcup_{c>0} \X_c$. By the strong law of large numbers,
$\frac{1}{N}\sum_{n=1}^N \xi_n^2 \to 1$ almost surely. Thus
$T(\{\xi_n\})=0$ almost surely. Now let $x\in \X_c$ for some $c>0$.
We have
\begin{equation*}
\lim_{N\to\infty} \frac{1}{N}\sum_{n=1}^N (x_n + \xi_n)^2
%\lim_{N\to\infty} \frac{1}{N}\sum_{n=1}^N (x_n^2 + \xi_n^2 +
%2x_n\xi_n)
 = c^2+1+\lim_{N\to\infty} \frac{2}{N}\sum_{n=1}^N x_n\xi_n
\end{equation*}
almost surely. Thus it remains only to note that
$X_N:=\frac{1}{N}\sum_{n=1}^N x_n\xi_n$ is distributed as a normal
random variable with expectation zero and variance
\begin{equation*}
  \frac{1}{N^2} \sum_{n=1}^N x_n^2 = O(N^{-1}),
\end{equation*}
since $x\in \X_c$. Thus $X_N\to 0$ almost surely. Since $c>0$ we
conclude that $T(\{x_n+\xi_n\})=1$ almost surely, as required.

To show that $T$ also solves the uniform detection problem for
$\bigcup_{c>2} \X_c$ we follow the same steps and need only observe
that by the triangle inequality, if $x\in \X_c$ for some $c>2$ then
\begin{equation*}
\begin{split}
  \left(\limsup_{N\to\infty} \frac{1}{N}\sum_{n=1}^N (x_n + \xi_n)^2\right)^{1/2} &\ge
  \left(\lim_{N\to\infty} \frac{1}{N}\sum_{n=1}^N x_n^2\right)^{1/2} - \left(\lim_{N\to\infty} \frac{1}{N}\sum_{n=1}^N
  \xi_n^2\right)^{1/2} =\\
  &= c - 1 > 1
\end{split}
\end{equation*}
almost surely, where the exceptional set of probability zero does
not depend on $x$.
\end{proof}
For the last part of the theorem, we require the following lemma.
\begin{lemma}\label{uniform_coupling_lem}
For each $0\le c\le 2$, there exists a random vector $(\xi^1,\xi^2)$
satisfying:
\begin{enumerate-math}
\item Each of $\xi^1$ and $\xi^2$ is a (real-valued) standard normal
random variable.
\item $\e(\xi^1-\xi^2)^2 = c^2$.
\end{enumerate-math}
\end{lemma}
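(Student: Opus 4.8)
The plan is to realize $(\xi^1,\xi^2)$ as a centered bivariate Gaussian vector with unit variances and a suitably chosen correlation coefficient. Concretely, I would start from two independent standard normal random variables $G_1,G_2$, set $\xi^1:=G_1$, and define $\xi^2:=\rho\,G_1+\sqrt{1-\rho^2}\,G_2$ for a parameter $\rho\in[-1,1]$ to be determined. Then $\xi^2$ is again a standard normal random variable, so property (i) holds for any choice of $\rho$.

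Next I would compute the relevant second moment. Since $\e(\xi^1)^2=\e(\xi^2)^2=1$ and $\e(\xi^1\xi^2)=\rho$, expanding the square gives $\e(\xi^1-\xi^2)^2 = 2-2\rho$. To match property (ii) we therefore need $2-2\rho=c^2$, i.e.\ $\rho=1-\tfrac{c^2}{2}$. The only thing to check is that this value is admissible, namely that $\rho\in[-1,1]$; this holds precisely because $0\le c\le 2$ forces $c^2\in[0,4]$ and hence $1-\tfrac{c^2}{2}\in[-1,1]$, so $\sqrt{1-\rho^2}$ is well defined. Substituting this $\rho$ back yields $\e(\xi^1-\xi^2)^2=c^2$, completing the construction.

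There is no genuine obstacle here; the only point worth a word is that the constraint $c\le 2$ is exactly what is needed for the required correlation to lie in $[-1,1]$ (at the endpoints one gets the degenerate couplings $\xi^1=\xi^2$ when $c=0$ and $\xi^2=-\xi^1$ when $c=2$), which also explains why the statement is restricted to this range.
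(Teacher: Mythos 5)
Your proof is correct: the bivariate Gaussian coupling $\xi^1=G_1$, $\xi^2=\rho G_1+\sqrt{1-\rho^2}\,G_2$ with $\rho=1-c^2/2$ gives standard normal marginals and $\e(\xi^1-\xi^2)^2=2-2\rho=c^2$, and the restriction $0\le c\le 2$ is exactly what keeps $\rho\in[-1,1]$. The paper takes a genuinely different route: it sets $\xi^2:=\xi^1$ on $\{|\xi^1|\ge t\}$ and $\xi^2:=-\xi^1$ on $\{|\xi^1|<t\}$, notes that $\xi^2$ is standard normal by symmetry, and tunes the threshold $t=t(c)$ so that $\e(\xi^1-\xi^2)^2=4\,\e\big[(\xi^1)^2\1_{(|\xi^1|<t)}\big]=c^2$, which is possible since this quantity increases continuously from $0$ to $4$ as $t$ runs from $0$ to $\infty$. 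Your construction is more explicit (no intermediate-value argument needed, the correlation is given in closed form), while the paper's has the feature that $\xi^2$ is a deterministic function of $\xi^1$ and is stylistically parallel to the mixture coupling it builds in Lemma~\ref{coupling_lem} for the partial-recovery result. For the application in part (iii) of Theorem~\ref{thm:uniform_detection}, all that is used is that the pair has standard normal marginals and prescribed second moment of the difference, so either coupling works equally well.
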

\begin{proof}
Let $\xi^1$ be a standard normal random variable and define
\begin{equation*}
\xi^2:=\begin{cases}
\hphantom{-} \xi^1, & |\xi^1|\ge t\\  %%%% the "hphantom" command creates a space as wide as the minus sign
-\xi^1, & |\xi^1|< t
\end{cases}
\end{equation*}
where $0\le t\le \infty$. It is straightforward to check that
$\xi^2$ is a standard normal random variable, and that for an
appropriate $t = t(c)$ we may achieve $\e(\xi^1-\xi^2)^2=c^2$.
\end{proof}
\begin{proof}[Proof of part \ref{item:uniform_detection_impossible} of
Theorem~\ref{thm:uniform_detection}] Fix $0<c\le 2$. Let
$T:\R^\N\to\{0,1\}$ be a Borel measurable mapping and assume, in
order to obtain a contradiction, that $\X_c$ admits uniform
detection via $T$. Consider independent copies $(\xi_n^1,\xi_n^2)$,
$n\in\N$, of the random vector of Lemma~\ref{uniform_coupling_lem}
for this $c$. By our assumption, we have almost surely that
\begin{equation*}
  T(\xi^1)=0\text{ and }T(x+\xi^2)=1\text{ for all }x\in\X_c.
\end{equation*} However,
$\xi^1-\xi^2\in\X_c$ almost surely by the law of large numbers. This
is a contradiction, since letting $x=\xi^1-\xi^2$ we see that
$T(x+\xi^2)=T(\xi^1)=0$. Since $T$ is arbitrary, this finishes the
proof of the theorem.
\end{proof}

\subsection{Partial Recovery}
When a space $\X$ does not admit recovery, one may ask instead for a
weaker property, that there exists a mapping $T$ taking the noisy
signal to a signal which is ``close'' to the original transmitted
signal. In this section we consider a rather weak notion of
``closeness'', that the recovered signal be close to the transmitted
signal in ``mean energy''. We show that even for this weak notion,
recovery is not always possible.

We again focus on the real-valued case and fix the index set of the
signals to be $\N$. Define a ``distance'' between two signals
$x,y\in\R^\N$ by
\begin{equation*}
d(x,y) := \left(\limsup_{N\to\infty} \frac{1}{N} \sum_{n=1}^N
|x_n-y_n|^2\right)^{1/2}
\end{equation*}
(this is sometimes called the Besicovitch distance, see
\cite{besicovitch}). This ``distance'' satisfies the triangle
inequality, but $d(x,y)=0$ does not imply $x=y$. In particular,
$d(x,y)=0$ whenever $x_n-y_n$ tends to zero with $n$. We will refer
to $d(x,y)$ as the \emph{mean energy distance} between $x$ and $y$.
\begin{definition*} We say that a space $\X\subseteq\R^\N$ admits \emph{recovery up to mean energy $c$ $(c\ge
0)$} if there exists a Borel measurable function $T:\R^\N\to\R^\N$
such that for each $x\in\X$, $d(T(x+\xi), x)\le c$ almost surely.
\end{definition*}

Observe that, by the strong law of large numbers, for every signal
$x\in\R^I$, $d(x+\xi, x) = 1$ almost surely. Thus recovery up to
mean energy $c = 1$ is always possible, by taking $T$ to be the
identity mapping. The following theorem shows that there exist
signal spaces for which this cannot be significantly improved.

\begin{theorem}
\label{no_linfty_rec} Let $\X=\{-1,1\}^\N$. There is a positive
constant $c$ such that $\X$ does not admit recovery up to mean
energy $c$.
\end{theorem}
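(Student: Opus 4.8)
The plan is to argue by contradiction: suppose $\X=\{-1,1\}^\N$ (a closed, hence Borel, subset of $\R^\N$) admits recovery up to mean energy $c$ via a Borel map $T$, and derive a contradiction whenever $c$ is below an explicit positive constant. First I would record a harmless normalization: replacing $T(z)_n$ by its truncation to $[-1,1]$ can only decrease $|T(z)_n-x_n|$ for every $x\in\X$, hence only decreases $d(T(z),x)$; so I may assume $T$ takes values in $[-1,1]^\N$, and in particular $|T(z)_n-x_n|^2\le 4$ for all $n$ and all $x\in\X$. This truncation is the ingredient that will make a certain martingale $L^2$-bounded later.

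Next I would put the uniform product measure $P$ on $\X$ (each coordinate $\pm1$ with probability $\tfrac12$, independently), sample $x$ from $P$ independently of $\xi$, and set $z:=x+\xi$. Exactly as in the Fubini argument at the start of Section~\ref{sec:general_conditions}, the recovery hypothesis gives that, almost surely on the joint space, $d(T(z),x)\le c$, i.e. $\limsup_{N}\frac1N\sum_{n=1}^N|T(z)_n-x_n|^2\le c^2$. Under the joint law the pairs $(x_n,z_n)$ are i.i.d., $z_n$ is distributed as $\tfrac12 N(1,1)+\tfrac12 N(-1,1)$, and a one-line Gaussian posterior computation shows that conditionally on $z$ the $x_n$ are independent with $\P(x_n=1\mid z)=(1+e^{-2z_n})^{-1}=:q_n$. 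Writing $A_n:=|T(z)_n-x_n|^2\in[0,4]$ and using that $T(z)$ is $z$-measurable, completing the square gives the pointwise lower bound
\[
  \e[A_n\mid z]\;=\;q_n\,(T(z)_n-1)^2+(1-q_n)(T(z)_n+1)^2\;\ge\;4q_n(1-q_n)\;=\;\operatorname{sech}^2(z_n),
\]
the minimum over all real values of $T(z)_n$ being attained at $2q_n-1$.

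The last step is to transfer this coordinatewise posterior-variance bound to the Cesàro averages. Conditionally on $z$ the $A_n$ are independent and uniformly bounded, so with respect to the filtration $\sigma(z)\vee\sigma(x_1,\dots,x_N)$ the sums $M_N:=\sum_{n=1}^N n^{-1}\bigl(A_n-\e[A_n\mid z]\bigr)$ form an $L^2$-bounded martingale; it converges almost surely, and Kronecker's lemma yields $\frac1N\sum_{n=1}^N(A_n-\e[A_n\mid z])\to 0$ a.s. Combining this with $\e[A_n\mid z]\ge\operatorname{sech}^2(z_n)$ and the strong law of large numbers for the i.i.d. bounded sequence $\operatorname{sech}^2(z_n)$ gives, almost surely,
\[
  d(T(z),x)^2\;=\;\limsup_{N\to\infty}\frac1N\sum_{n=1}^N A_n\;\ge\;\lim_{N\to\infty}\frac1N\sum_{n=1}^N\operatorname{sech}^2(z_n)\;=\;m,
\]
where $m:=\e[\operatorname{sech}^2(Z)]>0$ for $Z\sim\tfrac12 N(1,1)+\tfrac12 N(-1,1)$ (positivity is clear since $\operatorname{sech}^2>0$ and $Z$ has full support). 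This contradicts $d(T(z),x)\le c$ as soon as $c<\sqrt m$, so the theorem holds with, say, $c=\tfrac12\sqrt m$. The one place I expect to need care is the interchange of the $\limsup$ with the conditioning on $z$: it is precisely the conditional strong law (martingale plus Kronecker) that propagates the per-coordinate posterior variance to a genuine lower bound on the mean energy, and this is where the boundedness obtained from the truncation is used.
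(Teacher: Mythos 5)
Your proof is correct, but it takes a genuinely different route from the paper's. The paper proves Theorem~\ref{no_linfty_rec} via the coupling of Lemma~\ref{coupling_lem}: it builds $(Y^1,Y^2,\xi^1,\xi^2)$ with $Y^j$ uniform on $\{-1,1\}$, $\xi^j$ standard normal, $Y^j\perp\xi^j$, and $Y^1+\xi^1=Y^2+\xi^2$ a.s.\ while $\P(Y^1=Y^2)<1$; since any recovery map sees the same observation in both coordinates, the triangle inequality for $d$ forces one of the two recovery errors to be at least $\sigma/2$ with $\sigma^2=\e(Y^1-Y^2)^2>0$. You instead run a Bayesian argument: after the (correct and necessary) truncation of $T$ to $[-1,1]^\N$, you compute the posterior $\P(x_n=1\mid z)=(1+e^{-2z_n})^{-1}$, lower-bound the conditional mean-squared error per coordinate by the posterior variance $\operatorname{sech}^2(z_n)$, and propagate this to the Ces\`aro limit by an $L^2$-bounded martingale plus Kronecker's lemma together with the ordinary strong law for the i.i.d.\ sequence $\operatorname{sech}^2(z_n)$. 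All the steps check out (the conditional independence of the $x_n$ given the whole sequence $z$, the joint measurability needed for the Fubini step, and the identity $4q_n(1-q_n)=\operatorname{sech}^2(z_n)$ are all fine). What your approach buys is the \emph{sharp} constant: it shows non-recoverability below $\sqrt{m}$ with $m=\e\operatorname{sech}^2(Z)$, $Z\sim\tfrac12N(1,1)+\tfrac12N(-1,1)$, and the posterior-mean estimator $T(z)_n=\tanh(z_n)$ attains mean energy exactly $\sqrt{m}$ by the same conditional law of large numbers, so you actually identify the threshold. What the paper's coupling buys is robustness and economy: it needs no posterior computation or conditional limit theorem, and the same coupling device is reused for part \ref{item:uniform_detection_impossible} of Theorem~\ref{thm:uniform_detection}; on the other hand it yields only an unspecified small constant determined by the $p$ in \eqref{convex_comb_eq}.
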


We will need the existence of the following coupling.

\begin{lemma}\label{coupling_lem}
There exists a random vector $(Y^1,Y^2,\xi^1,\xi^2)$ satisfying:
\begin{enumerate-math}
\item Each of $Y^1$ and $Y^2$ is uniformly distributed on
$\{-1,1\}$.
\item Each of $\xi^1$ and $\xi^2$ is a (real-valued) standard normal random variable.
\item $Y^1$ is independent of $\xi^1$ and $Y^2$ is independent of
$\xi^2$.
\item $\P(Y^1+\xi^1=Y^2+\xi^2)=1$ and $\P(Y^1 = Y^2) < 1$.
\end{enumerate-math}
\end{lemma}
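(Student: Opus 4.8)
The plan is to produce the coupling by first realizing the \emph{common} noisy observation $W := Y^1 + \xi^1 = Y^2 + \xi^2$, and only afterwards sampling, conditionally and independently, two signal-plus-noise decompositions of $W$. This is in the same spirit as the construction of Lemma~\ref{uniform_coupling_lem}: the point is that the noisy observation never rules out either value of the underlying bit, so one may re-split it in two different ways.

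First I would record the law of a single noisy bit. If $Y$ is uniform on $\{-1,1\}$ and $\xi\sim N(0,1)$ is independent, then $W=Y+\xi$ has density
\[
  g(w)=\tfrac12\bigl(\phi(w-1)+\phi(w+1)\bigr),\qquad \phi(u):=\tfrac1{\sqrt{2\pi}}e^{-u^2/2},
\]
and, by Bayes' rule, the conditional probability that $Y=1$ given $W=w$ is
\[
  p(w):=\frac{\phi(w-1)}{\phi(w-1)+\phi(w+1)}=\frac{1}{1+e^{-2w}},
\]
which lies \emph{strictly} between $0$ and $1$ for every $w\in\R$ because $\phi$ is everywhere positive. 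This strict two-sided bound is the only delicate point in the argument — it is exactly the place where the Gaussianity (or at least the full support of the noise) is used — and everything else is a routine verification.

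Next I would build the coupling. Let $W$ have density $g$; conditionally on $W=w$, let $Y^1,Y^2$ be independent and each equal to $1$ with probability $p(w)$; and set $\xi^i:=W-Y^i$ for $i=1,2$. A direct computation gives, for every Borel set $A$,
\[
  \P(Y^1=1,\ \xi^1\in A)=\int_\R \1_{A}(w-1)\,p(w)\,g(w)\,dw=\tfrac12\int_A\phi(u)\,du,
\]
and similarly $\P(Y^1=-1,\ \xi^1\in A)=\tfrac12\int_A\phi$, so marginally $Y^i$ is uniform on $\{-1,1\}$, $\xi^i$ is standard normal, and $Y^i\perp\xi^i$; this yields (i)--(iii) (and likewise for $i=2$). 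By construction $Y^1+\xi^1=W=Y^2+\xi^2$ surely, which is the first half of (iv). Finally, conditionally on $W=w$ the variables $Y^1,Y^2$ are i.i.d. with $\P(Y^1=Y^2\mid W=w)=p(w)^2+(1-p(w))^2<1$, using $0<p(w)<1$; integrating in $w$ gives $\P(Y^1=Y^2)<1$, completing (iv). The main obstacle, such as it is, is thus confined to the observation that $p(w)\notin\{0,1\}$; the probabilistic bookkeeping is elementary.
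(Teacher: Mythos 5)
Your construction is correct, and it is a genuinely different one from the paper's. The paper writes the Gaussian density as a mixture $\phi=p\cdot\tfrac14\1_{(-2,2)}+(1-p)f$ and, on the (probability-$p$) event that the uniform component is active, exploits the identity ``sign plus $\mathrm{Unif}[-1,1]$ equals $\mathrm{Unif}[-2,2]$'' to swap the roles of two independent signs between the signal and the noise; the two decompositions then disagree exactly when that component is active and the two signs differ. You instead go through the observation channel: realize the noisy bit $W$ first, and resample the bit twice, conditionally independently, from its posterior $p(w)=\bigl(1+e^{-2w}\bigr)^{-1}$. Your marginal computations ($p(w)g(w)=\tfrac12\phi(w-1)$, etc.) are right, and you correctly isolate the one substantive point, namely that $0<p(w)<1$ for all $w$ because $\phi$ is everywhere positive. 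The trade-off: your argument is the canonical one and generalizes verbatim to any noise density with full support (full support is precisely what keeps the posterior nondegenerate, matching the paper's remarks in Section~\ref{sec:alternative_models} that bounded-support noise behaves differently); the paper's mixture construction is more hands-on, avoids conditional densities entirely, and produces the coupling on an explicit finite product space. Either proof serves the application in Theorem~\ref{no_linfty_rec} equally well.
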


\begin{proof}
Let $\phi: \R \to \R$ be the density of a standard Gaussian random
variable. For a sufficiently small $0<p<1$, we may express $\phi$ as
a convex combination of the form
\begin{equation}\label{convex_comb_eq}
\phi(x):=p\cdot\frac{1}{4}\1_{(-2,2)}(x) + (1-p) f(x),
\end{equation}
where $\1_{(-2,2)}$ is the indicator function of the $(-2,2)$
interval, and $f$ is non-negative with integral one. Let
$Z_1,Z_2,Z_3,U, W$ and $I$ be independent random variables, where
each of $Z_1,Z_2,Z_3$ is uniformly distributed on $\{-1,1\}$, $U$ is
distributed uniformly on the segment $[-1,1]$, $W$ is distributed
with density $f$ and $I$ takes the values $0$ and $1$ with
$\P(I=1)=p$. Define
\begin{align*}
Y^1 &:= IZ_1+(1-I)Z_3, &Y^2&:=IZ_2+(1-I)Z_3,\\
\xi^1 &:= I(Z_2+U)+(1-I)W, &\xi^2&:=I(Z_1+U)+(1-I)W.
\end{align*}
It is straightforward to check that $(Y^1,Y^2,\xi^1,\xi^2)$
satisfies the requirements of the lemma. For instance, to check that
$\xi^1$ is a standard normal random variable, note that $Z_2+U$ is
distributed uniformly on the segment $[-2,2]$ and apply
\eqref{convex_comb_eq}. To check that $Y^1$ is independent of
$\xi^1$, note that for any two Borel sets $A,B\subseteq\R$ we have
\begin{equation*}
\begin{split}
\P(Y^1\in A,\, \xi^1\in B) &= \e\left[\P(Y^1\in A,\, \xi^1\in B
\,|\,I) \right]
 = \e\left[\P(Y^1\in A\, |\, I)\,\P(\xi^1\in B\, |\, I)\right]\\
&= \P(Y^1\in A)\, \e\left[\P(\xi^1\in B\, |\, I)\right]
 = \P(Y^1\in A) \, \P(\xi^1\in B),
\end{split}
\end{equation*}
where we have used the fact that $\P(Y^1\in A\, |\, I)$ is a
constant.
\end{proof}

\begin{proof}[Proof of Theorem~\ref{no_linfty_rec}]
Let $T:\R^\N\to\R^\N$ be a Borel measurable function. Consider
independent copies $(Y^1_n,Y^2_n,\xi^1_n,\xi^2_n)$, $n\in\N$, of the
random vector of Lemma~\ref{coupling_lem}. Denote
$\bar{Y}^1:=(Y^1_n), \bar{Y}^2:=(Y^2_n), \bar{\xi}^1_n:=(\xi^1_n)$
and $\bar{\xi}^2_n:=(\xi^2_n)$. By the triangle inequality and the
fact that $\bar{Y}^1 + \bar{\xi}^1 = \bar{Y}^2 + \bar{\xi}^2$ almost
surely, we have
\[
d(\bar{Y}^1,\bar{Y}^2) \le d(T(\bar{Y}^1+\bar{\xi}^1),\bar{Y}^1) +
d(T(\bar{Y}^2+\bar{\xi}^2),\bar{Y}^2)\quad\text{almost surely}.
\]
Observe that, by the strong law of large numbers, the distance
$d(\bar{Y}^1,\bar{Y}^2)$ is almost surely constant, and is equal to
\[
\sigma := \sqrt{ \e \, [Y^1_1-Y^2_1]^2 } > 0.
\]
It follows that there is a $j\in\{1,2\}$ such that, with positive
probability, $d(T(\bar{Y}^j+\bar{\xi}^j),\bar{Y}^j)\ge \sigma/2$.
Since $\bar{Y}^j\in \X$ almost surely, and $\bar{Y}^j$ is
independent of $\bar{\xi}^j$, it follows from Fubini's theorem that
there exists some $y\in \X$ for which
$d(T(y+\bar{\xi}^j),y)\ge\sigma/2$ with positive probability. Since
$T$ is arbitrary, this proves the theorem.
\end{proof}

% ========================================================================

\section{Remarks and open
questions}\label{sec:remarks_open_questions}

{\bf Necessary and sufficient conditions.} Are there useful
necessary and sufficient conditions for a space $\X$ to admit
detection or recovery? The only condition of this kind that we have
is the detection criterion given by
Theorem~\ref{thm:detection_singularity_criterion}. However, this
criterion does not seem simple to check in concrete examples.

{\bf Quantitative recovery and detection.} In this work we discussed
the notion of almost-sure recovery (or detection) from infinite
noisy signals. The assumption that the signal has infinitely many
coordinates is certainly necessary for recovery to be possible with
probability one. In a more quantitative setup one may consider
signals having only finitely many coordinates and ask that recovery
be attained with some probability $p\in(0,1)$. It is of interest to
find such quantitative analogs of our results. One may try, for
instance, to take a space $\X$ for which almost-sure recovery is
possible and create from it a sequence of spaces $\X_n$, with
signals in $\X_n$ having only $n$ coordinates, such that $\X_n$
tends to $\X$ in some sense as $n$ tends to infinity. Then one may
try to investigate the probability of recovery from the spaces
$\X_n$ when $n$ is large.

It is not clear what the appropriate way to define $\X_n$ should be.
To give an example, let $\X$ be the space of all periodic signals in
$\R^\N$, which certainly admits recovery. One may naively define
$\X_n$ to be the space of all signals in $\R^n$ obtained as the
first $n$ coordinates of a signal in $\X$. However, with this
definition $\X_n = \R^n$ and no useful recovery is possible. To
remedy this, one may choose a function $f:\N\to\N$ which tends to
infinity and limit $\X_n$ to the set of vectors of the first $n$
coordinates of periodic signals with period at most $f(n)$. Such a
definition, although useful, is not unique as it depends on the
choice of $f$ and it is not clear how to generalize it for other
spaces $\X$.

A related subject for investigation is the complexity of recovery
from the spaces $\X_n$, that is, how many operations are required in
a recovery algorithm for $\X_n$. The analogous questions for the
detection problem are also of interest.

{\bf Critical phenomena.} Several of our results are of the
following type. If a certain parameter $\sigma(X)$ of the signal
space $\X$ exceeds a bound $\sigma_1$ then $\X$ admits recovery
while if $\sigma(X)$ is smaller than another bound $\sigma_0$ then
$\X$ does not even admit detection. It is of interest to investigate
further the critical or near-critical cases, when $\sigma_0\le
\sigma(X)\le \sigma_1$.

One example is furnished by
Theorem~\ref{thm:reconstruction-small-dimension}. Let $\F_\alpha$
consist of the Fourier transforms of all finite, complex measures
$\mu$, such that $\mu$ is carried by a Borel set of Hausdorff
dimension $\le \alpha$. Does $\F_\alpha$ admit recovery or detection
when $\alpha=1/2$? Is there a finer Hausdorff gauge function (that
is, refining the Hausdorff dimension) which captures the critical
phenomenon better?

Similarly, let $\X$ be a Walsh space with non-increasing amplitudes
$\{|\sigma_n|\}$ as in part (ii) of
Theorem~\ref{thm:detection_phases}. Denote by $\sigma(\X)$ the
quantity in \eqref{eq:cond_non_detect_Walsh}. Is there a critical
threshold $\sigma_c$, independent of $\X$, such that $\X$ admits
recovery when $\sigma(\X)>\sigma_c$ and does not even admit
detection when $\sigma(\X)<\sigma_c$? If so, what happens when
$\sigma(\X)=\sigma_c$?

{\bf Polynomial phase functions.} Does the space of polynomial phase
functions admit recovery? Some indication that the answer is
positive is provided in Section~\ref{sec:poly_phase}.

{\bf Monotonicity in strength of noise.} It was shown in
Proposition~\ref{prop_monotonicity_in_noise} that if $\X$ admits
detection (or recovery) then so does $c\X$ for any $c\ge 1$. Let
\begin{equation*}
\bar{\X}:=\{cx\colon x\in\X, c\ge 1\}.
\end{equation*}
Is it also the case that if $\X$ admits detection then so does
$\bar{\X}$? One difficulty here is that $\bar{\X}$ is an
\emph{uncountable} union of spaces of the form $c\X$ so that we
cannot use Proposition~\ref{prop:union_of_spaces}. On the one hand,
detection from $\bar{\X}$ may not be harder than from $\X$ since the
signals in $\bar{\X}$ are ``amplified''. On the other hand,
detection may be more difficult due to the fact that the
``amplification factor'' is unknown to the receiver.

Similarly, one may ask for a space $\X$ admitting detection whether
\begin{equation*}
\tilde{\X}:=\{\{c_n x_n\}\colon x\in\X, c_n\ge 1\}
\end{equation*}
also admits detection. Observe that $\tilde{\X}$ does not admit
recovery since the necessary condition \eqref{eq:recover_nec_cond}
is violated.

{\bf Stronger necessary conditions.} Can one strengthen the
necessary condition for detection given by
\eqref{eq:detection_nec_cond} in any way? For instance, is it the
case that any space $\X\subseteq\R^\N$ admitting detection possesses
a decomposition $\X = \cup \X_j$ satisfying
\begin{equation*}
  \text{for each $j$, }\lim_{k\to\infty}\inf_{y\in\X_j} \sum_{n=1}^k y_n^2=\infty?
\end{equation*}
A similar question may be asked for the necessary condition for
recovery given by \eqref{eq:recover_nec_cond}.

{\bf Acknowledgement.} We are grateful to Boris Tsirelson for
showing us the proof of
Theorem~\ref{thm:detection_singularity_criterion}. We thank Yoav
Benjamini and Felix Abramovitch for useful references and thank an
anonymous referee for useful comments on the exposition.

% ========================================================================

\end{document}